\newtheorem{theorem}{Theorem}[section] 
\newtheorem{conjecture}[theorem]{Conjecture}
\newtheorem{problem}[theorem]{Problem}
\newtheorem{lemma}[theorem]{Lemma}
\newtheorem{corollary}[theorem]{Corollary}
\newtheorem{proposition}[theorem]{Proposition}
\newtheorem{thmx}{Theorem}
\theoremstyle{definition}
\newtheorem{example}[theorem]{Example}
\newtheorem{remark}[theorem]{Remark}
\newcommand{\tto}{\twoheadrightarrow}
\font\sc=rsfs10
\newcommand{\cL}{\sc\mbox{L}\hspace{1.0pt}}
\newcommand{\cP}{\sc\mbox{P}\hspace{1.0pt}}
\font\scc=rsfs7
\newcommand{\ccL}{\scc\mbox{L}\hspace{1.0pt}}
\newcommand{\Hom}{\text{\rm Hom}}
\newcommand{\ext}{\operatorname{ext}}
\newcommand{\Z}{\mathbb Z}
\newcommand{\inv}{^{-1}}
\newcommand{\cO}{\mathcal O}
\newcommand{\fg}{{\mathfrak{g}}}
\newcommand{\ug}{U(\mathfrak{g})}
\newcommand{\soc}{\operatorname{soc}}
\renewcommand{\mod}{\operatorname{mod}}
\newcommand{\res}{\operatorname{Res}}
\newcommand{\Add}{\operatorname{add}}
\newcommand{\surj}{\rightarrow\mathrel{\mkern-14mu}\rightarrow}
\newcommand{\inj}{\hookrightarrow}
\newcommand{\half}{\frac{1}{2}}
\newcommand{\kl}{\underline H}
\newcommand{\hcii}{{}_{\hspace{4pt}0}^\infty\mathcal H^I_0}
\newcommand{\hciii}{\ \!_0^\infty\!\mathcal H^\infty_0}
\newcommand{\hci}{\ \!^\infty_0\!\mathcal H^1_0}
\newcommand{\ann}{\operatorname{Ann}}
\begin{document}

\title[Some homological properties of category $\mathcal{O}$, V]
{Some homological properties of category $\mathcal{O}$, V}

\author[H.~Ko, V.~Mazorchuk and R.~Mr{\dj}en]
{Hankyung Ko, Volodymyr Mazorchuk and Rafael Mr{\dj}en}

\begin{abstract}
We compute projective dimension of translated simple
modules in the regular block of the BGG category $\mathcal{O}$
in terms of Kazhdan-Lusztig combinatorics. This
allows us to determine which projectives can appear at
the last step of a minimal projective resolution for
a translated simple module, confirming a conjecture by
Johan K{\aa}hrstr{\"o}m. We also derive some inequalities,
in terms of Lusztig's $\mathbf{a}$-function,
for possible degrees in which the top (or socle) of
a translated simple module can live. Finally, we
relate Kostant's problem with decomposability and
isomorphism of translated simple modules, addressing 
yet another conjecture by Johan K{\aa}hrstr{\"o}m.

\end{abstract}

\maketitle

\setcounter{tocdepth}{1}
\tableofcontents

\section{Introduction, motivation and description of the results}\label{s1}

\subsection{Setup}\label{s1.1}
Let $\mathfrak{g}$ be a semi-simple complex Lie algebra with a fixed
triangular decomposition
\begin{displaymath}
\mathfrak{g}=\mathfrak{n}_-\oplus \mathfrak{h}\oplus\mathfrak{n}_+
\end{displaymath}
and $U(\mathfrak{g})$ be the universal enveloping algebra of $\mathfrak{g}$.
Let $\mathcal{O}$ denote the associated Bernstein-Gelfand-Gelfand (BGG)
category $\mathcal{O}$, see \cite{BGG,Hu}. Let, further, 
$\mathcal{O}_0$ denote the {\em principal block} $\mathcal{O}$, that is,
the indecomposable direct summand containing the trivial $\mathfrak{g}$-module.

Let $W$ be the Weyl group of $\mathfrak{g}$. It acts naturally on 
$\mathfrak{h}^*$ via $(w,\lambda)\mapsto w(\lambda)$. We consider 
the {\em dot-action} $(w,\lambda)\mapsto w\cdot \lambda$ of $W$ which 
is obtained by shifting the usual action by the half of the sum of all
positive roots.

The category $\mathcal{O}_0$ is equivalent to $A$-mod, for some finite-dimensional
associative basic algebra $A$, unique up to isomorphism. 
Simple objects in $\mathcal{O}_0$ are exactly the 
(pairwise non-isomorphic) simple
highest weight modules $L_w:=L(w\cdot 0)$ of highest weight $w\cdot 0$, for
$w\in W$.
The category $\mathcal{O}_0$ is equipped with the action of the
monoidal category $\cP$ of {\em projective functors}, as defined in \cite{BG}.
Up to isomorphism, indecomposable projective functors are also in bijection with the elements 
in $W$, where the indecomposable projective functor $\theta_w$, for $w\in W$,
is normalized such that it sends the projective cover $P_e$ of
$L_e$ to the projective cover $P_w$ of $L_w$.

The algebra $A$ is Koszul by \cite{So}, in particular, it admits a
positive $\mathbb{Z}$-grading
. Denote by 
$\mathcal{O}^{\mathbb{Z}}_0$ the category of finite dimensional 
$\mathbb{Z}$-graded $A$-modules with morphisms being homogeneous homomorphisms 
of degree zero, see e.g., \cite{St}.

\subsection{Motivation}\label{s1.2}
The first major motivation for the present paper is the following: 

\begin{conjecture}[\cite{KM}]\label{conj1}
Assume that $\mathfrak{g}$ is of type $A$. Then, for $x,y\in W$, the module 
$\theta_x L_y$ is either indecomposable or zero.
\end{conjecture}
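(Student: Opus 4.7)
The plan is to reduce Conjecture~\ref{conj1} to a statement about the graded endomorphism algebra of $\theta_x L_y$, use the biadjunction of projective functors to translate it into $\Hom$-spaces of the form $\Hom(L_y,\theta_z L_y)$, and then invoke a sharp type-$A$ version of Kostant's problem for $L_y$.

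\textbf{Step 1 (reduction).} Assume $\theta_x L_y\neq 0$. Since $\theta_x L_y$ has finite length in $\mathcal{O}_0$, indecomposability is equivalent to $\End(\theta_x L_y)$ being a local ring. Passing to the graded lift $\mathcal{O}^{\mathbb{Z}}_0$ coming from the Koszul grading on $A$, positivity of the grading forces all idempotents into degree zero, so it suffices to show that the degree-zero component $\End^0(\theta_x L_y)$ is one-dimensional.

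\textbf{Step 2 (biadjunction).} By the $(\theta_x,\theta_{x^{-1}})$-biadjunction in $\cP$, one has
\begin{equation*}
\End(\theta_x L_y)\;\cong\;\Hom_{\mathcal{O}_0}(L_y,\theta_{x^{-1}}\theta_x L_y),
\end{equation*}
and $\theta_{x^{-1}}\theta_x$ decomposes in $\cP$ as a direct sum of indecomposable $\theta_z$ with multiplicities dictated by the Kazhdan-Lusztig structure constants. The task thus becomes one of controlling the Hom-spaces $\Hom_{\mathcal{O}_0}(L_y,\theta_z L_y)$ for those $z$ which appear, which, via a further adjunction, is governed by the locally finite endomorphism algebra of $L_y$ -- the object of Kostant's problem for $L_y$. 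At this point I would separate the argument into two sub-claims: (a) the graded top of $\theta_x L_y$ is simple, and (b) only scalar multiples of the identity survive in degree zero of $\End(\theta_x L_y)$. Both sub-claims are then reformulated as statements about which $L_y$-isotypic components arise in the tops of the $\theta_z L_y$.

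\textbf{Step 3 (type-$A$ input and main obstacle).} In type $A$, Kåhrström's Kostant conjecture predicts a sharp description of the locally finite endomorphism algebra of $L_y$ in terms of KL-combinatorics on the right cell of $y$. Granting this description, the Hom-spaces in Step~2 collapse enough to force $\dim\End^0(\theta_x L_y)=1$, finishing the argument. The main obstacle is precisely this appeal: Kostant's problem is open in general, and even in type $A$ is known only in restricted situations. I therefore expect that the realistic output of this strategy is not an unconditional resolution of Conjecture~\ref{conj1}, but rather a clean equivalence reducing it to the Kåhrström-type conjecture on Kostant's problem -- which matches what the abstract announces. Any genuinely unconditional progress would require new input on Duflo involutions and right-cell multiplicities in the Kazhdan-Lusztig basis.
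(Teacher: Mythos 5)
The statement you are trying to prove is Conjecture~\ref{conj1} itself: the paper does not prove it, it only quotes it from \cite{KM} as an open problem (known for $\mathfrak{sl}_n$, $n\leq 6$, by \cite{CMZ}) and establishes conditional reductions and consequences. So there is no ``paper proof'' for your argument to match, and your own text concedes in Step~3 that you do not obtain an unconditional proof either. As a reduction strategy, your Steps~1--2 do follow the paper's actual route in Section~\ref{s4}: positivity of the grading on $\mathrm{End}(\theta_xL_y)$ (Lemma~\ref{lem11}), the consequence that $\dim\mathrm{End}(\theta_xL_y)_0=1$ forces indecomposability (Corollary~\ref{cor12}), the adjunction $\mathrm{End}(\theta_xL_y)\cong\mathrm{hom}(\theta_{x^{-1}}\theta_xL_y,L_y)$, and the type-$A$ decomposition of $\theta_{x^{-1}}\theta_x$ with a unique summand $\theta_{d}\langle-\mathbf{a}(d)\rangle$ for the Duflo element $d\sim_{\mathtt R}x$ (Lemma~\ref{lem10}). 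Note, however, that to discard all the other summands you need the degree bound of Theorem~\ref{thm-kmm-original} ($\mathrm{hom}(\theta_zL_y,L_y\langle i\rangle)\neq 0\Rightarrow i\geq\mathbf{a}(z)$), which your Step~2 does not identify; without it the ``collapse'' of the Hom-spaces is unjustified.

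The genuine gap is in Step~3: the conditional input you propose is the wrong one. Kostant's problem $\mathbf{K}(y)$ has a \emph{negative} answer for many $y$ in type $A$ (already $\mathbf{K}(13)$ fails in $A_3$, Example~\ref{ex9}, and Corollary~\ref{corollary:kostantA5} lists many failures in $A_5$), whereas Conjecture~\ref{conj1} asserts indecomposability for \emph{all} $y$. So any argument whose hypothesis is $\mathbf{K}(y)$, or a K{\aa}hrstr{\"o}m-type description of $\cL(L_y,L_y)$ valid only when Kostant's problem has a positive answer, cannot cover the general case; the implication the paper proves, $\mathbf{K}(y)\Rightarrow\mathbf{KM}(*,y)$ (Proposition~\ref{K to JKM}, Theorem~\ref{theorem1}), goes in a direction that is useless for $y$ with $\mathbf{K}(y)$ false. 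What the paper actually reduces Conjecture~\ref{conj1} to is the strictly weaker multiplicity-one condition $\mathbf{KMM}(d,y)$ of \eqref{eqnnn23}, namely $\dim\mathrm{hom}(\theta_dL_y,L_y\langle\mathbf{a}(d)\rangle)\leq 1$ for Duflo elements $d$ (Proposition~\ref{prop14}); Example~\ref{ex9} shows this is genuinely weaker than $\mathbf{K}$ ($\mathbf{K}(13)$ false, $\mathbf{KMM}(*,13)$ true), it is itself only conjectural (Conjecture~\ref{conj3prime}), and it even fails outside type $A$ (the $B_3$ example of Subsection~\ref{s-kmm+.0}), so the type-$A$ hypothesis cannot be dropped anywhere in this chain. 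In short: your reduction machinery is essentially the paper's, but the conjectural endpoint should be $\mathbf{KMM}$, not Kostant's problem, and no unconditional proof of Conjecture~\ref{conj1} is obtained either by you or by the paper.
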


Various approaches to Conjecture~\ref{conj1} were considered in
\cite{KM} and \cite{CMZ}. In the latter paper,
the conjecture was confirmed in the cases 
$\mathfrak{g}=\mathfrak{sl}_n$, where $n=2,3,4,5,6$.
For other values of $n$, a number of special results are
obtained in \cite{KM} and \cite{CMZ}.

The second major motivation for the present paper
is the so-called {\em Kostant's problem}, as popularized 
in \cite{Jo}, the content of which is to determine all $w\in W$
for which the universal enveloping algebra surjects onto the space
of linear endomorphism of $L_w$ that are locally finite with respect to the
adjoint action (we will denote this property by $\mathbf{K}(w)$, see \S~\ref{s2.3}
for details). This problem was studied in 
\cite{Ma1,Ma2,MS2,Kh,KhM}, see also the references therein.
In March 2019, the second author
received an email from Johan K{\aa}hrstr{\"o}m with the 
following conjecture (based on extensive computer computation).

\begin{conjecture}[J.~K{\aa}hrstr{\"o}m]\label{kconj-intro}
For a Duflo element $d\in W$, the following assertions are equivalent:
\begin{enumerate}[$($a$)$]
\item\label{kconj-1-intro} $\mathbf{K}(d)$.
\item\label{kconj-2-intro} $\theta_x L_d\not\cong\theta_y L_d$, for all
$x\neq y\in W$ such that $\theta_x L_d\neq 0$ and $\theta_y L_d\neq 0$.
\item\label{kconj-3-intro} For all
$x\neq y\in W$ such that $\theta_x L_d\neq 0$ and $\theta_y L_d\neq 0$, 
there exists $z\in W$ and $i\in\mathbb{Z}$
such that $[\theta_x L_d:L_z\langle i\rangle]\neq [\theta_y L_d:L_z\langle i\rangle]$
in $\mathcal{O}_0^{\mathbb{Z}}$.
\item\label{kconj-4-intro} For all
$x\neq y\in W$ such that $\theta_x L_d\neq 0$ 
and $\theta_y L_d\neq 0$, there is $z\in W$ 
such that $[\theta_x L_d:L_z]\neq [\theta_y L_d:L_z]$ in $\mathcal{O}_0$.
\end{enumerate}
\end{conjecture}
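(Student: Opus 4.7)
The plan is to prove the equivalence via the cyclic chain (a) $\Rightarrow$ (c) $\Rightarrow$ (d) $\Rightarrow$ (b) $\Rightarrow$ (a). Of these, only (d) $\Rightarrow$ (b) is immediate, since isomorphic modules share composition multiplicities. For (c) $\Rightarrow$ (d) I would exploit the Koszul grading on $A$ and the graded lifts of projective functors on $\mathcal{O}_0^{\mathbb{Z}}$ to normalize each $\theta_xL_d$ canonically. The graded composition multiplicities $[\theta_xL_d:L_z\langle i\rangle]$ are then given by Kazhdan-Lusztig combinatorics, and I would argue that positivity together with the $\mathbf{a}$-function symmetry of these numbers about a central degree prevents a non-zero graded difference at some $(z,i)$ from being cancelled by contributions at other degrees when summing over $i$.

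For (a) $\Rightarrow$ (c), the plan is to work through the Harish-Chandra bimodule $\mathcal{L}(L_d,L_d)$. The hypothesis $\mathbf{K}(d)$ asserts exactly that the canonical map $\ug/\ann L_d \tto \mathcal{L}(L_d,L_d)$ is an isomorphism. Under the standard equivalence between Harish-Chandra bimodules (with the appropriate annihilator condition on the right) and a subcategory of $\mathcal{O}_0$, this pins down $\mathcal{L}(L_d,L_d)$ to a specific object of $\mathcal{O}_0^{\mathbb{Z}}$ whose graded composition series is computable by Kazhdan-Lusztig data. From this one can read off the dimensions $\dim\Hom_{\mathcal{O}_0}(\theta_xL_d,L_d)$, and combining with BGG reciprocity and the adjunctions between $\theta_x$ and $\theta_x^*$ yields explicit formulas for $[\theta_xL_d:L_z\langle i\rangle]$ that depend genuinely on $x$, giving (c).

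The main obstacle is the final link (b) $\Rightarrow$ (a), which I would prove contrapositively. If $\mathbf{K}(d)$ fails, then $\mathcal{L}(L_d,L_d)$ is strictly larger than $\ug/\ann L_d$, which means that the action of $\cP$ on $L_d$ factors through a proper quotient, and hence two distinct indecomposable projective functors $\theta_x,\theta_y$ act identically on $L_d$. Transporting this coincidence through the relevant adjunctions should yield $\theta_xL_d\cong\theta_yL_d$, contradicting (b). The delicate technical point is to select such a pair $(x,y)$ with both $\theta_xL_d$ and $\theta_yL_d$ non-zero; this is where the $\mathbf{a}$-function estimates and the Duflo hypothesis on $d$ must enter in order to control which pairs of projective functors can collapse and to ensure that both translations survive.
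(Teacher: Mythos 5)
You should first note that the statement you are proving is Conjecture~\ref{kconj-intro} itself: it is an open conjecture of K{\aa}hrstr{\"o}m, and the paper does not prove it. What the paper establishes are partial results, chiefly Theorem~\ref{theorem1} ($\mathbf{K}(y)$ holds if and only if both $\mathbf{Kh}(y)$ and $\mathbf{KM}(*,y)$ hold), which gives the equivalence (a)$\Leftrightarrow$(b) only \emph{conditionally} on Conjecture~\ref{conj1} in type $A$, together with computer verifications in small rank. So a complete argument along your lines would be a substantial new theorem, and in fact each of the three nontrivial links in your cycle has a genuine gap.

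Concretely: for (b)$\Rightarrow$(a) your contrapositive step is a non sequitur. If $\mathbf{K}(d)$ fails, the map $U(\mathfrak{g})/\mathrm{Ann}(L_d)\to\mathcal{L}(L_d,L_d)$ is merely non-surjective; nothing forces two \emph{distinct} indecomposable projective functors to act identically on $L_d$. The actual dichotomy, proved in Theorem~\ref{theorem1}, is that failure of $\mathbf{K}(d)$ means failure of $\mathbf{Kh}(d)$ \emph{or} of $\mathbf{KM}(*,d)$; to deduce (a) from (b) you would therefore also need every $\theta_xL_d$ to be indecomposable or zero, which is exactly Conjecture~\ref{conj1} in type $A$ (open), while outside type $A$ indecomposability can genuinely fail. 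For (c)$\Rightarrow$(d), the ``no cancellation'' argument is invalid: two distinct symmetric vectors of nonnegative graded multiplicities can have equal sums (compare $(1,0,1)$ with $(0,2,0)$ centred at the same degree), and neither positivity nor the $\mathbf{a}$-function bounds exclude this; this implication is itself one of the open parts of the conjecture, the only trivial direction being (d)$\Rightarrow$(c). For (a)$\Rightarrow$(c), the graded multiplicities $[\theta_xL_d:L_z\langle i\rangle]$ are the Kazhdan--Lusztig structure constants $h_{z,x^{-1},d}$ by \eqref{gdim=h}, independently of whether $\mathbf{K}(d)$ holds; knowing $\mathcal{L}(L_d,L_d)\cong U(\mathfrak{g})/\mathrm{Ann}(L_d)$ does not by itself show that these polynomials separate different $x$, and the paper's Corollary~\ref{cor1.4} treats (a)$\Leftrightarrow$(c) as a hypothesis rather than a consequence. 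The only sound link in your chain is (d)$\Rightarrow$(b).
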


The final piece of motivation 
for the present paper is the problem  to determine
the projective dimension for all modules of the form
$\theta_xL_y\in\mathcal{O}_0$, as formulated in 
\cite[Problem~24]{Ma2}. 
In connection to this problem,
the email from Johan K{\aa}hrstr{\"o}m mentioned above 
contained the following conjecture (also based on 
extensive computer computation):

\begin{conjecture}[J.~K{\aa}hrstr{\"o}m]\label{kconj2-intro}
Let $x,y\in W$ and $k$ be the projective dimension of
$\theta_xL_y$. Assume that $z\in W$ is such that $\mathrm{Ext}^k(\theta_xL_y,L_z)\neq 0$.
Then $z$ and $x$ belong to the same Kazhdan-Lusztig left cell.
\end{conjecture}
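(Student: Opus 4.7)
\emph{Proof proposal.}
The plan is to combine two things established earlier in the paper: the explicit formula for $k=\operatorname{proj.dim}(\theta_xL_y)$ in Kazhdan--Lusztig terms, and a Hecke-algebraic constraint on which indecomposable projectives can appear in a projective resolution of $\theta_xL_y$. Since $\theta_x$ is exact, applying it term-wise to the minimal projective resolution of $L_y$ yields a (generally non-minimal) projective resolution of $\theta_xL_y$ whose $i$-th term is a direct sum of modules of the form $\theta_xP_w=\theta_x\theta_w P_e\cong\bigoplus_u P_u^{\oplus m_u}$, with multiplicities $m_u$ given by Kazhdan--Lusztig structure constants of the product corresponding to $\theta_x\theta_w$. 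A standard property of the Kazhdan--Lusztig basis forces every such index to satisfy $u\leq_{\mathcal L}x$. Since the minimal projective resolution of $\theta_xL_y$ is a direct summand (as a complex) of any projective resolution, this constraint descends, and in particular any $P_z$ appearing in degree $k$ must satisfy $z\leq_{\mathcal L}x$.

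To upgrade this to $z\sim_{\mathcal L}x$, the explicit formula for $k$ enters decisively. The plan is to argue that $k$ is precisely the largest homological degree in which a projective $P_u$ with $u\in\mathcal{L}(x)$ can contribute to the minimal resolution of $\theta_xL_y$, while for $u$ in a strictly smaller left cell (equivalently, $\mathbf{a}(u)>\mathbf{a}(x)$) the analogous maximum is strictly less than $k$. Monotonicity of Lusztig's $\mathbf{a}$-function with respect to $\leq_{\mathcal L}$, together with the way $\mathbf{a}$-values enter the projective dimension formula, should deliver the strict inequality, yielding $\Ext^k(\theta_xL_y,L_z)=0$ whenever $z<_{\mathcal L}x$ strictly. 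Combined with the upper bound above, this forces $z\sim_{\mathcal L}x$.

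The main obstacle is precisely this cell-by-cell degree estimate. A natural implementation is to filter $\theta_xL_y$ (or the terms $\theta_xP_w$) by cell subquotients and use the Koszul grading of $\mathcal{O}_0^{\mathbb{Z}}$ to match internal degrees of $\Ext$-groups with homological ones, comparing against the explicit formula for $k$. Alternatively, adjunction furnishes an isomorphism
\[
\Ext^k(\theta_xL_y,L_z)\cong\Ext^k(L_y,\theta_{x^{-1}}L_z)
\]
(up to grading shifts), which converts the problem into bounding the top $\Ext$-degree $\Ext^\bullet(L_y,L_w)$ as $L_w$ ranges over the Kazhdan--Lusztig-constrained composition factors of $\theta_{x^{-1}}L_z$; either route ultimately reduces to the same sharpness question for $k$ relative to the cell filtration.
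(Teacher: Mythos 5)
The first half of your argument is fine: applying $\theta_x$ to the minimal projective resolution of $L_y$ and decomposing $\theta_x\theta_w$ via Kazhdan--Lusztig structure constants does show that every indecomposable projective occurring in (hence in the minimal) resolution of $\theta_xL_y$ is indexed by an element in the left cell of $x$ or strictly beyond it (beware that your inequality $u\leq_{\mathtt L}x$ is stated in the convention opposite to the paper's, where the correct relation is $x\leq_{\mathtt L}u$; this is harmless but worth flagging). The genuine gap is the second half, which you yourself label ``the main obstacle'': you never prove that $\mathrm{Ext}^k(\theta_xL_y,L_z)=0$ when $z$ lies strictly beyond the cell of $x$, you only assert that monotonicity of $\mathbf{a}$ together with the projective dimension formula ``should deliver'' the strict inequality and then list two unexecuted strategies. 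But this sharpness statement \emph{is} the content of the conjecture; without it nothing beyond the coarse cell bound has been established. Moreover, your plan treats the formula $k=\mathbf{a}(w_0x)+\tfrac12 g_{x,y}$ as an independently available input, whereas in the paper this formula (Proposition~\ref{prop8.5-1}\eqref{prop8.5-1.1}) and the cell statement (Proposition~\ref{prop8.5-1}\eqref{prop8.5-1.2}) are proved simultaneously by one argument, so invoking the formula does not shortcut the missing step.

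Concretely, the paper's proof of the missing step is not a routine cell-filtration or Koszul-grading bookkeeping exercise: given $\mathrm{Ext}^k(\theta_xL_y,L_z)\neq 0$, one passes to $\mathrm{Ext}^k(\theta_xL_y,\theta_dL_z)\neq 0$ for the Duflo element $d$ in the left cell of $x$ (using $\theta_d\theta_x\supseteq\theta_x$ and adjunction), realizes both $\theta_xL_y$ and $\theta_dL_z$ as linear complexes of tilting modules via the Koszul--Ringel self-duality of \cite[Theorem~16]{Ma2}, and then uses the bound $\tfrac12 g_{d,z}\leq\mathbf{a}(w_0d)=\mathbf{a}(w_0x)$ from \cite[Proposition~1]{Ma2}, whose \emph{equality criterion} is exactly $z\sim_{\mathtt L}x$; finally, attainment of the top degree $k'=\mathbf{a}(w_0x)+\tfrac12 g_{x,y}$ for some $z\sim_{\mathtt L}x$ is produced explicitly using the classification of projective-injective modules in $\mathcal{O}^{\hat{\mathcal R}}$ and the homotopy-category argument of \cite[Theorem~1]{MO}. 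Your second suggested route (moving $\theta_{x^{-1}}$ across the adjunction and bounding top $\mathrm{Ext}$-degrees against composition factors of $\theta_{x^{-1}}L_z$) is in the same spirit but would still require precisely these ingredients, in particular a degree bound with a sharp equality criterion detecting the cell of $x$; as written, the proposal does not supply them.
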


\subsection{Description of the results}\label{s1.3}

The first main result of the present paper, 
see Proposition \ref{prop8.5-1}, Formula~\eqref{pd=a+b},
Theorem~\ref{cong-kmm-2} and Corollary~\ref{cor:kconj-301}, is:

\begin{thmx}
Conjecture \ref{kconj2-intro} is true.
Moreover, the projective dimension of $\theta_xL_y$, for $x,y\in W$
such that $\theta_xL_y\neq 0$, is given by 
\[\mathbf{a}(w_0x)+\mathbf{b}(y\inv w_0,w_0x\inv),\]
where $w_0$ is the longest element in $W$, $\mathbf{a}$ denotes Lusztig's $\mathbf{a}$-function, and $\mathbf{b}(u,v)$ is defined as the maximal degree shift of the composition factors in $\theta_u L_v$ (see Subsection \ref{ss:b}).
\end{thmx}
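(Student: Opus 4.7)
My plan is to work in the graded principal block $\mathcal{O}_0^{\mathbb{Z}}$ throughout, and to read off $\mathrm{pd}(\theta_x L_y)$ from the internal grading shifts of its minimal projective resolution, exploiting Koszulity of $A$. The starting observation is that $\mathrm{pd}(\theta_x L_y)$ equals the largest $i$ for which $\mathrm{ext}^i(\theta_x L_y, L_z\langle j\rangle)\neq 0$ for some $z\in W$ and $j\in\mathbb{Z}$. Koszulity of $A$ provides the crucial rigidity: $\mathrm{ext}^i(L,L'\langle j\rangle)$ is concentrated in $i=j$, which converts homological computations into tracking of internal degree shifts in graded modules with predictable tops.

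The key move is the graded biadjunction for indecomposable projective functors, pairing $\theta_x$ with $\theta_{x^{-1}}$ up to an internal degree shift determined by the extremal degrees of the Koszul grading of $\theta_x P_e = P_x$. This shift is controlled by $\mathbf{a}(w_0 x)$, reflecting that the graded Kazhdan-Lusztig basis element corresponding to $\theta_x$ spreads over a range of degrees whose extremum is exactly this $\mathbf{a}$-value. By the adjunction,
\[
\mathrm{ext}^i\bigl(\theta_x L_y,\, L_z\langle j\rangle\bigr)\ \cong\ \mathrm{ext}^i\bigl(L_y,\, \theta_{x^{-1}} L_z\langle j'\rangle\bigr),
\]
where $j'$ is determined by $j$ and the biadjunction shift. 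Koszulity then forces the maximal $i$ for which the right hand side is nonzero (as $z,j$ vary) to coincide with the maximal internal degree at which $L_y$ appears as a composition factor of some $\theta_{x^{-1}} L_z$, added to the biadjunction shift. The upper bound is realised because the extremal internal degree of a composition factor actually appears in the socle of the corresponding syzygy, again by Koszul-linearity.

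To match the answer with the formula $\mathbf{a}(w_0 x)+\mathbf{b}(y^{-1}w_0, w_0 x^{-1})$, I use the involutions of $\mathcal{O}_0^{\mathbb{Z}}$ that interchange simples via $L_v\mapsto L_{w_0 v^{-1}}$ (simple-preserving duality combined with the $w_0$-twist), together with the corresponding symmetry of the $\mathbf{b}$-function. Under this realignment the maximal degree shift of $L_y$ in $\theta_{x^{-1}}L_z$ rewrites as $\mathbf{b}(y^{-1}w_0, w_0 x^{-1})$, while the adjunction shift is $\mathbf{a}(w_0 x)$, giving the formula. Conjecture~\ref{kconj2-intro} then comes out of the same analysis: if $\mathrm{Ext}^k(\theta_x L_y, L_z)\neq 0$ at the top homological degree, Koszulity and the adjunction force $L_y$ to attain the maximal internal shift inside $\theta_{x^{-1}} L_z$, and by the cell-theoretic behaviour of projective functors (extremal composition factors of $\theta_u L_v$ lie in the left cell of $u$) this constrains $z$ to share a Kazhdan-Lusztig left cell with $x$.

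The principal obstacle I anticipate is bookkeeping: pinning down the precise graded shift in the biadjunction of indecomposable projective functors, and then realigning the two involutions $x\mapsto w_0 x^{-1}$, $y\mapsto y^{-1}w_0$ so that the two summands fall into place. Once these shifts and symmetries are identified, the Koszul-rigidity step is essentially automatic, and the cell-theoretic argument needed for Conjecture~\ref{kconj2-intro} follows by reading off which $z$ can support the extremal multiplicity inside $\theta_{x^{-1}} L_z$.
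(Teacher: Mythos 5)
Your proposal has a genuine gap at its central step: the claimed ``graded biadjunction shift'' does not exist. With the standard graded lifts normalized by $\theta_w P_e\cong P_w$, the functors $\theta_x$ and $\theta_{x\inv}$ are biadjoint with \emph{no} internal degree shift (this is exactly how adjunction is used throughout the paper, e.g.\ in the proof of \eqref{gdim=h}), so the summand $\mathbf{a}(w_0x)$ of the answer cannot be produced by bookkeeping an adjunction shift. In the paper's argument this term arises quite differently: one replaces $L_z$ by $\theta_d L_z$, where $d$ is the Duflo element in the left cell of $x$ (using $\theta_d\theta_x=\theta_x\oplus\theta$ and adjunction), and then invokes \cite[Proposition~1]{Ma2}, which bounds the graded length of $\theta_d L_z$ by $2\mathbf{a}(w_0 d)=2\mathbf{a}(w_0x)$ with equality if and only if $z\sim_\mathtt{L}x$ --- and it is precisely this equality criterion, not a general statement about ``extremal composition factors of $\theta_u L_v$,'' that yields Conjecture~\ref{kconj2-intro}. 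Your second key claim, that Koszulity of $A$ forces the maximal $i$ with $\mathrm{ext}^i(L_y,\theta_{x\inv}L_z\langle j\rangle)\neq 0$ to equal a composition-factor degree of $\theta_{x\inv}L_z$ (plus a shift), is also unsupported: Koszulity only pins the internal degree of $\mathrm{ext}^i$ between \emph{simples}, and since the grading shift $j$ is free, dévissage gives no bound of the kind you need; note also that $\theta_xL_y$ does not have a linear projective resolution in general, which is why its projective dimension involves both $\mathbf{a}$ and $\mathbf{b}$.

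What actually carries the paper's proof of Proposition~\ref{prop8.5-1} (hence of \eqref{pd=a+b}) is the Koszul--Ringel self-duality of the family $\{\theta_aL_b\}$ from \cite[Theorem~16]{Ma2}: both $\theta_xL_y$ and $\theta_dL_z$ are realized as \emph{linear complexes of tilting modules}, whose extreme nonzero positions are $-\tfrac12 g_{x,y}$ and $\tfrac12 g_{d,z}$, and the comparison of these positions gives the upper bound $k\leq \mathbf{a}(w_0x)+\tfrac12 g_{x,y}$ together with the cell constraint. Your attainment step (``the extremal degree appears in the socle of the syzygy by Koszul-linearity'') is the other missing ingredient: the paper has to \emph{construct} a nonzero extension in the top degree, which it does by identifying the projective-injective objects of $\mathcal{O}^{\hat{\mathcal{R}}}$ as the modules $\theta_{z\inv w_0}L_{w_0d}$ and applying the homotopy-category argument of \cite[Theorem~1]{MO} to an isomorphism between extreme components of the two linear tilting complexes; finally $\tfrac12 g_{x,y}$ is rewritten as $\mathbf{b}(y\inv w_0,w_0x\inv)$ via \eqref{2b=gl}--\eqref{2b=g}. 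Without the Koszul--Ringel duality, the degree bounds of \cite[Proposition~1]{Ma2} (or \eqref{eq9}), and the realization argument, the proposal does not establish either the formula or Conjecture~\ref{kconj2-intro}.
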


Note that both $\mathbf{a}(w_0x)$ and $\mathbf{b}(y\inv w_0,w_0x\inv$) are ``combinatorial''
in the sense that they are defined explicitly in terms of Kazhdan-Lusztig polynomials.

Our second main result is the following
statment, see Theorem~\ref{theorem1},
which combines ingredients of Conjecture~\ref{conj1}
with parts of Conjecture~\ref{kconj-intro}:

\begin{thmx}\label{thm1}
For $y\in W$, the assertion $\mathbf{K}(y)$ is true if and only 
if both of the following conditions hold.
\begin{enumerate}
    \item for all $x\neq z$, we have $\theta_xL_y\not\cong \theta_zL_y$ whenever nonzero (we refer to this property as $\mathbf{Kh}(y)$);
    \item for each $x\in W$, the module $\theta_xL_y$ is either indecomposable or zero (we refer to this property as $\mathbf{KM}(*,y)$).
\end{enumerate}
In particular, if Conjecture \ref{conj1} is true, then \ref{kconj-1-intro}$\Leftrightarrow$\ref{kconj-2-intro} in Conjecture \ref{kconj-intro} holds in type A.
\end{thmx}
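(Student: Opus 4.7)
The plan is to convert both $\mathbf{K}(y)$ and conditions (1), (2) into statements about the Harish-Chandra bimodule $\mathcal{L}(L_y,L_y)$ of linear endomorphisms of $L_y$ which are locally finite for the adjoint action. Recall that the natural inclusion $\ug/\ann(L_y)\hookrightarrow \mathcal{L}(L_y,L_y)$ of Harish-Chandra bimodules is always injective, and $\mathbf{K}(y)$ is precisely the assertion that it is an equality. The key reformulation is the adjunction isomorphism
\[
\Hom_{\mathcal{O}_0}(\theta_x L_y,\theta_z L_y)\;\cong\;\Hom_{\mathcal{O}_0}(L_y,\theta_{x\inv}\theta_z L_y),
\]
whose right-hand side, via the Bernstein--Joseph equivalence between a suitable subcategory of Harish-Chandra bimodules and $\mathcal{O}_0$, identifies with an isotypic component of $\mathcal{L}(L_y,L_y)$ indexed by the indecomposable projective functor summands of $\theta_{x\inv}\theta_z$.

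For the direction $\mathbf{K}(y)\Rightarrow(1)\wedge(2)$, I would exploit the explicit structure of $\ug/\ann(L_y)$ as a Harish-Chandra bimodule: under $\mathbf{K}(y)$, it coincides with $\mathcal{L}(L_y,L_y)$, whose indecomposable Harish-Chandra bimodule summands are indexed by $\widetilde W(y):=\{x\in W\mid \theta_xL_y\neq 0\}$. Plugging this into the Hom-isomorphism above forces $\End_{\mathcal{O}_0}(\theta_xL_y)$ to be local for every $x\in\widetilde{W}(y)$ (hence $\theta_xL_y$ is indecomposable, yielding (2)), and forces $\Hom_{\mathcal{O}_0}(\theta_xL_y,\theta_zL_y)$ for $x\neq z$ in $\widetilde W(y)$ to contain no isomorphisms (yielding (1)).

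For the converse direction, I would argue by comparing the indecomposable Harish-Chandra bimodule summands on the two sides of $\ug/\ann(L_y)\hookrightarrow\mathcal{L}(L_y,L_y)$. Via Bernstein--Joseph, these summands correspond to indecomposable summands of the modules $\theta_xL_y$, $x\in\widetilde W(y)$. Condition (2) ensures that each $\theta_xL_y$ contributes only a single indecomposable summand, and condition (1) ensures that distinct $x\in\widetilde W(y)$ contribute non-isomorphic summands. A multiplicity count then matches the bimodule composition series of $\mathcal{L}(L_y,L_y)$ with that of $\ug/\ann(L_y)$ and forces the inclusion to be an equality.

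The main obstacle is precisely this last step: one must verify that (1) and (2), which are statements about objects in $\mathcal{O}_0$, really do rule out any additional indecomposable Harish-Chandra bimodule summands of $\mathcal{L}(L_y,L_y)$ beyond those already present in $\ug/\ann(L_y)$; this requires tight control of how the Bernstein--Joseph equivalence interacts with indecomposable decompositions. Once the equivalence is established, the final assertion of the theorem is immediate: in type $A$, Conjecture \ref{conj1} would guarantee that (2) always holds, whence $\mathbf{K}(y)\Leftrightarrow \mathbf{Kh}(y)$, which is the equivalence \ref{kconj-1-intro}$\Leftrightarrow$\ref{kconj-2-intro} of Conjecture \ref{kconj-intro}.
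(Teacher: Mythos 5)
Your general framework (translating everything into Harish--Chandra bimodules and the internal hom of $\cL(L_y,L_y)$) is indeed the one the paper works in, and your easy direction could be completed in that spirit, but one structural claim you rely on is false: $\cL(L_y,L_y)$ is a single Harish--Chandra bimodule whose image under the equivalence \eqref{eq1} is the submodule $E_y\subseteq\theta_yL_{y\inv}$ (Corollary~\ref{subz}), while $U(\mathfrak{g})/\mathrm{Ann}(L_y)$ corresponds to the indecomposable quotient $D_y$ of $P_e$; neither object decomposes into ``indecomposable Harish--Chandra bimodule summands indexed by $\{x:\theta_xL_y\neq0\}$''. What actually forces (1) and (2) from $\mathbf{K}(y)$ in the paper is the internal-hom equivalence $\Add_{\cO_0}\{\theta_xL_y\}\cong\operatorname{proj}_{\hcii}(\cL(L_y,L_y))$ of Proposition~\ref{ihom}, combined with $\operatorname{mod}_{\hcii}(U_y)\cong\cO^{\hat{\mathcal{R}}}$ (Proposition~\ref{Uymod}): under $\mathbf{K}(y)$ the relevant projectives $\theta_x\otimes_U U_y$ become quotients of the $P_x$ in $\cO_0$, hence have simple tops and are pairwise non-isomorphic (Corollary~\ref{projU}). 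Your ``plugging this into the Hom-isomorphism forces locality'' needs this (or an equivalent) mechanism; the adjunction alone does not produce it.

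The genuine gap is the converse direction, which you yourself flag as the unresolved obstacle, and the sketch you give cannot be repaired as stated: a ``multiplicity count'' matching composition series of $U_y$ and $\cL(L_y,L_y)$ is not available, because conditions (1) and (2) are statements about the objects $\theta_xL_y$ and give no direct numerical control on the bimodule $\cL(L_y,L_y)$ (when $\mathbf{K}(y)$ fails, $E_y$ strictly contains $D_y$, cf.\ Example~\ref{ex9}, and nothing in (1),(2) counts those extra factors). The paper needs two separate nontrivial ingredients here. First, for a Duflo element $d$, the embeddings $\theta_wL_d\hookrightarrow P_w^{\hat{\mathcal{R}}}$ coming from \cite[Corollary~3]{Ma2} are used to build a faithful functor $\operatorname{proj}_{\hcii}(U_d)\to\operatorname{proj}_{\hcii}(A_d)$; only then do $\mathbf{Kh}(d)$ and $\mathbf{KM}(*,d)$ upgrade it (together with the faithful functor induced by restriction) to an equivalence, giving $A_d\cong U_d$, i.e.\ $\mathbf{K}(d)$ (Proposition~\ref{Morita}, Theorem~\ref{theorem}). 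Second, for general $y$ one must reduce to the Duflo element $d$ of its left cell: $\mathbf{KM}(*,y)$ makes $\theta_{y\inv}L_y$ indecomposable, hence $t_{y\inv}t_y=t_{d'}$ in the asymptotic ring, and a positivity/idempotent argument yields $t_yt_{y\inv}=t_d$, whence $E_y=E_d$ (Proposition~\ref{propn-86}) and $\mathbf{K}(y)\Leftrightarrow\mathbf{K}(d)$, while Proposition~\ref{prop-n942} transfers $\mathbf{Kh}$ and $\mathbf{KM}$ from $y$ to $d$. Neither ingredient, and in particular no treatment of the Duflo versus non-Duflo dichotomy, appears in your proposal, so the ``if'' direction is missing its core. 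The final ``in particular'' sentence is indeed immediate once the equivalence is established.
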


In Theorem~\ref{thm-kmm-original} we obtain some bounds
(given in terms of the $\mathbf{a}$-function) on the degrees of simple constituents in the top (or socle) of  $\theta_x L_y$. When the bound prescribed by Theorem~~\ref{thm-kmm-original} is achieved, an interesting question is the multiplicity of the corresponding simple constituent in the top. We formulate a condition on this multiplicity that we call $\mathbf{KMM}$, 
see \eqref{eqnnn23}, which would imply Conjecture~\ref{conj1} in type $A$.

We also relate Kostant's problem for different simple highest
weight modules from the same Kazhdan-Lusztig left cell,
see Corollary~\ref{of con1} and Proposition \ref{decyLy} for the condition  \ref{item:C_ii_d}:

\begin{thmx}\label{thm2}
Let $d\in W$ be a Duflo element and let $\mathcal{L}$ be the Kazhdan-Lusztig left cell containing $d$. 
\begin{enumerate}
    \item If $\mathbf{K}(d)$ is not true, then $\mathbf{K}(y)$ is not true, for all $y\in \mathcal{L}$;
    \item If $\mathbf{K}(d)$ is true, then, for each $y\in \mathcal{L}$, the following conditions are equivalent:
    \begin{enumerate}[$($a$)$]
        \item $\mathbf{K}(y)$;
        \item $\theta_{y}L_{y\inv}\cong \theta_d L_d$;
        \item $t_yt_{y\inv} = t_d$ in the asymptotic ring for $W$ (see \S \ref{ss:asym});
        \item $\mathbf{KM}(y,y\inv)$;
        \item\label{item:C_ii_d} if, furthermore, $\fg$ is of classical type: the $\mathtt{H}$-cell of $y$ and the $\mathtt{H}$-cell of $d$ contain the same number of elements (here, an $\mathtt{H}$-cell is the intersection of a Kazhdan-Lusztig right cell and  a Kazhdan-Lusztig left cells, see Subsection \ref{ss:cella}).
    \end{enumerate}
\end{enumerate}
\end{thmx}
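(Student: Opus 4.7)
The plan is to reduce the theorem to two categorical anchors. The first is a reformulation of Kostant's problem $\mathbf{K}(w)$ in terms of a multiplicity condition on $\theta_w L_{w\inv}$, going back to \cite{MS2}; the second is the categorification of Lusztig's asymptotic ring $J$ by the monoidal category $\cP$ of projective functors, under which the class of $\theta_y L_{y\inv}$ decategorifies to $t_y t_{y\inv}$. With these in hand, the theorem reduces to propagating the structure of $\theta_d L_d$ across the left cell $\mathcal{L}$ of the Duflo element $d$.

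For part (1), fix $y\in\mathcal{L}$ and apply the first anchor at $d$: if $\mathbf{K}(d)$ fails, there is a simple subquotient in $\theta_d L_d$ whose multiplicity strictly exceeds the value forced by surjectivity of $\ug\tto\mathcal{L}(L_d,L_d)$. Since $y$ and $d$ share a left cell, one uses a projective functor to move from $L_d$ to $L_y$ (and dually for $L_{y\inv}$), transporting this ``witness of failure'' into $\theta_y L_{y\inv}$ and yielding $\neg\mathbf{K}(y)$.

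For part (2), assume $\mathbf{K}(d)$ and prove the equivalences in the chain (a)$\Leftrightarrow$(b)$\Leftrightarrow$(c)$\Leftrightarrow$(d)$\Leftrightarrow$(e). The step (a)$\Leftrightarrow$(b) uses the first anchor: $\mathbf{K}(d)$ pins down $\theta_d L_d$ as an indecomposable with a specific top and socle pattern, and $\mathbf{K}(y)$ asserts that $\theta_y L_{y\inv}$ reproduces that pattern, which is equivalent to the isomorphism in (b). The step (b)$\Leftrightarrow$(c) is decategorification via the second anchor: an isomorphism $\theta_y L_{y\inv}\cong \theta_d L_d$ descends to the equality $t_y t_{y\inv}=t_d$ in $J$, while the reverse direction uses $\mathbf{K}(d)$ to identify the lift uniquely. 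The step (c)$\Leftrightarrow$(d) is combinatorial: expanding $t_y t_{y\inv}=\sum_z \gamma_{y,y\inv,z\inv} t_z$ records the indecomposable summands of $\theta_y L_{y\inv}$, so collapse to a single basis element is precisely $\mathbf{KM}(y,y\inv)$ together with the identification forced by $\mathbf{K}(d)$.

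The implication (d)$\Leftrightarrow$(e), restricted to classical type, invokes the explicit parametrization of $\mathtt{H}$-cells via Robinson--Schensted in type $A$ and its domino-tableau counterparts in types $B$, $C$, $D$. There $|\mathtt{H}|$ equals the dimension of a simple $J$-module supported on the two-sided cell, which in turn governs the multiplicities $\gamma$ in the asymptotic ring; thus the cardinalities of the $\mathtt{H}$-cells of $y$ and $d$ match exactly when $t_y t_{y\inv}$ reduces to the single Duflo basis element $t_d$. The main obstacle is the lifting step (c)$\Rightarrow$(b): one must recover an isomorphism in a non-asymptotic category from an identity in $J$, and here $\mathbf{K}(d)$ is used crucially to pin down the lift. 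A secondary difficulty is that (d)$\Leftrightarrow$(e) is not type-free, so case analysis through the cell classification is unavoidable.
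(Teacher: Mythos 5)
Your two ``anchors'' are precisely the parts of the theorem that require proof, and the first one does not exist in the literature in the form you use it. There is no known reformulation of $\mathbf{K}(w)$ as a multiplicity condition on the single module $\theta_w L_{w\inv}$ (certainly not in \cite{MS2}); the statement that, \emph{given} $\mathbf{K}(d)$, the condition $\mathbf{K}(y)$ is equivalent to $\mathbf{KM}(y,y\inv)$ is the hardest equivalence (a)$\Leftrightarrow$(d) of the theorem, and the paper has to earn it through the Harish--Chandra bimodule/internal-hom machinery: $\cL(L_y,L_y)$ is realized inside $\hcii$, $\mathbf{K}(y)$ is translated into the equality $D_y=E_y$ of explicit objects of $\mathcal{O}_0$ (with $D_y=U/\ann(L_y)\otimes_U P_e$ and $E_y=\cL(L_y,L_y)\otimes_U P_e$), $E_y$ is pinned down inside $\theta_y L_{y\inv}$ via its socle and partial approximation (Corollary~\ref{E_yz}), and the implication $\mathbf{K}(y)\Rightarrow\mathbf{KM}(*,y)$ is Proposition~\ref{K to JKM}, proved through module categories over $\cL(L_y,L_y)$ (Proposition~\ref{ihom}), culminating in Theorem~\ref{theorem1} and Corollary~\ref{of con1}. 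Assuming this as given makes the proposal circular at its core. Similarly, your part (1) mechanism fails: there is no projective functor ``moving $L_d$ to $L_y$'' (projective functors do not take simples to simples; $\theta L_d$ is typically a shifted projective in $\mathcal{O}_0^{\hat{\mathcal{R}}}$), so the ``witness of failure'' cannot be transported this way. The paper's actual argument is that $D_y=D_d$ (annihilators are constant on left cells) while $E_d$ is a direct \emph{summand} of $E_y$ (Proposition~\ref{propn-86}, which rests on the one-dimensionality of $\mathrm{Hom}(\theta_{y\inv}L_d,L_{y\inv})$ and the socle/$\Psi$ description of $E$), so $D_d\subsetneq E_d$ forces $D_y\subsetneq E_y$.

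Two further inaccuracies in the remaining steps. The equivalence (b)$\Leftrightarrow$(c) does not need $\mathbf{K}(d)$ and is not a formal ``decategorification'': the class of $\theta_yL_{y\inv}$ in the Grothendieck group is a Hecke-algebra expression, and the link to the asymptotic ring comes from the decomposition $\theta_yL_{y\inv}\cong\bigoplus_z\theta_zL_d^{\oplus\gamma_{y\inv,y,z\inv}}$ (Proposition~\ref{b=c}), which relies on the projective-injective structure of $\mathcal{O}_0^{\hat{\mathcal{R}}}$ and graded top/socle degree arguments from \cite{Ma2}; once this is in place, (b)$\Leftrightarrow$(c)$\Leftrightarrow$(d) follow, together with the idempotent/positivity argument (as in the proof of Theorem~\ref{theorem1}) relating $t_{y\inv}t_y$ and $t_yt_{y\inv}$. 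Finally, for (d)$\Leftrightarrow$(e) the paper does not run a Robinson--Schensted/domino-tableau case analysis, and ``$|\mathcal{H}|$ equals the dimension of a simple $J$-module'' is not correct in types $BCD$; what is used is that in classical type the diagonal $\mathtt{H}$-cell is an elementary abelian $2$-group acting transitively on each $\mathtt{H}$-cell in its row/column, so $t_yt_{y\inv}=\sum_{z\in\operatorname{Stab}(y)}t_z$ and the stabilizer is trivial exactly when the $\mathtt{H}$-cell of $y$ has the same (maximal) cardinality as that of $d$ (Lemmas~\ref{decyy} and~\ref{decyLy}). As written, the proposal leaves the essential content unproved and its part (1) argument is not repairable in the stated form.
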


Theorem \ref{thm1} and Theorem \ref{thm2} give a conjectural answer to Kostant's problem in terms of Kazhdan-Lusztig combinatorics as follows (see Corollary \ref{cor1.4}). 

\begin{corollary}\label{introcor}
Suppose \ref{kconj-1-intro} $\Leftrightarrow$ \ref{kconj-3-intro} in Conjecture \ref{kconj-intro} is true for $W$. Then, for each $y\in W$, the condition $\mathbf{K}(y)$ is equivalent to the conjunction of the following conditions:
\begin{enumerate}
    \item $t_yt_{y\inv}=t_d$ in the asymptotic ring for $W$;
    \item For the Duflo element $d\sim_\mathtt{L} y$ and $x,x'\leq_\mathtt{R} d$, if $h_{z,x,d}=h_{z,x',d}$, for all $z\in W$, then $x=x'$.
\end{enumerate}
\end{corollary}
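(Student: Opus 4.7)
The plan is to combine Theorem~\ref{thm2}, which handles the reduction to the Duflo element and the asymptotic-ring condition, with the assumed implication \ref{kconj-1-intro}$\Leftrightarrow$\ref{kconj-3-intro} of Conjecture~\ref{kconj-intro}, which handles the multiplicity condition. Fix $y \in W$ and let $d$ denote the unique Duflo element in the Kazhdan-Lusztig left cell of $y$. By Theorem~\ref{thm2}(1), if $\mathbf{K}(y)$ holds then so does $\mathbf{K}(d)$; conversely, Theorem~\ref{thm2}(2) shows that, once $\mathbf{K}(d)$ is assumed, $\mathbf{K}(y)$ is equivalent to the asymptotic identity $t_y t_{y^{-1}} = t_d$, which is the first condition of the corollary. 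Consequently, proving the corollary reduces to showing that, under the standing hypothesis, $\mathbf{K}(d)$ is equivalent to the second condition.

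For this remaining equivalence, I would apply \ref{kconj-1-intro}$\Leftrightarrow$\ref{kconj-3-intro} directly to the Duflo element $d$: under the hypothesis, $\mathbf{K}(d)$ holds if and only if, for all distinct $x, x' \in W$ with $\theta_x L_d \neq 0$ and $\theta_{x'} L_d \neq 0$, there exist $z \in W$ and $i \in \mathbb{Z}$ such that $[\theta_x L_d : L_z\langle i\rangle] \neq [\theta_{x'} L_d : L_z\langle i\rangle]$ in $\mathcal{O}_0^\mathbb{Z}$. The remaining step is to translate this multiplicity condition into the $h$-coefficient condition in the corollary. Two standard inputs from graded category $\mathcal{O}$ are needed: first, in the Grothendieck group of $\mathcal{O}_0^\mathbb{Z}$, the class $[\theta_x L_d]$ expands into shifted simples with coefficients read off from the Laurent polynomial $h_{z,x,d}$, so that the full family of graded multiplicities $\{[\theta_x L_d : L_z\langle i\rangle]\}_{z,i}$ is encoded by $\{h_{z,x,d}\}_{z\in W}$; second, $\theta_x L_d \neq 0$ precisely when $x \leq_\mathtt{R} d$. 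Together, these identify the multiplicity-difference condition with the statement of the second condition.

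Putting everything together, the corollary is a purely formal consequence: $\mathbf{K}(y)$ is equivalent to the conjunction of $\mathbf{K}(d)$ and the first condition, which by the previous paragraph is equivalent to the conjunction of the second and first conditions. The main technical point requiring care is the precise dictionary (conventions and degree shifts) between graded multiplicities in $\mathcal{O}_0^\mathbb{Z}$ and Hecke algebra structure constants $h_{z,x,d}$; this is well known in the Koszul/graded setup of \cite{So,St} and can be quoted rather than re-derived. Likewise, the nonvanishing characterization $\theta_x L_d \neq 0 \Leftrightarrow x \leq_\mathtt{R} d$ is a standard consequence of the theory of projective functors acting on simples, and restricts the quantifier in the second condition to the correct range.
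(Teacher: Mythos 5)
Your proposal is correct and takes essentially the same route as the paper's proof of Corollary~\ref{cor1.4}: there, too, one reduces $\mathbf{K}(y)$ to the conjunction of $\mathbf{K}(d)$ and $t_yt_{y\inv}=t_d$ (via Theorem~\ref{DEthet}, which is exactly the content packaged in Theorem~\ref{thm2} that you invoke) and then identifies $\mathbf{K}(d)$ with the second condition using the assumed equivalence with the graded-multiplicity statement and \eqref{gdim=h}. The only point in the dictionary you defer to that needs care is the inverse in \eqref{gdim=h}: the graded character of $\theta_xL_d$ is $(h_{z,x\inv,d})_{z}$ rather than $(h_{z,x,d})_{z}$, so the separation condition should be read with this reindexing (equivalently, quantified over $x,x'\leq_\mathtt{L} d$), which is the same implicit normalization the paper's own one-line appeal to \eqref{gdim=h} relies on.
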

Here $h_{z,x,d}\in\Z[v,v\inv]$ is the structure coefficient for the Kazhdan-Lusztig basis (see \eqref{hxyz}).

In the last section, we use Theorem \ref{thm1}, Theorem \ref{thm2}, Kazhdan-Lusztig combinatorics, and some other results in the paper to determine $\mathbf{K}(y)$, as well as  $\mathbf{KM}(y)$ and $\mathbf{Kh}(y)$, 
for all $y\in W$ in a number of small rank cases.  
The result confirms Conjecture \ref{kconj-intro} in type $A_n$ for $n\leq 5$ and in type $BCD$ in rank $\leq 4$.
In particular, we completely solve Kostant's problem  
in type $A_5$, see Corollary \ref{corollary:kostantA5}. 
This question was considered before in \cite{KhM, Kh}, where it was solved completely for
$A_n$, where $n\leq 4$, and a partial answer for $A_5$ was given.

\subsection*{Acknowledgments}
This research was partially supported by
the Swedish Research Council, 
G{\"o}ran Gustafsson Stiftelse and Vergstiftelsen. The third author was also partially supported by the QuantiXLie Center of Excellence grant no. KK.01.1.1.01.0004 funded by the European Regional Development Fund.

We are especially indebted to Johan K{\aa}hrstr{\"o}m who shared with us
his ideas which started the work on this paper.

\section{A zoo of questions about $\mathcal{O}_0$}\label{s2}

In this sections we both recall some classical open problems
and questions about $\mathcal{O}_0$ and propose some new ones. 
In the rest of the paper we look deeper into 
connection between these problems and questions.

For any function $\mathbf{F}:W\to \{\mathrm{false},\mathrm{true}\}$,
we write $\mathbf{F}(*)$ for the conjunction of all $\mathbf{F}(w)$,
where $w\in W$, and similarly for functions of several variables.
We also write $\mathbf{F}(*_d)$ for the conjunction of all $\mathbf{F}(w)$,
where $w\in W$ is a Duflo element.

\subsection{Indecomposability of translation of simple modules}\label{s2.1}

For $x,y\in W$, we denote by $\mathbf{KM}(x,y)$ the statement ``the module
$\theta_x L_y$ is either indecomposable or zero''. The following problem
is still open:

\begin{problem}\label{prob2}
Determine all $x,y\in W$, for which $\mathbf{KM}(x,y)$ is true.
\end{problem}

Conjecture~\ref{conj1} asserts that $\mathbf{KM}(x,y)$ is always true in type $A$.
We note that $\theta_x L_y$ is non-zero if and only if $x^{-1}\leq_\mathtt{L} y$, where
$\leq_\mathtt{L}$ denotes the Kazhdan-Lusztig left order on $W$ from \cite{KL}, 
see, for example, \cite[Formula~(1)]{KM} and the references preceding this formula.
If $\mathfrak{g}$ is of type $B_2$ and $1,2$ are the two simple reflections in 
$W$, then $\mathbf{KM}(12,21)$ is known to be false, see \cite[Subsection~5.1]{KM}.

\subsection{Graded simple tops of translated simple modules}\label{s2.2}

Denote by $\langle 1\rangle$ the grading shift on
$\mathcal{O}_0^{\mathbb{Z}}$ normalized such that
it maps degree one to degree zero. Fix standard 
graded lifts $L_w$, where $w\in W$,
of simple modules concentrated in degree zero. Fix standard graded lifts $P_w$, 
where $w\in W$, of indecomposable projective modules such that their tops are
concentrated in degree zero.
According to \cite{St}, each $\theta_w$, where $w\in W$, also has a graded lift
(unique up to isomorphism and shift of grading),
which we normalize such that $\theta_w P_e\cong P_w$ holds as graded modules.
Morphisms in $\mathcal{O}^{\mathbb{Z}}$ is denoted by $\mathrm{hom}$.

We will use Lusztig's $\mathbf{a}$-function 
$\mathbf{a}:W\to\mathbb{Z}_{\geq 0}$ 
from \cite{Lu1,Lu2} (see \S \ref{ss:cella} for details). One of our principal observations 
in this paper is the following:

\begin{theorem}\label{thm-kmm-original}
For $x,y,z\in W$ and $i\in\mathbb{Z}$, the condition
\begin{equation}\label{eq8}
\mathrm{hom}(\theta_x L_y,L_z\langle i \rangle)\neq 0
\quad\text{ implies }\quad i\geq \mathbf{a}(x).
\end{equation}
\end{theorem}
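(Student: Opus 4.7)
The plan is to realize $\theta_xL_y$ as a quotient of the graded projective $\theta_xP_y$, identify the graded head of $\theta_xP_y$ via the Kazhdan-Lusztig combinatorics of the Hecke algebra, and then invoke properties of Lusztig's $\mathbf{a}$-function to bound which head components survive in $\theta_xL_y$.

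The first observation is that any nonzero graded homomorphism $\theta_xL_y\to L_z\langle i\rangle$ lifts, through the surjection $\theta_xP_y\twoheadrightarrow\theta_xL_y$ induced by $P_y\twoheadrightarrow L_y$ and the exactness of $\theta_x$, to a nonzero graded homomorphism $\theta_xP_y\to L_z\langle i\rangle$. Thus $L_z\langle i\rangle$ must appear in the graded head of $\theta_xP_y$. Using the normalization $\theta_wP_e\cong P_w$ together with the Koszul-graded categorification of the Hecke algebra from \cite{So,St}, one has
\[
\theta_xP_y \;\cong\; \theta_x\theta_yP_e \;\cong\; \bigoplus_w h_{x,y,w}(v)\cdot P_w,
\]
where $h_{x,y,w}(v)\in\mathbb Z_{\geq 0}[v,v^{-1}]$ are the structure coefficients of the Hecke algebra multiplication $\underline H_x\cdot\underline H_y=\sum_w h_{x,y,w}(v)\underline H_w$ in the Kazhdan-Lusztig basis. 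Consequently, $L_z\langle i\rangle$ occurs in the graded head of $\theta_xP_y$ if and only if $v^i$ has a nonzero coefficient in $h_{x,y,z}$.

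The crux is then to show that, among these head components, only those with $i\geq \mathbf{a}(x)$ survive to give nonzero maps from the quotient $\theta_xL_y$. I would proceed by induction on $\ell(x)$, establishing simultaneously the stated head version and the equivalent (by adjunction plus BGG-duality) socle version $\mathrm{hom}(L_y,\theta_xL_z\langle i\rangle)\neq 0\Rightarrow i\geq \mathbf{a}(x)$. The base case $\ell(x)\leq 1$ follows from the explicit fact that $\theta_sL_y$, when nonzero, has graded head $L_y\langle 1\rangle=L_y\langle \mathbf{a}(s)\rangle$. For the inductive step, pick a simple reflection $s$ with $\ell(sx)<\ell(x)$ and use Soergel's decomposition $\theta_s\theta_{sx}\cong \theta_x\oplus \bigoplus_{u:\,su<u,\,u\leq sx}\mu(u,sx)\cdot\theta_u$ to realize $\theta_x$ as a direct summand of $\theta_s\theta_{sx}$; then combine the self-adjunction of $\theta_s$ with the inductive hypothesis at the shorter Weyl group elements.

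The main obstacle is making this inductive step precise. Lusztig's monotonicity of the $\mathbf{a}$-function along the two-sided Kazhdan-Lusztig order gives $\mathbf{a}(u)\geq \mathbf{a}(sx)\geq \mathbf{a}(x)$ for the shorter summands $\theta_u$ appearing in Soergel's formula, so by induction their contributions to the head of $\theta_xL_y$ automatically satisfy the stronger bound $i\geq \mathbf{a}(u)\geq \mathbf{a}(x)$. The trickier contribution comes from the leading summand $\theta_{sx}$ acted on through $\theta_s$: since $\ell(sx)<\ell(x)$ but $\mathbf{a}(sx)$ may equal $\mathbf{a}(x)$, one must track the grading shifts carefully. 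The cleanest route is to use the head and socle versions of the induction hypothesis in tandem, and exploit the positivity of Kazhdan-Lusztig polynomials to rule out accidental cancellations in the direct-sum decomposition.
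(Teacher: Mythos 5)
Your opening reduction (lifting to the head of $\theta_xP_y$ and reading it off from $h_{x,y,z}$) is harmless but carries no force here: the degrees occurring in $h_{x,y,z}$ are bounded by $\mathbf{a}(z)$, by the very definition of the $\mathbf{a}$-function, so this step only yields information in terms of $\mathbf{a}(z)$, not the required lower bound by $\mathbf{a}(x)$. The real problem is the inductive step, which contains a reversed inequality and, more seriously, cannot produce the bound $\mathbf{a}(x)$ at all. Since $\kl_x$ (and each extra $\kl_u$) appears in $\kl_s\kl_{sx}$, in the paper's conventions one has $sx\leq_\mathtt{L} x$ and $sx\leq_\mathtt{L} u$, so Proposition~\ref{cella}\eqref{cella:p4} gives $\mathbf{a}(sx)\leq\mathbf{a}(x)$ and $\mathbf{a}(sx)\leq\mathbf{a}(u)$; your claimed chain $\mathbf{a}(u)\geq\mathbf{a}(sx)\geq\mathbf{a}(x)$ is backwards, and indeed fails concretely: in type $A_2$ with simple reflections $s,t$ one has $\kl_s\kl_{ts}=\kl_{sts}+\kl_s$, where $\mathbf{a}(s)=1<3=\mathbf{a}(sts)$. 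Worse, even granting the correct inequalities, an induction along a reduced expression cannot reach $\mathbf{a}(x)$: after the self-adjunction of $\theta_s$ you are bounding a nonzero map $\theta_{sx}L_y\to\theta_sL_z\langle i\rangle$, and since every composition factor of $\theta_sL_z$ lives within grading distance one of $L_z$, the inductive hypothesis at $sx$ gives at best $i\geq\mathbf{a}(sx)-1$. But $\mathbf{a}(x)-\mathbf{a}(sx)$ is not bounded by one (already $2$ in the example $sx=ts$, $x=sts=w_0$ above, and in general $\mathbf{a}(w_0)=\ell(w_0)$ while the previous step in the recursion has much smaller $\mathbf{a}$-value), so no bookkeeping of single-step grading shifts, with or without the head/socle versions in tandem, can account for the jumps of the $\mathbf{a}$-function. (The appeal to positivity against ``cancellations'' is also moot: homs out of a direct summand embed into homs out of the whole module, so nothing can cancel.)

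The missing ingredient is the genuinely non-local Kazhdan--Lusztig input \eqref{eq9}: $[P_e:L_w\langle i\rangle]\neq 0$ forces $i\leq-\mathbf{a}(w)$, which is where the $\mathbf{a}$-function actually enters. The paper's proof transports a nonzero map $\theta_xL_y\to L_z\langle i\rangle$ through the Koszul--Ringel self-duality of the family $\{\theta_aL_b\}$ into a nonzero class in $\mathrm{Ext}^i(\theta_{y^{-1}w_0}L_{w_0x^{-1}},T_{w_0z^{-1}w_0})$, bounds such Ext-spaces from below in Lemma~\ref{lem7.2-1} by identifying the minimal injective resolution of $L_{w_0}$ with the graded composition series of $P_e$ under Koszul self-duality (this is exactly where \eqref{eq9} is used), and then converts the resulting bound into $\mathbf{a}(x)$ using Lemma~\ref{nlem1} together with the cell monotonicity of $\mathbf{a}$. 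Your proposal never engages this input, and the length induction cannot substitute for it; to repair the argument you would need some mechanism (duality to $P_e$, or an argument at the level of cells/the asymptotic algebra) that sees the full value $\mathbf{a}(x)$ in one stroke rather than one simple reflection at a time.
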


Theorem~\ref{thm-kmm-original} is proved in Section~\ref{s7}.

Recall that each Kazhdan-Lusztig left (and right) cell contains a
unique distinguished involution, called the {\em Duflo element}.
In type $A$, all involutions are Duflo elements.
We propose the following:

\begin{conjecture}\label{conj3prime}
Let $d,y\in W$ be such that $d$
is a Duflo element. Let, further, $M$ be an indecomposable summand of $\theta_d L_y$. Then 
\begin{equation}\label{eq8prime-2}
\dim \mathrm{hom}(M,L_y\langle \mathbf{a}(d) \rangle)=1.
\end{equation}
\end{conjecture}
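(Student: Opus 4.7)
The plan is to prove the equality by showing that both the total graded top multiplicity of $L_y\langle\mathbf{a}(d)\rangle$ in $\theta_d L_y$ and the number of indecomposable direct summands of $\theta_d L_y$ (counted with multiplicity) coincide with the same Kazhdan-Lusztig leading coefficient, and then matching these summand-by-summand so that each indecomposable summand contributes exactly one copy.

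The quantity $\dim\mathrm{hom}(\theta_d L_y, L_y\langle\mathbf{a}(d)\rangle)$ is, by definition, the graded top multiplicity of $L_y\langle\mathbf{a}(d)\rangle$ in $\theta_d L_y$. Theorem~\ref{thm-kmm-original} concentrates the top of $\theta_d L_y$ in degrees $\geq \mathbf{a}(d)$, so the minimal-degree contribution lives precisely at $\mathbf{a}(d)$. Using the Koszulity of $A$ and the description of graded characters in $\mathcal{O}_0^{\mathbb{Z}}$ in terms of the Hecke algebra action on the graded Grothendieck group, one identifies this top multiplicity with the leading coefficient (in $v^{\mathbf{a}(d)}$) of the Kazhdan-Lusztig structure constant $h_{d,y,y^{-1}}$ under the relevant $w_0$-conventions of the paper. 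For a Duflo element $d$, this leading coefficient is Lusztig's $\gamma$-invariant; the asymptotic identity $t_d \cdot t_y = t_y$ (valid for $y$ in the left cell of $d$) then categorifies to show that this $\gamma$-value coincides with the number of indecomposable summands of $\theta_d L_y$, the categorical shadow of $t_d$ being idempotent in the asymptotic ring.

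Granted the above numerical coincidence, the conjecture reduces to a summand-level argument: since the total top multiplicity equals the number of summands, and any distribution satisfying both these constraints must assign exactly one copy to each summand, it suffices to rule out the pathological distributions in which one summand $M$ contributes two copies of $L_y\langle\mathbf{a}(d)\rangle$ while another contributes only a different simple $L_z\langle\mathbf{a}(d)\rangle$ with $z \neq y$. The strategy here is to use Theorem~\ref{thm-kmm-original} to control the simple tops of each summand, combined with the idempotency of $t_d$ to force a uniform behavior across summands.

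The main obstacle is precisely this summand-level decoupling, which is not visible from the numerical leading-coefficient data alone. The most promising plan for this step is a functorial indexing of the indecomposable summands of $\theta_d L_y$ via cell $2$-representation theory in the style of Mazorchuk-Miemietz, where each summand is canonically associated to a single element of an $\mathtt{H}$-cell and the matching with $L_y\langle\mathbf{a}(d)\rangle$ is forced by the indexing. A direct alternative would exploit the rigidity of graded projective functors through Koszul duality to compute the top of each indecomposable summand separately; in either route, removing the indeterminacy in how the top multiplicity distributes among the summands is the crux of the proof.
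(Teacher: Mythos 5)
The statement you set out to prove is Conjecture~\ref{conj3prime}: the paper does not prove it. It offers only the much weaker observation, recorded right after the conjecture, that the total space $\mathrm{hom}(\theta_d L_y,L_y\langle\mathbf{a}(d)\rangle)$ is non-zero (via the natural transformation $\zeta\colon\theta_d\to\theta_e\langle\mathbf{a}(d)\rangle$), together with supporting evidence such as the type $B_3$ computation in Subsection~\ref{s-kmm+.0}, where this total dimension equals $2$ and $\theta_d L_y$ happens to split into two summands. So there is no argument in the paper to compare with, and what you have written is a programme rather than a proof: you yourself flag the decisive step, forcing each indecomposable summand to receive exactly one copy of $L_y\langle\mathbf{a}(d)\rangle$, as an unresolved ``crux'', and neither of the two routes you sketch for it (cell $2$-representation indexing of summands, or Koszul-dual computation of the top of each summand) is carried out.

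Beyond that acknowledged gap, the two numerical identifications your plan rests on are themselves unjustified. First, \eqref{gdim=h} computes the graded composition multiplicity $[\theta_d L_y:L_y]=h_{y,d,y}$ (not $h_{d,y,y^{-1}}$), whereas $\dim\mathrm{hom}(\theta_d L_y,L_y\langle\mathbf{a}(d)\rangle)$ is a top multiplicity; Theorem~\ref{thm-kmm-original} only bounds the degrees in which the top can live, it does not say that every composition factor $L_y\langle\mathbf{a}(d)\rangle$ lies in the top, nor that the top is concentrated in degree $\mathbf{a}(d)$. Moreover, when $y\not\sim_\mathtt{J}d$ the degree $\mathbf{a}(d)$ need not be the extreme degree of $\theta_d L_y$ (its height is $\mathbf{b}(d,y)$, which can exceed $\mathbf{a}(d)$), and the coefficient of $h_{y,d,y}$ in degree $\mathbf{a}(d)$ is then not a $\gamma$-coefficient, since those are by definition the coefficients in degree $\mathbf{a}(y)$. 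Second, the claim that this number equals the number of indecomposable direct summands of $\theta_d L_y$ is exactly the kind of statement the paper cannot establish: how $\theta_x L_y$ decomposes is Problem~\ref{prob2}/Conjecture~\ref{conj1}, and the related bound \eqref{eqnnn23} already fails in type $B_3$. The asymptotic identity you invoke from Lemma~\ref{[d unit]} holds only for $y$ in the relevant cell of $d$ (indeed $t_dt_y=0$ whenever $y\not\sim_\mathtt{R}d$), whereas Conjecture~\ref{conj3prime} concerns every $y$ with $\theta_d L_y\neq 0$; in the paper's own $B_3$ example $y$ lies in a different two-sided cell from the Duflo element, so the asymptotic ring does not see $\theta_d L_y$ at all. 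Until both identifications and the summand-by-summand matching are actually supplied, the proposal does not establish the conjecture.
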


Let $d\in W$ be a Duflo element. Then there is a
(unique up to a non-zero scalar and homogeneous of degree zero) non-zero 
natural transformation $\zeta:\theta_d\to\theta_e\langle\mathbf{a}(d)\rangle$,
see \cite[Section~7]{MM3}.
If $y\in W$ is such that $\theta_d L_y\neq 0$, then $\zeta_{L_y}$ is
non-zero because the cokernel of $\zeta$ is killed by $\theta_d$,
see \cite[Proposition~17]{MM1},
and hence this cokernel must annihilate $L_y$. Therefore,
$\dim \mathrm{hom}(\theta_d L_y,L_y\langle \mathbf{a}(d) \rangle)\geq 1$.

For fixed $x,y\in W$, we denote by $\mathbf{KMM}(x,y)$
the property
\begin{equation}\label{eqnnn23}
\dim \mathrm{hom}(\theta_x L_y,L_y\langle \mathbf{a}(x) \rangle)\leq 1. 
\end{equation}

\subsection{Kostant's problem}\label{s2.3}

For any $\mathfrak{g}$-modules $M$ and $N$, the vector space
$\mathrm{Hom}_{\mathbb{C}}(M,N)$ has the natural structure of 
a $U(\mathfrak{g})$-$U(\mathfrak{g})$-bimodule. The subspace
$\cL(M,N)$  consisting of all
vectors of $\mathrm{Hom}_{\mathbb{C}}(M,N)$, 
the adjoint action of $\mathfrak{g}$ on which is locally
finite, is a $U(\mathfrak{g})$-$U(\mathfrak{g})$-subbimodule.
If $M=N$, then the image of $U(\mathfrak{g})$ in 
$\mathrm{Hom}_{\mathbb{C}}(M,M)$ belongs to 
$\cL(M,M)$ and this $U(\mathfrak{g})$-$U(\mathfrak{g})$-bimodule 
map is also an algebra map. The following is known as {\em Kostant's problem},
see \cite{Jo}:

\begin{problem}\label{prob4}
For which $w\in W$, the image of 
$U(\mathfrak{g})$ in $\mathrm{Hom}_{\mathbb{C}}(L_w,L_w)$
coincides with $\cL(L_w,L_w)$?
\end{problem}

This problem was studied, for example, in \cite{GJ,MS1,MS2,Ma1,Ma2,KhM,Kh},
where several partial results were obtained. However, the general case is very much open.
As already mentioned, we write $\mathbf{K}(w)$ for the statement 
``the image of  $U(\mathfrak{g})$ in $\mathrm{Hom}_{\mathbb{C}}(L_w,L_w)$
coincides with $\cL(L_w,L_w)$''. 

\subsection{K{\aa}hrstr{\"o}m's conditions}\label{s2.4}

For $y\in W$, we write $\mathbf{Kh}(y)$ for the statement 
``for all $x\neq z\in W$ such that
$\theta_x L_y\neq 0$ and $\theta_zL_y\neq 0$, we have 
$\theta_x L_y\not\cong \theta_zL_y$'' and write $[\mathbf{Kh}](y)$ for the statement 
``for all $x\neq z\in W$ such that
$\theta_x L_y\neq 0$ and $\theta_zL_y\neq 0$, we have 
$[\theta_x L_y]\neq [\theta_zL_y]$ in the Grothendieck group $\mathrm{Gr}(\cO^\mathbb{Z}_0)$''.

In particular, the equivalence of 
Conjecture~\ref{kconj-intro}\ref{kconj-1-intro} and  
Conjecture~\ref{kconj-intro}\ref{kconj-2-intro} can be
expresses as $\mathbf{K}(d)\Leftrightarrow\mathbf{Kh}(d)$, and 
the equivalence of Conjecture~\ref{kconj-intro}\ref{kconj-2-intro} and  
Conjecture~\ref{kconj-intro}\ref{kconj-3-intro} can be
expresses as $\mathbf{Kh}(d)\Leftrightarrow[\mathbf{Kh}](d)$,
for a Duflo element $d\in W$.

\section{Kazhdan-Lusztig combinatorics}

The Kazhdan-Lusztig conjecture from \cite{KL}, proved in \cite{BB,BK} (see also \cite{EW}), 
tells us that a large amount of information on $\cO^\mathbb{Z}_0$ is 
encoded in the Hecke algebra of $W$ and thus can be computed combinatorially.
We recall in this section some well-known constructions and facts around the Hecke algebra and their relation to $\cO^\mathbb{Z}_0$.

\subsection{Hecke algebra and Kazhdan-Lusztig basis}

Let $S\subset W$ be the set of simple reflections. Then $(W,S)$ is a Coxeter group. 
The Hecke algebra $H(W,S)$ associated to $(W,S)$ is a $\mathbb Z[v,v\inv]$-algebra 
generated by $H_s$, for $s\in S$, which satisfy the (Coxeter) braid relations and
the quadratic relation
\[(H_s+v)(H_s-v\inv)=0,\]
for all $s\in S$.
Given a reduced expression $w=st\cdots u$ of $w\in W$, we let $H_w=H_sH_t\cdots H_u$. 
The element $H_w$ is, in fact, independent of the choice of the reduced expression, 
and $\{H_w\}_{w\in W}$ is a ($\mathbb Z[v,v\inv]$-)basis of $H(W,S)$ called the {\em standard basis}.
Now consider the ($\mathbb Z$-algebra-)involution 
\[\overline{\phantom{A}} :H(W,S)\to H(W,S)\] 
determined by $\overline{v}= v\inv$ and $\overline{H_s} = H_s\inv$.
Then there is a unique element $\kl_w$ in $H(W,S)$ such that $\overline{\kl_w} = \kl_w$ and 
\[\kl_w = H_w + \sum_y p_{y,w}H_y,\]
for some $p_{y,w}\in v\mathbb Z[v]$.
The elements $\kl_w$, where $w\in W$, form a basis of $H(W,S)$ called the {\em Kazhdan-Lusztig (KL) basis}.
We refer to \cite{KL}, or \cite[\S 3-5]{luneq} for details (if referring to \cite{luneq}, note that we are in the special case $L(s)=1$ for all $s\in S$ and 
that our $v$ is denoted by $v\inv$, $H_w$ is denoted by $T_w$, and our $\kl_w$ is denoted by $c_w$ in \cite{luneq}).

We denote by $\boldsymbol{\mu}$ the Kazhdan-Lusztig $\mu$-function.

\subsection{Kazhdan-Lusztig theory}\label{ss:KLT}

Let $\cP$ be the monoidal category of (graded) projective functors on $\cO_0^\mathbb{Z}$ where 
indecomposables $\theta_w$, for $w\in W$, are normalized such that $\theta_w P_e \cong P_w$
holds in $\cO^\mathbb{Z}_0$. Then, for the split Grothendieck ring 
$\mathrm{Gr}_{\oplus}(\cP)$, we have
\begin{equation}
    \begin{split}
        \mathrm{Gr}_{\oplus}(\cP)^{\mathrm{op}}&\xrightarrow{\simeq} H(W,S)\\
        [\theta_w]&\mapsto \kl_w\\
    \end{split}
\end{equation}
as $\mathbb Z[v,v\inv]$-algebras so that $\langle 1\rangle$ on the left corresponds to $v$ on the right.
For example, $[\theta_w\langle m\rangle]\mapsto v^m\kl_w$. Furthermore,
\begin{equation}\label{eqn4}
    \begin{split}
        \mathrm{Gr}(  \cO_0^\Z)&\xrightarrow{\simeq} H(W,S)\\
        [P_w]&\mapsto \kl_w
    \end{split}
\end{equation}
as (right) modules over $\mathrm{Gr}_{\oplus}(\cP)^{\mathrm{op}}\cong H(W,S)$ 
so that $\langle 1\rangle$ on the left hand side corresponds to $v$ on the right hand side.
We note that, for $w\in W$, we have $[\Delta_w]\mapsto H_w$, where $\Delta_w$ denotes the (graded)
Verma module  with simple top $L_w$.

Using \eqref{eqn4}, various multiplicities in $\mathcal{O}_0^\mathbb{Z}$ can be described
in terms of the Kazhdan-Lusztig polynomials from \cite{KL}. In particular, the
graded composition multiplicities in $P_e$ are given by the 
corresponding coefficients of the Kazhdan-Lusztig polynomials.

\subsection{Kazhdan-Lusztig cells and the $\mathbf{a}$-function}\label{ss:cella}

For $x,y\in W$, write
\begin{equation}\label{hxyz}
    \kl_x\kl_y= \sum_{z} h_{x,y,z}\kl_z ,
\end{equation}
where $h_{x,y,z}\in \Z [v^{\pm 1}]$. 
In fact, we have $h_{x,y,z}\in \Z_{\geq 0} [v^{\pm 1}]\cap \Z[v+v\inv]$ by \cite{KL}.

Given $x,y\in W$, we say $x\leq_\mathtt{L} y$ if there exists $z\in W$ such that $\kl_y$ 
appears with a nonzero coefficient in $\kl_z\kl_x$, that is, $h_{z,x,y}\neq 0$. 
This defines the equivalence relation $\sim_\mathtt{L}$ on $W$.
A (Kazhdan-Lusztig) left cell is an equivalence class for the relation $\sim_\mathtt{L}$.
Similarly, define the right preorder $\leq_\mathtt{R}$
and (Kazhdan-Lusztig) right cells using multiplication on the right.
Finally, we define the two-sided preorder $\leq_\mathtt{J}$
and (Kazhdan-Lusztig) two-sided cells using multiplication on both sides.
The equivalence relation $\sim_\mathtt{J}$ is the minimum equivalence 
relation containing both $\sim_\mathtt{L}$ and $\sim_\mathtt{R}$.
We define the equivalence relation $\sim_\mathtt{H}$ as the
intersection of $\sim_\mathtt{L}$ and $\sim_\mathtt{R}$.
Equivalence classes for $\sim_\mathtt{H}$ are called  $\mathtt{H}$-cells.
A two-sided cell is called {\em strongly regular} if the intersection
of each left and each right cell inside this two-sided cell is
a singleton. A left (right) cell is {\em strongly regular} if it belongs
to a strongly regular two-sided cell.

We note that the left and right orders are the opposite of that in \cite{luneq}.
Our conventions are consistent with the previous papers \cite{Ma0,Ma2,CoM0,CoM} of the series.

Lusztig's a-function $\mathbf{a}:W\to \Z_{\geq 0}$ is defined as follows:
\begin{equation}
 \mathbf{a}(z):=\operatorname{max}_{x,y\in W}\{\operatorname{deg}h_{x,y,z}\}.
\end{equation}
The following facts can be found in \cite{luneq} (note that the conjectures P1-15 
in \cite[\S 13]{luneq} are proved in \cite[\S 14-15]{luneq} in our setting). 
Let $w_0$ denote the longest 
element in $W$.

\begin{proposition}\label{cella}
Let $x,y\in W$.
\begin{enumerate}
    \item \label{cella:i} $x\leq_\mathtt{L} y\iff x\inv\leq_\mathtt{R} y\inv$. 
    \item \label{cella:ii} $x\leq_\mathtt{L} y \iff w_0x\geq_\mathtt{L} w_0y$. 
    Furthermore, $x\leq_\mathtt{R} y \iff xw_0\geq_\mathtt{R} yw_0$.
    \item \label{cella:p4} Let $\mathtt{X}\in\{\mathtt{L},\mathtt{R},\mathtt{J}\}$. If $x\leq_\mathtt{X} y$, then $\mathbf{a}(x)\leq\mathbf{a}(y)$. 
    If $x<_\mathtt{X} y$, then $\mathbf{a}(x)<\mathbf{a}(y)$. 
    In particular, the $\mathbf{a}$-function is $\mathtt{J}$-cell invariant. 
    \item \label{cella:p9} If $\mathbf{a}(x)=\mathbf{a}(y)$, then $x\leq_\mathtt{L} y \Rightarrow x\sim_\mathtt{L} y$. Furthermore, if $\mathbf{a}(x)=\mathbf{a}(y)$, then $x\leq_\mathtt{R} y \Rightarrow x\sim_\mathtt{R} y$.
\end{enumerate}
\end{proposition}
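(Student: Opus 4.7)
The plan is to derive parts (i) and (ii) from elementary symmetries of the Hecke algebra, and to invoke \cite{luneq} directly for parts (iii) and (iv), since the latter are Lusztig's conjectures P4 and P9, established in the equal-parameter setting in \cite[Ch.~14--15]{luneq}. The bulk of the mathematical content is therefore external, and my task is really to assemble the pieces.

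For part (i), I would use the $\mathbb{Z}[v,v^{-1}]$-linear anti-automorphism $\iota$ of $H(W,S)$ that fixes each $H_s$; on the standard basis this satisfies $\iota(H_w)=H_{w^{-1}}$. Since $\iota$ commutes with the bar involution and preserves the defining characterization of the Kazhdan-Lusztig basis, it must send $\kl_w$ to $\kl_{w^{-1}}$. Applying $\iota$ to \eqref{hxyz} then yields $h_{x,y,z}=h_{y^{-1},x^{-1},z^{-1}}$ for all $x,y,z\in W$. Hence $x\leq_\mathtt{L} y$, meaning $h_{z,x,y}\neq 0$ for some $z$, is equivalent to $h_{x^{-1},z^{-1},y^{-1}}\neq 0$ for some $z$, which is exactly $x^{-1}\leq_\mathtt{R} y^{-1}$.

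For part (ii), I would invoke the classical $w_0$-symmetry of Kazhdan-Lusztig cells: left-multiplication by $w_0$ (respectively right-multiplication by $w_0$) induces an order-reversing bijection for the left (respectively right) preorder on $W$. This goes back to \cite{KL} and is recorded in \cite[\S 11.8]{luneq}; it can also be seen directly by comparing the expansions of $\kl_{w_0x}\kl_y$ with those of $\kl_x\kl_y$ using the absorption property of $\kl_{w_0}$. Once the left statement is in hand, the right statement follows from part (i) via $(ww_0)^{-1}=w_0 w^{-1}$, together with the fact that $w_0$ is an involution.

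The main obstacle is parts (iii) and (iv). These package nontrivial consequences of the positivity $h_{x,y,z}\in\mathbb{Z}_{\geq 0}[v,v^{-1}]$ together with sharp control on leading coefficients, and are exactly Lusztig's conjectures P4 and P9 from \cite[\S 13]{luneq}. I would cite the proofs given in \cite[Ch.~14--15]{luneq}, which are valid in our equal-parameter setting. The $\mathtt{J}$-invariance of $\mathbf{a}$ in (iii) then follows formally by combining the $\mathtt{L}$- and $\mathtt{R}$-statements with the fact that $\sim_\mathtt{J}$ is the equivalence relation generated by $\sim_\mathtt{L}$ and $\sim_\mathtt{R}$.
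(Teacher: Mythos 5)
Your proposal is correct and follows essentially the same route as the paper: for parts (iii) and (iv) the paper gives no argument beyond citing \cite{luneq} (with the remark that P1--P15 of \cite[\S 13]{luneq} are proved in \cite[\S 14--15]{luneq} in this setting, and that the left/right order conventions there are opposite to ours), which is exactly your plan, and your elementary derivations of (i) via the anti-automorphism $H_w\mapsto H_{w^{-1}}$ and of (ii) via the $w_0$-symmetry are standard and sound. One small imprecision: the strict inequality in (iii) for $\mathtt{X}\in\{\mathtt{R},\mathtt{J}\}$ and the $\mathtt{R}$-statement in (iv) also rely on P10 and P11, not only P4 and P9, but these are covered by the same citation.
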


Recall from
\cite{Lu1,Lu2}, that the $\mathbf{a}$-function can be also defined as the maximal (or minimal, depending on the normalization) 
possible degree of a Kazhdan-Lusztig polynomial between the
identity and an element of a given
left (or right) cell. As a consequence of this definition, for
any $w\in W$, we have
\begin{equation}\label{eq9}
[P_e:L_w\langle i\rangle]\neq 0\quad\text{ implies }\quad
-\mathbf{a}(w)\geq i\geq -\ell(w),
\end{equation}
where $\ell(w)$ denotes the length of $w$,
moreover, the inequality $-\mathbf{a}(w)\geq i$ is strict
unless $w$ is a Duflo element and in the latter case
\begin{equation}\label{eq9-1}
[P_e:L_w\langle -\mathbf{a}(w)\rangle]=1.
\end{equation}

For each right cell $\mathcal{R}$ in $W$, we have the Serre subcategory 
$\mathcal{O}_{0}^{\hat{\mathcal R}}$ of $\mathcal{O}_{0}$ whose simples are
$L_w$, for all $w\in W$ such that $w\leq_{\mathtt{R}}\mathcal{R}$,
see \cite{MS1}.

\subsection{Asymptotic rings}\label{ss:asym}

We introduce the asymptotic ring $A(W)=A(W,S)$ defined by Lusztig (and called ``the ring $J$'' in \cite{luneq}). It has a ($\mathbb Z$-)basis $\{t_w\}_{w\in W}$ whose multiplication is defined as
\begin{equation}\label{tt}
  t_xt_y=\sum_{z\in W}\gamma_{x,y,z\inv}t_z,  
\end{equation}
where $\gamma_{y,x,z\inv}\in \mathbb Z_{\geq 0}$ is the coefficient of $\theta_z\langle \mathbf{a}(z)\rangle$ in the decomposition of $\theta_x\theta_y$, i.e., the top degree coefficient in $h_{y,x,z}$ (see \eqref{hxyz}).
One can check that $\gamma_{x,y,z\inv}=0$ unless $y$ and $x\inv$ belong to the same right cell. 
The basis elements $t_d$ corresponding to Duflo elements $d\in W$ are local identities in $A(W)$
in the following sense:

\begin{lemma}\label{[d unit]}
Let $d\in W$ be a Duflo element in $\mathcal{H}=\mathcal{L}\cap \mathcal{L}\inv$. Here, $\mathcal{L}$ is the left cell containing $d$ and $\mathcal{L}\inv$ is the right cell containing $d\inv=d$.
We have
\begin{equation}
    t_dt_x = t_x\quad\text{ and } \quad t_yt_d = t_y.
\end{equation}
for each $y\in \mathcal{L}$ and $x\in \mathcal{L}\inv$ 
and
\begin{equation}\label{t't}
    t_{x\inv}t_x = t_d + \sum_{d\neq z\in \mathcal L\cap \mathcal{L}\inv}\gamma_{x\inv,x,z\inv}t_z.
\end{equation}
\end{lemma}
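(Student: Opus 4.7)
The plan is to derive all three identities from Lusztig's structural theory of the asymptotic ring $A(W)$, relying on properties P1--P15 of \cite{luneq}, which hold in our Weyl group setting. Once the correct cell-support constraints on the structure constants $\gamma_{u,v,w\inv}$ are in hand, the proof becomes essentially a bookkeeping exercise.

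The first step I would carry out is to record the cell-support constraints. From P7 and P8 of \cite{luneq}, the condition $\gamma_{u,v,w\inv}\neq 0$ forces $w\sim_{\mathtt{R}} u$ and $w\sim_{\mathtt{L}} v$ (and hence also $v\sim_{\mathtt{R}} u\inv$, as already noted in the excerpt just before the lemma), so the nonzero contributions to $t_u t_v$ are confined to a single $\mathtt{H}$-cell determined by $u$ and $v$. The second ingredient is the Duflo property (P13 together with P15 of \cite{luneq}): for the Duflo element $d\in\mathcal{H}$ and any $x\in \mathcal{L}\inv$, one has $\gamma_{d,x,w\inv}=\delta_{w,x}$, and dually $\gamma_{y,d,w\inv}=\delta_{w,y}$ for $y\in \mathcal{L}$. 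In particular, $\gamma_{x\inv,x,d}=1$.

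Putting this together, the first two identities are immediate from \eqref{tt}: $t_dt_x = \sum_w \gamma_{d,x,w\inv}t_w = t_x$, and $t_yt_d = \sum_w \gamma_{y,d,w\inv}t_w = t_y$. For \eqref{t't}, I would expand $t_{x\inv}t_x = \sum_z \gamma_{x\inv,x,z\inv}t_z$; the cell constraints force $z\sim_{\mathtt{R}} x\inv$ (so $z\in\mathcal{L}$) and $z\sim_{\mathtt{L}} x$ (so $z\in\mathcal{L}\inv$), whence $z\in \mathcal{H} = \mathcal{L}\cap\mathcal{L}\inv$. Isolating the $z=d$ summand, whose coefficient equals $1$ by the Duflo identity, yields the claimed decomposition, with the remaining summands indexed precisely by $d\neq z\in\mathcal{H}$.

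The main obstacle is not the combinatorial manipulation but the invocation of P13 itself, which supplies both the precise value $\gamma_{x\inv,x,d}=1$ and the tight support statement $\gamma_{d,x,w\inv}=\delta_{w,x}$. These are nontrivial theorems of Lusztig, proved in \cite[\S 14--15]{luneq} for the equal-parameter case relevant here; within the present paper they may legitimately be cited as black boxes, reducing Lemma~\ref{[d unit]} to a formal consequence.
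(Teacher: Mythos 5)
Your proposal is correct and follows essentially the same route as the paper, whose proof is just the citation of Lusztig's properties (P5, P7, P8, P13 in \cite[\S 14.1]{luneq}) together with positivity of $h_{x,y,z}$, with the bookkeeping you spell out left implicit. One small attribution point: the facts you credit to ``P13 together with P15'' (namely $\gamma_{x\inv,x,d}=1$ and the support statement $\gamma_{d,x,w\inv}=\delta_{w,x}$) are really the combination of P5 (or P2) with P7 and P13, which is exactly what the paper cites.
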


\begin{proof}
This follows from P5, P7, P8, P13 in \cite[\S14.1]{luneq} and positivity of $h_{x,y,z}$. 
\end{proof}

\subsection{Graded composition multiplicities in $\cO^\mathbb{Z}_0$}

By \S\ref{ss:KLT} (and \cite{KL,BB,BK}), composition multiplicities of many important objects in $\cO^{\mathbb Z}_0$ 
can be computed purely inside the Hecke algebra $H(W,S)$.

\begin{proposition}
For $x,y,w\in W$, we have 
\begin{equation} \label{gdim=h}
[\theta_x L_y:L_{z}]=h_{z,x^{-1},y},
\end{equation}
where $[\theta_x L_y:L_{z}]$ denotes the graded composition multiplicity viewed as an
element in $\mathbb{Z}[v,v^{-1}]$. In particular, for $i\in\mathbb{Z}$, we have
\begin{equation}\label{gdim=h.2}
[\theta_x L_y : L_z\langle i\rangle]=[\theta_z L_{y^{-1}} : L_x\langle i\rangle].
\end{equation}
\end{proposition}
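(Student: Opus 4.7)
The plan is to translate the graded composition multiplicity into a graded $\mathrm{hom}$-dimension, then move $\theta_x$ across using the biadjunction $(\theta_x,\theta_{x^{-1}})$, and finally read off the answer from the Hecke-algebra structure \eqref{hxyz} via the isomorphism \eqref{eqn4}.

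First, I would record the standard identity
\[
[\theta_x L_y : L_z\langle i\rangle] \;=\; \dim \mathrm{hom}(P_z\langle i\rangle,\, \theta_x L_y),
\]
which follows because $A$ is basic, $\dim\mathrm{hom}(P_z\langle i\rangle, L_{z'}\langle j\rangle) = \delta_{z,z'}\delta_{i,j}$, and $\mathrm{hom}(P_z,-)$ is exact. Then I would apply the graded biadjunction between $\theta_x$ and $\theta_{x^{-1}}$ (standard for projective functors under the normalization $\theta_w P_e\cong P_w$ fixed in \S\ref{s2.2}) to rewrite this dimension as $\dim\mathrm{hom}(\theta_{x^{-1}}P_z\langle i\rangle,\, L_y)$. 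By \eqref{eqn4}, the class of the graded projective module $\theta_{x^{-1}}P_z$ is $\kl_z\kl_{x^{-1}} = \sum_{y'}h_{z,x^{-1},y'}\kl_{y'}$, and Krull--Schmidt for graded projectives upgrades this to a genuine decomposition $\theta_{x^{-1}}P_z \cong \bigoplus_{y',\,j}P_{y'}\langle j\rangle^{\,n_{y',j}}$ with $\sum_j n_{y',j}v^j = h_{z,x^{-1},y'}$. Substituting and again using $\dim\mathrm{hom}(P_{y'}\langle k\rangle, L_y) = \delta_{y',y}\delta_{k,0}$ isolates the single contribution $n_{y,-i}$, and summing against $v^i$ gives $[\theta_xL_y:L_z] = h_{z,x^{-1},y}(v^{-1})$. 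This equals $h_{z,x^{-1},y}(v)$ by bar-invariance, since $h_{z,x^{-1},y} \in \mathbb{Z}[v+v^{-1}]$ as stated just after \eqref{hxyz}. This establishes \eqref{gdim=h}.

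For the second identity \eqref{gdim=h.2}, I would apply the $\mathbb{Z}[v,v^{-1}]$-anti-involution $\sigma$ of $H(W,S)$ determined by $\sigma(H_w) = H_{w^{-1}}$ (which preserves the KL basis, so that $\sigma(\kl_w) = \kl_{w^{-1}}$) to \eqref{hxyz} with $y$ replaced by $x^{-1}$: one obtains $\kl_x\kl_{z^{-1}} = \sum_y h_{z,x^{-1},y}\kl_{y^{-1}}$, whence $h_{x,z^{-1},y^{-1}} = h_{z,x^{-1},y}$. Substituting into \eqref{gdim=h} yields the claimed symmetry.

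The main obstacle will be making the biadjunction step clean in the graded setting: one needs that both graded adjoints of $\theta_x$ coincide with $\theta_{x^{-1}}$ without an extra shift $\langle c\rangle$, since any such shift would propagate as a factor of $v^c$ and spoil the identification with $h_{z,x^{-1},y}$. With the normalization fixed in \S\ref{s2.2} this is standard, but it is the one point in the argument that genuinely relies on the specific choice of grading and therefore deserves a precise reference.
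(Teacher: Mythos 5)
Your argument is correct, and for the first identity \eqref{gdim=h} it is essentially the paper's proof: both reduce the graded multiplicity to $\dim\mathrm{hom}(\theta_z P_e\langle i\rangle,\theta_xL_y)$, move $\theta_x$ across by the (shift-free, given the normalization $\theta_wP_e\cong P_w$) graded adjunction, and read off the decomposition of $\theta_{x\inv}\theta_zP_e$ from \eqref{eqn4} and \eqref{hxyz}; your extra care about $v$ versus $v\inv$, resolved by $h_{z,x\inv,y}\in\Z[v+v\inv]$, is a point the paper passes over silently, and your worry about the adjoint carrying a shift is indeed settled by the stated normalization, exactly as the paper's bare ``by adjunction'' presumes. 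Where you genuinely diverge is \eqref{gdim=h.2}: you deduce it purely combinatorially from \eqref{gdim=h} via the Hecke anti-involution $H_w\mapsto H_{w\inv}$, which fixes the Kazhdan--Lusztig basis and yields $h_{x,z\inv,y\inv}=h_{z,x\inv,y}$, whereas the paper stays categorical, identifying both sides with $\mathrm{hom}$-spaces into $I_e$ and invoking the fact that the multiplicity of $\theta_y$ in $\theta_{x\inv}\theta_z$ equals that of $\theta_{y\inv}$ in $\theta_{z\inv}\theta_x$ (adjoint functors have isomorphic multiplicities of adjoint summands). The two routes are shadows of one another: your version makes the underlying symmetry of the structure constants explicit and needs only standard facts about the bar involution and uniqueness of the KL basis, while the paper's version never leaves category $\mathcal{O}$ and so requires no input about how $\kl_w$ behaves under $w\mapsto w\inv$; either is a complete proof.
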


\begin{proof}
Formula \eqref{gdim=h} follows, by adjunction, from the observation that the
multiplicity of $L_{z}\langle i\rangle$ in $\theta_x L_y$ equals
\begin{displaymath}
\dim\mathrm{hom}(\theta_{z}P_e\langle i\rangle,\theta_x L_y)=
\dim\mathrm{hom}(\theta_{x\inv}\theta_{z}P_e\langle i\rangle, L_y).
\end{displaymath}
By a similar adjunction, \eqref{gdim=h.2} is equivalent to
\begin{displaymath}
\mathrm{hom}(\theta_{x^{-1}}\theta_z L_{y^{-1}},I_e\langle i\rangle)\cong
\mathrm{hom}(\theta_{z^{-1}}\theta_x L_y,I_e\langle i\rangle),
\end{displaymath}
where $I_e$ is the indecomposable injective envelope of $L_e$ with socle
concentrated in degree zero.
Also, by adjunction, for $a,b\in W$ and $j\in\mathbb{Z}$, we have
\begin{displaymath}
\dim \mathrm{hom}(\theta_{a^{-1}} L_b,I_e\langle j\rangle) = \delta_{a,b}\delta_{j,0}.
\end{displaymath}
As $\theta_{x^{-1}}\theta_z$ is adjoint to 
$\theta_{z^{-1}}\theta_x$ and $\theta_y$ is adjoint to $\theta_{y^{-1}}$, 
the multiplicity of $\theta_y$ in $\theta_{x^{-1}}\theta_z$ coincides
with the multiplicity of  $\theta_{y^{-1}}$ in $\theta_{z^{-1}}\theta_x$.
The claim follows.
\end{proof}

\section{Proof of Theorem~\ref{thm-kmm-original}}\label{s7}

For $w\in W$, we denote by $T_w$ the indecomposable tilting module in 
$\mathcal{O}_0$ with highest weight $w\cdot 0$. 

We record a lemma which follows directly from, for example, \cite[Lemma~13(a)]{MM1}.
\begin{lemma}\label{nlem1}
Let $\theta$ be a  projective functor on $\mathcal O_0$ and $M\in\mathcal O_0$.
If $y\in W$ is such that $[\theta M:L_y]\neq 0$, then there exists $x\in W$ such that $[M:L_x]\neq 0$ and $y\leq_\mathtt{R} x$.
\end{lemma}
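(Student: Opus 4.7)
The plan is to reduce the statement to the case of a simple module by standard dévissage and then read off the conclusion from the Hecke-algebra formula \eqref{gdim=h}. First, since an arbitrary projective functor $\theta$ decomposes as a finite direct sum of indecomposable projective functors $\theta_w$, and $[\theta M:L_y]$ is additive with respect to such a decomposition, we may and will assume $\theta=\theta_w$ for some $w\in W$. Second, fix a composition series $0=M_0\subset M_1\subset \cdots\subset M_n=M$ with subquotients $L_{x_i}$. The projective functor $\theta_w$ is exact, so from the short exact sequences $0\to \theta_w M_{i-1}\to \theta_w M_i\to \theta_w L_{x_i}\to 0$ we deduce that the total composition multiplicity $[\theta_w M:L_y]$ equals $\sum_i [\theta_w L_{x_i}:L_y]$. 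The hypothesis therefore gives an index $i$ with $[\theta_w L_{x_i}:L_y]\neq 0$; the element $x:=x_i$ then satisfies $[M:L_x]\neq 0$, and it remains to show that $y\leq_\mathtt{R} x$.

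For this final step I would invoke the formula \eqref{gdim=h}, which yields
\[
 [\theta_w L_x : L_y] \;=\; h_{y,w^{-1},x}.
\]
Nonvanishing of this graded multiplicity (as an element of $\mathbb{Z}[v,v^{-1}]$, which is equivalent to ordinary nonvanishing by the positivity $h_{a,b,c}\in\Z_{\geq 0}[v^{\pm 1}]$ recalled after \eqref{hxyz}) means precisely that $\kl_x$ appears with nonzero coefficient in the product $\kl_y\kl_{w^{-1}}$. By the very definition of the Kazhdan-Lusztig right preorder given in \S\ref{ss:cella} (take $z=w^{-1}$), this is exactly the statement $y\leq_\mathtt{R} x$, concluding the proof.

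There is no serious obstacle here: the only point worth double-checking is the bookkeeping of the inverse appearing in \eqref{gdim=h} and the orientation convention for $\leq_\mathtt{R}$ used in the paper (which, as emphasized after Proposition~\ref{cella}, is opposite to \cite{luneq}). Both checks are straightforward; the argument amounts to little more than combining exactness of projective functors with the translation between multiplicities and Hecke-algebra structure constants.
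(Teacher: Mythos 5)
Your argument is correct, but it is not the route the paper takes: the paper gives no internal proof at all and simply records the lemma as an immediate consequence of the general $2$-representation-theoretic statement \cite[Lemma~13(a)]{MM1}, which encodes, for an arbitrary fiat $2$-category acting on a finitary category, how composition factors produced by applying a $1$-morphism sit in the right order. Your proof instead stays entirely inside the paper's own toolkit: you reduce to $\theta=\theta_w$ by additivity, reduce to $M$ simple by exactness of projective functors and a composition series, and then read off $y\leq_\mathtt{R} x$ from the multiplicity formula \eqref{gdim=h}, since $[\theta_w L_x:L_y]=h_{y,w^{-1},x}\neq 0$ says that $\kl_x$ occurs in $\kl_y\kl_{w^{-1}}$, which is the paper's definition of $y\leq_\mathtt{R} x$ (and there is no circularity, as the proof of \eqref{gdim=h} is an adjunction computation independent of this lemma). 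What each approach buys: the citation to \cite{MM1} is instantaneous and works in much greater generality, while your version is self-contained and makes the Kazhdan--Lusztig content explicit. One cosmetic remark: the appeal to positivity of $h_{x,y,z}$ is not really needed to pass between graded and ungraded nonvanishing --- the ungraded multiplicity is the sum of the (nonnegative integer) graded multiplicities $[\theta_w L_x:L_y\langle i\rangle]$, so one is nonzero exactly when the other is; your indexing of \eqref{gdim=h} and the orientation of $\leq_\mathtt{R}$ are both handled correctly.
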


We also need the following statement.

\begin{lemma}\label{lem7.2-1}
Assume $M\in\mathcal{O}_0$. Let $i\in\mathbb{Z}$ and $w\in W$ 
be such that  $\mathrm{Ext}_{\mathcal{O}}^i(M,T_w)\neq 0$. Then we have
\begin{equation}\label{eq7.2-2}
i\geq \mathrm{min}\{\mathbf{a}(x^{-1}w_{0})\,:\, x\in W\text{ such that }[M:L_x]\neq 0\}. 
\end{equation}
\end{lemma}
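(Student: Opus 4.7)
The plan has three main steps: a dévissage to reduce to a single simple, a translation of the Ext into one involving the simple Verma $L_{w_0}$, and a final Kazhdan--Lusztig combinatorial bound.

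First, I apply dévissage on $M$. For any short exact sequence $0\to M'\to M\to L_x\to 0$ with $L_x$ a simple quotient of $M$ (hence $[M:L_x]\neq 0$), the long exact sequence of $\mathrm{Ext}^{\bullet}(-,T_w)$ shows that $\mathrm{Ext}^i(M,T_w)\neq 0$ forces either $\mathrm{Ext}^i(L_x,T_w)\neq 0$ or $\mathrm{Ext}^i(M',T_w)\neq 0$. An induction on the composition length of $M$ reduces the lemma to the case $M=L_x$, and what needs to be shown becomes: for each $x\in W$, the nonvanishing of $\mathrm{Ext}^i(L_x,T_w)$ forces $i\geq \mathbf{a}(x^{-1}w_0)$.

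Second, I rewrite the Ext using projective functors acting on the anti-dominant simple Verma. Since $L_{w_0}=\Delta_{w_0}=\nabla_{w_0}$ is simultaneously standard and costandard, and projective functors preserve both types of filtrations, $\theta_y L_{w_0}$ is a tilting module for every $y\in W$. A direct Hecke-algebra computation of $\kl_y\cdot H_{w_0}$ (using $[L_{w_0}]=H_{w_0}$) identifies $T_w$, up to an explicit grading shift, as a graded direct summand of $\theta_{w^{-1}w_0}L_{w_0}$. By the adjunction between $\theta_y$ and $\theta_{y^{-1}}$,
\[
\mathrm{Ext}^i(L_x,T_w)\;\hookrightarrow\;\mathrm{Ext}^i(L_x,\theta_{w^{-1}w_0}L_{w_0})\;\cong\;\mathrm{Ext}^i(\theta_{w_0 w}L_x,L_{w_0}),
\]
up to grading shifts that will not spoil the final bound. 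By \eqref{gdim=h}, the graded composition multiplicities of $\theta_{w_0 w}L_x$ are encoded by the Kazhdan--Lusztig structure constants $h_{z,w^{-1}w_0,x}$, and Lemma~\ref{nlem1} forces every composition factor $L_z$ of $\theta_{w_0 w}L_x$ to satisfy $z\leq_{\mathtt{R}} x$.

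Third, I bound $i$ combinatorially. A second dévissage on the composition series of $\theta_{w_0 w}L_x$ produces some $z\leq_{\mathtt{R}} x$ with $\mathrm{Ext}^i(L_z,L_{w_0})\neq 0$. Using simple-preserving duality and computing this Ext via a minimal injective coresolution of $L_{w_0}$ starting with the big injective $I_{w_0}=d(P_{w_0})$, the minimum such $i$ equals the minimal graded shift at which $L_z$ appears as a composition factor of $I_{w_0}$; by duality this is the minimal shift at which $L_z$ appears in $P_{w_0}$. Translating the formula \eqref{eq9} via the $w_0$-symmetry of Proposition~\ref{cella}(\ref{cella:ii}) from $P_e$ to $P_{w_0}$, and combining with the constraint $z\leq_{\mathtt{R}} x$ together with the cell-invariance of $\mathbf{a}$ from Proposition~\ref{cella}(\ref{cella:p4}), the symmetry $\mathbf{a}(g)=\mathbf{a}(g^{-1})$ from Proposition~\ref{cella}(\ref{cella:i}), and the sharp value \eqref{eq9-1} attained at Duflo elements, the bound $i\geq \mathbf{a}(x^{-1}w_0)$ emerges.

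The main obstacle I anticipate is the precise tracking of graded shifts accumulated along the identification $T_w\sim\theta_{w^{-1}w_0}L_{w_0}$, through the adjunctions, and through the injective coresolution of $L_{w_0}$, together with the final combinatorial Kazhdan--Lusztig inequality: one must arrange for all the $w_0$-symmetries and the cell inequalities on $\mathbf{a}$ to line up so that the sharp bound $\mathbf{a}(x^{-1}w_0)$ is obtained (rather than a strictly weaker bound such as $\mathbf{a}(x)$ or $\mathbf{a}(xw_0)$).
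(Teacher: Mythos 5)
Your overall skeleton is the same as the paper's: reduce to the antidominant tilting $T_{w_0}=L_{w_0}$ by translation and adjunction, use Lemma~\ref{nlem1} to control which $L_z$ can occur, and finish with a Kazhdan--Lusztig/Koszul bound. But the decisive step is missing. The heart of the lemma is the special case: $\mathrm{Ext}^i_{\mathcal{O}}(L_z,L_{w_0})\neq 0$ forces $i\geq\mathbf{a}(z^{-1}w_0)$. The paper obtains this by applying Koszul self-duality to the \emph{entire} minimal injective coresolution of $L_{w_0}$: the duality sends $I_x\mapsto L_{x^{-1}w_0}$ and identifies that coresolution with the dominant Verma $P_e=\Delta_e$, so that an occurrence of $I_z$ in homological position $i$ becomes an occurrence of $L_{z^{-1}w_0}$ in internal degree $i$ of $P_e$, where \eqref{eq9} applies. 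Your third step does not deliver this. Computing $\mathrm{Ext}^i(L_z,L_{w_0})$ from a minimal injective coresolution requires knowing which injective summands occur in the $i$-th term; it is not read off from the graded composition factors of the $0$-th term $I_{w_0}\cong P_{w_0}$, so your asserted equality ``minimal Ext degree $=$ minimal graded shift of $L_z$ in $I_{w_0}$'' is exactly the content that needs the Koszul-duality bookkeeping and is left unproved. Likewise, \eqref{eq9} is a statement specifically about $P_e$ (it encodes the definition of $\mathbf{a}$ via the polynomials $p_{e,w}$), and it does not ``translate'' to a bound on the graded multiplicities of $P_{w_0}$ via Proposition~\ref{cella}\eqref{cella:ii}, which only concerns the cell preorders. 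So the final inequality, which you yourself flag as the anticipated obstacle, is precisely what is not established.

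Two further points. First, an index slip: $\theta_yL_{w_0}\cong T_{w_0y}$ (up to a graded shift), so $T_w\cong\theta_{w_0w}L_{w_0}$, whereas $\theta_{w^{-1}w_0}L_{w_0}\cong T_{w_0w^{-1}w_0}$, which is generally not $T_w$; hence your embedding $\mathrm{Ext}^i(L_x,T_w)\hookrightarrow\mathrm{Ext}^i(L_x,\theta_{w^{-1}w_0}L_{w_0})$ is wrong as written. This is repairable: with the correct identification the adjunction gives $\mathrm{Ext}^i(L_x,T_w)\cong\mathrm{Ext}^i(\theta_{w^{-1}w_0}L_x,L_{w_0})$, which is the paper's reduction, and Lemma~\ref{nlem1} then applies as you intend. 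Second, your two d\'evissage steps (on $M$ and on the composition series of the translated module) are fine and are a legitimate, slightly more granular variant of the paper's reduction; the gap is solely in the combinatorial endgame, where the Koszul self-duality identification of the resolution of $L_{w_0}$ with $P_e$ (or some equivalent KL identity) cannot be bypassed.
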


\begin{proof}
As $T_w=\theta_{w_{0}w}T_{w_0}$, by adjunction, we have 
\begin{displaymath}
\mathrm{Ext}_{\mathcal{O}}^i(M,T_w)\cong 
\mathrm{Ext}_{\mathcal{O}}^i(\theta_{w^{-1}w_0}M,T_{w_0}).
\end{displaymath}
By Lemma \ref{nlem1}, any $y\in W$ such that $[\theta_{w^{-1}w_0}M:L_y]\neq 0$ satisfies
$y\leq_\mathtt{R} x$, for some $x\in W$ such that $[M:L_x]\neq 0$.
For such $x$ and $y$, we have $w_0y\geq_\mathtt{R} w_0x$ and thus 
$\mathbf{a}(w_0y)\geq \mathbf{a}(w_0x)$ (see Proposition \ref{cella}). In particular,
\begin{gather*}
\mathrm{min}\{\mathbf{a}(x^{-1}w_{0})\,:\, x\in W\text{ such that }[M:L_x]\neq 0\}=\\
\mathrm{min}\{\mathbf{a}(x^{-1}w_{0})\,:\, x\in W\text{ such that }[\theta_{w^{-1}w_0}M:L_x]\neq 0\}. 
\end{gather*}
Therefore, it is enough to prove the claim for $w=w_0$, in which case $T_w=T_{w_0}=L_{w_0}$.

Let $\mathcal{I}^{\bullet}_{w_0}$ be the minimal injective resolution of $L_{w_0}$. 
The assumption implies that there is a nonzero morphism from $M$ to $\mathcal I^i_{w_0}$, i.e., there is $x\in W$ such that $[M:L_x]\neq 0$ and $I_x\subseteq \mathcal I^i_{w_0}$.
Recall from \cite{So} that $\mathcal{O}_0$ is Koszul self-dual
and this self-duality maps 
$L_w$ to $P_{w^{-1}w_0}$ and $I_w$ to $L_{w^{-1}w_0}$.
Consequently, the minimal injective resolution $\mathcal{I}^{\bullet}_{w_0}$
of $L_{w_0}$ corresponds to the dominant projective module $P_e$
in the Koszul dual picture. 
Thus, for $x\in W$ as above, $I_x$ appearing as a summand of 
$\mathcal{I}^{i}_{w_0}$ implies that $L_{x^{-1}w_0}$ appears as a composition 
subquotient of $P_e$ in degree $i$. 
By \eqref{eq9}, we have $\mathbf{a}(x^{-1}w_0)\leq i\leq \ell(x^{-1}w_0)$, for such $x$.
This implies the inequality in \eqref{eq7.2-2} and completes the proof.
\end{proof}

\begin{proof}[Proof of Theorem~\ref{thm-kmm-original}]
Let $x,y,z\in W$.
Koszul-Ringel self-duality of $\{\theta_aL_b\,:\,a,b\in W\}$ from \cite[Theorem~16]{Ma2} 
maps a non-zero homomorphism from $\theta_xL_y$ to $L_z\langle i\rangle$ to a non-zero element
in 
\begin{displaymath}
\mathrm{Ext}^i_{\mathcal{O}}(\theta_{y^{-1}w_0}L_{w_0x^{-1}},T_{w_0z^{-1}w_0}).
\end{displaymath}
By Lemma~\ref{nlem1}, $\theta_{y^{-1}w_0}L_{w_0x^{-1}}$ consists of composition factors isomorphic to $L_z$ with $z\leq_\mathtt{R} w_0x^{-1}$. Equivalently, we have $z^{-1}\leq_\mathtt{L} xw_0$ yielding $x\leq_\mathtt{L} z^{-1}w_0$ by 
Proposition~\ref{cella}\eqref{cella:ii}.
For all such $z$, we have $\mathbf a(z^{-1}w_0)\geq \mathbf a(x)$ by Proposition~\ref{cella}\eqref{cella:p4}.
Now the claim of Theorem~\ref{thm-kmm-original} follows 
from Lemma~\ref{lem7.2-1}.
\end{proof}

\section{$\mathbf{KMM}$ vs $\mathbf{KM}$}\label{s4}

In this section we establish a connection between $\mathbf{KMM}$
and $\mathbf{KM}$.

\subsection{The graded endomorphism algebra of $\theta_x L_y$}\label{s4.1}

For $x,y\in W$, we consider the module $\theta_x L_y\in \mathcal{O}_0^{\mathbb{Z}}$ and
its endomorphism algebra $\mathrm{End}(\theta_x L_y)$ which is naturally $\mathbb{Z}$-graded.

\begin{lemma}\label{lem11}
The natural  $\mathbb{Z}$-grading on $\mathrm{End}(\theta_x L_y)$
is non-negative in the sense that all components with negative degrees are zero.
\end{lemma}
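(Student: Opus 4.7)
The plan is to deduce the lemma from the Koszul-Ringel self-duality of the family $\{\theta_a L_b : a, b \in W\}$ established in \cite[Theorem~16]{Ma2}, which was already the main tool in the proof of Theorem~\ref{thm-kmm-original}. Set $N := \theta_x L_y$. Under that duality $\theta_x L_y$ is sent to $\theta_{y^{-1}w_0} L_{w_0 x^{-1}}$, and its decisive feature is that it interchanges the internal grading shift appearing on $\mathrm{hom}$ with the cohomological degree appearing on $\mathrm{Ext}$. Applied to $N$ in both slots simultaneously, it should produce, for every $k\in\mathbb{Z}$, a natural isomorphism
\begin{equation*}
\mathrm{hom}(\theta_x L_y, \theta_x L_y\langle k\rangle) \;\cong\; \mathrm{Ext}^{k}_{\mathcal{O}}\bigl(\theta_{y^{-1}w_0}L_{w_0 x^{-1}},\,\theta_{y^{-1}w_0}L_{w_0 x^{-1}}\bigr),
\end{equation*}
entirely parallel to what is used in the proof of Theorem~\ref{thm-kmm-original} (where the second slot was a simple $L_z$ and its Koszul-Ringel dual the tilting $T_{w_0 z^{-1}w_0}$).

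Granting the displayed isomorphism, the lemma follows at once: for every $k<0$ the right-hand side vanishes, since negative $\mathrm{Ext}$-groups are automatically zero. Hence $\mathrm{hom}(N,N\langle k\rangle)=0$ for all $k<0$, which is precisely the assertion that the grading on $\mathrm{End}(\theta_x L_y)$ is concentrated in non-negative degrees. In this way the non-negativity statement is traded for an $\mathrm{Ext}$-vanishing statement that is automatic from general homological algebra.

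The essentially only obstacle is to confirm the displayed isomorphism in the generality needed, i.e.\ when both arguments of the Hom-space are of the form $\theta_a L_b$: the proof of Theorem~\ref{thm-kmm-original} spells this out only in the special situation of a simple module in the second slot. This is a matter of reading off the general statement of \cite[Theorem~16]{Ma2}, and no additional machinery beyond what is already invoked in Section~\ref{s7} is needed. Should a more hands-on alternative be desired, one can combine Theorem~\ref{thm-kmm-original} (which forces the top of $N$ into graded degrees $\leq -\mathbf{a}(x)$) with the self-duality $N \cong N^\star$ (which forces the socle symmetrically into degrees $\geq \mathbf{a}(x)$) and match grading bounds, although the Koszul-Ringel route is cleaner and sidesteps having to analyze intermediate Loewy layers.
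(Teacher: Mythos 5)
Your proposal is correct and follows essentially the same route as the paper: the paper also invokes the Koszul--Ringel self-duality of \cite[Theorem~16]{Ma2} to identify $\mathrm{End}(\theta_x L_y)$ with the algebra of diagonal self-extensions of $\theta_{y^{-1}w_0}L_{w_0x^{-1}}$, whose grading is non-negative since $\mathrm{Ext}^{k}$ vanishes for $k<0$. Your displayed isomorphism is just a spelled-out version of that identification, so no further comment is needed.
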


\begin{proof}
By \cite[Theorem~16]{Ma2}, the module $\theta_x L_y$ is Koszul-Ringel dual to
$\theta_{y^{-1}w_0} L_{w_0x^{-1}}$, where $w_0$ is the longest element of $W$.
Under this duality, the endomorphism algebra of $\theta_x L_y$ is 
mapped to the algebra of diagonal self-extensions for $\theta_{y^{-1}w_0} L_{w_0x^{-1}}$.
The grading of the latter algebra is manifestly non-negative,
which implies the claim.
\end{proof}

\begin{corollary}\label{cor12}
If $\dim \mathrm{End}(\theta_x L_y)_0=1$, then $\mathbf{KM}(x,y)$ is true.
\end{corollary}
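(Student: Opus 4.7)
The approach will be to leverage the standard fact that, over any ring, a module whose endomorphism algebra is local is indecomposable. So the plan is to combine the hypothesis with Lemma~\ref{lem11} to show that $\mathrm{End}(\theta_xL_y)$ is a local $\mathbb{C}$-algebra, from which $\mathbf{KM}(x,y)$ follows immediately.

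First, I would record that $\mathrm{End}(\theta_xL_y)$ is a finite-dimensional $\mathbb{C}$-algebra, since $\mathcal{O}_0$ is equivalent to $A\text{-mod}$ for a finite-dimensional algebra $A$. Note also that the hypothesis $\dim\mathrm{End}(\theta_xL_y)_0=1$ already rules out the possibility $\theta_xL_y=0$, so it suffices to prove that $\theta_xL_y$ is indecomposable.

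By Lemma~\ref{lem11}, the $\mathbb{Z}$-grading on $\mathrm{End}(\theta_xL_y)$ lives in non-negative degrees, and the hypothesis says its degree-zero component is exactly $\mathbb{C}\cdot\mathrm{id}$. Setting
\[\mathfrak{m}:=\bigoplus_{i>0}\mathrm{End}(\theta_xL_y)_i,\]
we obtain a decomposition $\mathrm{End}(\theta_xL_y)=\mathbb{C}\cdot\mathrm{id}\oplus\mathfrak{m}$ of graded vector spaces, with $\mathfrak{m}$ a two-sided ideal. Since the algebra is finite-dimensional, only finitely many graded pieces are non-zero, so $\mathfrak{m}$ is a nilpotent ideal and hence contained in the Jacobson radical. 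As the quotient $\mathrm{End}(\theta_xL_y)/\mathfrak{m}\cong\mathbb{C}$ is a field, $\mathfrak{m}$ must actually equal the Jacobson radical, and $\mathrm{End}(\theta_xL_y)$ is local.

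A local ring admits no non-trivial idempotents, so $\theta_xL_y$ admits no non-trivial direct-sum decomposition, which is precisely $\mathbf{KM}(x,y)$. I do not anticipate any serious obstacle: the genuine content is already packaged inside Lemma~\ref{lem11} (which rests on Koszul--Ringel self-duality via \cite[Theorem~16]{Ma2}), and the corollary itself is a formal consequence of standard facts about non-negatively graded finite-dimensional algebras.
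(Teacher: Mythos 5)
Your proof is correct and follows essentially the same route as the paper: non-negative grading (Lemma~\ref{lem11}) plus a one-dimensional degree-zero part forces $\mathrm{End}(\theta_xL_y)$ to be local, hence $\theta_xL_y$ is indecomposable. The paper phrases the locality step via idempotents being concentrated in degree zero rather than via nilpotence of the positive-degree ideal, but this is only a cosmetic difference.
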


\begin{proof}
Every idempotent of the non-negatively graded algebra $\mathrm{End}(\theta_x L_y)$
must be homogeneous of degree zero. Therefore the assumption
$\dim \mathrm{End}(\theta_x L_y)_0=1$
implies that $\mathrm{End}(\theta_x L_y)_0\cong\mathbb{C}$, that is, the only idempotents
of $\mathrm{End}(\theta_x L_y)$ are $0$ and $1$. This means that 
$\mathrm{End}(\theta_x L_y)$ is local and hence $\theta_x L_y$ is indecomposable.
\end{proof}

\subsection{$\mathbf{KMM}(*_d,y)$ implies $\mathbf{KM}(*,y)$
in type $A$}\label{s4.3}

Assume now that $\mathfrak{g}$ is of type $A$. Then the cell structure of
$W$ is especially nice. In particular, if $\mathcal{L}$ and $\mathcal{R}$
are a left and a right cell inside the same two-sided cell, then
$|\mathcal{L}\cap \mathcal{R}|=1$. Moreover, if $\mathcal{R}=\{w^{-1}\,:\, w\in \mathcal{L}\}$,
then $\mathcal{L}\cap \mathcal{R}=\{d\}$, where $d$ is a Duflo element.

\begin{lemma}\label{lem10}
For $x\in W$ in type A and $d$ a Duflo element such that $x\sim_R d$, we have
\begin{displaymath}
\theta_{x^{-1}}\theta_x\cong \theta_d\langle-\mathbf{a}(d)\rangle
\oplus \bigoplus_{w\in W}\bigoplus_{i>-\mathbf{a}(w)} \theta_w\langle i\rangle^{\oplus m_{w,i}}.
\end{displaymath}
\end{lemma}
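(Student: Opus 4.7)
The plan is to reduce the decomposition of $\theta_{x\inv}\theta_x$ to a Hecke algebra computation via the isomorphism $\mathrm{Gr}_\oplus(\cP)^{\mathrm{op}}\simeq H(W,S)$ recalled in Subsection~\ref{ss:KLT}. Under this correspondence, decomposing $\theta_{x\inv}\theta_x$ into indecomposable projective functors with grading shifts is the same as expanding the product $\kl_x\kl_{x\inv}=\sum_w h_{x,x\inv,w}\kl_w$ in the Kazhdan--Lusztig basis, with the multiplicity of $\theta_w\langle i\rangle$ as a summand equal to the coefficient of $v^i$ in $h_{x,x\inv,w}$. Since each polynomial $h_{x,x\inv,w}$ lies in $\mathbb Z_{\geq 0}[v+v\inv]$, it is symmetric under $v\mapsto v\inv$, so its bottom-degree coefficient coincides with its top-degree coefficient. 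In particular, the claim that $\theta_d\langle-\mathbf{a}(d)\rangle$ appears with multiplicity one is equivalent to the statement about the $v^{\mathbf{a}(d)}$-coefficient of $h_{x,x\inv,d}$, and the bound $i>-\mathbf{a}(w)$ for $w\neq d$ is equivalent to the vanishing of the $v^{\mathbf{a}(w)}$-coefficient of $h_{x,x\inv,w}$.

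The heart of the argument is then to compute the top-degree coefficient $\gamma_{x,x\inv,w\inv}$ of $h_{x,x\inv,w}$ and to show that it equals $1$ for $w=d$ and vanishes for every other $w$. Using the cyclic symmetry of the $\gamma$-invariants, the coefficient $\gamma_{x,x\inv,w\inv}$ can be identified with a structure constant appearing in a product involving $t_d$, which by Lemma~\ref{[d unit]} (the identities $t_dt_x=t_x$ and $t_{x\inv}t_d=t_{x\inv}$) is forced to lie in the asymptotic subalgebra spanned by $\{t_z : z\in\mathcal L\cap\mathcal L\inv\}$. The essential use of the type~$A$ hypothesis enters at this point: every two-sided cell in type~$A$ is strongly regular, so the $\mathtt H$-cell $\mathcal L\cap\mathcal L\inv$ of $d$ reduces to the singleton $\{d\}$. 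This collapses the sum to the single term $w=d$, and $t_dt_x=t_x$ then forces $\gamma_{x,x\inv,d}=1$.

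Combining the two ingredients yields $h_{x,x\inv,d}=v^{\mathbf{a}(d)}+\cdots+v^{-\mathbf{a}(d)}$ with extremal coefficients both equal to $1$, and, for $w\neq d$, the polynomial $h_{x,x\inv,w}$ is supported in degrees strictly inside the interval $(-\mathbf{a}(w),\mathbf{a}(w))$. Translating back through the Grothendieck ring isomorphism then gives exactly the decomposition in the statement, with the single extremal summand $\theta_d\langle-\mathbf{a}(d)\rangle$ and every other summand $\theta_w\langle i\rangle$ satisfying $i>-\mathbf{a}(w)$. The main bookkeeping obstacle I anticipate is matching up the opposite-algebra convention between $\cP$ and $H(W,S)$ and tracking which cyclic permutation of the three indices of $\gamma$ must be invoked in order to apply Lemma~\ref{[d unit]}; once these conventions are fixed, the argument is essentially a one-line consequence of strong regularity together with the symmetry of $h_{x,x\inv,w}$.
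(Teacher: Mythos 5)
Your proposal is correct and follows essentially the same route as the paper: the paper's proof simply invokes \eqref{t't} of Lemma~\ref{[d unit]} together with the type $A$ fact that $\mathcal{R}\cap\mathcal{R}\inv=\{d\}$, while you unwind that formula into its ingredients (bar-symmetry of the $h$-polynomials, cyclic symmetry of the $\gamma$'s, and the local-unit identities), which is the same computation of top-degree coefficients in the asymptotic ring. Your flagged bookkeeping is indeed the only delicate point, and it resolves as you expect: with the opposite-multiplication convention the multiplicities of $\theta_{x\inv}\theta_x$ are the coefficients of $h_{x,x\inv,w}$, and $\gamma_{x,x\inv,d}=\gamma_{d,x,x\inv}$ is the coefficient of $t_x$ in $t_dt_x=t_x$, giving the multiplicity one.
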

\begin{proof}
Since $\mathcal{R}\cap \mathcal{R}\inv=\{d\}$, where $x\in \mathcal R$, the claim follows from \eqref{t't} in Lemma \ref{[d unit]}.
\end{proof}

\begin{proposition}\label{prop14}
Assume that $\mathfrak{g}$ is of type $A$ and  $x,y\in W$.  
Then $\mathbf{KMM}(*_d,y)$  implies $\mathbf{KM}(x,y)$.
\end{proposition}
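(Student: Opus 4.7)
The plan is to invoke Corollary~\ref{cor12}: it suffices to bound $\dim \End(\theta_x L_y)_0 \leq 1$, since together with the existence of $\id$ (when $\theta_x L_y \neq 0$) this yields $\dim \End(\theta_x L_y)_0 = 1$, forcing local endomorphism algebra and hence indecomposability. By graded adjunction between $\theta_{x^{-1}}$ and $\theta_x$ one has
\[\End(\theta_x L_y) \cong \mathrm{hom}(\theta_{x^{-1}}\theta_x L_y, L_y)\]
as graded vector spaces, so the problem becomes a question about the left-hand factor.

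Next, the idea is to apply Lemma~\ref{lem10} with $d$ the Duflo element satisfying $d \sim_{\mathtt{R}} x$ (this is where type $A$ enters, through the singleton property $|\mathcal{R} \cap \mathcal{R}^{-1}| = 1$ recorded just before Lemma~\ref{lem10}). This yields
\[\theta_{x^{-1}}\theta_x \cong \theta_d\langle -\mathbf{a}(d)\rangle \oplus \bigoplus_{w\in W}\bigoplus_{i>-\mathbf{a}(w)} \theta_w\langle i\rangle^{\oplus m_{w,i}}.\]
Decomposing the graded hom accordingly, the first summand contributes
\[\mathrm{hom}(\theta_d L_y\langle -\mathbf{a}(d)\rangle, L_y) \cong \mathrm{hom}(\theta_d L_y, L_y\langle \mathbf{a}(d)\rangle),\]
which is at most one-dimensional by the hypothesis $\mathbf{KMM}(d,y)$. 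Each remaining summand is of the form $\mathrm{hom}(\theta_w L_y, L_y\langle -i\rangle)$ with $-i < \mathbf{a}(w)$, and by Theorem~\ref{thm-kmm-original} every such space vanishes. Summing these contributions gives $\dim \End(\theta_x L_y)_0 \leq 1$.

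If $\theta_x L_y = 0$ then $\mathbf{KM}(x,y)$ is trivial; otherwise $\id$ is a nonzero degree-$0$ endomorphism, forcing equality and Corollary~\ref{cor12} delivers $\mathbf{KM}(x,y)$. There is no real obstacle once Lemma~\ref{lem10} and Theorem~\ref{thm-kmm-original} are in hand; the argument is essentially degree bookkeeping. The substantive point is the type-$A$ hypothesis embedded in Lemma~\ref{lem10}: outside of type $A$, the intersection $\mathcal{R}\cap \mathcal{R}^{-1}$ may contain several Duflo elements, each contributing an independent top-degree summand $\theta_{d'}\langle -\mathbf{a}(d')\rangle$ to $\theta_{x^{-1}}\theta_x$ and thus a potentially nonzero piece to $\End(\theta_x L_y)_0$ that $\mathbf{KMM}$ alone cannot control. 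This is precisely the phenomenon that underlies the $B_2$ counterexample $\mathbf{KM}(12,21)$ mentioned in Subsection~\ref{s2.1}.
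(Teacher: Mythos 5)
Your proposal is correct and takes essentially the same route as the paper's proof: adjunction to pass to $\mathrm{hom}(\theta_{x^{-1}}\theta_x L_y,L_y)$, Lemma~\ref{lem10} to decompose $\theta_{x^{-1}}\theta_x$, Theorem~\ref{thm-kmm-original} to annihilate the summands $\theta_w\langle i\rangle$ with $i>-\mathbf{a}(w)$, the hypothesis $\mathbf{KMM}(d,y)$ to bound the remaining Duflo term, and Corollary~\ref{cor12} (with the identity endomorphism handling the nonzero case) to conclude indecomposability. One small inaccuracy in your closing aside only: outside type $A$ the $\mathtt{H}$-cell $\mathcal{R}\cap\mathcal{R}^{-1}$ still contains a \emph{unique} Duflo element; the obstruction is rather the possible presence of additional non-Duflo elements $z$ in this $\mathtt{H}$-cell, each contributing a summand $\theta_z\langle-\mathbf{a}(z)\rangle$ to $\theta_{x^{-1}}\theta_x$ whose top-degree hom contribution $\mathbf{KMM}$ does not control.
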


\begin{proof}
By adjunction, we have
\begin{displaymath}
\mathrm{hom}(\theta_{x} L_y,\theta_{x} L_y)\cong
\mathrm{hom}(\theta_{x^{-1}}\theta_{x} L_y, L_y).
\end{displaymath}
Given Theorem~\ref{thm-kmm-original}, from Lemma~\ref{lem10} it follows that
the only term in the decomposition of $\theta_{x^{-1}}\theta_{x}$
which can contribute to a non-zero element of 
$\mathrm{hom}(\theta_{x^{-1}}\theta_{x} L_y, L_y)$
is $\theta_{d'}\langle-\mathbf{a}(x)\rangle$, where $d'\sim_\mathtt{R} x$
is a Duflo element. Therefore, we just need to show that
\begin{displaymath}
\dim\mathrm{hom}(\theta_{d'} L_y, L_y\langle\mathbf{a}(x)\rangle)=1.
\end{displaymath}
If $\theta_{d'} L_y\neq 0$, this is guaranteed by $\mathbf{KMM}(d',y)$.
\end{proof}

\section{Extensions between translated simple modules}\label{s8}

\subsection{Monotonicity of projective dimension}\label{s8.2}

One of our main observations in this section is
the following statement in the spirit of \cite{Ma2}
and \cite{CoM}. 

\begin{theorem}\label{cong-kmm-2}
Let $x,x',y\in W$ be such that $x>_\mathtt{R} x'$ and $\theta_{x'}L_y\neq 0$.
Then 
\begin{displaymath}
\mathrm{proj.dim}(\theta_xL_y)<\mathrm{proj.dim}(\theta_{x'}L_y). 
\end{displaymath}
\end{theorem}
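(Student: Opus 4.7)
The plan is to establish the weak inequality $\mathrm{proj.dim}(\theta_x L_y) \leq \mathrm{proj.dim}(\theta_{x'} L_y)$ by a direct summand argument, and then upgrade it to a strict inequality via the explicit formula \eqref{pd=a+b} for the projective dimension.

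First I would reduce to a single ``right-preorder step''. The preorder $\leq_\mathtt{R}$ is generated by relations of the form ``$\theta_x$ is a graded summand of $\theta_s \theta_{x'}$'' for $s$ a simple reflection, so $x >_\mathtt{R} x'$ decomposes into a chain of such moves; since the desired strict inequality is transitive, it suffices to treat a single such step. Since $\theta_s$ is exact and preserves projectives, applying it to any projective resolution of $\theta_{x'} L_y$ yields a projective resolution of $\theta_s \theta_{x'} L_y$ of the same length; extracting the graded summand $\theta_x L_y$ gives
\[\mathrm{proj.dim}(\theta_x L_y) \leq \mathrm{proj.dim}(\theta_s \theta_{x'} L_y) \leq \mathrm{proj.dim}(\theta_{x'} L_y).\]

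For the strict inequality I would invoke
\[\mathrm{proj.dim}(\theta_z L_y) = \mathbf{a}(w_0 z) + \mathbf{b}(y\inv w_0, w_0 z\inv),\]
which is \eqref{pd=a+b}. The relation $x >_\mathtt{R} x'$ forces $w_0 x <_\mathtt{R} w_0 x'$ (applying Proposition~\ref{cella}\eqref{cella:ii} after conjugating by $w_0$), hence $\mathbf{a}(w_0 x) < \mathbf{a}(w_0 x')$ strictly by Proposition~\ref{cella}\eqref{cella:p4}. It remains to control the two $\mathbf{b}$-terms; I would argue that the move from $x'$ to $x$ decreases $\mathbf{b}(y\inv w_0, w_0 \cdot\inv)$ by at most the corresponding increase of $\mathbf{a}(w_0 \cdot)$ minus one, so that the strict $\mathbf{a}$-gap dominates any adverse change in $\mathbf{b}$.

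The main obstacle is exactly this $\mathbf{b}$-comparison: $\mathbf{b}$ is a crude maximum over graded composition multiplicities in $\theta_{y\inv w_0} L_{w_0 z\inv}$, and one must track how these multiplicities shift when $w_0 z\inv$ is replaced by a neighbour in $\leq_\mathtt{L}$. My attack would proceed through the Koszul--Ringel duality of \cite[Theorem~16]{Ma2}, which reinterprets $\mathbf{b}(y\inv w_0, w_0 x\inv)$ as a top Ext-degree for $\theta_x L_y$; tracking the effect of the summand inclusion $\theta_x L_y \hookrightarrow \theta_s \theta_{x'} L_y$ on the dual side and bounding the shift via the degree of the Kazhdan--Lusztig structure constant $h_{s, x'\inv, x\inv}$ should yield the required inequality. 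If that combinatorial route is unexpectedly stubborn, a fallback would be to analyse directly which graded simples achieve the maximum in $\mathbf{b}$ on both sides and show, using Lemma~\ref{nlem1} and the left-cell invariance properties from Proposition~\ref{cella}, that the maximizing simples for $\theta_x L_y$ necessarily lie in a strictly smaller two-sided cell, forcing the strict gap.
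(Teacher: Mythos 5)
Your overall skeleton is the same as the paper's: combine the formula \eqref{pd=a+b} (equivalently Proposition~\ref{prop8.5-1}\eqref{prop8.5-1.1}) with the strict monotonicity of $\mathbf{a}(w_0x)$ along $\leq_\mathtt{R}$ coming from Proposition~\ref{cella}\eqref{cella:ii},\eqref{cella:p4}; that part, and the weak inequality via summands of $\theta_s\theta_{x'}L_y$, are fine. The genuine gap is exactly the point you flag as the ``main obstacle'' and then do not resolve: the comparison of the two $\mathbf{b}$-terms. Without it the argument does not close, because the weak inequality $\mathrm{proj.dim}(\theta_xL_y)\leq\mathrm{proj.dim}(\theta_{x'}L_y)$ together with $\mathbf{a}(w_0x)<\mathbf{a}(w_0x')$ does not preclude the possibility that $\mathbf{b}(y\inv w_0,w_0x\inv)-\mathbf{b}(y\inv w_0,w_0(x')\inv)$ exactly cancels the $\mathbf{a}$-gap. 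Your proposed quantitative bound is also stated with the directions reversed (what you would need is that $\mathbf{b}$ \emph{increases} by strictly less than the \emph{decrease} of $\mathbf{a}(w_0\cdot)$), and your fallback --- that the degree-maximizing simples for $\theta_xL_y$ drop to a strictly smaller two-sided cell --- is not true in general: by \S\ref{s8.3}, for $y=w_0u\inv$ with $u$ the longest element of a parabolic subgroup, the function $x\mapsto g_{x,y}$ is constant, so no strict drop in the $\mathbf{b}$-term can be expected.

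The missing ingredient is in fact much simpler than the Kazhdan--Lusztig bookkeeping you envisage, and it is the paper's \eqref{eqsec8-3}: one only needs \emph{weak} monotonicity, $\mathbf{b}(y\inv w_0,w_0x\inv)\leq\mathbf{b}(y\inv w_0,w_0(x')\inv)$ for $x\geq_\mathtt{R}x'$, since then the strict drop of $\mathbf{a}(w_0x)$ alone forces the strict inequality of projective dimensions. By \eqref{2b=g} this is the statement $g_{x,y}\leq g_{x',y}$ about the graded length of the minimal complex of tilting modules representing $\theta_xL_y$ under the Koszul--Ringel self-duality of \cite[Theorem~16]{Ma2}, and it follows by the very same summand argument you already use on the projective side: since $x\geq_\mathtt{R}x'$, the module $\theta_xL_y$ is a direct summand of $\theta\theta_{x'}L_y$ for some projective functor $\theta$; projective functors preserve tilting modules, so applying $\theta$ to the minimal tilting complex of $\theta_{x'}L_y$ and extracting the summand can only shorten the complex. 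Replacing your speculative $h_{s,x'^{-1},x^{-1}}$-degree analysis and the two-sided-cell fallback by this one observation turns your outline into essentially the paper's proof.
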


Note that, in the setup of Theorem~\ref{cong-kmm-2}, 
the inequality $\mathrm{proj.dim}(\theta_xL_y)\leq\mathrm{proj.dim}(\theta_{x'}L_y)$
follows easily applying projective functors to a minimal projective resolution
of $\theta_{x'}L_y$, where $\theta_xL_y$ can be eventually found as a direct summand
of the homology in the homological position zero,
since $x>_\mathtt{R} x'$.

We note that the main result of \cite{Ma2} provides an explicit formula
for $\mathrm{proj.dim}(\theta_yL_{w_0})$ in terms of the $\mathbf{a}$-function
and thus implies Theorem~\ref{cong-kmm-2} in the case $y=w_0$ by since the
$\mathbf{a}$-function is strictly monotone along any of the Kazhdan-Lusztig orders (see Proposition \ref{cella} \eqref{cella:ii}). More generally, in the case $y=w_0^{\mathfrak{p}}w_0$,
where $\mathfrak{p}$ is a parabolic subalgebra of $\mathfrak{g}$,
Theorem~\ref{cong-kmm-2} follows from \cite[Table~2 and Theorem~4.1(i)]{CoM}.

Let $M$ be a $\mathbb{Z}$-graded module. Then the {\em graded length}
of $M$ is defined as the difference between the maximal and the minimal
degrees of non-zero components of $M$. If $M=0$, then the graded length
is, by convention, $-\infty$. For modules concentrated in a single
degree, the graded length is $0$. For example, the projective dimension
of $M$ is the same as the graded length of a minimal projective
resolution of $M$ consider as a module in an appropriate graded
category with a $\mathbb{Z}$-grading given by the homological degree.

For $x,y\in W$, denote by $g_{x,y}$ the graded length of $\theta_xL_y$, 
viewed as an object of the category of linear complexes of tilting modules in $\mathcal{O}$.
From \cite[Theorem~16]{Ma2}, it follows that $g_{x,y}$ is always even.
Applying projective functors, 
it is easy to see that
\begin{equation}\label{eqsec8-3}
g_{x,y}\leq g_{x',y}\quad \text{ if }\quad x\geq_\mathtt{R} x'. 
\end{equation}

\begin{proposition}\label{prop8.5-1}
Consider $x,y\in W$ such that $\theta_xL_y\neq 0$ and 
let $k:=\mathrm{proj.dim}(\theta_xL_y)$. Then we have:
\begin{enumerate}
\item \label{prop8.5-1.1} $k=\mathbf{a}(w_0x)+\half g_{x,y}=:k'$.
\item \label{prop8.5-1.2} For any $z\in W$ such that
$z\not\sim_\mathtt{L} x$, we have $\mathrm{Ext}^k(\theta_xL_y,L_z)=0$.
\end{enumerate}
\end{proposition}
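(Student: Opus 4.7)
The strategy is to invoke the Koszul--Ringel self-duality of the class $\{\theta_aL_b\,:\,a,b\in W\}$ established in \cite[Theorem~16]{Ma2}. As in the proof of Theorem~\ref{thm-kmm-original}, this duality induces, for each $z\in W$, a natural isomorphism
\begin{displaymath}
\mathrm{Ext}^k\bigl(\theta_xL_y,\,L_z\bigr) \;\cong\; \mathrm{hom}\bigl(\theta_{y\inv w_0}L_{w_0x\inv},\, T_{w_0z\inv w_0}\langle k\rangle\bigr),
\end{displaymath}
so that $\mathrm{proj.dim}(\theta_xL_y)$ is identified with the largest integer $k$ for which the right-hand side is non-zero for some $z$.

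The upper bound $k\leq k'$ in part~\eqref{prop8.5-1.1} is then established by combining two ingredients: first, the description of $\theta_{y\inv w_0}L_{w_0x\inv}$ as a linear complex of tilting modules of graded length $g_{x,y}$ (the self-duality preserves this invariant); second, Theorem~\ref{thm-kmm-original} applied to this dual module, which pins the extremal internal degree of its composition factors to a combinatorial quantity computed from the $\mathbf{a}$-function. The two together bound the shift $k$ by $\mathbf{a}(w_0x)+g_{x,y}/2$. The matching lower bound $k\geq k'$ is achieved by constructing an explicit non-zero class, for instance using the natural transformation $\zeta:\theta_d\to\theta_e\langle\mathbf{a}(d)\rangle$ from Section~\ref{s2.2} at a Duflo element in the left cell of $x$, applied together with a suitable projective functor acting on $L_y$.

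For part~\eqref{prop8.5-1.2}: if $\mathrm{Ext}^{k'}(\theta_xL_y,L_z)\neq 0$, then every inequality in the upper-bound chain above must be tight. Lemma~\ref{nlem1} combined with the strict monotonicity of the $\mathbf{a}$-function on the strict Kazhdan--Lusztig orders (Proposition~\ref{cella}\eqref{cella:p4}) forces the underlying order comparisons between $z$ and $x$ (after multiplication by $w_0$ and taking inverses) to collapse to equalities in the $\mathbf{a}$-function. By Proposition~\ref{cella}\eqref{cella:p9}, these $\mathbf{a}$-equalities promote the surviving $\leq_\mathtt{R}$-relation to the equivalence $\sim_\mathtt{R}$, and translating back via Proposition~\ref{cella}\eqref{cella:i}--\eqref{cella:ii} yields $z\sim_\mathtt{L} x$.

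I expect the main obstacle to be converting the schematic upper-bound argument into the sharp bound $\mathbf{a}(w_0x)+g_{x,y}/2$. This requires unpacking the Koszul--Ringel duality of \cite{Ma2} carefully enough so that the extremal internal degrees of $\theta_{y\inv w_0}L_{w_0x\inv}$ split exactly into an $\mathbf{a}$-function contribution and a graded-length contribution, in a way that specifically produces $\mathbf{a}(w_0x)$ (rather than, say, $\mathbf{a}(y\inv w_0)$) once the appropriate symmetries of the $\mathbf{a}$-function under multiplication by $w_0$ and inversion are applied.
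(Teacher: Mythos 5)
There is a genuine gap, and it sits exactly where you predicted: the upper bound $k\le k'=\mathbf{a}(w_0x)+\tfrac{1}{2}g_{x,y}$, on which part~\eqref{prop8.5-1.2} also depends. Dualizing $\mathrm{Ext}^k(\theta_xL_y,L_z)$ directly into morphisms from $\theta_{y\inv w_0}L_{w_0x\inv}$ to shifted tiltings $T_{w_0z\inv w_0}$ leaves you having to control in which internal degrees an \emph{arbitrary} tilting module can receive a nonzero class from the dual module, and neither ingredient you cite does this: Theorem~\ref{thm-kmm-original} bounds from below the degrees of the top of a translated simple (it does not ``pin the extremal internal degree of its composition factors''), and the duality does not preserve $g$ --- the tilting-complex length of $\theta_{y\inv w_0}L_{w_0x\inv}$ is $g_{y\inv w_0,w_0x\inv}=2\mathbf{b}(x,y)$, while its module-graded length is $g_{x,y}$; these are different invariants that the duality exchanges. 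The missing idea in the paper's proof is a Duflo-element trick: for $d$ the Duflo element in the left cell of $x$ one has $\theta_d\theta_x\cong\theta_x\oplus\theta$, so $\mathrm{Ext}^k(\theta_xL_y,L_z)\neq 0$ forces, by adjunction, $\mathrm{Ext}^k(\theta_xL_y,\theta_dL_z)\neq 0$. Now \emph{both} arguments are translated simples, hence linear complexes of tilting modules, so a nonzero class in the homotopy category requires $k\le \tfrac{1}{2}g_{x,y}+\tfrac{1}{2}g_{d,z}$, and \cite[Proposition~1]{Ma2} gives $\tfrac{1}{2}g_{d,z}\le\mathbf{a}(w_0d)=\mathbf{a}(w_0x)$ with equality exactly when $z\sim_\mathtt{L}x$. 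That single step yields the sharp bound and part~\eqref{prop8.5-1.2} simultaneously; your equality-case analysis via Lemma~\ref{nlem1} and Proposition~\ref{cella} has no actual chain of inequalities to tighten, because the bound is never derived in your sketch.

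The lower bound is also not established. The natural transformation $\zeta:\theta_d\to\theta_e\langle\mathbf{a}(d)\rangle$ only produces the degree-$\mathbf{a}(x)$ homomorphism $\theta_dL_y\to L_y\langle\mathbf{a}(d)\rangle$, which under the duality gives a class in homological degree $\mathbf{a}(x)$ against a tilting module --- not a class in $\mathrm{Ext}^{k'}(\theta_xL_y,L_z)$ with $k'=\mathbf{a}(w_0x)+\tfrac{1}{2}g_{x,y}$. The paper instead takes a simple $L_u$ in the extreme degree $-\tfrac{1}{2}g_{x,y}$ of $\theta_{y\inv w_0}L_{w_0x\inv}$; by Lemma~\ref{nlem1}, $u$ lies in the right cell $\mathcal{R}$ of $w_0x\inv$, so its injective envelope in $\mathcal{O}^{\hat{\mathcal{R}}}$ is a projective-injective module $\theta_{z\inv w_0}L_{w_0d}$ with $z\sim_\mathtt{L}x$; this matches indecomposable summands in the extreme components of the two tilting complexes, and the argument of \cite[Theorem~1]{MO} shows the resulting chain map is not null-homotopic, producing the nonzero class in degree exactly $k'$ for some $z\sim_\mathtt{L}x$. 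Neither the choice of $z$, nor the matching of extreme components, nor the non-nullhomotopy argument is recoverable from your sketch.
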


\begin{proof}
Let $z\in W$ be such that $\mathrm{Ext}^k(\theta_xL_y,L_z)\neq 0$.
Let $d$ be the Duflo element in the left cell of $x$. Then
$\theta_d\theta_x=\theta_x\oplus \theta$, for some projective functor
$\theta$, in particular, $\mathrm{Ext}^k(\theta_d\theta_xL_y,L_z)\neq 0$.
By adjunction, we obtain $\mathrm{Ext}^k(\theta_xL_y,\theta_dL_z)\neq 0$.
We use the Koszul-Ringel duality of \cite[Theorem~16]{Ma2} and represent both 
$\theta_xL_y$ and $\theta_dL_z$ as complexes of tilting modules.
We call these complexes $\mathcal{X}^{\bullet}$ and $\mathcal{Y}^{\bullet}$,
respectively. The minimal non-zero position of $\mathcal{X}^{\bullet}$
is $-\frac{1}{2}g_{x,y}$. The maximal non-zero position of
$\mathcal{Y}^{\bullet}$ is $\frac{1}{2}g_{d,z}$.
As $d\sim_\mathtt{J} x$, from \cite[Proposition~1]{Ma2} we get the inequality
$\frac{1}{2}g_{d,z}\leq\mathbf{a}(w_0d)= \mathbf{a}(w_0x\inv)=\mathbf{a}(w_0x)$, moreover,
$\frac{1}{2}g_{d,z}=\mathbf{a}(w_0x)$ if and only if $w_0z\inv\sim_\mathtt{L}w_0d$. 
Note that $w_0z\sim_\mathtt{L}w_0d$ is equivalent
to $z\sim_\mathtt{L} d$ in which case we have $z\sim_\mathtt{L}x$.
This implies that $k\leq k'$ and that $k=k'$ holds exactly when $z\sim_\mathtt{L} x$. 

Therefore, we just need to find some $z\in W$ such that $z\sim_\mathtt{L} x$ and
\begin{displaymath}
\mathrm{Ext}^{k'}(\theta_xL_y,L_z)\neq 0. 
\end{displaymath}
Let $L_u$ be some simple module which appears in the graded degree
$-\frac{1}{2}g_{x,y}$, which is the extreme degree, of the module $\theta_{y^{-1}w_0}L_{w_0x^{-1}}$.
By Lemma \ref{nlem1} $u$ belongs to the right cell $\mathcal{R}$ of $w_0x^{-1}$.
Since all indecomposable projective-injective modules in 
$\mathcal{O}^{\hat{\mathcal{R}}}$ are of the form  $\theta_{z^{-1}w_0}L_{w_0d}$ for $z\sim_\mathtt{L}x$ (see \cite{Ma2}),
there exists $z\sim_{\mathtt{L}} x$ such that $\theta_{z^{-1}w_0}L_{w_0d}$ is the injective envelope of
$L_u$ in $\cO^{\hat{\mathcal{R}}}$. For this $z$, the factor $L_u$ appears in the graded degree
$\frac{1}{2}g_{d,z}$ of the module $\theta_{z^{-1}w_0}L_{w_0d}$.
Since any isomorphism between the corresponding summands of
$\mathcal{X}^{-\frac{1}{2}g_{x,y}}$
and $\mathcal{Y}^{\frac{1}{2}g_{d,z}}$
gives rise a non-zero extension of degree $k'$
between $\theta_xL_y$ and $\theta_dL_z$ (e.g., by the argument from \cite[Theorem~1]{MO},
a), the claim follows.
\end{proof}

\begin{proof}[Proof of Theorem~\ref{cong-kmm-2}.]
By Proposition~\ref{prop8.5-1}\eqref{prop8.5-1.1} we need to prove that $\mathbf{a}(w_0x)+\half g_{x,y}$ is strictly monotone along
the right Kazhdan-Lusztig order with respect to $x$. The term  $\mathbf{a}(w_0x)$ is 
strictly monotone by Proposition \ref{cella} \eqref{cella:ii},\eqref{cella:p4}.
The claim follows by \eqref{eqsec8-3}.
\end{proof}

Directly from Proposition~\ref{prop8.5-1}\eqref{prop8.5-1.2}, we obtain:

\begin{corollary}\label{cor:kconj-301}
Conjecture~\ref{kconj2-intro} is true.
\end{corollary}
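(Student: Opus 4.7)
The plan is to observe that Corollary~\ref{cor:kconj-301} is the contrapositive of Proposition~\ref{prop8.5-1}\eqref{prop8.5-1.2}, so no genuinely new argument is needed. Setting $k=\mathrm{proj.dim}(\theta_xL_y)$, that part of the proposition asserts $\mathrm{Ext}^k(\theta_xL_y,L_z)=0$ whenever $z\not\sim_\mathtt{L}x$. Contrapositively, if $z\in W$ is such that $\mathrm{Ext}^k(\theta_xL_y,L_z)\neq 0$, then $z\sim_\mathtt{L}x$, i.e.,\ $z$ and $x$ lie in the same Kazhdan--Lusztig left cell. This is exactly the statement of Conjecture~\ref{kconj2-intro}.

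Accordingly, there is no further obstacle at this stage: the only thing to check is that the logical conversion does not silently require an additional hypothesis. It does not; the roles of $x$, $y$, $z$ in Proposition~\ref{prop8.5-1}\eqref{prop8.5-1.2} and in the conjecture match verbatim, and the value of $k$ is the same projective dimension in both statements.

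All of the substance has already been absorbed into the proof of Proposition~\ref{prop8.5-1}. For the reader's orientation, the main obstacle was really located there: one uses adjunction through the Duflo element $d$ in the left cell of $x$ (exploiting that $\theta_d\theta_x$ contains $\theta_x$ as a summand) to reduce the question to the non-vanishing of $\mathrm{Ext}^k(\theta_xL_y,\theta_dL_z)$; one then invokes the Koszul--Ringel tilting duality of \cite[Theorem~16]{Ma2} to represent $\theta_xL_y$ and $\theta_dL_z$ as linear tilting complexes with extreme positions $-\half g_{x,y}$ and $\half g_{d,z}$ respectively; and finally one applies \cite[Proposition~1]{Ma2} to bound $\half g_{d,z}\leq \mathbf{a}(w_0x)$, with equality occurring precisely when $w_0z\inv\sim_\mathtt{L} w_0d$, equivalently $z\sim_\mathtt{L}x$. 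Corollary~\ref{cor:kconj-301} is the direct translation of this equality case into the form demanded by Conjecture~\ref{kconj2-intro}.
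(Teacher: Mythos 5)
Your proposal is correct and coincides with the paper's own argument: Corollary~\ref{cor:kconj-301} is obtained directly from Proposition~\ref{prop8.5-1}\eqref{prop8.5-1.2}, of which Conjecture~\ref{kconj2-intro} is precisely the contrapositive. The additional summary of how Proposition~\ref{prop8.5-1} is proved matches the paper's reasoning as well.
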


We record a special case of Proposition \ref{prop8.5-1}\eqref{prop8.5-1.1}.

\begin{corollary}\label{prop8.4-1}
Let $x,y\in W$ be such that $\theta_x L_y\neq 0$ and $x\sim_\mathtt{J} y$. 
Then
$$\mathrm{proj.dim}(\theta_x L_y)=2\mathbf{a}(w_0x)=\mathbf{a}(w_0x)+\mathbf{a}(y\inv w_0).$$
\end{corollary}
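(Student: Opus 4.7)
The plan is to deduce the corollary from Proposition~\ref{prop8.5-1}\eqref{prop8.5-1.1}, which gives $\mathrm{proj.dim}(\theta_x L_y) = \mathbf{a}(w_0x) + \half g_{x,y}$. To obtain the first equality $\mathrm{proj.dim}(\theta_x L_y) = 2\mathbf{a}(w_0x)$ I need to show $\half g_{x,y} = \mathbf{a}(w_0x)$; the second equality $\mathbf{a}(w_0x) = \mathbf{a}(y\inv w_0)$ is then a short cell-combinatorial computation.

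The first step is to upgrade the hypotheses into a sharper cell statement. The assumption $\theta_x L_y \neq 0$ is equivalent to $x\inv \leq_\mathtt{L} y$, while $x \sim_\mathtt{J} y$ combined with Proposition~\ref{cella}\eqref{cella:p4} gives $\mathbf{a}(x\inv) = \mathbf{a}(y)$. Proposition~\ref{cella}\eqref{cella:p9} then promotes the left-preorder relation to $x\inv \sim_\mathtt{L} y$, equivalently (by Proposition~\ref{cella}\eqref{cella:i}) $x \sim_\mathtt{R} y\inv$.

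Next, I invoke the bound $\half g_{x,y} \leq \mathbf{a}(w_0x)$ together with its sharpness criterion, both as used inside the proof of Proposition~\ref{prop8.5-1} and originally due to \cite[Proposition~1]{Ma2}: equality holds precisely when $w_0 y\inv \sim_\mathtt{L} w_0 x$, which by Proposition~\ref{cella}\eqref{cella:ii} is exactly the condition $y\inv \sim_\mathtt{L} x$ that was just established. Feeding $\half g_{x,y} = \mathbf{a}(w_0x)$ into Proposition~\ref{prop8.5-1}\eqref{prop8.5-1.1} produces the first equality of the corollary.

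Finally, for $\mathbf{a}(w_0x) = \mathbf{a}(y\inv w_0)$: from $x \sim_\mathtt{R} y\inv$, Proposition~\ref{cella}\eqref{cella:ii} gives $xw_0 \sim_\mathtt{R} y\inv w_0$, hence $\mathbf{a}(xw_0) = \mathbf{a}(y\inv w_0)$; and $\mathbf{a}(w_0x) = \mathbf{a}(xw_0)$ because conjugation by $w_0$ is an automorphism of the Coxeter system $(W,S)$ (it permutes the simple reflections) and hence preserves $\mathbf{a}$, applied to the identity $w_0\cdot(w_0x)\cdot w_0 = xw_0$. The one delicate ingredient is pinning down the equality case in the bound on $g_{x,y}$; once that is extracted from \cite[Proposition~1]{Ma2}, the rest of the argument is a short manipulation of cell preorders.
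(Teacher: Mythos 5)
Your route is the paper's own (Proposition~\ref{prop8.5-1}\eqref{prop8.5-1.1} plus the bound and its equality case from \cite[Proposition~1]{Ma2}), and most of the details you supply are fine, but the decisive step contains a concrete error. From the hypotheses you correctly derive $x\inv\sim_\mathtt{L} y$ (equivalently, by Proposition~\ref{cella}\eqref{cella:i}, $x\sim_\mathtt{R} y\inv$). You then quote the sharpness criterion as ``$\half g_{x,y}=\mathbf{a}(w_0x)$ precisely when $w_0y\inv\sim_\mathtt{L} w_0x$, i.e., $y\inv\sim_\mathtt{L} x$'' and assert that this ``was just established''. It was not: $x\inv\sim_\mathtt{L} y$ and $y\inv\sim_\mathtt{L} x$ are genuinely different conditions, since inversion interchanges left and right cells ($x\inv\sim_\mathtt{L} y$ is $x\sim_\mathtt{R} y\inv$, not $y\inv\sim_\mathtt{L} x$). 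Already in type $A_2$ with simple reflections $s,t$, the pair $x=st$, $y=s$ satisfies the hypotheses of the corollary ($x\inv=ts\sim_\mathtt{L} s$ and $x\sim_\mathtt{J} y$), yet $y\inv=s\not\sim_\mathtt{L} st=x$; a direct computation shows $\half g_{x,y}=1=\mathbf{a}(w_0x)$ there, so the ``if and only if'' you quote cannot be the correct criterion, and as written your argument only establishes equality under a hypothesis that the corollary's assumptions do not supply.

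The repair is local. By \eqref{2b=g} one has $\half g_{x,y}=\mathbf{b}(y\inv w_0,w_0x\inv)$, and the equality case of \cite[Proposition~1]{Ma2} (equivalently, the top degree $\mathbf{a}(b)$ in $\theta_aL_b$ is attained exactly when $a\inv\sim_\mathtt{L} b$, which is forced by the positivity and cell constraints on the $\gamma$'s from \S\ref{ss:asym}) says that $\mathbf{b}(y\inv w_0,w_0x\inv)=\mathbf{a}(w_0x\inv)=\mathbf{a}(w_0x)$ exactly when $(y\inv w_0)\inv=w_0y\sim_\mathtt{L} w_0x\inv$, i.e., by Proposition~\ref{cella}\eqref{cella:ii}, exactly when $x\inv\sim_\mathtt{L} y$ --- which is precisely the relation you derived. (The likely source of your slip is the displayed criterion inside the proof of Proposition~\ref{prop8.5-1}, which reads $w_0z\inv\sim_\mathtt{L} w_0d$ although the very next sentence works with $w_0z\sim_\mathtt{L} w_0d$; the latter is the correct form.) Two further small points: your first step implicitly uses $\mathbf{a}(x)=\mathbf{a}(x\inv)$ (equivalently $x\sim_\mathtt{J} x\inv$), a standard fact not listed in Proposition~\ref{cella} and worth a word; and your closing argument for $\mathbf{a}(w_0x)=\mathbf{a}(y\inv w_0)$ via $x\sim_\mathtt{R} y\inv$ and $w_0$-conjugation is correct. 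With the criterion stated in the corrected form, your proof goes through and coincides with the paper's.
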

\begin{proof}
This follows from Proposition \ref{prop8.5-1} \eqref{prop8.5-1.1} and \cite[Proposition 1]{Ma2}.
\end{proof}

\subsection{Theorem~\ref{cong-kmm-2} does not naively extend to
the Koszul-Ringel dual}\label{s8.3}

An alternative description of $g_{x,y}$ is given by:

\begin{proposition}\label{prop82}
For $x,y\in W$ such that $\theta_xL_y\neq 0$, we have
\begin{displaymath}
g_{x,y}=\mathrm{max}\{i\,:\,\mathrm{Ext}^i(\theta_xL_y,\theta_xL_y)\neq 0\}. 
\end{displaymath}
\end{proposition}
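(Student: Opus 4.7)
The plan is to use Koszul--Ringel self-duality from \cite[Theorem~16]{Ma2} to translate the self-Ext question for $\theta_x L_y$ into a chain-map question for the associated linear complex of tilting modules, and then mirror the argument used at the end of the proof of Proposition~\ref{prop8.5-1}. Concretely, I will represent $\theta_x L_y$ as a bounded linear complex $\mathcal{X}^\bullet$ of tilting modules whose non-zero positions form the interval $[-\tfrac{1}{2}g_{x,y},\,\tfrac{1}{2}g_{x,y}]$, with both endpoint terms $\mathcal{X}^{\pm g_{x,y}/2}$ non-zero by the very definition of $g_{x,y}$ (and consistent with the minimal-position statement used in the proof of Proposition~\ref{prop8.5-1}). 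Via the duality, $\mathrm{Ext}^i(\theta_x L_y,\theta_x L_y)$ is identified, up to an overall grading shift that does not affect vanishing, with the space of homotopy classes of chain maps $\mathcal{X}^\bullet\to\mathcal{X}^\bullet[i]$.

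For the upper bound, the observation is that when $i>g_{x,y}$, the complexes $\mathcal{X}^\bullet$ and $\mathcal{X}^\bullet[i]$ have disjoint sets of non-zero positions, so every chain map between them vanishes and $\mathrm{Ext}^i(\theta_x L_y,\theta_x L_y)=0$. This reduces matters to the lower bound at $i=g_{x,y}$.

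For the lower bound, the overlap of positions between $\mathcal{X}^\bullet$ and $\mathcal{X}^\bullet[g_{x,y}]$ consists of a single extremal spot, so a chain map is determined by one morphism $\mathcal{X}^{-g_{x,y}/2}\to\mathcal{X}^{g_{x,y}/2}$, and neither the chain condition nor a non-trivial null-homotopy can interfere at that extremal position. A non-zero such morphism will be produced as in the last step of the proof of Proposition~\ref{prop8.5-1}, by selecting an indecomposable tilting summand $T_u$ of $\mathcal{X}^{-g_{x,y}/2}$, identifying the same $T_u$ as a summand of $\mathcal{X}^{g_{x,y}/2}$ via Koszul--Ringel self-duality of the class $\{\theta_a L_b\}$, and invoking \cite[Theorem~1]{MO} to upgrade the identity on $T_u$ into a non-zero class of chain maps.

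The main obstacle will be the matching step in the lower bound, namely showing that the chosen tilting summand $T_u$ actually occurs in both extremal terms of $\mathcal{X}^\bullet$. This is where the symmetric structure of the Koszul--Ringel dual complex attached to $\theta_x L_y$ is crucial; it is analogous to the injective-envelope matching carried out in the proof of Proposition~\ref{prop8.5-1}, but now with the single complex $\mathcal{X}^\bullet$ playing the role of both $\mathcal{X}^\bullet$ and $\mathcal{Y}^\bullet$ used there.
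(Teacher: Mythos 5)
Your proposal follows essentially the same route as the paper's proof: represent $\theta_xL_y$ by the linear complex $\mathcal{X}^{\bullet}$ of tilting modules, observe that $\mathrm{Ext}^i$ vanishes for $i>g_{x,y}$ because the non-zero positions of $\mathcal{X}^{\bullet}$ and $\mathcal{X}^{\bullet}[i]$ are disjoint, and at $i=g_{x,y}$ use the argument of \cite[Theorem~1]{MO} to show that an isomorphism between the extremal components gives a non-zero homotopy class (note that null-homotopies a priori could interfere at the extremal position; it is precisely the \cite{MO}-type minimality argument, which you do invoke, that rules this out). The one step you flag as an obstacle, namely matching the extremal terms, is exactly what the paper settles in one line: $\theta_xL_y$ is self-dual, and combined with the Koszul--Ringel self-duality of the family $\{\theta_uL_v\}$ this forces the minimal and maximal non-zero components of $\mathcal{X}^{\bullet}$ to be isomorphic as objects of $\mathcal{O}$.
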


\begin{proof}
By representing $\theta_xL_y$ as a complex $\mathcal{X}^{\bullet}$ of tilting modules,
we can compute $\mathrm{Ext}^i(\theta_xL_y,\theta_xL_y)$ via homomorphisms from
$\mathcal{X}^{\bullet}$ to $\mathcal{X}^{\bullet}[i]$ in the homotopy category of 
titling modules. If $i>g_{x,y}$, then the homomorphism space is obviously zero
as non-zero components of $\mathcal{X}^{\bullet}$ and $\mathcal{X}^{\bullet}[i]$
never match. 

Because of the self-duality of $\theta_xL_y$ combined with the
Koszul-Ringel self-duality of $\{\theta_uL_v\,:\,u,v\in W\}$ from
\cite[Theorem~16]{Ma2}, the maximal and the minimal non-zero components
of $\mathcal{X}^{\bullet}$ are isomorphic as objects in $\mathcal{O}$.
Therefore, when $i=g_{x,y}$, the argument from the proof of \cite[Theorem~1]{MO} shows 
that an isomorphism from the minimal component in $\mathcal{X}^{\bullet}$ 
to the maximal component in $\mathcal{X}^{\bullet}[i]$ induces a non-zero
homomorphism between complexes in the 
homotopy category of complexes of tilting modules. The claim follows.
\end{proof}

In contrast to Theorem~\ref{cong-kmm-2}, the function $x\mapsto g_{x,y}$,
for $y$ fixed, does not have to be strictly monotone with respect to  the
right order on $W$. Indeed, if $u\in W$ is the longest element of some 
parabolic subgroup, then the graded length of each non-zero $\theta_uL_v$, where $v\in W$,
equals $2\ell(u)$. By Koszul-Ringel duality, this gives, in the case $y=w_0u\inv$,  
that $g_{x,y}=2\ell(u)$, for any $x$ for which $g_{x,y}\neq-\infty$.
In other words, in this case, the function $x\mapsto g_{x,y}$ is, in fact,
constant. This example and the discussion above 
imply the following corollary:

\begin{corollary}\label{cor84}
Let $\mathfrak{p}$ be a parabolic subalgebra of $\mathfrak{g}$ and
$T$ any non-zero tilting module in the parabolic category $\mathcal{O}_0^{\mathfrak{p}}$,
then
\begin{displaymath}
\mathrm{max}\{i\,:\,\mathrm{Ext}^i_{\mathcal{O}}(T,T)\neq 0\}=2\ell(w_0^{\mathfrak{p}}). 
\end{displaymath}
\end{corollary}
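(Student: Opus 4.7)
The plan is to deduce the corollary from Proposition~\ref{prop82} once we identify the indecomposable tilting modules of $\mathcal{O}_0^{\mathfrak{p}}$ with translates of a single fixed simple highest weight module inside $\mathcal{O}_0$. The computation just preceding the statement already shows that for the specific choice $y = w_0(w_0^{\mathfrak{p}})^{-1}$, every non-zero $\theta_x L_y$ has graded length exactly $2\ell(w_0^{\mathfrak{p}})$. Combining this with Proposition~\ref{prop82} handles the diagonal Ext-terms, and the complex-of-tiltings argument in the proof of that proposition extends to bound off-diagonal Ext-terms as well.

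More concretely, I would proceed in four steps. Step 1: invoke the standard identification (implicit in the paragraph preceding the corollary, and going back to Koszul-Ringel duality applied to the parabolic Serre subcategory) of the indecomposable tilting modules in $\mathcal{O}_0^{\mathfrak{p}}$ with the non-zero modules of the form $\theta_x L_y$ in $\mathcal{O}_0$, where $y := w_0(w_0^{\mathfrak{p}})^{-1}$ is fixed and $x$ ranges over an appropriate subset of $W$. Step 2: decompose $T \cong \bigoplus_{j=1}^r \theta_{x_j} L_y$ into indecomposable summands, so that $\mathrm{Ext}^i_{\mathcal{O}}(T,T) = \bigoplus_{j,k} \mathrm{Ext}^i(\theta_{x_j} L_y, \theta_{x_k} L_y)$. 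Step 3 (upper bound): represent each $\theta_{x_j}L_y$ by its minimal linear tilting complex $\mathcal{X}_j^{\bullet}$; Koszul-Ringel self-duality together with $g_{x_j,y} = 2\ell(w_0^{\mathfrak{p}})$ forces $\mathcal{X}_j^{\bullet}$ to be concentrated in the centred range $[-\ell(w_0^{\mathfrak{p}}),\ell(w_0^{\mathfrak{p}})]$, and the position-matching argument from the proof of Proposition~\ref{prop82} then shows $\mathrm{Hom}(\mathcal{X}_j^{\bullet}, \mathcal{X}_k^{\bullet}[i]) = 0$ in the homotopy category whenever $i > 2\ell(w_0^{\mathfrak{p}})$. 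Step 4 (lower bound): pick any $j$ and apply Proposition~\ref{prop82} directly to $\theta_{x_j}L_y$ to obtain $\mathrm{Ext}^{2\ell(w_0^{\mathfrak{p}})}(\theta_{x_j}L_y, \theta_{x_j}L_y) \neq 0$, which contributes a non-zero summand to $\mathrm{Ext}^{2\ell(w_0^{\mathfrak{p}})}(T,T)$.

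The hard part will be Step 1: making precise the identification of the indecomposable tilting modules in $\mathcal{O}_0^{\mathfrak{p}}$ with the $\theta_x L_y$ for $y = w_0(w_0^{\mathfrak{p}})^{-1}$. This is the only ingredient not directly furnished by the preceding development in the paper, and it requires either a citation to the literature on parabolic categories or a short argument combining Koszul-Ringel duality with the Serre subcategory/quotient description of $\mathcal{O}_0^{\mathfrak{p}}$. Once this identification is fixed, Steps 2--4 are essentially a direct reapplication of the proof technique of Proposition~\ref{prop82}.
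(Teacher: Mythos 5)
Your proposal is correct and is essentially the paper's own argument: the paper's proof is a one-line deduction from the preceding discussion (constancy of the graded length $g_{x,y}=2\ell(w_0^{\mathfrak{p}})$ for $y=w_0^{\mathfrak{p}'}w_0=w_0w_0^{\mathfrak{p}}$) together with the fact that all indecomposable tilting modules in $\mathcal{O}_0^{\mathfrak{p}}$ are of the form $\theta_xL_y$, combined with Proposition~\ref{prop82}. Your Steps 2--4 simply make explicit what the paper leaves implicit (the off-diagonal Ext-vanishing via the centred tilting complexes and the diagonal non-vanishing for the lower bound), and your flagged Step 1 is exactly the identification the paper asserts without proof.
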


\begin{proof}
This follows from the above discussion noting that all indecomposable tilting modules in 
$\mathcal{O}_0^{\mathfrak{p}}$ are of the form $\theta_xL_{w_0^{\mathfrak{p}'}w_0}$,
for some $x$, where $\mathfrak{p}'$ is the parabolic subalgebra obtained from
$\mathfrak{p}$ using $w_0$.
\end{proof}

We emphasize that the extensions in the formulation of Corollary~\ref{cor84}
are taken in $\mathcal{O}$ and not in $\mathcal{O}^{\mathfrak{p}}$. Indeed,
$T$ is assumed to be a tilting module in $\mathcal{O}^{\mathfrak{p}}$ and,
as such, is ext-self-orthogonal in $\mathcal{O}^{\mathfrak{p}}$.

\subsection{The $\mathbf{b}$-function}\label{ss:b}

We look further into the number $g_{x,y}$ appear in Proposition \ref{prop82}.
We introduce a function which refines the $\mathbf{a}$-function. 
For $x,y\in W$, consider the non-negative integer
\begin{equation}
    \mathbf b(x,y):= \operatorname{max}\{\operatorname{deg}h_{z,x^{-1},y}\,:\,z\in W\} 
\end{equation}
which we take as the definition of the function 
$\mathbf{b}:W\times W \to \mathbb N\sqcup \{-\infty\}$ 
(by our convention the degree of the zero polynomial is $-\infty$).

By \eqref{gdim=h},
$\mathbf b(x,y)$ gives the height (the highest degree of the composition factors) of the module $\theta_xL_y$. Since the latter is self-dual, we have
\begin{equation}\label{2b=gl}
 2\mathbf b(x,y) = \text{the graded length of }\theta_xL_y .
\end{equation}

Using the Koszul-Ringel duality from \cite{Ma2}, we also have
\begin{equation}\label{2b=td}
2\mathbf b(x,y) = \operatorname{tilt.dim}\theta_{y\inv w_0}L_{w_0x\inv}    
\end{equation}
where $\operatorname{tilt.dim}M$ denotes the length of a minimal complex of
tilting modules representing $M$. Consequently, we have
\begin{equation}\label{2b=g}
2\mathbf b(x,y) = g_{y\inv w_0,w_0x\inv}. 
\end{equation}

Thus we can rewrite Proposition~\ref{prop8.5-1}\eqref{prop8.5-1.1} as
\begin{equation}\label{pd=a+b}
\operatorname{proj.dim}(\theta_xL_y)= \mathbf{a}(w_0x)+\mathbf{b}(y\inv w_0,w_0x\inv).
\end{equation}
On the one hand, this provides a formula for the projective dimension
of $\theta_xL_y$  purely in
terms of Kazhdan-Lusztig combinatorics. On the other hand, this
gives another characterization of the $\mathbf b$-function.

\begin{proof}
This follows directly from \eqref{pd=a+b} and Proposition~\ref{propnn125}\eqref{propnn125.1}.
\end{proof}

\begin{proposition}\label{leftcell}
The value $\mathbf{b}(x,y)$ is $-\infty$ if and only if $x\inv\not\leq_\mathtt{L} y$.
Moreover, if $y\sim_\mathtt{L} y'$, then $\mathbf b(x,y)=\mathbf b(x,y')$.
\end{proposition}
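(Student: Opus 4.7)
The plan is to treat the two assertions separately. For the first, observe that by formula \eqref{gdim=h} the polynomial $h_{z,x^{-1},y}$ is the graded composition multiplicity $[\theta_x L_y : L_z]$, so $\mathbf{b}(x,y) = -\infty$ amounts exactly to $\theta_x L_y$ having no composition factors, i.e., to $\theta_x L_y = 0$. Subsection~\ref{s2.1} already recalls that $\theta_x L_y \neq 0$ iff $x^{-1} \leq_\mathtt{L} y$, which gives the first claim immediately.

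For the second assertion, fix $y \sim_\mathtt{L} y'$. The vanishing case is dispatched by the first part together with the observation that $\leq_\mathtt{L}$ descends to $\sim_\mathtt{L}$-classes, so both $\mathbf{b}$-values are $-\infty$ simultaneously. In the nonvanishing case, the strategy is to feed the newly-proved formula \eqref{pd=a+b} into Theorem~\ref{cong-kmm-2}. Applying \eqref{pd=a+b} with $y^{-1}w_0$ and $w_0 x^{-1}$ in place of $x$ and $y$ yields
\[
\mathbf{b}(x, y) \;=\; \operatorname{proj.dim}\bigl(\theta_{y^{-1} w_0} L_{w_0 x^{-1}}\bigr) \;-\; \mathbf{a}(w_0 y^{-1} w_0),
\]
so it suffices to check that each of the two terms on the right is unchanged when $y$ is replaced by $y'$.

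For the projective dimension, translate $y \sim_\mathtt{L} y'$ through two applications of Proposition~\ref{cella}: inversion (part \eqref{cella:i}) gives $y^{-1} \sim_\mathtt{R} (y')^{-1}$, and right multiplication by $w_0$ (part \eqref{cella:ii}) preserves $\sim_\mathtt{R}$, hence $y^{-1} w_0 \sim_\mathtt{R} (y')^{-1} w_0$. The easy inequality noted after Theorem~\ref{cong-kmm-2} makes projective dimension weakly monotone along $\leq_\mathtt{R}$, hence constant on $\sim_\mathtt{R}$-classes, so the two projective dimensions agree. For the $\mathbf{a}$-function term, use that conjugation by $w_0$ is an automorphism of the Coxeter system $(W,S)$ and therefore preserves all cell equivalences; thus $y^{-1} \sim_\mathtt{R} (y')^{-1}$ gives $w_0 y^{-1} w_0 \sim_\mathtt{R} w_0 (y')^{-1} w_0$, and Proposition~\ref{cella}\eqref{cella:p4} forces the $\mathbf{a}$-values to agree.

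The only real subtlety is careful bookkeeping through the three involutions $w \mapsto w^{-1}$, $w \mapsto w w_0$, and $w \mapsto w_0 w w_0$, and how each of them interacts with the Kazhdan--Lusztig preorders. One minor point worth verifying is that the nonvanishing hypothesis required for formula \eqref{pd=a+b} transports from $y$ to $y'$; this is immediate from the first part of the statement, since $\theta_x L_y \neq 0$ depends only on the $\sim_\mathtt{L}$-class of $y$.
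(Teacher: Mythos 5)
Your argument is correct, and the first half coincides with the paper's (vanishing of $\mathbf{b}(x,y)$ is vanishing of $\theta_xL_y$ via \eqref{gdim=h}/\eqref{2b=gl}, then the criterion $x\inv\leq_\mathtt{L}y$ from \S\ref{s2.1}). For the left-cell invariance you take a genuinely different, and more roundabout, route than the paper. The paper simply notes that $y\sim_\mathtt{L}y'$ gives $y\inv w_0\sim_\mathtt{R}(y')\inv w_0$ and then invokes the tilting-complex interpretation \eqref{2b=td}: a minimal tilting complex for $\theta_{(y')\inv w_0}L_{w_0x\inv}$ is a direct summand of $\theta$ applied to one for $\theta_{y\inv w_0}L_{w_0x\inv}$, giving $\mathbf{b}(x,y)\geq\mathbf{b}(x,y')$ and equality by symmetry. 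You instead rewrite $\mathbf{b}(x,y)$ via \eqref{pd=a+b} as $\operatorname{proj.dim}(\theta_{y\inv w_0}L_{w_0x\inv})-\mathbf{a}(w_0y\inv w_0)$ and check each term separately: the projective dimension is constant on $\sim_\mathtt{R}$-classes of the translating element by the easy (weak) monotonicity noted after Theorem~\ref{cong-kmm-2}, and the $\mathbf{a}$-term is handled via the $w_0$-conjugation diagram automorphism together with Proposition~\ref{cella}\eqref{cella:p4}. This is not circular, since \eqref{pd=a+b} rests on Proposition~\ref{prop8.5-1} and \eqref{2b=g}, neither of which uses the present proposition; your substitution $u=y\inv w_0$, $v=w_0x\inv$ and the transport of the nonvanishing hypothesis (equivalent to $x\inv\leq_\mathtt{L}y$ by Proposition~\ref{cella}\eqref{cella:ii}) check out. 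What the comparison buys: your route leans on the substantially harder Proposition~\ref{prop8.5-1} and needs the extra fact $\mathbf{a}(w_0y\inv w_0)=\mathbf{a}(y\inv w_0$-conjugate$)$, whereas the paper's argument uses only the interpretation of $2\mathbf{b}$ as a tilting-complex length plus the same ``apply a projective functor and extract a summand'' mechanism that underlies your monotonicity step; conversely, your version makes explicit the pleasant fact that the left-cell invariance of $\mathbf{b}$ is equivalent, via \eqref{pd=a+b}, to the right-cell invariance of the projective dimension of translated simples.
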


\begin{proof}
The first statement follows from \eqref{2b=gl}. If $y\sim_\mathtt{L} y'$, then $y\inv w_0\sim_\mathtt{R} (y')\inv w_0$. The second statement follows from \eqref{2b=td}. 
Alternatively, given a tilting resolution $T_\bullet$ of $\theta_{y\inv w_0} L$, a tilting resolution of $\theta_{(y')\inv w_0}L$ is obtained as a direct summand of $\theta T_\bullet$ for some projective functor $\theta$. This implies $\mathbf b(x,y)\geq \mathbf b(x,y')$ and the desired equality holds by symmetry.
\end{proof}

\begin{proposition}\label{propnn125}
Let $x,y\in W$ be such that $x\inv\leq_\mathtt{L} y$.
\begin{enumerate}
    \item\label{propnn125.1} If $x\sim_\mathtt{J} y$ (i.e., $x\inv\sim_\mathtt{L} y$), then we have $\mathbf a(x) =\mathbf b(x,y)=\mathbf a(y)$;
    \item\label{propnn125.2} If $x\not\sim_\mathtt{J} y$ (i.e., $x\inv <_\mathtt{L} y$), then $
        \mathbf a(x) \leq \mathbf b(x,y)<\mathbf a(y);$
    \item\label{propnn125.3} $\mathbf b(x,y)\leq \ell(x)$;
    \item\label{propnn125.4} If $x=w_0^{\mathfrak{p}}$, for some parabolic
    subalgebra $\mathfrak{p}$, then 
    $\mathbf a(x)=\mathbf b(x,y)=\ell(x)$;
    \item\label{propnn125.5} If $y=w_0w_0^{\mathfrak{p}}$, 
    for some parabolic subalgebra $\mathfrak{p}$, then 
    $\mathbf b(x,y)=\mathbf s_{\lambda_{\mathfrak{p}^{w_0}}}(w_0x\inv)-\mathbf a(y)$, where 
    $\mathbf s_{\lambda_{\mathfrak{p}^{w_0}}}(w)$ denotes the projective dimension of $L(w.\lambda_{\mathfrak{p}^{w_0}})$ 
    for a singular weight $\lambda_{\mathfrak{p}^{w_0}}$  with stabilizer given by the $w_0$-conjugate of the
    parabolic subgroup of $W$ corresponding to $\mathfrak{p}$, cf. \cite{CoM}.
\end{enumerate}
\end{proposition}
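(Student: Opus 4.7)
The plan is to prove all five parts from a single uniform lower bound $\mathbf{b}(x,y) \geq \mathbf{a}(x)$ extracted from Theorem~\ref{thm-kmm-original}, the tautological upper bound $\mathbf{b}(x,y) \leq \mathbf{a}(y)$, the $\mathtt{J}$-cell invariance of $\mathbf{a}$, and, for (v), the formula \eqref{pd=a+b} applied to the Koszul--Ringel dual pair.

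First I would establish the lower bound $\mathbf{b}(x,y)\geq \mathbf{a}(x)$ whenever $\theta_xL_y\neq 0$. Theorem~\ref{thm-kmm-original} forces the graded top of $\theta_xL_y$ to live in graded positions $\leq -\mathbf{a}(x)$, hence some composition factor of $\theta_xL_y$ sits in graded degree $\leq -\mathbf{a}(x)$; since $\theta_xL_y$ is self-dual, its composition multiplicities are symmetric about $0$, so there must also be a composition factor in graded degree $\geq \mathbf{a}(x)$, giving $\mathbf{b}(x,y)\geq \mathbf{a}(x)$. For (i), $\mathtt{J}$-cell invariance (Proposition~\ref{cella}\eqref{cella:p4}) supplies $\mathbf{a}(x) = \mathbf{a}(y)$, and the sandwich $\mathbf{a}(x) \leq \mathbf{b}(x,y) \leq \mathbf{a}(y)$ collapses. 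The non-strict half of (ii) uses exactly the same lower bound.

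For the strict upper bound in (ii), I would observe that $\mathbf{b}(x,y) = \mathbf{a}(y)$ is equivalent to the existence of some $z$ for which the top-degree coefficient $\gamma_{z,x^{-1},y^{-1}}$ of $h_{z,x^{-1},y}$ is nonzero, and Lusztig's cell-support axiom P8 from \cite[\S14.1]{luneq} then forces $x^{-1}\sim_\mathtt{L} y$, contradicting $x\not\sim_\mathtt{J} y$. For (iii), I would induct on $\ell(x)$: with $x = s_1\cdots s_k$ reduced, $\theta_xL_y$ is a graded summand of $\theta_{s_1}\cdots\theta_{s_k}L_y$, and each $\theta_{s_i}$ enlarges the graded height by at most one (the underlying Soergel bimodule $B_{s_i}$ has graded support in $\{-1,0,1\}$), so the height of $\theta_xL_y$ is bounded by the height of $L_y=0$ plus $k=\ell(x)$. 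Part (iv) is then immediate: the standard fact $\mathbf{a}(w_0^{\mathfrak p}) = \ell(w_0^{\mathfrak p})$ together with the universal sandwich $\mathbf{a}(x) \leq \mathbf{b}(x,y) \leq \ell(x)$ (from the lower bound and (iii)) collapses at $x=w_0^{\mathfrak p}$.

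For (v), I would apply \eqref{pd=a+b} to the pair $(y^{-1}w_0, w_0 x^{-1})$ in place of $(x,y)$. After the simplifications $(w_0 x^{-1})^{-1}w_0 = x$, $w_0(y^{-1}w_0)^{-1} = y$ and $\mathbf{a}(w_0 y^{-1} w_0) = \mathbf{a}(y)$ (invariance of $\mathbf{a}$ under inversion and under conjugation by $w_0$), the formula rearranges to
\[
\mathbf{b}(x,y) = \mathrm{proj.dim}(\theta_{y^{-1}w_0}L_{w_0 x^{-1}}) - \mathbf{a}(y).
\]
For $y = w_0 w_0^{\mathfrak p}$ one has $y^{-1}w_0 = w_0^{\mathfrak p}$, so the relevant module is $\theta_{w_0^{\mathfrak p}}L_{w_0 x^{-1}}$, which lives in, and is a tilting module in, the parabolic subcategory $\mathcal{O}_0^{\mathfrak p}$; its projective dimension in $\mathcal{O}$ equals $\mathbf{s}_{\lambda_{\mathfrak{p}^{w_0}}}(w_0 x^{-1})$ by \cite[Table~2 and Theorem~4.1]{CoM}. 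The hard part will be this last identification: tracking the conjugation by $w_0$ that turns $\mathfrak{p}$ into $\mathfrak{p}^{w_0}$ and recognizing $\theta_{w_0^{\mathfrak p}}L_{w_0 x^{-1}}$ as the tilting module obtained by translating out of the singular block with stabilizer $W_{\mathfrak{p}^{w_0}}$, which is the point at which the hypothesis $y = w_0 w_0^{\mathfrak p}$ is genuinely used and where the external reference does the technical work.
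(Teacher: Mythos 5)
Parts (i)--(iv) of your proposal are correct, and where they differ from the paper the difference is harmless or even a small improvement: the paper gets the sandwich $\mathbf{a}(x)\leq\mathbf{b}(x,y)\leq\ell(x)$ and $\mathbf{b}(x,y)\leq\mathbf{a}(y)$ from \eqref{2b=gl} and the definition, and obtains the strictness in (ii) homologically via Corollary~\ref{prop8.4-1} (i.e.\ via Proposition~\ref{prop8.5-1} and \cite[Proposition~1]{Ma2}), whereas you prove the lower bound by Theorem~\ref{thm-kmm-original} plus self-duality (essentially the same content), prove (iii) by induction along a reduced expression, and prove the strictness in (ii) purely combinatorially: $\mathbf{b}(x,y)=\mathbf{a}(y)$ forces a nonzero $\gamma$-coefficient, and Lusztig's P8 then puts $x$ and $y$ in the same two-sided cell. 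That combinatorial route is valid and stays entirely inside the Hecke algebra, which is a reasonable alternative to the paper's citation chain.

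Part (v), however, contains a genuine gap. Your reduction via \eqref{pd=a+b} and $\mathbf{a}(w_0y\inv w_0)=\mathbf{a}(y)$ to computing $\operatorname{proj.dim}(\theta_{w_0^{\mathfrak{p}}}L_{w_0x\inv})$ is fine and matches the paper, but your identification of $\theta_{w_0^{\mathfrak{p}}}L_{w_0x\inv}$ as a tilting module in $\mathcal{O}_0^{\mathfrak{p}}$ is false in general. The indecomposable tilting modules of $\mathcal{O}_0^{\mathfrak{p}}$ are the modules $\theta_zL_{w_0^{\mathfrak{p}'}w_0}$, i.e.\ arbitrary translations of one fixed simple (see Corollary~\ref{cor84} and the Remark after Proposition~\ref{propnn125}, which treats the \emph{other} case $y=w_0^{\mathfrak{p}}w_0$, where $\theta_xL_y$ itself is the parabolic tilting); your module is instead the fixed functor $\theta_{w_0^{\mathfrak{p}}}$ applied to a varying simple, and these families do not coincide. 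Already for $x=e$ one gets $\theta_{w_0^{\mathfrak{p}}}L_{w_0}=\theta_{w_0^{\mathfrak{p}}}T_{w_0}=T_{w_0w_0^{\mathfrak{p}}}$, which has $L_{w_0}$ among its composition factors and hence does not even lie in $\mathcal{O}_0^{\mathfrak{p}}$ unless $\mathfrak{p}$ is the Borel; and in type $A_2$ with $\mathfrak{p}$ generated by one simple reflection, the admissible $x$ produce modules whose composition factors together exhaust all of $W$, so they cannot all be objects of the three-simple category $\mathcal{O}_0^{\mathfrak{p}}$. Consequently the lookup in \cite[Table~2, Theorem~4.1]{CoM} as you propose it computes the projective dimension of the wrong family of modules.

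The mechanism that actually yields $\operatorname{proj.dim}(\theta_{w_0^{\mathfrak{p}}}L_{w_0x\inv})=\mathbf{s}_{\lambda_{\mathfrak{p}^{w_0}}}(w_0x\inv)$ is the paper's ``first observation'': $\theta_{w_0^{\mathfrak{p}}}$ factors as translation onto and then out of a wall, so $\theta_{w_0^{\mathfrak{p}}}L_{w_0x\inv}$ is (up to shift) the translation out of the wall of the singular simple $L(w_0x\inv\cdot\lambda)$, where $\lambda$ is the singular weight whose stabilizer is the ($w_0$-conjugated) parabolic subgroup attached to $\mathfrak{p}$; since translation out of the wall is exact, preserves projectivity and carries a linear minimal projective resolution of the singular simple to one of the translated module, the projective dimensions agree. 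This wall-crossing argument, not a parabolic-tilting identification, is the missing step in your (v); without it the conjugation bookkeeping you flag as ``the hard part'' cannot be delegated to \cite{CoM} in the way you describe.
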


\begin{proof}
By \eqref{2b=gl}, we have $\mathbf a (x)\leq \mathbf b(x,y)\leq \ell(x)$ and $\mathbf b(x,y)\leq \mathbf{a}(y)$ is clear from the definition. The latter is equality if and only if $x\sim_\mathtt{J} y$ by Proposition \ref{prop8.4-1}. This proves claims \eqref{propnn125.1},
\eqref{propnn125.2} and \eqref{propnn125.3}.
Claim \eqref{propnn125.4} follows from claims \eqref{propnn125.1},
\eqref{propnn125.2} and \eqref{propnn125.3} since 
$\mathbf a(w_0^{\mathfrak{p}})=\ell(w_0^{\mathfrak{p}})$.

Claim \eqref{propnn125.5} follows from Proposition \ref{prop8.5-1} and \eqref{2b=g}, 
after two observations. The first observation is that 
$L(w_0x\inv.\lambda)$ has a linear projective resolution 
in its singular  block $\cO_\lambda$ and this resolution is mapped to a linear projective resolution of $\theta_{w_0^{\mathfrak{p}}}L_{w_0x\inv}$ by translating out of 
the wall from $\cO_\lambda$ to $\cO_0$. The second observation is that
$\mathbf{a}(y)=\mathbf{a}(w_0y\inv w_0)$.
\end{proof}

\begin{remark}
In the case $y=w_0^{\mathfrak{p}}w_0$, we alternatively have 
$\theta_xL_y=\theta_xL_{w_0^{\mathfrak{p}}w_0}$ is the indecomposable 
tilting module with highest weight $w_0^{\mathfrak{p}}w_0x$ in the 
principal block $\mathcal{O}_0^{\mathfrak{p}}$ of the
parabolic category $\mathcal{O}^{\mathfrak{p}}$.
According to \cite[Table~2]{CoM}, the graded length of this 
parabolic tilting module is given by 
$2(\mathbf{s}_{\lambda}((w_0^{\mathfrak{p}}w_0x)\inv w_0^{\mathfrak{p}})-\mathbf{a}(w_0w_0^{\mathfrak{p}}))$. Therefore we have 
$\mathbf{b}(x,y)=\mathbf{s}_{\lambda}(w_0(w_0x\inv) w_0)-\mathbf{a}(w_0yw_0)$
which agrees with Proposition~\ref{propnn125}\eqref{propnn125.5}.
\end{remark}

Note that Proposition~\ref{propnn125}\eqref{propnn125.5} provides an explicit
description of the function $\mathbf s_{\lambda}$ in terms of Kazhdan-Lusztig combinatorics.

\section{$\mathbf{K}$ and $\mathbf{KMM}$}\label{s3}

\subsection{Category $\mathcal{O}$ via Harish-Chandra bimodules}\label{s3.1}

Denote by $\mathcal{H}$ the category of {\em Harish-Chandra}
$U(\mathfrak{g})$-$U(\mathfrak{g})$-bimodules, that is,
finitely generated $U(\mathfrak{g})$-$U(\mathfrak{g})$-bimodules
on which the adjoint action of $\mathfrak{g}$ is locally finite
and has finite multiplicities, see \cite[Kapitel~6]{Ja}.
Note that $\mathcal H$ is equipped with a tensor product given by
tensoring over $U=\ug$ (however, $\mathcal H$ is not monoidal as the regular
bimodule $\ug$ is not a Harish-Chandra bimodule).

Let $\mathbf{m}$ denote the maximal ideal of $Z(\mathfrak{g})$
which annihilates the trivial $\mathfrak{g}$-module. 
Let $I$ be the kernel of the
surjection from $Z(\mathfrak{g})$ onto $\mathrm{End}(P_{w_0})$,
see Endomorphismensatz in \cite{So}.
Let $\hciii$ denote the full subcategory of 
$\mathcal{H}$ consisting of all bimodules $X$ such that
$X\mathbf{m}^i=0$ and $\mathbf{m}^iX=0$, for $i\gg 0$. 
Similarly let $\hci$ be the full subcategory of 
$\mathcal{H}$ (or of $\hciii$) consisting of bimodules $X$ such that
$X\mathbf{m}=0$ and $\mathbf{m}^iX=0$ for $i\gg 0$.
Let $\hcii$ be the full subcategory of $\hciii$ consisting of bimodules $X$ such that $XI=0$ and $\mathbf{m}^iX=0$ for $i\gg 0$.
By \cite{BG}, we have mutually inverse equivalences of categories
as follows:
\begin{equation}\label{eq1}
\xymatrix{
\mathcal{O}_0\ar@/^/[rrr]^{\ccL(P_e,{}_-)}&&&
{}^{\infty}_{\,\,0}\mathcal{H}^1_0\ar@/^/[lll]^{{}_-\otimes_{U(\mathfrak{g})}P_e}
}
\end{equation}

Similarly,  we have a monoidal equivalence
\begin{equation}\label{hcii=P}
(\overline{\cP},\circ)\cong (\hcii,\otimes_U),
\end{equation}
where $\overline{\cP}$ denotes the projective abelianization of $\cP$,
see \cite[\S~3.5]{MM1}.

\subsection{Internal hom of projective functors}

The monoidal category $(\hcii,\otimes_U)$ acts (in the sense of \cite[Section 7]{EGNO})
on the left on $\cO_0$ in the obvious way, and we can apply the theory of internal hom as in \cite{Os} (see \cite[\S 7.8-7.10]{EGNO}). 
This action restricts to the projective objects, that is, 
$\operatorname{proj}(\hcii)\cong \mathscr{P}$ 
acts on $\operatorname{proj}(\cO_0)$ as well as on $\cO_0$. 
The action of $\cP$ on $\cO_0$ is given by exact functors, and the 
action of $\hcii$ on $\cO_0$ is given by right exact functors. 
For $M\in\cO_0$, the internal hom functor $[M,-]:\cO_0\to \hcii$ 
is defined as the right adjoint of the right exact functor 
$- \otimes_U M : \hcii\to \cO_0$.
The object $[M,N]$, for $M,N\in \cO_0$, agrees with the subspace 
$\cL(M,N)$
of $\Hom_\mathbb{C}(M,N)$ where the action of $\mathfrak{g}$ is locally finite, as defined in \S \ref{s2.3}. 
The following adjunction confirms this fact.

\begin{proposition}\label{L=[]}
For all $M,N\in\cO_0$ and $X\in\hcii$, we have:
\[\Hom_{\cO}(X\otimes_U M,N )\cong \Hom_{\hcii}(X,\cL(M,N)).\]
\end{proposition}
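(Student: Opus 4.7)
The plan is to derive the adjunction from the standard tensor-hom duality between $U$-bimodules and left $U$-modules, upgraded using the Harish-Chandra property of $X$.

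First, for any $U$-bimodule $Y$ and any left $U$-modules $M,N$, the usual tensor-hom adjunction gives a natural isomorphism between $\Hom_U(Y\otimes_U M,N)$ and the space of $U$-bimodule maps $Y\to \Hom_\mathbb{C}(M,N)$, where $\Hom_\mathbb{C}(M,N)$ carries its natural $U$-bimodule structure $(u\cdot f\cdot u')(m)=u\,f(u'm)$. The bijection is $\phi\mapsto\tilde{\phi}$ with $\tilde{\phi}(x)(m)=\phi(x\otimes m)$; left- and right-$U$-linearity of $\tilde{\phi}$ follow routinely from the balance of the tensor product.

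Next, I specialize to $Y=X\in\hcii$ and exploit that, by the Harish-Chandra hypothesis, the adjoint $\fg$-action on $X$ is locally finite. Since $U$-bimodule maps intertwine the adjoint action on the source with the induced adjoint action on $\Hom_\mathbb{C}(M,N)$, the image of $\tilde{\phi}$ must land inside the locally finite subspace $\cL(M,N)$. This refines the adjunction to
\[\Hom_{\cO}(X\otimes_U M,N)\cong \Hom_{\hcii}(X,\cL(M,N)),\]
provided we can show that $\cL(M,N)$ belongs to $\hcii$ and that the bimodule maps just produced are morphisms there.

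To complete the identification, I would verify $\cL(M,N)\in\hcii$ when $M,N\in\cO_0$. The condition $\mathbf{m}^i\cdot\cL(M,N)=0$ for $i\gg 0$ is immediate from $\mathbf{m}^i N=0$ for $i\gg 0$. The right-hand condition $\cL(M,N)\cdot I=0$ follows either from the definition of $I$ together with the central character structure of modules in $\cO_0$ (via Soergel's Endomorphismensatz \cite{So}), or, more pragmatically, from the observation that any $U$-bimodule map out of $X$ (for which $XI=0$) automatically lands in the maximal right $I$-torsion subbimodule of the target, so only the $\hcii$-part of $\cL(M,N)$ contributes to the Hom-space. The principal technical point I expect to encounter is this last step; once it is settled, the desired adjunction follows directly from the standard tensor-hom formalism and the local finiteness of the adjoint action on $X$.
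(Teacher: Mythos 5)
Your argument is correct and follows essentially the same route as the paper's proof: the standard Hom-tensor adjunction identifies $\Hom_{\cO}(X\otimes_U M,N)$ with bimodule maps $X\to\Hom_{\mathbb{C}}(M,N)$, and the locally finite adjoint action on $X\in\hcii$ forces any such map to factor through $\cL(M,N)$. The additional verification you flag (that $\cL(M,N)$ lies in $\hcii$, using $XI=0$ and the standard fact from \cite[\S~6.8]{Ja}) is left implicit in the paper and is handled correctly by your observations.
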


\begin{proof}
The Hom-tensor adjunction gives
\[\Hom_{\cO}(X\otimes_U M,N)=\Hom_{U-}(X\otimes_U M,N)\cong \Hom_{U- U}(X,\Hom_{\mathbb C} (M,N)). \]
Now, since $X\in \hcii$, any $U- U$-bimodule map $X\to \Hom_{\mathbb C} (M,N)$ factors through $\cL(M,N)$, that is, we have 
\[\Hom_{\cO}(X\otimes_U M,N)\cong \Hom_{\hcii}(X,\cL (M,N)). \qedhere\]
\end{proof}

Note that $\cL(\Delta_e,\Delta_e)$, as well as $\cL(M,M)$, for any $M\in\cO_0$, 
is an algebra in $\hcii$. The multiplication coming from the internal hom 
construction and the multiplication restricted from $\Hom_\mathbb{C}(M,M)$ coincide. 
We denote by $\operatorname{mod}_{\hcii}(\cL(M,M))$ the category of (right) 
$\cL(M,M)$-modules in $\hcii$. We denote by $\operatorname{proj}_{\hcii}(\cL(M,M))$ 
the category of projective objects in $\operatorname{mod}_{\hcii}(\cL(M,M))$. The morphism 
spaces in both categories are denoted  $\Hom_{\cL(M,M)}(-,-)$. There is a natural action of $\hcii$ on $\operatorname{mod}_{\hcii}(\cL(M,M))$ on the left given by the
monoidal structure of $\hcii$. We refer to \cite{EGNO} for further details on
module categories in monoidal categories.

\begin{proposition}\label{ihom}
For $M\in\cO_0$, we have 
\[\Add_{\cO_0}\{\theta_x M\ |\ x\in W\}\cong \operatorname{proj}_{\hcii}(\cL(M,M))\]
as module categories over $\mathscr{P}$. Moreover, we have 
\[{\Add_{\cO_0}\{FM\ |\ F\in \hcii\}}\cong \operatorname{mod}_{\hcii}(\cL(M,M))\] 
 as module categories over $\hcii$. 
\end{proposition}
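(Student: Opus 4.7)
The plan is to apply the general theory of internal homs for module categories over monoidal categories, developed in \cite{Os} and \cite[\S~7.8--7.10]{EGNO}, to the action of $\hcii$ on $\cO_0$. By Proposition~\ref{L=[]}, the bimodule $\cL(M,M)$ is the internal endomorphism algebra $[M,M]$ and $\cL(M,-)$ is the right adjoint of $-\otimes_U M$; the algebra structure inherited from $\Hom_{\mathbb{C}}(M,M)$ coincides with the canonical one coming from the internal hom, placing us in Ostrik's reconstruction setup.

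For the first equivalence, I would show that the functor
\[\Psi\colon \Add_{\cO_0}\{\theta_x M\,:\,x\in W\}\longrightarrow \operatorname{proj}_{\hcii}(\cL(M,M)),\qquad N\mapsto \cL(M,N),\]
is a $\mathscr{P}$-equivariant equivalence. Fully faithfulness will follow from Proposition~\ref{L=[]} combined with the projection formula
\[\cL(M,\,Y\otimes_U M)\;\cong\; Y\otimes_U \cL(M,M)\qquad\text{in }\hcii,\]
which for $X,Y\in\operatorname{proj}(\hcii)\cong\mathscr{P}$ yields
\[\Hom_{\cO_0}(X\otimes_U M,\,Y\otimes_U M)\cong \Hom_{\hcii}(X,\,Y\otimes_U\cL(M,M))\cong \Hom_{\cL(M,M)}\bigl(\Psi(X\otimes_U M),\,\Psi(Y\otimes_U M)\bigr).\]
For essential surjectivity, every projective $\cL(M,M)$-module in $\hcii$ is a direct summand of a free module $X\otimes \cL(M,M)$ for some $X\in\operatorname{proj}(\hcii)$, and such a free module equals $\Psi(X\otimes_U M)$ by the projection formula.

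The second equivalence then follows by enlarging the first one to cokernel closures. Every $F\in\hcii$ admits a projective presentation $X_1\to X_0\to F\to 0$ in $\hcii$; since $-\otimes_U M$ is right exact, $FM$ is the cokernel of a morphism in $\Add_{\cO_0}\{\theta_x M\}$. Symmetrically, every object of $\operatorname{mod}_{\hcii}(\cL(M,M))$ is a cokernel of a morphism between free $\cL(M,M)$-modules of the form $X\otimes \cL(M,M)$. The first equivalence matches these presentations, and extending $\cL(M,-)$ to the cokernel closures gives the claimed $\hcii$-equivariant equivalence.

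The main obstacle will be verifying the projection formula used in the fully faithfulness step: a priori there is no reason for $\cL(M,-)$ to commute with tensoring on the left by a Harish-Chandra bimodule, since passing to the locally finite part of $\Hom_{\mathbb{C}}$ and tensoring over $U$ are operations of quite different nature. One can either check this by hand, exploiting that $Y\otimes_U-$ is a $\mathfrak{g}$-equivariant left adjoint preserving local finiteness of the adjoint action when $Y\in\hcii$, or invoke the abstract result \cite[Proposition~7.8.13]{EGNO}, which establishes such equivalences in any sufficiently nice module category once the internal endomorphism algebra has been identified, as is done in Proposition~\ref{L=[]}.
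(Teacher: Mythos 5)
Your treatment of the first equivalence is essentially the paper's argument made explicit: full faithfulness and essential surjectivity at the additive level follow from the adjunction of Proposition~\ref{L=[]} together with the projection formula $\cL(M,Y\otimes_U M)\cong Y\otimes_U\cL(M,M)$ for $Y\in\operatorname{proj}(\hcii)\cong\mathscr{P}$, and that formula is legitimate precisely because projective functors admit biadjoints --- this is the rigidity of $\mathscr{P}$ that the paper invokes before delegating the details to \cite[\S 4.2]{MMMT}. Up to the routine check that your composite isomorphism of hom-spaces is the one actually induced by $\cL(M,-)$ (and that the same isomorphisms supply the $\mathscr{P}$-module structure), this half is fine.

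The second equivalence is where there is a genuine gap. You propose to ``extend to cokernel closures'', but $\Add_{\cO_0}\{FM\,:\,F\in\hcii\}$ is an additive closure, not a cokernel closure, and, more importantly, the mechanism you describe does not function: $\cL(M,-)$ is a right adjoint, hence only left exact, so it does not carry the right exact sequence $X_1M\to X_0M\to FM\to 0$ to the presentation $X_1\otimes_U\cL(M,M)\to X_0\otimes_U\cL(M,M)\to F\otimes_U\cL(M,M)\to 0$; in particular you cannot identify $\cL(M,FM)$ with $F\otimes_U\cL(M,M)$, since the projection formula is only available for rigid (projective) $F$. In the other direction, a general object of $\operatorname{mod}_{\hcii}(\cL(M,M))$ is the cokernel of an \emph{arbitrary} $\cL(M,M)$-module map between free modules; under your first equivalence such a map corresponds to an arbitrary morphism $X_1M\to X_0M$ in $\cO_0$, whose cokernel in $\cO_0$ is not visibly of the form $FM$ (nor even in $\Add_{\cO_0}\{FM\}$), and in any case the cokernel computed in $\cO_0$ is not the one transported by a left exact functor. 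So ``matching presentations'' yields neither essential surjectivity nor well-definedness. This is exactly the passage from the projective to the abelian level that the paper outsources to the algebra analogue of \cite[Theorem~4.7]{MMMT}, after recording that $\cL(M,-)$ is a $\mathscr{P}$-module functor and that $\cL(M,M)$ is projective over itself; note also that a blanket appeal to \cite[\S 7.8--7.10]{EGNO} is not available for this step, because $\hcii$ itself is not rigid --- only $\operatorname{proj}(\hcii)$ is. To repair your argument you would need to reproduce that machinery (construct the tensor functor $-\otimes_{\cL(M,M)}M$, prove the unit and counit of the resulting adjunction are isomorphisms on the stated subcategories, and verify that the essential image is $\Add_{\cO_0}\{FM\}$), or else cite it as the paper does.
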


\begin{proof}
Since $\mathscr{P}\cong \operatorname{proj}\hcii$ has monoidal duals (i.e., rigid in the sense of \cite{EGNO}; it is also fiat in the sense of \cite{MM1}), the internal hom theory as in \cite[Section 7]{EGNO} and \cite{MMMT} applies.
Consider the functor 
\[\cL(M,-):\cO_0\to \hcii.\]
Objects of the form  $\cL(M,N)$ can be viewed as objects in  $\operatorname{mod}_{\hcii}(\cL(M,M))$ since composition defines an action $\cL(M,N)\otimes \cL(M, M)\to \cL(M,N)$. 
Now, consider the restriction $\overline \Phi$ of $\cL(M,-)$ as follows:  
\[\overline \Phi:{\Add_{\cO_0}\{FM\ |\ F\in \hcii\}}\to \operatorname{mod}_{\hcii}(\cL(M,M)).\]
Then we have that
\begin{enumerate}
     \item $\overline\Phi$ is a $\mathscr{P}$-module functor;
    \item $\cL(M,M)$ is projective in $\operatorname{mod_{\hcii}}(\cL(M,M))$.
\end{enumerate}
This yields a $\mathscr{P}$-module functor 
\[\Phi: {\Add_{\cO_0}\{\theta M\ |\ \theta\in \mathscr{P} \}}\to \operatorname{proj}_{\hcii}(\cL(M,M))\]
(see the proofs in \cite[\S 4.2]{MMMT} for the dual version).
Since $\Add_{\cO_0}\{\theta M\ |\ \theta\in \mathscr{P} \}$ is, by definition, a transitive representation of $\mathscr{P}$, an algebra analog of \cite[Theorem 4.7]{MMMT} gives the two equivalences in the statement.
\end{proof}

The following is a slight variation of \eqref{eq1}.

\begin{proposition}\label{HC}
We have $\mod_{\hcii}(\cL(\Delta_e,\Delta_e))\cong \cO_0$. 
\end{proposition}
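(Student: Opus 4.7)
The plan is to derive Proposition~\ref{HC} from Proposition~\ref{ihom} applied with $M=\Delta_e$, the crucial preliminary being the identification of $\Delta_e$ with $P_e$. Since $0$ is the maximum element of its dot-orbit $W\cdot 0$, the standard vanishing $\Ext^1_{\cO}(\Delta_e,L_w)=0$ for every $w\in W$ (no element of the orbit lies strictly above $0$) forces $\Delta_e=\Delta(0)$ to be projective; having simple top $L_e$, it must therefore agree with the projective cover $P_e$. Equivalently, this can be read directly off the formulas of Subsection~\ref{ss:KLT}: one has $[P_e]=\underline H_e=H_e=[\Delta_e]$ in $\mathrm{Gr}(\cO_0^\Z)$, which forces $[P_e:\Delta_y]=\delta_{y,e}$ and hence $P_e\cong\Delta_e$. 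Consequently $\cL(\Delta_e,\Delta_e)=\cL(P_e,P_e)$ as algebras in $\hcii$, and the task reduces to establishing $\cO_0\cong \mod_{\hcii}(\cL(P_e,P_e))$.

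For this, Proposition~\ref{ihom} applied with $M=P_e$ supplies the equivalence of $\hcii$-module categories
\[
\Add_{\cO_0}\{FP_e\mid F\in\hcii\}\;\cong\;\mod_{\hcii}(\cL(P_e,P_e)),
\]
so it suffices to verify that the left-hand side exhausts $\cO_0$. I would combine two ingredients: via the monoidal equivalence \eqref{hcii=P}, every $F\in\hcii$ fits into a two-term presentation by objects of $\cP=\operatorname{proj}(\hcii)$; and the functor $(-)\otimes_U P_e\colon\hcii\to\cO_0$, being a left adjoint to $\cL(P_e,-)$ by Proposition~\ref{L=[]}, is right exact and sends $\theta_w$ to $P_w$ by normalization. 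Hence $FP_e$ is the cokernel of the corresponding morphism between direct sums of indecomposable projectives in $\cO_0$, and since $\cO_0$ has enough projectives, every module arises this way.

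In short, once $P_e=\Delta_e$ is recorded, Proposition~\ref{HC} is an essentially formal consequence of Proposition~\ref{ihom} together with the presence of enough projectives in $\cO_0$; no serious obstacle is anticipated.
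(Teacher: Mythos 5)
Your preliminaries are fine: $\Delta_e\cong P_e$ is correct (and is exactly what makes $\theta_x\Delta_e\cong P_x$), and invoking Proposition~\ref{ihom} with $M=\Delta_e$ is the same mechanism the paper uses. The gap is in your last step. You correctly observe that every $F\in\hcii\cong\overline{\cP}$ has a presentation $\theta_1\to\theta_0\to F\to 0$ with $\theta_i$ sums of $\theta_w$'s, so that $F\otimes_U P_e$ is a cokernel of a map between projectives of $\cO_0$; but the conclusion you need is the \emph{converse}: that every $M\in\cO_0$, written as $\operatorname{coker}(Q_1\to Q_0)$ with $Q_i$ projective, lies in $\Add_{\cO_0}\{F\otimes_U P_e\}$. "Since $\cO_0$ has enough projectives" does not give this: to realize $M$ as $F\otimes_U P_e$ along your lines you must lift the chosen morphism $Q_1\to Q_0$ in $\cO_0$ to a morphism $\theta_1\to\theta_0$ in $\hcii$, i.e.\ you are tacitly using fullness of the evaluation functor $\cP\to\operatorname{proj}\cO_0$, $\theta\mapsto\theta\otimes_U P_e$. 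That fullness is neither stated nor among the facts you cite (note that $\mathrm{Hom}_{\hcii}(\theta_x,\theta_y)$ and $\mathrm{Hom}_{\cO}(P_x,P_y)$ are genuinely different spaces; making the lifting work would itself require an argument, e.g.\ via surjectivity of the unit maps $\theta_w\to\cL(\Delta_e,\theta_w\Delta_e)$ using \cite[\S 6.8--6.9]{Ja}).

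The hole is easy to close, but not by the sentence you wrote. The cheapest repair is the equivalence \eqref{eq1}: for any $M\in\cO_0$ one has $M\cong\cL(P_e,M)\otimes_U P_e$ with $\cL(P_e,M)\in\hci\subseteq\hcii$, so $\Add_{\cO_0}\{F\otimes_U P_e\mid F\in\hcii\}=\cO_0$ outright, and then your appeal to the second equivalence of Proposition~\ref{ihom} finishes the proof. Alternatively, do what the paper does and avoid the essential-surjectivity question altogether: use the \emph{first} equivalence of Proposition~\ref{ihom}, which identifies $\Add_{\cO_0}\{\theta_x\Delta_e\mid x\in W\}=\operatorname{proj}\cO_0$ with $\operatorname{proj}_{\hcii}(\cL(\Delta_e,\Delta_e))$; there no lifting is needed, because the relevant morphism spaces on the bimodule side are $\mathrm{Hom}_{\cL(\Delta_e,\Delta_e)}(\theta_x\otimes\cL(\Delta_e,\Delta_e),\theta_y\otimes\cL(\Delta_e,\Delta_e))$ rather than $\mathrm{Hom}_{\hcii}(\theta_x,\theta_y)$, and both abelian categories are recovered from their subcategories of projectives.
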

\begin{proof}
By Proposition \ref{ihom}, we have equivalences 
\[\Add_{\cO_0}\{\theta_x\Delta_e\ |\ x\in W\}\cong \operatorname{proj}_{\hcii}(\cL(\Delta_e,\Delta_e))\]
and 
\begin{equation}\label{eqt}
  {\Add_{\cO_0}\{F\otimes_U \Delta_e\ |\ F\in \hcii\}}\cong \operatorname{mod}_{\hcii}(\cL(\Delta_e,\Delta_e)). 
\end{equation}
But $\theta_x\Delta_e\cong P_x$, and therefore
$\Add_{\cO_0}\{\theta_x\Delta_e\ |\ x\in W\}\cong \operatorname{proj}\cO_0.$ The claim follows.
\end{proof}

\subsection{The image of $\cL(L_w,L_w)$ in $\mathcal{O}_0$}\label{s3.2}

Graded characters of the projective modules $P_w$, $w\in W$, can be computed using Kazhdan-Lusztig combinatorics. Recall from Subsection \ref{ss:cella} the left $\leq_\mathtt{L}$, right $\leq_\mathtt{R}$ and the two-sided order $\leq_\mathtt{J}$ on $W$.

Note that $\mathrm{Ann}(L_x)=\mathrm{Ann}(L_y)$ if and only if 
$x\sim_\mathtt{L} y$, and moreover, the inclusion order of primitive ideals 
for simple modules in $\mathcal{O}_0$ is the opposite of $\leq_\mathtt{L}$, see \cite{BV1,BV2}.
In particular, the image of $U(\mathfrak{g})$
in $\cL(L_w,L_w)$ depends (up to isomorphism) only on the left cell of $w$.

Let $d\in W$ be a Duflo element. The module $\theta_d L_d$ is indecomposable
with simple top $L_d\langle\mathbf{a}(d)\rangle$ and simple socle
$L_d\langle-\mathbf{a}(d)\rangle$. It has a unique simple subquotient
isomorphic to the trivial $\mathfrak{g}$-module, and this subquotient is in
degree zero. Therefore there is a unique, up to scalar, non-zero homomorphism 
from $P_e$ to $\theta_d L_d$. We denote by $D_d$ the image of this homomorphism.
The module $D_d$ is indecomposable with simple top $L_e$ and simple socle
$L_d\langle-\mathbf{a}(d)\rangle$. All other simple subquotients of 
$D_d$ have the form $L_w\langle i\rangle$, where $-\mathbf{a}(d)<i<0$
and $x<_\mathtt{R} d$ (and hence also $x<_\mathtt{L} d$ since $d$ is an involution). 
We refer to \cite[Section~3]{Ma2} for details.

\begin{proposition}\label{prop5}
We have
\begin{displaymath}
(U(\mathfrak{g})/\mathrm{Ann}(L_d))\otimes_{U(\mathfrak{g})}P_e\cong D_d.
\end{displaymath}
\end{proposition}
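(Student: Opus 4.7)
The plan is to identify both sides as quotients of $P_e$ and verify that their kernels coincide.

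First I would note that, since $L_d$ has trivial central character, $\mathbf{m} U \subseteq \operatorname{Ann}(L_d)$, so $U/\operatorname{Ann}(L_d) \in \hci$. An elementary manipulation of tensor products gives
\[
(U(\mathfrak g)/\operatorname{Ann}(L_d)) \otimes_{U(\mathfrak g)} P_e \;\cong\; P_e\big/\bigl(\operatorname{Ann}(L_d)\cdot P_e\bigr).
\]
Since $D_d = P_e/\ker\phi$ where $\phi\colon P_e\to\theta_dL_d$ is the essentially unique nonzero homomorphism, the statement reduces to the equality $\operatorname{Ann}(L_d)\cdot P_e = \ker\phi$ of submodules of $P_e$.

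Next I would establish the inclusion $\operatorname{Ann}(L_d)\cdot P_e\subseteq \ker\phi$, which amounts to $\operatorname{Ann}(L_d)\cdot\theta_dL_d = 0$. For this, one uses the general fact that the annihilator of $\theta M$, for $\theta$ a projective functor and $M$ a simple module in $\mathcal O_0$, equals the intersection of the annihilators of its composition factors, combined with Lemma~\ref{nlem1} to see that each composition factor $L_x$ of $\theta_dL_d$ satisfies $x\leq_{\mathtt R} d$; invoking $d=d^{-1}$ and Proposition~\ref{cella}\eqref{cella:i}--\eqref{cella:p4} one gets $\operatorname{Ann}(L_d)\subseteq \operatorname{Ann}(L_x)$ for all such $x$, whence the intersection equals $\operatorname{Ann}(L_d)$. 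This furnishes a canonical surjection $\alpha\colon P_e/(\operatorname{Ann}(L_d)\cdot P_e)\twoheadrightarrow D_d$.

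To show that $\alpha$ is an isomorphism, I would apply the Bernstein--Gelfand equivalence \eqref{eq1}: the map $\alpha$ corresponds on the $\hci$-side to a surjection $U/\operatorname{Ann}(L_d)\twoheadrightarrow\ccL(P_e,D_d)$. The target equals the cyclic sub-bimodule of $\ccL(P_e,\theta_dL_d)$ generated by the distinguished element $\phi\in\mathrm{hom}(P_e,\theta_dL_d)$, which in turn is isomorphic to $U/\operatorname{Ann}_U(\phi)=U/\operatorname{Ann}(D_d)$. It remains to identify $\operatorname{Ann}(D_d)$ with $\operatorname{Ann}(L_d)$: the inclusion $\operatorname{Ann}(D_d)\subseteq\operatorname{Ann}(L_d)$ holds because $L_d\langle-\mathbf a(d)\rangle$ is the simple socle of $D_d$, while the reverse inclusion is precisely the preceding step. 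Hence $\alpha$ is injective and the proposition follows.

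The main obstacle, as I see it, is the second step: proving that the primitive ideal $\operatorname{Ann}(L_d)$ annihilates $\theta_dL_d$ and not merely acts nilpotently on it. The naive composition-factor chase only yields the nilpotent statement $\operatorname{Ann}(L_d)^n\cdot\theta_dL_d=0$, so one must appeal to the deeper fact that annihilators of translations of simples are given by the intersection of the annihilators of composition factors.
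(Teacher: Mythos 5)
Your overall frame (identify both sides as quotients of $P_e$ via the Bernstein--Gelfand equivalence and reduce everything to the equality $\operatorname{Ann}(D_d)=\operatorname{Ann}(L_d)$) is exactly the paper's route, but your second step, which you yourself flag as the crux, rests on a false statement. The ``general fact'' that $\operatorname{Ann}(\theta M)$, for $\theta$ a projective functor and $M$ simple, equals the intersection of the annihilators of the composition factors of $\theta M$ is not true, and neither is the specific claim $\operatorname{Ann}(L_d)\cdot\theta_dL_d=0$ that you reduce to. Already for $\mathfrak{g}=\mathfrak{sl}_2$ and the Duflo element $d=s=w_0$ one has $\theta_{w_0}L_{w_0}\cong P_{w_0}$, the big projective, while $\operatorname{Ann}(L_{w_0})=U(\mathfrak{g})\mathbf{m}$; but $\mathbf{m}P_{w_0}\neq 0$, since by Soergel's Endomorphismensatz the center acts on $P_{w_0}$ through the coinvariant algebra, hence non-semisimply, even though $\mathbf{m}$ lies in the annihilator of every composition factor of $P_{w_0}$. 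The same phenomenon occurs for any $\mathfrak g\neq 0$: a translated simple module generally has only a generalized central character, so the primitive ideal $\operatorname{Ann}(L_d)$ acts nilpotently, not trivially, on $\theta_dL_d$, and its annihilator is strictly smaller than the intersection over composition factors. Note also that the inclusion $\operatorname{Ann}(L_d)P_e\subseteq\ker\phi$ is \emph{not} equivalent to $\operatorname{Ann}(L_d)\theta_dL_d=0$; it only requires the weaker statement $\operatorname{Ann}(L_d)\cdot D_d=0$, i.e.\ that the primitive ideal kills the image of $P_e$ in $\theta_dL_d$, not all of $\theta_dL_d$.

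That weaker statement, equivalently $\operatorname{Ann}(D_d)=\operatorname{Ann}(L_d)$, is the actual content of the proposition, and it cannot be obtained by a composition-factor chase: it is a genuine result about primitive ideals, which the paper imports as \cite[Lemma~6]{KhM} (in the spirit of Joseph's work underlying Kostant's problem for Verma modules). Once that input is granted, the remainder of your argument (the cyclicity of $\cL(P_e,D_d)$, the identification of the cyclic sub-bimodule generated by $\phi$ with $U/\operatorname{Ann}(D_d)$, and the easy inclusion $\operatorname{Ann}(D_d)\subseteq\operatorname{Ann}(L_d)$ from the socle) does recover the paper's proof, which phrases the same reduction more economically via the bijection between sub-bimodules of $U(\mathfrak{g})/U(\mathfrak{g})\mathbf{m}$, i.e.\ two-sided ideals, and submodules of $P_e$. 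So the proposal is incomplete precisely at the point where the real work lies: you must either cite \cite[Lemma~6]{KhM} (or an equivalent primitive-ideal result) or prove $\operatorname{Ann}(L_d)\cdot D_d=0$ by other means; the route through the annihilator of $\theta_dL_d$ is irreparably blocked.
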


\begin{proof}
The equivalence \eqref{eq1} sends, by construction, 
$U(\mathfrak{g})/(U(\mathfrak{g})\mathbf{m})$
to $P_e$ inducing a bijection between subobjects. Note that subobjects of the
former are exactly the two-sided ideals and that $D_d$ is, by definition, a quotient of $P_e$. 
Therefore, it is enough to argue that $\mathrm{Ann}(D_d)=\mathrm{Ann}(L_d)$.
This follows from \cite[Lemma~6]{KhM}.
\end{proof}

Denote by $\overline{D}_d$ the intersection of the kernels of all possible
homomorphisms 
\begin{displaymath}
\varphi:\theta_d L_d\to \theta_w L_d\langle i\rangle, \quad
w\sim_\mathtt{R} d,\quad i\in\mathbb{Z},
\end{displaymath}
satisfying $\varphi(D_d)=0$.
By construction, $D_d\subset \overline{D}_d$.
In particular, $L_d$ is the simple socle of $\overline{D}_d$
and all other composition factors of $\overline{D}_d$
are of the form $L_w\langle i\rangle$, where $-\mathbf{a}(d)<i$
and $w<_\mathtt{R} d$ (and also $w<_\mathtt{L} d$).

\begin{proposition}\label{prop6}
We have
\begin{displaymath}
\cL(L_d,L_d)\otimes_{U(\mathfrak{g})}P_e\cong \overline{D}_d.
\end{displaymath}
\end{proposition}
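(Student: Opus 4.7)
The plan is to pass through the Bernstein--Gelfand equivalence \eqref{eq1} between $\cO_0$ and $\hci$ and reinterpret the desired isomorphism as a statement about Harish-Chandra bimodules. Because $d$ is a Duflo element, it is an involution, so $d=d^{-1}$ and there is a canonical surjection $\pi\colon P_d\twoheadrightarrow L_d$. Precomposition by $\pi$, combined with the biadjunction identification
\[
\cL(P_d,L_d)=\cL(\theta_d P_e,L_d)\cong \cL(P_e,\theta_d L_d),
\]
produces a bimodule embedding $\iota\colon \cL(L_d,L_d)\hookrightarrow \cL(P_e,\theta_d L_d)$. Applying the exact equivalence $-\otimes_{U(\mathfrak g)}P_e$ from \eqref{eq1} to $\iota$ yields an inclusion
\[
N:=\cL(L_d,L_d)\otimes_{U(\mathfrak g)}P_e\ \hookrightarrow\ \theta_d L_d
\]
in $\cO_0$. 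Composing with the tautological inclusion $U(\mathfrak g)/\operatorname{Ann}(L_d)\hookrightarrow \cL(L_d,L_d)$ and invoking Proposition~\ref{prop5}, one obtains the chain $D_d\subseteq N\subseteq \theta_d L_d$.

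It remains to prove $N=\overline{D}_d$. For the inclusion $N\subseteq \overline{D}_d$, I would take an arbitrary $\varphi\colon \theta_d L_d\to \theta_w L_d\langle i\rangle$ with $w\sim_\mathtt{R} d$ and $\varphi|_{D_d}=0$, translate it through \eqref{eq1} into a bimodule morphism $\tilde\varphi\colon \cL(P_e,\theta_d L_d)\to \cL(P_e,\theta_w L_d\langle i\rangle)$ vanishing on $U(\mathfrak g)/\operatorname{Ann}(L_d)$, and then verify that $\tilde\varphi$ automatically vanishes on the subbimodule $\cL(L_d,L_d)$. Here I would exploit the description of $\cL(L_d,L_d)$ inside $\cL(P_d,L_d)$ as the bimodule of maps of the form $f\circ \pi$, i.e.\ $\cL(L_d,L_d)=\cL(L_d,L_d)\cdot \pi$, together with the natural-transformation interpretation of $\hcii$ via \eqref{hcii=P}. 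For the reverse inclusion $\overline{D}_d\subseteq N$, let $Q$ denote the bimodule cokernel of $\iota$; the short exact sequence
\[
0\to N\to \theta_d L_d\to Q\otimes_{U(\mathfrak g)}P_e\to 0
\]
in $\cO_0$ displays $\theta_d L_d/N$ as a module whose composition factors lie in strictly smaller right cells, hence each embeds into some $\theta_w L_d\langle i\rangle$ with $w\sim_\mathtt{R} d$, producing enough maps $\varphi$ vanishing on $D_d$ whose common kernel equals $N$.

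The main obstacle will be the implication ``$\tilde\varphi|_{U(\mathfrak g)/\operatorname{Ann}(L_d)}=0$ forces $\tilde\varphi|_{\cL(L_d,L_d)}=0$'', which is not automatic from bimodule linearity: precisely when $\mathbf{K}(d)$ fails, one has $U(\mathfrak g)/\operatorname{Ann}(L_d)\subsetneq \cL(L_d,L_d)$, so a larger test-space needs to be handled. The argument must leverage the specific way $\tilde\varphi$ arises from a natural transformation of projective functors (via \eqref{hcii=P}) and the fact that $\pi$ serves as a distinguished generator of $\cL(P_d,L_d)$ through which every element of $\cL(L_d,L_d)$ factors; once this compatibility is established, the vanishing on $U(\mathfrak g)\cdot\pi=U(\mathfrak g)/\operatorname{Ann}(L_d)$ will propagate to the full subbimodule $\cL(L_d,L_d)\cdot\pi=\cL(L_d,L_d)$, completing the harder inclusion.
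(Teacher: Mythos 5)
Your setup through the chain $D_d\subseteq N:=\cL(L_d,L_d)\otimes_{U(\mathfrak g)}P_e\subseteq\theta_dL_d$ is fine (it is essentially Corollary~\ref{subz} in the special case $y=z=d$), but neither half of the identification $N=\overline{D}_d$ is actually proved. For $N\subseteq\overline{D}_d$ you must show that every $\varphi\colon\theta_dL_d\to\theta_wL_d\langle i\rangle$ with $w\sim_\mathtt{R}d$ and $\varphi(D_d)=0$ also kills $N$, and you explicitly leave this as ``the main obstacle'': the proposed mechanism --- vanishing on $U(\mathfrak g)/\operatorname{Ann}(L_d)$ ``propagates'' to the possibly strictly larger sub-bimodule $\cL(L_d,L_d)$ because everything factors through $\pi$ --- is exactly the non-formal point (it is where $\mathbf{K}(d)$ can fail), and no argument for it is offered; ``once this compatibility is established'' is a placeholder, not a proof. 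What actually makes the inclusion true is representation-theoretic input: by adjunction the graded multiplicities of $L_w$ in $N$, for $w$ in the right cell of $d$, are computed by $\mathrm{hom}(\theta_wL_d,L_d\langle\cdot\rangle)$, which by \cite[Proposition~1]{Ma2} is concentrated in a single (extreme) degree, so such factors occur only in the socle of $N$; since any nonzero $\varphi(N)$ would have to contain such a factor off the socle, $\varphi$ kills $N$. This is precisely the content of Proposition~\ref{present} and Corollary~\ref{E_yz} later in the paper (and, in the paper's own proof of the present proposition, of \cite[Lemma~11]{KhM} and \cite[Lemma~12]{KhM}); your sketch contains none of it.

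The reverse inclusion $\overline{D}_d\subseteq N$ is in worse shape, because the structural claim it rests on is false. Since $\overline{D}_d$ (and a fortiori $N$) contains exactly one composition factor from the two-sided cell of $d$, namely its socle $L_d\langle-\mathbf{a}(d)\rangle$, the quotient $\theta_dL_d/N$ contains for instance the simple top $L_d\langle\mathbf{a}(d)\rangle$ of $\theta_dL_d$, so its composition factors certainly do not all lie in strictly smaller right cells. Moreover, the inference drawn from that claim runs backwards: the modules $\theta_wL_d\langle i\rangle$ with $w\sim_\mathtt{R}d$ are sums of the indecomposable projective-injective objects of $\mathcal{O}_0^{\hat{\mathcal{R}}}$, whose socles consist of simples $L_z$ with $z$ in the two-sided cell of $d$; a nonzero module all of whose composition factors lie in strictly smaller right cells (hence strictly smaller two-sided cells) admits no embedding into a sum of them, so your premise would preclude, rather than produce, the maps you need. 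What is actually required is the opposite statement, that the socle of $\theta_dL_d/N$ consists only of cell factors (equivalently, that $N$ is the largest submodule of $\theta_dL_d$ whose non-socle factors lie outside the cell, as in Corollary~\ref{E_yz}), and establishing this is again the same unproven core as in the first inclusion. For comparison, the paper sidesteps all of this by working on the bimodule side: by \eqref{eq1} it suffices to prove $\cL(P_e,\overline{D}_d)\cong\cL(L_d,L_d)$; since $\mathrm{Hom}(\theta K,\overline{D}_d)=0$ for the kernel $K$ of $P_e\tto D_d$ by \cite[Corollary~3]{Ma2}, one gets $\cL(P_e,\overline{D}_d)\cong\cL(D_d,\overline{D}_d)$, which is identified with $\cL(L_d,L_d)$ by \cite[Lemma~11]{KhM} and \cite[Lemma~12]{KhM}. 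Your route can be completed, but only by importing that socle/degree analysis, which is the real content missing from the proposal.
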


\begin{proof} 
By \eqref{eq1}, it is enough to prove that
\begin{equation}\label{eq2}
\cL(P_e,\overline{D}_d)\cong \cL(L_d,L_d).
\end{equation}
The natural projection $P_e\tto D_d$ induces an embedding
\begin{equation}\label{eq3}
\cL(D_d,\overline{D}_d)\subset\cL(P_e,\overline{D}_d).
\end{equation}
Let $K$ be the kernel of $P_e\tto D_d$.
From \cite[Corollary~3]{Ma2} it follows that 
$\theta K$ has no non-zero homomorphisms to $\overline{D}_d$,
for any projective functor $\theta$. Therefore
the inclusion \eqref{eq3} is, in fact, an isomorphism. 
Consequently, we have $\cL(P_e,\overline{D}_d)\cong \cL(D_d,\overline{D}_d)$.
Now \eqref{eq2} follows from
\cite[Lemma~11]{KhM} and \cite[Lemma~12]{KhM}.
\end{proof}

As an immediate corollary from Propositions~\ref{prop5} and \ref{prop6},
we have the following statement (which is a reformulation of the main result
of \cite{KhM}):

\begin{corollary}\label{cor8}
For $d\in W$ Duflo element, $\mathbf{K}(d)$ is equivalent to
$D_d=\overline{D}_d$.
\end{corollary}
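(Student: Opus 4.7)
The plan is to apply the Bernstein--Gelfand equivalence $-\otimes_{U(\mathfrak{g})}P_e:\hci\xrightarrow{\sim}\cO_0$ from \eqref{eq1} to translate the bimodule statement $\mathbf{K}(d)$ directly into a statement about submodules of $\theta_dL_d$. First, I would observe that the canonical algebra map $U(\mathfrak{g})\to \cL(L_d,L_d)$ has kernel exactly $\mathrm{Ann}(L_d)$, hence factors as an injection $\iota:U(\mathfrak{g})/\mathrm{Ann}(L_d)\hookrightarrow \cL(L_d,L_d)$ of Harish--Chandra bimodules, and $\mathbf{K}(d)$ is by definition the assertion that $\iota$ is an isomorphism. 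Both source and target of $\iota$ lie in $\hci$, since their right $Z(\mathfrak{g})$-actions factor through the central character of $L_d$, which is zero.

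Next, I would apply the equivalence $-\otimes_U P_e$ to $\iota$. Propositions~\ref{prop5} and~\ref{prop6} identify its source and target with $D_d$ and $\overline{D}_d$ respectively, and exactness of the equivalence produces an injective morphism $\tilde\iota:D_d\hookrightarrow \overline{D}_d$ in $\cO_0$. The next step is to verify that $\tilde\iota$ coincides with the tautological inclusion $D_d\subseteq \overline{D}_d$ recorded in \S\ref{s3.2}. This should follow from the fact that both identifications in Propositions~\ref{prop5}--\ref{prop6} are built from the same canonical (up to scalar, unique) nonzero homomorphism $P_e\to \theta_dL_d$: tracing through, the composed map $P_e\cong U(\mathfrak{g})/(U(\mathfrak{g})\mathbf{m})\otimes_U P_e \to \cL(L_d,L_d)\otimes_U P_e\cong \overline{D}_d$ factors through $D_d$ via the tautological surjection $P_e\tto D_d$ and the natural inclusion of \S\ref{s3.2}, so $\tilde\iota$ must be this inclusion.

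Once this compatibility is in hand, the conclusion is immediate: $\mathbf{K}(d)$ is equivalent to $\iota$ being an isomorphism, which, since $-\otimes_U P_e$ is an equivalence of categories, is equivalent to $\tilde\iota$ being an isomorphism, i.e., to $D_d=\overline{D}_d$. The essentially only nontrivial step is the compatibility check in the previous paragraph; everything else is bookkeeping using Propositions~\ref{prop5} and~\ref{prop6} and the exactness of \eqref{eq1}.
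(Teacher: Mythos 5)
Your proof is correct and follows essentially the same route as the paper, which states the corollary as an immediate consequence of Propositions~\ref{prop5} and~\ref{prop6} via the equivalence \eqref{eq1}; your explicit compatibility check (that the induced map is the tautological inclusion $D_d\subseteq\overline{D}_d$) simply spells out what the paper leaves implicit, and could even be bypassed by noting that an abstract isomorphism between the finite-length modules $D_d\subseteq\overline{D}_d$ already forces equality.
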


\begin{example}\label{ex9}

In type $A_3$ with Dynkin diagram \begin{tikzpicture}[scale=0.4]
\draw (0 cm,0) -- (4 cm,0);
\draw[fill=white] (0 cm, 0 cm) circle (.15cm) node[above=1pt]{\scriptsize $1$};
\draw[fill=white] (2 cm, 0 cm) circle (.15cm) node[above=1pt]{\scriptsize $2$};
\draw[fill=white] (4 cm, 0 cm) circle (.15cm) node[above=1pt]{\scriptsize $3$};
\end{tikzpicture},
where $1,2,3$ are simple reflections, it is known from \cite{MS2}
that $\mathbf{K}(13)$ is false. In this case one can compute that
the graded composition multiplicities of $D_{13}$ and
$\overline{D}_{13}$ are, respectively, as follows (here $e$ is in degree zero,
and we also abbreviate $L_w$ by $w$, for simplicity): 
\begin{equation}\label{eqn1}
\xymatrix@C=1em@R=1em{
&e\ar@{-}[dl]\ar@{-}[dr]&\\
1\ar@{-}[dr]&&3\ar@{-}[dl]\\
&13&
}\quad\quad\quad\quad\quad\quad\quad\quad\quad
\xymatrix@C=1em@R=1em{
&&e\ar@{-}[dl]\ar@{-}[dr]&\\
123\ar@{-}[drr]&1\ar@{-}[dr]&&3\ar@{-}[dl]&321\ar@{-}[dll]\\
&&13&
}
\end{equation}

\end{example}

\subsection{$\mathbf{K}(d)$ implies $\mathbf{KMM}(*,d)$}\label{s3.3}

\begin{proposition}\label{prop7}
Let $d\in W$ be a Duflo element. Then 
$\mathbf{K}(d)$ implies $\mathbf{KMM}(*,d)$.
\end{proposition}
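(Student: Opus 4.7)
My plan is to translate $\dim\mathrm{hom}(\theta_xL_d, L_d\langle\mathbf{a}(x)\rangle)$ into a graded composition multiplicity inside $\overline{D}_d$, and then exploit $\mathbf{K}(d)$ through the identity $\overline{D}_d = D_d$ from Corollary~\ref{cor8} to bound it by the known structure of $P_e$.

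The first step applies the adjunction of Proposition~\ref{L=[]} with $M=L_d$ and $N=L_d\langle\mathbf{a}(x)\rangle$. Writing $\theta_xL_d = \cL(\Delta_e,P_x)\otimes_U L_d$ via the action of $\hcii$ on $\cO_0$, this gives
\[
\mathrm{hom}_{\cO}(\theta_xL_d, L_d\langle\mathbf{a}(x)\rangle) \cong \mathrm{hom}_{\hcii}\bigl(\cL(\Delta_e,P_x),\, \cL(L_d,L_d)\langle\mathbf{a}(x)\rangle\bigr).
\]
Since $\mathbf{m}$ annihilates both $\Delta_e$ and $L_d$ (central character $\chi_0$), a direct check shows that the bimodules $\cL(\Delta_e,P_x)$ and $\cL(L_d,L_d)$ both actually lie in $\hci$. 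Under the Bernstein-Gelfand equivalence $\hci\cong\cO_0$ (the inverse of \eqref{eq1} given by $-\otimes_U\Delta_e$), these correspond respectively to $P_x$ (since $\mathbf{m}\Delta_e=0$ gives $\cL(\Delta_e,P_x)\otimes_U\Delta_e = P_x$) and to $\overline{D}_d$ (by Proposition~\ref{prop6}), yielding
\[
\mathrm{hom}_{\cO}(\theta_xL_d, L_d\langle\mathbf{a}(x)\rangle) \cong \mathrm{hom}_{\cO}(P_x, \overline{D}_d\langle\mathbf{a}(x)\rangle).
\]

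Since $P_x$ is the projective cover of $L_x\langle 0\rangle$, this last dimension equals the graded composition multiplicity $[\overline{D}_d : L_x\langle-\mathbf{a}(x)\rangle]$. Under $\mathbf{K}(d)$, Corollary~\ref{cor8} gives $\overline{D}_d = D_d$, and $D_d$ is a quotient of $P_e$ by construction; hence
\[
[\overline{D}_d : L_x\langle-\mathbf{a}(x)\rangle] = [D_d : L_x\langle-\mathbf{a}(x)\rangle] \leq [P_e : L_x\langle-\mathbf{a}(x)\rangle].
\]
By \eqref{eq9-1}, the right-hand side is $1$ if $x$ is a Duflo element and $0$ otherwise, so in either case it is at most $1$, which gives $\mathbf{KMM}(x,d)$.

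The one subtlety worth checking carefully is that $\cL(\Delta_e,P_x)$ genuinely implements $\theta_x$ via $-\otimes_U L_d$ and is identified with $P_x$ under BG; but both statements follow immediately from $\mathbf{m}\Delta_e=0$ and the monoidal structure recorded in \S\ref{s3.1}. The remainder of the argument is a transparent multiplicity estimate, so I do not expect any further obstacles.
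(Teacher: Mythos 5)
Your proposal is correct and follows essentially the same route as the paper: it uses $\mathbf{K}(d)\Leftrightarrow D_d=\overline{D}_d$ (Proposition~\ref{prop6}/Corollary~\ref{cor8}), the fact that $D_d$ is a quotient of $P_e$, and the multiplicity bound \eqref{eq9}--\eqref{eq9-1}. The only difference is that you spell out explicitly the internal-hom adjunction identifying $\dim\mathrm{hom}(\theta_xL_d,L_d\langle\mathbf{a}(x)\rangle)$ with $[\overline{D}_d:L_x\langle-\mathbf{a}(x)\rangle]$, a step the paper leaves implicit (it is the same adjunction as in \eqref{lnn137}), and your identifications there are accurate.
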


\begin{proof}
By Proposition~\ref{prop6}, the claim $\mathbf{K}(d)$
is equivalent to  $D_d=\overline{D}_d$.  
Note that $D_d$ is a quotient of $P_e$, and hence \eqref{eq9} and \eqref{eq9-1} implies that
$[D_d:L_x\langle -\mathbf{a}(x)\rangle]\leq 1$. This implies $\mathbf{KMM}(*,d)$.
\end{proof}

From Example~\ref{ex9} we see that $\mathbf{K}$ is strictly stronger
than $\mathbf{KMM}$. Indeed, the two additional elements on the 
right picture in \eqref{eqn1} (compared with the left picture) both have 
the $\mathbf{a}$-value $1$ and have multiplicity one (they are not Duflo elements either). Therefore, in this case, we have that
$\mathbf{K}(13)$ is false while $\mathbf{KMM}(*,13)$ is true.

\section{$\mathbf{K}$ vs.  $\mathbf{KM}$ and $\mathbf{Kh}$}\label{s30}

\subsection{Some computations in $\cO^{\hat{\mathcal{R}}}$}

\begin{lemma}\label{d unit}
Let $\mathcal{L}$ be a left cell in $W$ and $d\in \mathcal{H}=\mathcal{L}\cap \mathcal{L}\inv$
the corresponding Duflo element. 
For each $y\in \mathcal{L}$ and $y'\in \mathcal{L}\inv$, we have
\begin{equation}\label{eq:dy=y}
\theta_d\theta_y = \theta_y\langle \mathbf{a}(d)\rangle \oplus \theta,
\qquad
\theta_{y'}\theta_d = \theta_{y'}\langle \mathbf{a}(d)\rangle \oplus \theta',
\end{equation}
where $\theta,\theta'\in\mathscr{P}$ are (possibly empty) direct sums of (some) 
$\theta_z\langle a\rangle$ with $z\not\leq_\mathtt{J} d$ or $a<\mathbf{a}(d)$.
\end{lemma}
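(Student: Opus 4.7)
The plan is to translate both decompositions into the Hecke algebra via the isomorphism $\mathrm{Gr}_{\oplus}(\mathscr{P})^{\mathrm{op}}\cong H(W,S)$ of \S\ref{ss:KLT}, and then to read off the multiplicities from the top-degree structure coefficients, invoking Lemma~\ref{[d unit]}. Under this isomorphism, $[\theta_d\theta_y]$ maps to $\kl_y\kl_d$ (the order-swap being forced by the $\mathrm{op}$) and $[\theta_{y'}\theta_d]$ maps to $\kl_d\kl_{y'}$. Expanding via \eqref{hxyz}, the multiplicity of $\theta_z\langle a\rangle$ in $\theta_d\theta_y$ is then the coefficient of $v^a$ in $h_{y,d,z}$, and analogously for $\theta_{y'}\theta_d$ with $h_{d,y',z}$.

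Next, I would use that $\deg h_{y,d,z}\leq \mathbf{a}(z)$ by the very definition of $\mathbf{a}$, and that the coefficient of $v^{\mathbf{a}(z)}$ in $h_{y,d,z}$ is precisely the asymptotic structure coefficient $\gamma_{y,d,z\inv}$. Since $y\in\mathcal{L}$, Lemma~\ref{[d unit]} yields $t_y t_d=t_y$, which via \eqref{tt} reads $\gamma_{y,d,z\inv}=\delta_{z,y}$. Combined with $\mathbf{a}(y)=\mathbf{a}(d)$ (as $y\sim_\mathtt{J} d$), this pinpoints the summand $\theta_y\langle\mathbf{a}(d)\rangle$ with multiplicity exactly $1$. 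For any other summand $\theta_z\langle a\rangle$, the vanishing $\gamma_{y,d,z\inv}=0$ (for $z\neq y$) forces $\deg h_{y,d,z}<\mathbf{a}(z)$, while non-vanishing of $h_{y,d,z}$ gives $z\leq_\mathtt{R} d$ and hence $\mathbf{a}(z)\leq\mathbf{a}(d)$ by Proposition~\ref{cella}\eqref{cella:p4}. Thus $a\leq \deg h_{y,d,z}<\mathbf{a}(z)\leq\mathbf{a}(d)$, so $a<\mathbf{a}(d)$ as the lemma requires.

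The second identity will follow by the symmetric argument, using the companion Duflo identity $t_d t_{y'}=t_{y'}$ from Lemma~\ref{[d unit]} (valid since $y'\in\mathcal{L}\inv$), which yields $\gamma_{d,y',z\inv}=\delta_{z,y'}$. The only genuinely delicate point I expect is to correctly match the side of Lemma~\ref{[d unit]} to each product: the $\mathrm{op}$ in the algebra isomorphism pairs $\theta_d\theta_y$ with $\kl_y\kl_d$, hence with $t_y t_d$, and $\theta_{y'}\theta_d$ with $\kl_d\kl_{y'}$, hence with $t_d t_{y'}$; confusing these two would derail the argument.
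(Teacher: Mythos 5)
Your overall route is the same as the paper's: the paper proves this lemma simply by citing Lemma~\ref{[d unit]}, and your proposal spells out the translation through $\mathrm{Gr}_{\oplus}(\mathscr{P})^{\mathrm{op}}\cong H(W,S)$, which you match correctly (including the order swap, so that $\theta_d\theta_y$ corresponds to $\kl_y\kl_d$ and the relevant asymptotic identities are $t_yt_d=t_y$ and $t_dt_{y'}=t_{y'}$). The identification of the multiplicity of $\theta_z\langle a\rangle$ with the coefficient of $v^a$ in $h_{y,d,z}$, the bound $\deg h_{y,d,z}\leq\mathbf{a}(z)$, and the identification of the coefficient in degree $\mathbf{a}(z)$ with $\gamma_{y,d,z\inv}$ are all correct and give the distinguished summand $\theta_y\langle\mathbf{a}(d)\rangle$ with multiplicity exactly one.

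There is, however, a genuine error in the final step. You claim that $h_{y,d,z}\neq 0$ forces $z\leq_\mathtt{R} d$ and hence $\mathbf{a}(z)\leq\mathbf{a}(d)$, and from this you conclude $a\leq\deg h_{y,d,z}<\mathbf{a}(z)\leq\mathbf{a}(d)$ for every non-distinguished summand. With this paper's conventions the order goes the other way: by the definition of $\leq_\mathtt{L}$ and $\leq_\mathtt{R}$ via \eqref{hxyz}, nonvanishing of $h_{y,d,z}$ gives $z\geq_\mathtt{L} d$ and $z\geq_\mathtt{R} y$, so Proposition~\ref{cella}\eqref{cella:p4} yields $\mathbf{a}(z)\geq\mathbf{a}(d)$, and your inequality chain collapses. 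In fact you are trying to prove more than the lemma asserts (that \emph{every} extra summand has $a<\mathbf{a}(d)$), and your argument does not establish this, since for $z>_\mathtt{J} d$ the only available bound is $\deg h_{y,d,z}\leq\mathbf{a}(z)$, which may exceed $\mathbf{a}(d)$. The repair is easy and uses the disjunction in the statement: every summand $\theta_z\langle a\rangle$ satisfies $z\geq_\mathtt{J} d$, so either $z\not\leq_\mathtt{J} d$ (allowed outright), or $z\sim_\mathtt{J} d$, in which case $\mathbf{a}(z)=\mathbf{a}(d)$ and the vanishing $\gamma_{y,d,z\inv}=\delta_{z,y}$ from $t_yt_d=t_y$ (via \eqref{tt} and Lemma~\ref{[d unit]}) gives $a<\mathbf{a}(d)$ unless $z=y$ and $a=\mathbf{a}(d)$, which is the distinguished summand. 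The symmetric argument with $t_dt_{y'}=t_{y'}$ handles $\theta_{y'}\theta_d$.
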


\begin{proof}
This follows from  Lemma \ref{[d unit]}.
\end{proof}

The following lemma slightly extends \cite[Theorem 6]{Ma2}. 

\begin{lemma}\label{xLyProj}
Let $\mathcal{R}\subseteq W$ be a right cell, 
$y \in \mathcal{R}$ and $x\in W$ be such that $x\sim_{\mathtt{J}}y$. 
Then $\theta_x L_y$ is a projective-injective object in $\cO^{\hat{\mathcal{R}}}$. 
\end{lemma}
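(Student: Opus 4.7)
My plan is to reduce the claim to the case $x^{-1}\sim_{\mathtt{L}} y$, which is exactly the setting of \cite[Theorem~6]{Ma2} (and matches the parametrisation of the indecomposable projective--injectives of $\mathcal{O}^{\hat{\mathcal{R}}}$ used already in the proof of Proposition~\ref{prop8.5-1}). The ``slight extension'' is therefore the combinatorial observation that, under the standing non-vanishing hypothesis, $x\sim_{\mathtt{J}} y$ already forces $x^{-1}\sim_{\mathtt{L}} y$.

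First, if $\theta_x L_y = 0$ the conclusion is vacuous, so assume $\theta_x L_y \neq 0$. By the non-vanishing criterion recalled in \S\ref{s2.1}, this gives $x^{-1}\leq_{\mathtt{L}} y$. From $x\sim_{\mathtt{J}} y$ we obtain $\mathbf{a}(x)=\mathbf{a}(y)$ by Proposition~\ref{cella}\eqref{cella:p4}, and combining this with the standard identity $\mathbf{a}(x)=\mathbf{a}(x^{-1})$ (which is immediate from the symmetry $h_{x,y,z}=h_{y^{-1},x^{-1},z^{-1}}$, see \cite[\S 13-14]{luneq}) we also get $\mathbf{a}(x^{-1})=\mathbf{a}(y)$. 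Proposition~\ref{cella}\eqref{cella:p9} then promotes the inequality $x^{-1}\leq_{\mathtt{L}} y$ to the equivalence $x^{-1}\sim_{\mathtt{L}} y$.

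With this in hand, Lemma~\ref{nlem1} guarantees that the composition factors of $\theta_x L_y$ are of the form $L_z$ with $z\leq_{\mathtt{R}} y$, so $\theta_x L_y\in\mathcal{O}^{\hat{\mathcal{R}}}$ and the statement is well-posed. The projective-injectivity of $\theta_x L_y$ in $\mathcal{O}^{\hat{\mathcal{R}}}$ is then precisely the conclusion of \cite[Theorem~6]{Ma2} applied to the pair $(x,y)$ with $x^{-1}\sim_{\mathtt{L}} y$, which completes the proof.

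I do not anticipate a serious obstacle: the whole argument is a formal cell-combinatorial reduction followed by an invocation of a known result. The only mildly delicate point is the use of the invariance of $\mathbf{a}$-values under inversion, and the fact that this gives stability of $\mathtt{J}$-cells under the map $x\mapsto x^{-1}$; both facts are standard consequences of Lusztig's conjectures P1-P15, which hold unconditionally in the equal-parameter setting by \cite[\S 14-15]{luneq}.
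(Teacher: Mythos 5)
There is a genuine gap, and it sits exactly where the lemma has content. Your combinatorial reduction is fine: assuming $\theta_xL_y\neq 0$, the non-vanishing criterion gives $x\inv\leq_{\mathtt{L}}y$, and $x\sim_{\mathtt{J}}y$ together with $\mathbf{a}(x)=\mathbf{a}(x\inv)$ and Proposition~\ref{cella}\eqref{cella:p9} upgrades this to $x\inv\sim_{\mathtt{L}}y$; also Lemma~\ref{nlem1} does place $\theta_xL_y$ in $\cO^{\hat{\mathcal{R}}}$. But this was never the issue. The result you then invoke, \cite[Theorem~6]{Ma2}, concerns only the modules $\theta_xL_d$ where $d$ is the \emph{Duflo element} of the right cell $\mathcal{R}$ (it identifies these, for $x\inv\in\mathcal{R}$, as the complete list of indecomposable projective-injectives in $\cO^{\hat{\mathcal{R}}}$); it says nothing about $\theta_xL_y$ for an arbitrary $y\in\mathcal{R}$. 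The lemma is described as a ``slight extension'' of that theorem precisely because $y$ is allowed to be non-Duflo, so your final step, ``apply \cite[Theorem~6]{Ma2} to the pair $(x,y)$ with $x\inv\sim_{\mathtt{L}}y$,'' is not an application of a known result but a restatement of the lemma itself. In other words, you have reduced the hypothesis to a normal form and then assumed the conclusion.

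To close the gap one needs an actual module-theoretic argument in the non-Duflo case. The paper does this as follows: with $a=\mathbf{a}(d)$, each indecomposable projective-injective $\theta_wL_d$ (for $w\inv\in\mathcal{R}$) has simple top $L_w\langle a\rangle$, simple socle $L_w\langle -a\rangle$, and no other constituents in these extreme degrees; by \cite[Proposition~1]{Ma2}, for arbitrary $y\in\mathcal{R}$ the (nonzero) module $\theta_xL_y$ has its top concentrated in degree $-a$ and its socle in degree $a$, of the same length by self-duality of $\theta_xL_y$. Comparing with the structure of the projective-injectives, the minimal projective cover of $\theta_xL_y$ in $\cO^{\hat{\mathcal{R}}}$ is forced to be an isomorphism, which gives projectivity (and injectivity, by duality). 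Alternatively, the claim follows from \cite[Theorem~2]{KMMZ}. Either of these inputs, or an equivalent argument, is what your proposal is missing.
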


\begin{proof}
If $y=d$ is Duflo, the claim follows directly from \cite[Theorem 6]{Ma2}. 
Moreover, the objects $\theta_x L_d$, where $x\inv \in \mathcal{R}$,
give a complete and irredundant list of indecomposable projective-injective
modules in $\cO^{\hat{\mathcal{R}}}$. Set $a:=\mathbf{a}(d)$. 
Each of $\theta_x L_d$ has simple top $L_x\langle a\rangle$
and simple socle $L_x\langle -a\rangle$ and no other subquotients 
in these extreme non-zero components.

For an arbitrary $y$, by \cite[Proposition~1]{Ma2}, 
the module $\theta_x L_y$ is either zero (in which case the claim is obvious) or
the top of $\theta_x L_y$ is concentrated in degree $-a$
and the socle of $\theta_x L_y$ is concentrated in degree $a$,
moreover, these top and socle have the same length
due to self-duality of $\theta_x L_y$. Therefore, from the previous
paragraph it follows that the minimal projective cover of $\theta_x L_y$
is an isomorphism. The claim of the lemma follows. We note that, alternatively, 
the claim of the lemma follows from \cite[Theorem~2]{KMMZ}.
\end{proof}

\begin{proposition}\label{dLy}
Let $\mathcal{L}$ be a left cell and $y,d\in \mathcal{L}$ 
be such that $d$ is Duflo. Let $d'$ be the Duflo element in the 
right cell $\mathcal{R}$ of $y$.
Then 
$\theta_dL_y\cong \theta_yL_{d'}\cong P^{\hat{\mathcal{R}}}_y\langle \mathbf{a}(d)\rangle$,
where $P^{\hat{\mathcal{R}}}_y$ denotes the projective cover of $L_y$ in $\cO_0^{\hat{\mathcal{R}}}$.
\end{proposition}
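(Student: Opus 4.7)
The plan is to show that both $\theta_d L_y$ and $\theta_y L_{d'}$ are indecomposable projective-injective modules in $\cO_0^{\hat{\mathcal{R}}}$ with simple top $L_y\langle \mathbf{a}(d)\rangle$; since $y\in\mathcal{R}$, this will force both to be isomorphic to $P^{\hat{\mathcal{R}}}_y\langle\mathbf{a}(d)\rangle$. The first step is to observe that $d\sim_\mathtt{J} y\sim_\mathtt{J} d'$, so Lemma~\ref{xLyProj} applies in both cases: $\theta_d L_y$ is projective-injective in $\cO_0^{\hat{\mathcal{R}}}$ (using $y\in\mathcal{R}$), and so is $\theta_y L_{d'}$ (using $d'\in\mathcal{R}$). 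In particular $\mathbf{a}(d)=\mathbf{a}(d')$.

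To determine the top of $\theta_d L_y$, I would use that, by the proof of Lemma~\ref{xLyProj} (via \cite[Proposition~1]{Ma2}), this top lives in the extremal graded degree $-\mathbf{a}(d)$, so it is a direct sum of copies of $L_z\langle\mathbf{a}(d)\rangle$ for various $z$. By \eqref{gdim=h}, the multiplicity of $L_z\langle\mathbf{a}(d)\rangle$ equals the top-degree coefficient of $h_{z,d,y}$, namely $\gamma_{z,d,y^{-1}}$. Applying the cyclic symmetry of $\gamma$ (property P7 in~\cite{luneq}), this equals $\gamma_{d,y^{-1},z}$, i.e., the coefficient of $t_{z^{-1}}$ in the asymptotic product $t_d t_{y^{-1}}$. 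Since $y\in\mathcal{L}$ gives $y^{-1}\in\mathcal{L}^{-1}$, which is the right cell of the Duflo element $d$, Lemma~\ref{[d unit]} yields $t_d t_{y^{-1}}=t_{y^{-1}}$, so this coefficient is $\delta_{z,y}$. Thus $\operatorname{top}(\theta_d L_y)=L_y\langle\mathbf{a}(d)\rangle$, pinning down $\theta_d L_y\cong P^{\hat{\mathcal{R}}}_y\langle\mathbf{a}(d)\rangle$.

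The same strategy handles $\theta_y L_{d'}$ symmetrically. The multiplicity of $L_z\langle\mathbf{a}(d)\rangle$ in its top is $\gamma_{z,y^{-1},d'}=\gamma_{y^{-1},d',z}$, which equals the coefficient of $t_{z^{-1}}$ in $t_{y^{-1}}t_{d'}$. Since $y\sim_\mathtt{R} d'$ gives that $y^{-1}$ lies in the left cell of $d'$, Lemma~\ref{[d unit]} yields $t_{y^{-1}}t_{d'}=t_{y^{-1}}$, so the coefficient is again $\delta_{z,y}$. Hence $\theta_y L_{d'}\cong P^{\hat{\mathcal{R}}}_y\langle\mathbf{a}(d)\rangle$, completing the proof.

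The main technical point is navigating the two-sided asymmetry of Lemma~\ref{[d unit]}: $t_d$ absorbs $t_{y^{-1}}$ when multiplied from the \emph{left} (because $y^{-1}\in\mathcal{L}^{-1}$), while $t_{d'}$ absorbs $t_{y^{-1}}$ when multiplied from the \emph{right} (because $y^{-1}$ is in the left cell of $d'$). Routing each top-coefficient computation through the correct cyclic rearrangement of $\gamma$ is what lets the two cases land on the same answer $\delta_{z,y}$; this is also what is hidden behind the assertion that the two modules $\theta_d L_y$ and $\theta_y L_{d'}$ coincide, despite being given by a priori quite different formulas.
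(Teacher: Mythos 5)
Your proof is correct, but you identify the tops by a different mechanism than the paper. The paper pins down the top of $\theta_d L_y$ module-theoretically: by Lemma~\ref{d unit} one has $\theta_d P_y\cong P_y\langle\mathbf{a}(d)\rangle\oplus P$, where every summand of $P$ is some $P_z\langle a\rangle$ with $z\not\leq_\mathtt{J}d$ or $a<\mathbf{a}(d)$; since, by \cite[Proposition~1]{Ma2}, the top of $\theta_dL_y$ consists of $L_w\langle\mathbf{a}(d)\rangle$ with $w\sim_\mathtt{R}y$, no map from $P$ can reach a top constituent, so the surjection $\theta_dP_y\surj\theta_dL_y$ restricts to $P_y\langle\mathbf{a}(d)\rangle\surj\theta_dL_y$ and the top is the simple $L_y\langle\mathbf{a}(d)\rangle$; the identification of $\theta_yL_{d'}$ as the projective cover of $L_y\langle\mathbf{a}(d)\rangle$ in $\cO_0^{\hat{\mathcal{R}}}$ is then simply quoted from \cite[Theorem~6]{Ma2}. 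You instead compute both tops combinatorially, extracting the extreme-degree coefficients of $h$ via \eqref{gdim=h}, the definition of $\gamma$, its cyclic invariance (P7 of \cite{luneq}) and Lemma~\ref{[d unit]}, getting $\delta_{z,y}$ in both cases, and then let Lemma~\ref{xLyProj} force both modules to be $P^{\hat{\mathcal{R}}}_y\langle\mathbf{a}(d)\rangle$. The two routes rest on essentially the same inputs, since Lemma~\ref{d unit} is itself derived from Lemma~\ref{[d unit]}, and both need \cite[Proposition~1]{Ma2} to know the top sits in the extreme degree (this is also what justifies your tacit identification of top multiplicities with composition multiplicities in that degree: the radical lives in strictly higher degrees because the grading is positive). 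So your argument is not more elementary, but it is more uniform, treating $\theta_dL_y$ and $\theta_yL_{d'}$ by one and the same coefficient computation, and in particular it re-derives, rather than cites, the fact that $\theta_yL_{d'}$ is the projective cover of $L_y\langle\mathbf{a}(d)\rangle$; the price is the explicit reliance on cyclicity of $\gamma$, which the paper only uses implicitly through Lemma~\ref{[d unit]}.
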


\begin{proof}
By Lemma \ref{d unit}, we have
\[\theta_d P_y \cong \theta_d\theta_y P_e \cong P_y\langle \mathbf{a}(d)\rangle \oplus P,\]
where $P$ is a direct sum of (some) $P_z\langle a \rangle$ with either $z\not\leq_\mathtt{J} d$ or $a<\mathbf{a}(d)$.
Applying $\theta_d$ to the canonical map $P_y\surj L_y$, we get
\begin{equation}\label{ala}
    P_y\langle \mathbf{a}(d)\rangle \oplus P \cong \theta_d P_y \surj \theta_d L_y.
\end{equation} 
By \cite[Proposition~1]{Ma2},
the top of $\theta_d L_y$ is concentrated in degree 
$\mathbf{a}(d)=\mathbf{b}(d,y)=\mathbf{a}(y)$.
Therefore, this top is a direct sum of 
$L_w\langle \mathbf{a}(d)\rangle$, for $w\sim_\mathtt{R} y$. 
It follows that the image of any possible map from $P$ to $\theta_d L_y$ does not contain any
top component of $\theta_dL_y$, showing that \eqref{ala} restricts to 
$P_y\langle \mathbf{a}(d)\rangle \surj \theta_d L_y$.
Now the claim of the proposition follows combining the facts that 
$\theta_dL_y$ is projective in $\cO_0^{\hat{\mathcal{R}}}$,
see Lemma~\ref{xLyProj}, and that $\theta_y L_{d'}$ is the projective 
cover of $L_y\langle \mathbf{a}(d)\rangle$ in $\cO_0^{\hat{\mathcal{R}}}$, see 
\cite[Theorem~6]{Ma2}. 
\end{proof}

\begin{proposition}\label{b=c}
Let  $\mathcal{R}\subseteq W$ be a right cell  and $d,y\in \mathcal{R}$
with $d$ Duflo. Let $\mathcal{L}$ be the left cell of $y$
and $x\in W$ be such that $x\sim_{\mathtt{J}}y$. Then
$\theta_{x}L_{y}=0$ if $x\inv\not\in \mathcal{L}$
and, in case $x\inv\in \mathcal{L}$, we have 
\begin{equation}\label{cz}
\theta_{x}L_{y} \cong \bigoplus_{z\in \mathcal{L}\cap\mathcal{R}}
\theta_zL_d^{\oplus \gamma_{y,x,z\inv}}.
\end{equation}
\end{proposition}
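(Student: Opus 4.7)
The approach is to recognize $\theta_x L_y$, when nonzero, as a projective--injective object of the Serre subcategory $\cO_0^{\hat{\mathcal R}}$ (via Lemma~\ref{xLyProj}), decompose it into the indecomposable projective--injectives of that category (which are, up to isomorphism, the $\theta_z L_d$), and then read off the multiplicities as top-degree coefficients of Kazhdan--Lusztig structure constants. For the vanishing assertion, the standard criterion that $\theta_x L_y \neq 0$ if and only if $x\inv\leq_\mathtt{L} y$ (recalled in Section~\ref{s2.1}), together with $\mathbf{a}(x\inv)=\mathbf{a}(y)$ coming from $x\sim_\mathtt{J} y$ and Proposition~\ref{cella}\eqref{cella:p9}, upgrades $x\inv\leq_\mathtt{L} y$ to $x\inv\sim_\mathtt{L} y$, i.e.\ to $x\inv\in\mathcal L$.

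Assuming $x\inv\in\mathcal L$, Lemma~\ref{xLyProj} and its proof identify the indecomposable projective--injectives of $\cO_0^{\hat{\mathcal R}}$ as the modules $\theta_z L_d$ for $z$ in the appropriate indexing subset of $\mathcal R$, each with simple top $L_z\langle\mathbf{a}(d)\rangle$ by Proposition~\ref{dLy} and Lemma~\ref{[d unit]}. Since, by \cite[Proposition~1]{Ma2}, the top of $\theta_x L_y$ is concentrated in the single extreme degree $\mathbf{a}(x)=\mathbf{a}(d)$, no grading shifts are needed and we may write $\theta_x L_y \cong \bigoplus_z \theta_z L_d^{\oplus m_{x,y,z}}$ with multiplicities $m_{x,y,z}\in\mathbb Z_{\geq 0}$. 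Matching the simple summands of the tops, and invoking \eqref{gdim=h}, the multiplicity $m_{x,y,z}$ equals the coefficient of $v^{\mathbf{a}(y)}$ in $h_{z, x\inv, y}$, which by definition of the asymptotic structure constants is $\gamma_{z, x\inv, y\inv}$.

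It then remains to rewrite $\gamma_{z, x\inv, y\inv}$ in the form stated in the proposition and to cut the summation down to the indicated index set. For this I would invoke the inversion symmetry $\gamma_{a,b,c}=\gamma_{b\inv, a\inv, c\inv}$ (coming from the anti-involution $t_w\mapsto t_{w\inv}$ of the asymptotic ring) and the cyclic symmetry $\gamma_{a,b,c}=\gamma_{b,c,a}$ (one of Lusztig's P-properties, proven in our setting), which together transform $\gamma_{z, x\inv, y\inv}$ into $\gamma_{y, x, z\inv}$; the cell support conditions on $\gamma$ then restrict the summation to $z\in\mathcal L\cap \mathcal R$. The most delicate point I anticipate is the combinatorial bookkeeping around the exact indexing set of indecomposable projective--injectives and the vanishing locus of $\gamma_{y,x,z\inv}$, both of which rely on the Duflo relations from Lemma~\ref{[d unit]} and on the matrix-block structure of the asymptotic ring on the two-sided cell containing $y$.
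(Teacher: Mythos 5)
Your outline is correct and rests on the same skeleton as the paper's argument: the vanishing claim via the criterion $\theta_xL_y\neq 0\Leftrightarrow x\inv\leq_\mathtt{L}y$ upgraded to $x\inv\sim_\mathtt{L}y$ by Proposition~\ref{cella}\eqref{cella:p4},\eqref{cella:p9}, and the decomposition via Lemma~\ref{xLyProj} (i.e.\ \cite[Proposition~1, Theorem~6]{Ma2}) into the indecomposable projective--injectives $\theta_zL_d$ of $\cO_0^{\hat{\mathcal R}}$, whose extreme-degree layer is exactly the simple top $L_z\langle\mathbf{a}(d)\rangle$. Where you diverge is only in extracting the multiplicities: the paper applies $\theta_x$ to the projective cover surjection $\theta_yL_d\langle-\mathbf{a}(d)\rangle\tto L_y$ and reads the multiplicities off the top-degree summands of $\theta_x\theta_y$, which produces the constants $\gamma_{y,x,z\inv}$ in exactly the form appearing in \eqref{cz}; you instead read the extreme-degree graded composition multiplicities from \eqref{gdim=h}, obtaining $\gamma_{z,x\inv,y\inv}$, and then convert via the inversion symmetry $\gamma_{a,b,c}=\gamma_{b\inv,a\inv,c\inv}$ and cyclicity (Lusztig's P7), which is a valid alternative since P1--P15 hold here; your route needs these extra symmetries plus the (easy, but worth stating) remark that the extreme-degree part of $\theta_xL_y$ coincides with its top, so that graded multiplicities there really count indecomposable summands. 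On the bookkeeping point you flag: if you carry out the support analysis (P8) for $\gamma_{y,x,z\inv}$, the constraint you get is $z\sim_\mathtt{L}x$ and $z\sim_\mathtt{R}y$, i.e.\ the sum naturally runs over the intersection of the \emph{left cell of $x$} with $\mathcal R$, which agrees with the stated index set $\mathcal L\cap\mathcal R$ only when $x\sim_\mathtt{L}y$; since the coefficients $\gamma_{y,x,z\inv}$ vanish off their support, the safe formulation of \eqref{cz} is to let $z$ run over the support of $\gamma$ (equivalently over the left cell of $x$ intersected with $\mathcal R$), and your approach, unlike the paper's two-line proof, would actually force this point into the open.
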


\begin{proof}
That $\theta_{x}L_{y}=0$, for all $x\in W$ such that $x\sim_{\mathtt{J}}y$
and $x\inv\not\in \mathcal{L}$ follows from \S~\ref{s2.1}.
From \cite[Proposition~1 and Theorem~6]{Ma2} it follows that 
$\theta_{x}L_{y}$ is a direct sum of $\theta_zL_d$, where
$z\in \mathcal{L}\cap\mathcal{R}$, with
some multiplicities which we denote by $c_z\in\mathbb Z_{\geq 0}$. Applying $\theta_x$ to 
$\theta_y L_d\langle-\mathbf{a}(d)\rangle\tto L_y$ and using
\cite[Proposition~1]{Ma2}  and \S~\ref{ss:asym}, we obtain $c_z=\gamma_{y,x,z\inv}$. 
\end{proof}

\subsection{Computation with simple bimodules}

Denote by $\beta_x$ the simple Harish-Chandra bimodule corresponding 
to $x\in W$, that is, $\beta_x$ is the simple top of $\theta_x$ in $\hcii$. 
Denote by $\star:\hcii\to \hcii$ the simple preserving duality satisfying 
$(\beta \langle a \rangle)^\star=\beta^\star\langle -a \rangle$, see \cite[\S~4.1]{MM0}. 
This duality restricts to $\cO_0$ and is compatible with the action
in the sense that, for $M\in \cO_0$ and 
$\theta \in \hcii$, we have a natural isomorphism $\theta^{\star} M^{\star}
\cong (\theta M)^{\star}$.
The following statement provides a different interpretation of the internal hom.

\begin{proposition}\label{betatensor}
For any $y\in W$, we have $\cL(L_y,({}_-)^{\star}I_e)^\star\cong  - \otimes_U\beta_{y\inv}$.
\end{proposition}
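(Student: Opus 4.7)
My plan is to prove the claimed isomorphism by Yoneda: for each $Z\in\hcii$, I will produce a natural chain of isomorphisms showing
\begin{displaymath}
\Hom_\hcii\!\bigl(\cL(L_y,X^\star I_e)^\star,Z\bigr)\;\cong\;\Hom_\hcii(X,Z\otimes_U\beta_y)\;\cong\;\Hom_\hcii(X\otimes_U\beta_{y\inv},Z).
\end{displaymath}
Since the displayed isomorphism is natural in both $X$ and $Z$, Yoneda (applied in $Z$) then yields the desired natural isomorphism of functors in $X$.

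For the first identification, I would chain the following standard moves. First, I use the contravariant involution $\star$ on $\hcii$ to rewrite the left-hand side as $\Hom_\hcii(Z^\star,\cL(L_y,X^\star I_e))$. Then I apply the internal-hom adjunction of Proposition~\ref{L=[]} to get $\Hom_\cO(Z^\star\otimes_U L_y,\,X^\star I_e)$. Next, I apply $\star$-duality on $\cO_0$, using the compatibility $(WN)^\star\cong W^\star N^\star$ between the $\star$-dualities and the $\hcii$-action on $\cO_0$ (this follows from the fact that projective functors commute with $\star$, so the identity holds on projective generators of $\hcii$ and then extends), together with $I_e^\star=P_e$ and $L_y^\star=L_y$, to obtain $\Hom_\cO(XP_e,ZL_y)$. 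Applying Proposition~\ref{L=[]} once more gives $\Hom_\hcii(X,\cL(P_e,ZL_y))$; invoking the $\hcii$-equivariance of the equivalence~\eqref{eq1} then yields $\Hom_\hcii(X,Z\otimes_U\cL(P_e,L_y))$. Finally, I identify $\cL(P_e,L_y)\cong\beta_y$: under \eqref{eq1}, the surjection $P_y\tto L_y$ induces a nonzero surjection $\theta_y\cong\cL(P_e,P_y)\tto\cL(P_e,L_y)$ onto a simple bimodule, which must factor through the simple top $\beta_y$ of $\theta_y$.

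For the second identification, I appeal to the rigidity of the fiat monoidal category $\hcii\cong\overline{\cP}$ (see \cite{MM1}): the simple bimodule $\beta_{y\inv}$ is the (left) rigid dual of $\beta_y$, so right tensor with $\beta_{y\inv}$ is left adjoint to right tensor with $\beta_y$, giving $\Hom_\hcii(X\otimes_U\beta_{y\inv},Z)\cong\Hom_\hcii(X,Z\otimes_U\beta_y)$ naturally.

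The main obstacle is the careful bookkeeping of structural compatibilities used in the chain, in particular the compatibility $(WN)^\star\cong W^\star N^\star$ of $\star$-dualities with the $\hcii$-action, the $\hcii$-equivariance of \eqref{eq1}, and the rigid duality $\beta_y^\vee\cong\beta_{y\inv}$. Each is a standard consequence of the Bernstein-Gelfand framework and the theory of fiat 2-categories, but lining them up so that all normalizations (and any graded shifts, if one wishes to track them) match is where the work lies.
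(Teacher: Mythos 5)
Your reduction hinges on the final step: that $\beta_{y\inv}$ is a rigid dual of $\beta_y$ in $\hcii\cong\overline{\mathscr{P}}$, so that $-\otimes_U\beta_{y\inv}$ is left adjoint to $-\otimes_U\beta_y$, and the claim then follows by uniqueness of adjoints. This is where the argument breaks. In this fiat setting the objects admitting (bi)adjoints are the $1$-morphisms $\theta_w$ (the projectives of the abelianization), with $\theta_w$ biadjoint to $\theta_{w\inv}$; the simple objects $\beta_y$ are not rigid. If your adjunction held, $-\otimes_U\beta_y$ would be a right adjoint and hence left exact, but it is only right exact, and left exactness genuinely fails. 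Already in type $A_1$ with $y=e$: here $\beta_e\cong\cL(P_e,L_e)$ is the one-dimensional bimodule with both actions through the counit, so $\beta_e\otimes_U\beta_e\cong\beta_e\neq 0$; on the other hand $\theta_s\otimes_U\beta_e=0$, since it lies in $\hci$ and its image under \eqref{eq1} is $\theta_s L_e=0$. Since $\Ext^1_{\hcii}(\beta_s,\beta_e)\neq 0$ (transport the non-split sequence $0\to L_e\to \nabla_e\to L_s\to 0$ through \eqref{eq1}) and $\theta_s$ is projective in $\hcii$ with simple top $\beta_s$, we get $\Hom_{\hcii}(\rad\theta_s,\beta_e)\neq 0$, hence $\rad\theta_s\otimes_U\beta_e$ surjects onto $\beta_e\otimes_U\beta_e\neq 0$ while the middle term $\theta_s\otimes_U\beta_e$ vanishes. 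So $-\otimes_U\beta_e$ does not preserve the kernel of $\theta_s\to\beta_s$, and the adjunction you invoke is false. Since the proposition itself is true, your steps 1--6 cannot all be natural either; the weak link there is applying the compatibility $(\theta M)^{\star}\cong\theta^{\star}M^{\star}$ with $\theta=X^{\star},Z^{\star}$ arbitrary objects of $\hcii$: as a natural isomorphism in the bimodule variable this would force $-\otimes_U M$ to be exact in that variable (one side is right exact, the other left exact), which the same example contradicts; it is really a statement about projective functors. A smaller slip: $\cL(P_e,P_y)\not\cong\theta_y$ (the bimodule $\theta_y$ is not killed by $\mathbf{m}$ on the right), although your intended conclusion $\cL(P_e,L_y)\cong\beta_y$ is correct and follows directly from \eqref{eq1} together with $\Hom_{\hcii}(\theta_x,\cL(P_e,L_y))\cong\Hom_{\cO}(P_x,L_y)$.

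For comparison, the paper's proof avoids adjoint-uniqueness altogether: both $-\otimes_U\beta_{y\inv}$ and $\cL(L_y,({}_-)^{\star}I_e)^{\star}$ are right exact and commute with the left action of $\hcii$ on itself, so by Khomenko's theorem each is determined up to isomorphism by its value at $\theta_e$, and a short computation (essentially the evaluation you also perform, via $\Hom_{\hcii}(\theta_w,\cL(L_y,I_e))\cong\Hom_{\cO}(L_y,I_{w\inv})$) identifies both values with $\beta_{y\inv}$. If you want to salvage your approach, the evaluation part of your chain is fine; what is missing is a legitimate substitute for rigidity, and Khomenko's result is exactly that substitute.
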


\begin{proof}
Consider the functor $-\otimes_U \beta_{y\inv}:\hcii\to \hcii$. 
It naturally commutes (in the sense of \cite{Kho}) with the left action of $\hcii$
on itself. Therefore, by the main result of \cite{Kho}, it is determined, up to 
isomorphism, by its value at $\theta_e$. Clearly, this value is
$\theta_e \otimes_U \beta_{y\inv}  = \beta_{y\inv}$.

By construction, $\cL(L_y,({}_-)^{\star}I_e)^\star$ is also right exact and
naturally commutes with the left action of $\hcii$ by \cite[\S~6.8]{Ja}.
The value 
\begin{displaymath} 
\cL(L_y,(\theta_e)^{\star}I_e)^\star\cong
\cL(L_y,(\theta_eP_e)^{\star})^\star\cong
\cL(L_y,I_e)^\star
\end{displaymath}
at $\theta_e$ can be identified studying homomorphisms from projective objects:
\[\Hom_{\hcii}(\theta_w, \cL(L_y,I_e)) \cong \Hom_{\cO}(\theta_w L_y, I_e) \cong \Hom_\cO(L_y,\theta_{w\inv} I_e) \cong\Hom_\cO(L_y,I_{w\inv})=\delta_{y,w\inv}\mathbb C. \]
It follows that $\cL(L_y,I_e)\cong \cL(L_y,I_e)^\star\cong \beta_{y\inv}$.
The claim follows.
\end{proof}

Let $y,z\in W$. Evaluating both sides of Proposition~\ref{betatensor} at $\beta_z$
and noticing that all simples are self-dual, we obtain
\begin{equation}\label{eq:bebe}
  \mathcal{L}(L_y,L_z)\cong (\beta_z\beta_{y\inv})^{\star}.   
\end{equation}
Evaluating both sides of Proposition~\ref{betatensor} at $\theta_z$, we obtain
\begin{equation}\label{eq:thebe}
  \mathcal{L}(L_y,I_z)\cong (\theta_z\beta_{y\inv})^{\star}.    
\end{equation} 

\begin{corollary}\label{subz}
We have the canonical inclusion 
$\cL(L_y,L_z)\otimes_U P_e \inj \theta_zL_{y\inv}$, for $y,z\in W$. 
\end{corollary}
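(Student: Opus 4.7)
The plan is to dualize the canonical surjection $\theta_z\twoheadrightarrow\beta_z$ in $\hcii$, evaluated on $L_{y\inv}$ in $\cO_0$, using the interplay between the duality $\star$ on $\hci$ and the BGG duality $d$ on $\cO_0$.

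First, I would begin with the canonical surjection $\theta_z\twoheadrightarrow\beta_z$ onto the simple top in $\hcii$. Tensoring on the right with $\beta_{y\inv}$ and then applying $-\otimes_U P_e$ (both right exact) produces a surjection
\[\theta_zL_{y\inv}\twoheadrightarrow(\beta_z\beta_{y\inv})\otimes_U P_e\]
in $\cO_0$; here I use that $\beta_{y\inv}\otimes_U P_e\cong L_{y\inv}$, which follows from identifying $\beta_{y\inv}\cong\cL(L_y,I_e)$ (obtained in the proof of Proposition~\ref{betatensor}) with the simple object in $\hci$ corresponding to $L_{y\inv}$ under the equivalence \eqref{eq1}.

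Next, applying the BGG duality $d$ on $\cO_0$ converts this surjection into an embedding
\[\bigl((\beta_z\beta_{y\inv})\otimes_U P_e\bigr)^d\hookrightarrow(\theta_zL_{y\inv})^d\cong\theta_zL_{y\inv},\]
where the last isomorphism uses the standard self-duality of translations of simples by projective functors (cf.\ Section~\ref{s8.3}). Finally, since the simple-preserving duality $\star$ on $\hci$ corresponds to $d$ on $\cO_0$ under \eqref{eq1} (by uniqueness of a simple-preserving exact duality), we have $\bigl((\beta_z\beta_{y\inv})\otimes_U P_e\bigr)^d\cong(\beta_z\beta_{y\inv})^\star\otimes_U P_e$, which by \eqref{eq:bebe} equals $\cL(L_y,L_z)\otimes_U P_e$. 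Composing these identifications yields the desired canonical inclusion.

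The main obstacle is making precise the compatibility of the $\star$-duality on $\hci$ with BGG duality on $\cO_0$ through \eqref{eq1}. While this reduces to a uniqueness statement for simple-preserving dualities, the bookkeeping across the dualities on $\hcii$, $\hci$, and $\cO_0$, and their interaction with the functor $-\otimes_U P_e$, is what requires care to make rigorous.
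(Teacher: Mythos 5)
Your argument is correct, but it is essentially the mirror image of the paper's proof rather than the same one. The paper applies the left exact internal hom functor $\cL(L_y,{}_-)$ to the injective envelope inclusion $L_z\inj I_z$ and then identifies the target via \eqref{eq:thebe} and self-duality of $\theta_zL_{y\inv}$; you instead apply the right exact functors ${}_-\otimes_U\beta_{y\inv}$ and ${}_-\otimes_U P_e$ to the projective cover surjection $\theta_z\tto\beta_z$ in $\hcii$, identify the target via \eqref{eq:bebe}, and then dualize the resulting surjection $\theta_zL_{y\inv}\tto(\beta_z\beta_{y\inv})\otimes_U P_e$. Both are one-step consequences of Proposition~\ref{betatensor}, and both rest on exactly the same background identifications: that $\beta_{y\inv}\otimes_U P_e\cong L_{y\inv}$ and that $\star$ corresponds, under \eqref{eq1}, to the standard duality on $\cO_0$ --- the paper needs these too when it rewrites $\cL(L_y,I_z)\otimes_U P_e\cong(\theta_z\beta_{y\inv})^\star\otimes_U P_e$ as $(\theta_zL_{y\inv})^\star\cong\theta_zL_{y\inv}$. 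So the compatibility you flag as the main obstacle is part of the common setup imported from \cite{MM0} (where the paper states that $\star$ restricts to $\cO_0$ compatibly with the action), not an extra cost of your route. What the paper's route buys is that the inclusion appears directly from left exactness, with no final dualization; what your route buys is the interpretation of the embedding as the dual of a canonical surjection coming from the simple top of $\theta_z$ in $\hcii$.

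Two small repairs. First, your justification of $\beta_{y\inv}\otimes_U P_e\cong L_{y\inv}$ is circular as phrased: identifying $\cL(L_y,I_e)\cong\beta_{y\inv}$ with ``the simple corresponding to $L_{y\inv}$ under \eqref{eq1}'' is precisely the fact to be proved. A cleaner argument: apply ${}_-\otimes_U P_e$ to $\theta_{y\inv}\tto\beta_{y\inv}$ to get a surjection $P_{y\inv}\tto\beta_{y\inv}\otimes_U P_e$; since $\beta_{y\inv}$ lies in $\hci$ and ${}_-\otimes_U P_e$ is an equivalence there, the image is a nonzero simple quotient of $P_{y\inv}$, hence $L_{y\inv}$. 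Second, you need not appeal to abstract uniqueness of simple-preserving dualities (which, at the level of functors, is delicate); it suffices to invoke the stated compatibility of $\star$ with the action and with \eqref{eq1}, which is exactly what the published proof also uses implicitly.
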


\begin{proof}
Applying the left exact functor
$\cL(L_y,{}_-)$ to the canonical inclusion $L_z\inj I_z$, we obtain
\begin{displaymath}
\cL(L_y,L_z)\inj  \cL(L_y,I_z)
\end{displaymath}
Applying now the equivalence ${}_-\otimes_U P_e$, the right term becomes
$(\theta_zL_{y\inv})^{\star}\cong \theta_zL_{y\inv}$.
\end{proof}

Note that $\cL(L_y,L_z)=0$ unless $y\sim_\mathtt{R} z$. 
Proposition~\ref{betatensor} provides the following 
description of $\cL(L_y,L_z)$ in the case when it is nonzero:

\begin{proposition}\label{present}
Let $y,z\in W$ be such that $y \sim_\mathtt{R} z$. 
Then the object $\cL(L_y,L_z)\otimes_U P_e$ admits a copresentation
\begin{equation}\label{9}
0\to \cL(L_y,L_z)\otimes_U P_e\to \theta_zL_{y\inv} \to 
\bigoplus_{w\in W} \boldsymbol{\mu}(z,w)\theta_wL_{y\inv}\langle 1\rangle.
\end{equation}
\end{proposition}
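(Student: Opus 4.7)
The plan is to transport the beginning of the minimal injective copresentation of $L_z$ from $\cO_0^{\mathbb Z}$ through the left exact functor $\cL(L_y,-)$ and then through the exact equivalence $-\otimes_U P_e$.

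First I would invoke Kazhdan-Lusztig theory (\S\ref{ss:KLT}): the minimal projective presentation of $L_z$ in $\cO_0^{\mathbb Z}$ has its second term governed by the Kazhdan-Lusztig $\boldsymbol{\mu}$-function. Dualising via the standard duality on $\cO_0^{\mathbb Z}$ (which exchanges $P_w$ and $I_w$ and flips the sign of the grading shift) yields the beginning of the minimal injective copresentation of $L_z$,
\[0\to L_z\to I_z\to \bigoplus_{w\in W}\boldsymbol{\mu}(z,w)\,I_w\langle 1\rangle,\]
using that $\boldsymbol{\mu}(z,w)=\boldsymbol{\mu}(w,z)$.

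Next I would apply the functor $\cL(L_y,-):\cO_0\to\hcii$. By Proposition~\ref{L=[]} this functor is left exact, as it is right adjoint to $-\otimes_U L_y$; hence the resulting sequence
\[0\to \cL(L_y,L_z)\to \cL(L_y,I_z)\to \bigoplus_{w\in W}\boldsymbol{\mu}(z,w)\,\cL(L_y,I_w)\langle 1\rangle\]
is exact at the first two positions. Each term $\cL(L_y,M)$ with $M\in\cO_0$ lies in $\hci$: the right $\mathbf{m}$-action is trivialised because $L_y$ has central character given by $\mathbf{m}$, and the left action is nilpotent because $M\in\cO_0$. One may then apply the exact equivalence $-\otimes_U P_e:\hci\xrightarrow{\sim}\cO_0$ from~\eqref{eq1}.

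What remains is to identify $\cL(L_y,I_w)\otimes_U P_e\cong \theta_w L_{y\inv}$ for each $w\in W$. This follows by combining \eqref{eq:thebe}, which gives $\cL(L_y,I_w)\cong (\theta_w\beta_{y\inv})^{\star}$, with the compatibility $(FM)^{\star}=F^{\star}M^{\star}$, the identity $P_e^{\star}=I_e$, the equality $\beta_{y\inv}\otimes_U I_e=L_{y\inv}$ (obtained by dualising $\beta_{y\inv}\otimes_U P_e=L_{y\inv}$, which itself is Proposition~\ref{betatensor} applied at $\theta_e$), and the self-duality of $\theta_w L_{y\inv}$. Substituting these identifications into the sequence above yields the claim. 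The main technical obstacle is the careful bookkeeping of the grading shift $\langle 1\rangle$ through the duality $\star$ (which flips its sign) so that the shifts in the derived copresentation match those in the statement; once the shift conventions from \S\ref{ss:KLT} are pinned down, the rest of the proof is a routine application of left exactness and of the duality relations already established in Section~\ref{s3.1}.
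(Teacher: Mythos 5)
Your proposal is correct and follows essentially the same route as the paper: apply the left exact functor $\cL(L_y,{}_-)\otimes_U P_e$ to the injective copresentation $L_z\inj I_z\to\bigoplus_w\boldsymbol{\mu}(z,w)I_w\langle 1\rangle$ and identify the terms via \eqref{eq:thebe} and the duality argument of Corollary~\ref{subz}. The extra details you supply (left exactness via Proposition~\ref{L=[]}, landing in $\hci$, and $\beta_{y\inv}\otimes_U P_e\cong L_{y\inv}$) are exactly the ingredients the paper leaves implicit.
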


\begin{proof}
The simple object $L_z\in\cO$ admits an injective copresentation
\begin{equation}\label{inresol}
L_z\inj I_z\to \bigoplus_{w\in W} \boldsymbol{\mu}(z,w) I_w 
\langle 1\rangle .
\end{equation}
Applying the left exact functor $\cL (L_y,-)\otimes_U P_e$ to \eqref{inresol}, 
we obtain \eqref{9} using \eqref{eq:thebe} 
by the same arguments as in Corollary~\ref{subz}.
\end{proof}

Next lemma give us additional information on the copresentation in \eqref{9}.

\begin{lemma}\label{socE}
For $y,z\in W$ such that $y\sim_\mathtt{R} z$, 
we have $\soc\cL(L_y,L_z) = \soc\cL(L_y,I_z)$.
\end{lemma}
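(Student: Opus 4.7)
Via the equivalence $\hcii\cong\cO_0$ of \eqref{eq1} together with Corollary~\ref{subz} and the identification $\cL(L_y,I_z)\otimes_U P_e\cong \theta_zL_{y\inv}$ coming from Proposition~\ref{betatensor}, the assertion translates to the following statement in $\cO_0$: for $N:=\cL(L_y,L_z)\otimes_U P_e\subseteq \theta_zL_{y\inv}$, one has $\soc N=\soc(\theta_zL_{y\inv})$. The inclusion $\soc N\subseteq\soc(\theta_zL_{y\inv})$ is automatic, and since the equivalence preserves socles, only the reverse inclusion $\soc(\theta_zL_{y\inv})\subseteq N$ remains to be established.

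For that I would invoke Proposition~\ref{present} to realise $N$ as the kernel of the natural map
\[
\varphi\colon\theta_zL_{y\inv}\longrightarrow \bigoplus_{w\in W}\boldsymbol\mu(z,w)\,\theta_wL_{y\inv}\langle 1\rangle,
\]
and then argue that $\varphi$ vanishes on $\soc(\theta_zL_{y\inv})$ purely by a graded-degree count. The hypothesis $y\sim_\mathtt{R} z$ forces $y\sim_\mathtt{J} z$, and Proposition~\ref{propnn125}\eqref{propnn125.1} then yields $\mathbf{b}(z,y\inv)=\mathbf{a}(z)=\mathbf{a}(y\inv)$. Combining \eqref{2b=gl} with the self-duality of $\theta_zL_{y\inv}$ shows that this module is supported in graded degrees $[-\mathbf{a}(z),\mathbf{a}(z)]$, while Theorem~\ref{thm-kmm-original} together with self-duality forces every simple constituent of $\soc(\theta_zL_{y\inv})$ to sit in degree $\geq \mathbf{a}(z)$. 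The two bounds together pin the socle down to the single graded degree $\mathbf{a}(z)$.

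On the target side, for each $w$ with $\boldsymbol\mu(z,w)\neq 0$, Proposition~\ref{propnn125} gives $\mathbf{b}(w,y\inv)\leq \mathbf{a}(y\inv)=\mathbf{a}(z)$; by \eqref{2b=gl} and self-duality, $\theta_wL_{y\inv}$ is supported in degrees at most $\mathbf{a}(z)$, and after the shift $\langle 1\rangle$ the summand $\theta_wL_{y\inv}\langle 1\rangle$ is supported in degrees at most $\mathbf{a}(z)-1<\mathbf{a}(z)$. Since morphisms in $\cO_0^\Z$ preserve the grading, $\varphi$ must kill every element of degree $\mathbf{a}(z)$ in the source, and in particular the socle, giving $\soc(\theta_zL_{y\inv})\subseteq\ker\varphi=N$.

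I do not expect any serious obstacle here; the only thing requiring care is pinning down the exact degree of $\soc(\theta_zL_{y\inv})$, but this is cleanly handled by combining Theorem~\ref{thm-kmm-original} with the support bound from \eqref{2b=gl}, or alternatively by appealing to Lemma~\ref{xLyProj}, which describes $\theta_zL_{y\inv}$ (since $y\sim_\mathtt{J} z$) as a direct sum of indecomposable projective-injectives in $\cO_0^{\hat{\mathcal R}}$ whose socles manifestly live in the extreme graded degree.
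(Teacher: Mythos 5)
Your argument is correct and follows essentially the same route as the paper's proof: pass to $\mathcal{O}_0$, realise $\cL(L_y,L_z)\otimes_U P_e$ as the kernel of the copresentation map from Proposition~\ref{present}, and kill the socle of $\theta_zL_{y\inv}$ by the degree count $\mathbf{b}(w,y\inv)-1\leq\mathbf{a}(y\inv)-1<\mathbf{a}(z)$ from Proposition~\ref{propnn125}. The only difference is that you spell out (via Theorem~\ref{thm-kmm-original} and self-duality, or Lemma~\ref{xLyProj}) why the socle of $\theta_zL_{y\inv}$ sits in the extreme degree $\mathbf{a}(z)$, a fact the paper simply asserts.
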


\begin{proof}
We prove the equivalent statement in $\cO$, namely, 
$\soc\cL(L_y,L_z)\otimes_U P_e = \soc\cL(L_y,I_z)\otimes_U P_e$.
Since $y\sim_\mathtt{R} z$, the socle of $\cL(L_y,I_z)\otimes_U P_e\cong \theta_zL_{y\inv}$ 
is concentrated in degree $\mathbf{a}(z)$. 
By Proposition \ref{present}, it is thus enough to note that each $\theta_w L_{y\inv}\langle 1\rangle$ has extreme degree $\mathbf{b}(w,y\inv)-1\leq \mathbf{a}(y\inv)-1<\mathbf{a}(z)$,
see Proposition~\ref{propnn125}.
\end{proof}

From Proposition~\ref{present} and Theorem~\ref{thm-kmm-original}, we have that,
for $y,z\in W$ such that $y\sim_\mathtt{R} z$,
the assumption $$[\cL(L_y,L_z)\otimes_U P_e:L_w\langle -a \rangle]\neq 0,$$
for some $w\in W$ and $a\in\mathbb{Z}$, implies  $\mathbf{a}(w)\leq a\leq \mathbf{a}(y)$.

The following statement generalizes \cite[Lemma~8(i)]{KhM}.

\begin{corollary}\label{E_yz}
Let $y,z\in W$ be such that $y\sim_{\mathtt{R}} z$, and 
$\mathcal{R}$ be the right cell of $y^{-1}$.
Then $\cL(L_y,L_z)\otimes_U P_e$ is isomorphic to the largest submodule 
$M$ of $\theta_zL_{y\inv}\in \cO_0^{\mathcal{\hat{R}}}$ such that 
any simple subquotient of $M/\soc M $ is, up to shift of grading, of the form 
$L_w$, for some $w\not\in\mathcal{R}$. 
\end{corollary}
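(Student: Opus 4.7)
Set $E := \cL(L_y,L_z)\otimes_U P_e$ and $T := \theta_z L_{y\inv}$. The plan is to identify $E$ with the described maximal submodule $M\subseteq T$ by proving the two inclusions $E\subseteq M$ and $M\subseteq E$. Both rely on the exact sequence
\[
0\to E\to T\to B:=\bigoplus_{w\in W}\boldsymbol{\mu}(z,w)\theta_wL_{y\inv}\langle 1\rangle
\]
from Proposition~\ref{present}, together with the equality $\soc E=\soc T$ from Lemma~\ref{socE}. For any candidate $M\subseteq T$ one has $\soc M\subseteq \soc T\subseteq E$, so simples in $\mathcal{R}$ appearing in $\soc M$ cause no obstruction to the defining property of $M$.

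For $E\subseteq M$, I plan to verify that $E/\soc T$ has no composition factor $L_v$ with $v\in\mathcal{R}$. Using Lemma~\ref{nlem1} together with Proposition~\ref{cella}\eqref{cella:p9}, the composition factors of $T$ belonging to $\mathcal{R}$ are precisely those $L_v$ with $\mathbf{a}(v)=\mathbf{a}(z)$; for each such copy lying above $\soc T$ the idea is to exhibit a nonzero image in $B$, arising from minimality of the injective copresentation~\eqref{inresol} of $L_z$ and the fact that its image under the left-exact functor $\cL(L_y,-)\otimes_U P_e$ faithfully captures the ``layer above'' $L_z$.

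For the reverse inclusion $M\subseteq E$, I would pass to the image $M':=M/(M\cap E)\hookrightarrow T/E\hookrightarrow B$. Any simple subobject of $\soc M'$ is a composition factor of $M$ not contained in $\soc M$ (because $\soc M\subseteq E$), hence by hypothesis of the form $L_w$ with $w\notin\mathcal{R}$; the structure of $B$ combined with the degree restrictions recalled in the proof of Lemma~\ref{socE} should then prevent any such simple from embedding in $B$ as the image of a submodule of $M$, forcing $M'=0$.

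The hard step will be the first inclusion, specifically establishing that each copy of $L_v$ (with $v\in\mathcal{R}$) in $T$ above the socle is genuinely detected by the map $T\to B$. This reduces to showing non-degeneracy of the connecting morphism in the copresentation~\eqref{inresol} after applying $\cL(L_y,-)\otimes_U P_e$, and the cleanest route should combine the Koszul self-duality of $\cO_0^{\mathbb{Z}}$ with the internal-hom description of Proposition~\ref{ihom} and the rigidity of the projective-injective object $T$ in $\cO_0^{\hat{\mathcal{R}}}$ provided by Lemma~\ref{xLyProj}.
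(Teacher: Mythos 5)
Your overall skeleton (the copresentation of Proposition~\ref{present}, the equality $\soc E=\soc T$ from Lemma~\ref{socE}, and the two inclusions) is the same as the paper's, but the crucial first inclusion $E\subseteq M$ is not actually proved, and the route you sketch for it does not work as stated. You propose to show that every copy of $L_v$ with $v\in\mathcal{R}$ sitting in $T=\theta_zL_{y\inv}$ above the socle is ``detected'' by the map $T\to B$, appealing to ``minimality'' of the injective copresentation \eqref{inresol} and to its image being ``non-degenerate'' after applying the left exact functor $\cL(L_y,{}_-)\otimes_U P_e$. A left exact functor does not preserve minimality of a copresentation, and individual composition factors of $T$ cannot be tracked through the map $T\to B$; what has to be shown is a statement about the multiplicities of $E=\ker(T\to B)$ itself, namely that $[E:L_w\langle j\rangle]=0$ for $w\in\mathcal{R}$ outside the socle degree. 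The paper's mechanism for this, which is absent from your proposal, is the internal-hom adjunction of Proposition~\ref{L=[]}: $\Hom_{\cO}(\theta_wL_y,L_z)\cong\Hom_{\hcii}(\theta_w,\cL(L_y,L_z))$, so the graded multiplicity of $L_w$ in $E$ equals $\dim\mathrm{hom}(\theta_wL_y,L_z\langle\,\cdot\,\rangle)$; since $w\in\mathcal{R}$ forces $w\sim_{\mathtt{J}}y$, \cite[Proposition~1]{Ma2} places the top of $\theta_wL_y$ in the single degree $\mathbf{a}(w)=\mathbf{a}(z)$, so $L_w$ can only occur in $E$ in the socle degree of $T$, and $E\subseteq M$ follows from maximality together with Corollary~\ref{subz}. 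Without this (or an equivalent substitute), your ``hard step'' remains a gap, as you yourself acknowledge.

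In the second inclusion your strategy (embed $M/(M\cap E)$ into $B$ and derive a contradiction from its socle) is the right one, but the fact you invoke is the wrong one: the degree bounds recalled in the proof of Lemma~\ref{socE} show that $\soc E=\soc T$, but they do not prevent a simple $L_w$ with $w\notin\mathcal{R}$ from embedding into $B$. What is needed, and what the paper uses, is that the socle of every nonzero summand $\theta_wL_{y\inv}\langle 1\rangle$ of $B$ consists, up to shift, of simples $L_u$ with $u\in\mathcal{R}$: by self-duality the socle equals the top, and $\mathrm{hom}(\theta_wL_{y\inv},L_u)\cong\mathrm{hom}(L_{y\inv},\theta_{w\inv}L_u)\neq 0$ forces $y\inv\leq_{\mathtt{R}}u$, while Lemma~\ref{nlem1} gives $u\leq_{\mathtt{R}}y\inv$, so $u\sim_{\mathtt{R}}y\inv$. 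With this cell-theoretic fact in place of the degree argument, your second paragraph does close correctly; the first inclusion, however, needs the adjunction argument above to be a proof.
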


\begin{proof}
We start by showing that, for $w\in \mathcal{R}$, the module $L_w$ can only appear in the
socle of $\cL(L_y,L_z)\otimes_U P_e$, that is in degree $\mathbf{a}(w)$. This
follows directly from \cite[Proposition~1]{Ma2} and the adjunction
\begin{displaymath}
\Hom_{\cO}(\theta_w L_y,L_z )\cong \Hom_{\hcii}(\theta_w,\cL(L_y,L_z)),
\end{displaymath}
given by Proposition~\ref{L=[]}. Therefore $\cL(L_y,L_z)\otimes_U P_e\subset M$
by Corollary~\ref{subz}. Now, the necessary isomorphism 
$\cL(L_y,L_z)\otimes_U P_e=M$ follows from Proposition~\ref{present} since
the socle of each summand in the second term of the injective 
(in $\cO_0^{\mathcal{\hat{R}}}$) copresentation of
$\cL(L_y,L_z)\otimes_U P_e$ is, up to shift of grading, of the form 
$L_w$, for some $w\in\mathcal{R}$. 
\end{proof}

For a Duflo element $d\in W$, let $\mathcal{R}$ be the right cell  of $d$.
Denote by $\Psi:\mathcal{O}_0^{\mathcal{\hat{R}}}\to \mathcal{O}_0^{\mathcal{\hat{R}}}$
the functor of partial approximation with respect to projective-injective
modules in $\mathcal{O}_0^{\mathcal{\hat{R}}}$, see \cite[\S~2.4]{KM}.
The easiest way to define this functor is as follows: we let
$\mathcal{X}$ denote the Serre subcategory of $\mathcal{O}_0^{\mathcal{\hat{R}}}$
generated by all simple which do not appear in the socle of
projective-injective objects. Then $\Psi$ is the composition of
the (exact) natural projection $\mathcal{O}_0^{\mathcal{\hat{R}}}\tto 
\mathcal{O}_0^{\mathcal{\hat{R}}}/\mathcal{X}$ followed by the right adjoint
of this projection. In particular, $\Psi$ is left exact and is equipped with a
natural transformation $\eta$ from the identity to it which is non-zero exactly on
those simple modules  which appear in the socle of a projective-injective module.
By \cite[Theorem~6]{Ma2},
Corollary~\ref{E_yz} says precisely that 
$\cL(L_y,L_z)\otimes_U P_e$ is isomorphic to 
$\Psi(\soc(\theta_zL_{y\inv}))$.

We can now relate our discussion closer to the results of \cite{KhM}.

\begin{proposition}\label{tohatR}
Let $y\in W$ and $\mathcal{R}$ be the right cell containing $y\inv$. 
Then the left exact functor 
$$\cL(L_y,{}_-)\otimes_U P_e:\cO_0\to \cO_0$$ 
has image in $\cO^{\hat{\mathcal{R}}}$ and 
maps $I_z\in \cO_0$, for $w\in W$, to zero unless $z\leq_\mathtt{R} y$. 
\end{proposition}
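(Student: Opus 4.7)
The plan is to reduce the statement to a direct computation on indecomposable injective objects, and then identify the resulting module with a translated simple module via the identifications already established in the paper.

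First, I would observe that the functor $F:=\cL(L_y,{}_-)\otimes_U P_e:\cO_0\to\cO_0$ is left exact, being the composition of the left exact internal hom with the equivalence ${}_-\otimes_U P_e:\hcii\to\cO_0$. Since $\cO_0^{\hat{\mathcal R}}$ is a Serre subcategory of $\cO_0$, it is closed under taking submodules. Therefore, given any $M\in\cO_0$, an embedding $M\hookrightarrow I$ into an injective envelope produces $F(M)\hookrightarrow F(I)$, and it suffices to prove the claim for the values $F(I_z)$ on indecomposable injectives, since every injective in $\cO_0$ is a finite direct sum of $I_z$'s and $F$ commutes with finite direct sums.

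Next I would use \eqref{eq:thebe}, which gives $\cL(L_y,I_z)\cong(\theta_z\beta_{y\inv})^\star$ in $\hcii$. Applying the equivalence ${}_-\otimes_U P_e$ and invoking the compatibility of the duality $\star$ with this equivalence, together with the identification $\beta_{y\inv}\otimes_U P_e\cong L_{y\inv}$ (the simple top $\beta_{y\inv}$ of $\theta_{y\inv}$ corresponds to the simple top $L_{y\inv}$ of $P_{y\inv}$ under the equivalence), one obtains
\[
F(I_z)=\cL(L_y,I_z)\otimes_U P_e\;\cong\;(\theta_z L_{y\inv})^\star\;\cong\;\theta_z L_{y\inv},
\]
where the last isomorphism uses that $\theta_z L_{y\inv}$ is self-dual in $\cO_0$ (as projective functors commute with $\star$ and $L_{y\inv}^\star\cong L_{y\inv}$). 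This same identification is already used implicitly in the proof of Lemma~\ref{socE}.

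Once this identification is in hand, both claims follow quickly. For the vanishing statement, $\theta_z L_{y\inv}=0$ precisely when $z\inv\not\leq_\mathtt{L} y\inv$ by \S\ref{s2.1}, which by Proposition~\ref{cella}\eqref{cella:i} is equivalent to $z\not\leq_\mathtt{R} y$. For the containment in $\cO_0^{\hat{\mathcal R}}$, I would invoke Lemma~\ref{nlem1}: every composition factor $L_w$ of $\theta_z L_{y\inv}$ satisfies $w\leq_\mathtt{R} y\inv$ (since $L_{y\inv}$ is simple), and because $y\inv\in\mathcal R$ this yields $w\leq_\mathtt{R}\mathcal R$, i.e., $L_w\in\cO_0^{\hat{\mathcal R}}$.

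The main obstacle is the bookkeeping in the second step: one must verify carefully that the duality $\star$ on $\hcii$ intertwines with the duality on $\cO_0$ under the equivalence \eqref{eq1} and that $\beta_{y\inv}\otimes_U P_e$ is indeed $L_{y\inv}$ (without an unintended grading shift). All of this is in principle available from \cite{BG} and \S\ref{s3.1}, but getting the combination right is the only nontrivial point; the remaining steps are formal once the isomorphism $F(I_z)\cong\theta_z L_{y\inv}$ is established.
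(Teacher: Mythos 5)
Your argument is correct, but it takes a different route from the paper's. The paper disposes of both claims in a few lines by applying, for arbitrary $M\in\cO_0$, the adjunction of Proposition~\ref{L=[]}: a composition factor $L_x$ of $\cL(L_y,M)\otimes_U P_e$ forces
$\Hom_{\cO}(P_x,\cL(L_y,M)\otimes_U P_e)\cong\Hom_{\hcii}(\theta_x,\cL(L_y,M))\cong\Hom_{\cO}(\theta_x L_y,M)\neq 0$,
hence $\theta_xL_y\neq 0$ and $x\leq_\mathtt{R}y\inv$, which gives the containment in $\cO^{\hat{\mathcal R}}$; and for $M=I_z$ a nonzero such Hom forces $[\theta_xL_y:L_z]\neq 0$, hence $z\leq_\mathtt{R}y$. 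You instead reduce to injectives via left exactness and the Serre property of $\cO^{\hat{\mathcal R}}$, and then identify $F(I_z)\cong\theta_zL_{y\inv}$, finishing with Lemma~\ref{nlem1} and Proposition~\ref{cella}\eqref{cella:i}. This is legitimate, and the one step you flag as delicate (the duality and the identification $\beta_{y\inv}\otimes_U P_e\cong L_{y\inv}$, so that $\cL(L_y,I_z)\otimes_U P_e\cong\theta_zL_{y\inv}$) is not something you need to re-verify: it is exactly what is established and used in the proof of Corollary~\ref{subz} (and again in Lemma~\ref{socE}), so you may simply cite it. What your route buys is the explicit value of the functor on each indecomposable injective, which in particular makes the vanishing statement an equality-of-objects observation rather than a Hom-vanishing argument; what the paper's route buys is brevity and the fact that it never invokes the duality $\star$ or Proposition~\ref{betatensor} at all, working uniformly for all $M$ without passing through an injective copresentation. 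Two cosmetic points: grading shifts are irrelevant here since the proposition is stated in $\cO_0$, not $\cO_0^{\mathbb{Z}}$; and your phrase ``the remaining steps are formal'' is fair, since left exactness is already asserted in the statement itself.
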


\begin{proof}
Note that, for any $M\in \cO_0$, for $L_x$ to appear in the composition series of $\cL(L_y,M)\otimes_U P_e$, the space
\begin{equation}\label{lnn137}
\Hom_{\cO}(P_x,\cL(L_y,M)\otimes_U P_e) \cong 
\Hom_{\hcii}(\theta_x,\cL(L_y,M))\cong \Hom_{\cO}(\theta_x L_y,M)
\end{equation}
should be nonzero, which requires $\theta_x L_y\neq 0$ and thus $x\leq_\mathtt{R} y\inv$.
This proves the first claim. 

Note that all simple subquotients of $\theta_x L_y$ have the form
$L_z$, where $z\leq_\mathtt{R} y$. This implies the second claim.
\end{proof}

For $y\in W$, denote by  $\mathcal{R}$ the right cell of $y$
and by $\mathcal{R}'$ the right cell of $y\inv$. 
Proposition~\ref{tohatR} says that the functor
$\cL(L_y,-)\otimes_U P_e$ restricts to  the functor
\begin{equation}
\Psi_y:\cO_0^{\hat{\mathcal{R}}}\to \cO_0^{\hat{\mathcal{R}'}}.
\end{equation}
If $y$ is a Duflo element, then $\mathcal{R}=\mathcal{R}'$ 
and the functor $\Psi_y$ agrees with the functor of 
partial coapproximation with respect to the projective-injective
modules in $\cO_0^{\hat{\mathcal{R}}}$, cf. \cite[Corollary~7.21]{KhM}.

\subsection{Kostant's problem via internal hom}

By \cite[\S~6.9]{Ja}, we have $U/\ann(P_e)\cong \cL(P_e,P_e)$
as algebras in $\hcii$. Recall that $\ann(P_e)=UI$. 
More generally, for every $M\in\mathcal{O}_0$,
there is an injective map of algebras 
$\phi_M:U/\ann(M)\to  \cL(M,M)$. In the case $M=L(w)$, since 
$U/\ann(P_e)\surj U/\ann(L_w)$, we have an algebra homomorphism 
\begin{equation}
    \cL(P_e,P_e)\to \cL(L_w,L_w)
\end{equation}
which is surjective if and only if $\mathbf{K}(w)$ holds.

Fix $y\in W$, let $\mathcal{L}$ be the left cell of $y$ and $\mathcal{R}=\mathcal{L}\inv$, 
the right cell of $y\inv$. For simplicity, we write $U_y:=U/\ann(L_y)$ and $A_y:=\cL(L_y,L_y)$, and let
$\phi_y:=\phi_{L_y}:U_y \inj A_y$.
This induces the restriction functor
\[\res^{A_y}_{U_y}:\operatorname{mod}_{\hcii}(A_y)\to \operatorname{mod}_{\hcii}(U_y).\]

\begin{proposition}\label{Uytriv}
The category $\operatorname{mod}_{\hcii}(U_y)$ can be identified with 
$^\infty_0\mathcal H^{\ann(L_y)}_0$, i.e., the category of Harish-Chandra 
bimodules that are annihilated by $\ann(L_y)$ on the right. 
\end{proposition}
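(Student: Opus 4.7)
The plan is to unfold both sides of the asserted identification and observe that they parametrize the same data. First I would note that, since $L_y$ is a composition factor of $P_e$, we have $\ann(P_e)\subseteq \ann(L_y)$, so the quotient map $U\to U_y$ factors through $U/\ann(P_e)\cong \cL(P_e,P_e)$, the monoidal unit $\mathbf{1}$ of $(\hcii,\otimes_U)$. Consequently $U_y$ is a well-defined algebra object of $\hcii$, realized as the quotient of $\mathbf{1}$ by the two-sided ideal $\ann(L_y)/\ann(P_e)$.

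Second, I would unpack what a right $U_y$-module structure in $\hcii$ means: an action map $\mu:X\otimes_U U_y\to X$ in $\hcii$ satisfying the associativity and unit axioms. Because $U_y$ is a quotient of the monoidal unit, the unit axiom forces $\mu$ to be uniquely determined by the canonical isomorphism $X\otimes_U\mathbf{1}\xrightarrow{\sim} X$: the map $\mu$ must make the composition $X\otimes_U\mathbf{1}\to X\otimes_U U_y\xrightarrow{\mu} X$ equal to this isomorphism. Hence $\mu$ exists (and is then unique) precisely when the image of the kernel of $X\otimes_U\mathbf{1}\to X\otimes_U U_y$ inside $X$ vanishes. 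That image is $X\cdot\ann(L_y)$, so a $U_y$-module structure on $X\in\hcii$ exists if and only if $X\cdot\ann(L_y)=0$.

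Using this, I would construct the equivalence as follows. The forward functor sends a $U_y$-module in $\hcii$ to the underlying Harish-Chandra bimodule, whose right action of $U$ factors through $U_y$, hence which lies in $\hciii$, is annihilated by $\ann(L_y)$ on the right, and has $\mathbf{m}^iX=0$ on the left for $i\gg0$; i.e., an object of $\hcii^{\ann(L_y)}$ (in the notation of the statement, $\hci$ with the superscript replaced by $\ann(L_y)$). The reverse functor takes $X\in{}^\infty_0\!\mathcal H^{\ann(L_y)}_0$ and equips it with the unique $U_y$-action produced above; here one checks that $X\in\hcii$, which follows from $I\subseteq\ann(P_e)\subseteq\ann(L_y)$, so $XI=0$ automatically. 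Morphism sets agree because any bimodule map between two bimodules both killed on the right by $\ann(L_y)$ is automatically $U_y$-linear; therefore the two functors are mutually inverse equivalences of categories.

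The only genuinely delicate point is the uniqueness-from-the-unit argument in the second paragraph, which relies on the fact that an $A$-action on $X$ in a monoidal category, when $A$ is a quotient of the unit, is constrained by the unit axiom to be the canonical isomorphism $X\otimes\mathbf{1}\cong X$ after passing through the quotient; everything else is bookkeeping.
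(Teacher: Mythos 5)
Your proof is correct and follows essentially the same route as the paper: you realize $U_y$ as a quotient of the monoidal unit $U/UI\cong\cL(P_e,P_e)$ of $\hcii$, and use the unit axiom together with the surjectivity of $1_X\otimes\epsilon$ to show that a (necessarily unique) $U_y$-action on $X\in\hcii$ exists precisely when $X\cdot\ann(L_y)=0$, which is exactly the paper's argument. The extra bookkeeping you include (that $\ann(P_e)\subseteq\ann(L_y)$, that $XI=0$ is automatic, and that morphisms agree) is consistent with, and slightly more explicit than, the published proof.
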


\begin{proof}
Note that ${}_-\otimes_U U_y$ is the identity functor on the full subcategory  of
$\hcii$ consisting of all $M$ such that $M\cdot\ann(L_y)=0$. 
In particular, we have $\mu:U_y\otimes_U U_y \cong U_y$ and this, 
together with the unit map $\epsilon:U/UI\surj U_y$, makes $U_y$ an 
algebra object in $\hcii$. For this algebra object, the 
corresponding module category $\operatorname{mod}_{\hcii}(U_y)$ 
is a full  subcategory of $\hcii$.  
The objects in $\operatorname{mod}_{\hcii}(U_y)$ are exactly those $M\in\hcii$ 
for which  
$M\otimes_U U_y\cong M$, i.e., $ M\cdot\ann (L_y)= 0$. 
In fact, if $M\otimes_U U_y\cong M$, then $1_M\otimes_U \mu$ 
defines a $U_y$-module structure on $M$, and, if we have a 
$U_y$-modules structure $m:M\otimes_U U_y \to M$, then the 
unit law gives $1_M= m\circ (1_M\otimes_U \epsilon)$, which 
implies that $m$ is an isomorphism since $ 1_M\otimes_U \epsilon$ is surjective.
The claim follows.
\end{proof}

Recall that, for an additive category $\mathcal{A}$, its projective abelianization is denoted $\overline{\mathcal{A}}$, see \cite[\S~3.5]{MM1}.

\begin{proposition}\label{Uymod}
We have $\operatorname{mod}_{\hcii}(U_y)\cong
\overline{\Add_{\hcii}\{\theta_x\otimes_U U_y\,:\,x\in W\}} 
\cong \cO^{\mathcal{\hat R}}$.
\end{proposition}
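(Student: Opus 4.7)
The first equivalence is a direct application of the general theory of module categories over monoidal categories (cf.\ \cite[\S7]{EGNO}). The left adjoint $-\otimes_U U_y:\hcii\to \mod_\hcii(U_y)$ of the forgetful functor sends the set of projective generators $\{\theta_x\langle i\rangle\}_{x\in W,\,i\in\Z}$ of $\hcii$ to a set of projective generators of $\mod_\hcii(U_y)$, so $\mod_\hcii(U_y)$ is the projective abelianization of $\Add_\hcii\{\theta_x\otimes_U U_y:x\in W\}$.

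For the second equivalence, I would transport to $\cO_0$ via \eqref{eq1} and apply Proposition~\ref{ihom}. Since $\ann(L_y)$ depends only on the left cell of $y$, we have $U_y=U_d$ for $d$ the Duflo element in $\mathcal L$, and by Proposition~\ref{prop5}, $U_y\otimes_U P_e\cong D_d$. Compatibility of \eqref{eq1} with the left $\hcii$-action identifies $\theta_x\otimes_U U_y$ with $\theta_x D_d$, so the middle term is additively equivalent to $\Add_{\cO_0}\{\theta_x D_d:x\in W\}$. The key computation is $\cL(D_d,D_d)\cong U_d=U_y$ as algebras in $\hcii$, which I would establish via the chain of adjunctions
\[
\Hom_\hcii(X,\cL(D_d,D_d))\cong\Hom_{\cO_0}(X\otimes_U D_d,D_d)\cong \Hom_\hcii(X\otimes_U U_d,U_d)\cong \Hom_\hcii(X,U_d),
\]
where the last isomorphism uses that $U_d$ is already right-annihilated by $\ann(L_d)$, so any bimodule map $X\to U_d$ factors through $X\otimes_U U_d$. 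Applying Proposition~\ref{ihom} with $M=D_d$ and taking projective abelianizations then yields $\overline{\Add_{\cO_0}\{\theta_x D_d\}}\cong \overline{\operatorname{proj}_\hcii(U_y)}=\mod_\hcii(U_y)$.

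The remaining task is the identification $\mod_\hcii(U_y)\cong\cO^{\hat{\mathcal R}}$, and I expect this to be the main obstacle. The simples of $\mod_\hcii(U_y)$ are the $\beta_x$ whose right annihilator contains $\ann(L_y)$, namely those with $x\leq_\mathtt{L} y$; those of $\cO^{\hat{\mathcal R}}$ are the $L_z$ with $z\leq_\mathtt{R} y\inv$. The two indexing sets are in natural bijection via the inversion map of Proposition~\ref{cella}\eqref{cella:i}, but producing a concrete functor realizing this bijection at the level of abelian categories is the most technically demanding step. A natural candidate is the tensor functor $-\otimes_{U_y}L_y:\mod_\hcii(U_y)\to\cO_0$, which sends the generators $\theta_x\otimes_U U_y$ to $\theta_x L_y$, composed with an appropriate duality or twist that places the image in $\cO^{\hat{\mathcal R}}$; by Proposition~\ref{dLy} the indecomposable projectives of $\cO^{\hat{\mathcal R}}$ appear (up to grading shift) as modules of the form $\theta_z L_y$ for $z$ in the appropriate cell, yielding the required equivalence once the labelling bijection is properly implemented.
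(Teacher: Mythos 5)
The decisive step is the one you defer to the end: the identification $\mod_{\hcii}(U_y)\cong\cO^{\hat{\mathcal R}}$. There your sketch rests on a miscalculation and then proposes a functor that cannot work. The right annihilator of $\beta_x$ is $\ann(L_{x\inv})$, not $\ann(L_x)$: indeed $\beta_x\cong\cL(L_{x\inv},I_e)$ (see the proof of Proposition~\ref{betatensor}), the right $U$-action on $\cL(M,N)$ comes from the action on the first argument $M$, and equality of the annihilators is the standard description of annihilators of simple Harish-Chandra bimodules. Hence $\beta_x\cdot\ann(L_y)=0$ if and only if $\ann(L_y)\subseteq\ann(L_{x\inv})$, i.e., $x\inv\leq_\mathtt{L}y$, i.e., $x\leq_\mathtt{R}y\inv$ by Proposition~\ref{cella}\eqref{cella:i} --- not $x\leq_\mathtt{L}y$ as you assert. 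With the correct indexing there is no mismatch to repair: by Proposition~\ref{Uytriv}, $\mod_{\hcii}(U_y)$ is the full subcategory of $\hci$ of bimodules killed by $\ann(L_y)$ on the right, its simples are exactly the $\beta_x$ with $x\leq_\mathtt{R}y\inv$, and the equivalence \eqref{eq1} (which sends $\theta_x\mapsto P_x$ and $\beta_x\mapsto L_x$) therefore restricts on the nose to an equivalence with the Serre subcategory $\cO^{\hat{\mathcal R}}$. This one-line argument is precisely the paper's proof; the ``technically demanding'' inversion twist you anticipate is a phantom created by the wrong annihilator.

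Your proposed candidate for that twist is moreover substantively wrong, not merely unnecessary. The functor $-\otimes_{U_y}L_y$ sends $\theta_x\otimes_UU_y$ to $\theta_xL_y$, and comparing $\{\theta_x\otimes_UU_y\}$ with $\{\theta_xL_y\}$ is exactly the content of Kostant's problem: if this comparison gave an equivalence onto the projectives of $\cO^{\hat{\mathcal R}}$ for every $y$, then $\mathbf{K}(y)$ would always hold (cf. Propositions~\ref{K to JKM} and~\ref{Morita}), contradicting, e.g., the failure of $\mathbf{K}(13)$ in type $A_3$. Also, Proposition~\ref{dLy} and Lemma~\ref{xLyProj} only produce the projective-injective objects of $\cO^{\hat{\mathcal R}}$, indexed by the two-sided cell (and in Proposition~\ref{dLy} the relevant right cell is that of $y$, not of $y\inv$); they do not give all indecomposable projectives --- for instance $P_e^{\hat{\mathcal R}}$ is not of the form $\theta_zL_y\langle i\rangle$ when $y\neq e$. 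Finally, even your intermediate reduction is circular as stated: your adjunction chain for $\cL(D_d,D_d)\cong U_d$ is essentially fine (one should check the isomorphism is induced by the canonical algebra map $U_d\to\cL(D_d,D_d)$), but after applying Proposition~\ref{ihom} you still owe the identification $\overline{\Add_{\cO_0}\{\theta_xD_d\,:\,x\in W\}}\cong\cO^{\hat{\mathcal R}}$, i.e., essentially that $D_d$ is the projective cover of $L_e$ in $\cO^{\hat{\mathcal R}}$, which is tantamount to the proposition itself and is what the annihilator argument above settles directly.
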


\begin{proof}
Since $\beta_x$,  for $x\in W$ such that $x\leq_\mathtt{R} y\inv$, are exactly 
the simples that are killed by $\ann(L_y)$ on the right, the equivalence 
${}_-\otimes_U P_e$
given by \eqref{eq1} restricts to the desired equivalence via Proposition~\ref{Uytriv}.
\end{proof}

\begin{corollary}\label{projU}
The indecomposable projectives in $\operatorname{mod}_{\hcii}(U_y)$ are exactly $\theta_x\otimes_U U_y$, for $x\in\hat{\mathcal{R}}$.
\end{corollary}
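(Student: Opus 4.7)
The plan is to combine Proposition~\ref{Uymod} with the free-forgetful adjunction between $\hcii$ and $\operatorname{mod}_{\hcii}(U_y)$, the latter viewed via Proposition~\ref{Uytriv} as the full Serre subcategory of $\hcii$ consisting of bimodules annihilated by $\ann(L_y)$ on the right. Proposition~\ref{Uymod} presents $\operatorname{mod}_{\hcii}(U_y)$ as the projective abelianization of $\Add_{\hcii}\{\theta_x\otimes_U U_y\,:\,x\in W\}$, so every indecomposable projective in $\operatorname{mod}_{\hcii}(U_y)$ must occur as a direct summand of some $\theta_x\otimes_U U_y$, and it remains only to select the non-zero, indecomposable ones.

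The key computational step will be the adjunction
$$\Hom_{\operatorname{mod}_{\hcii}(U_y)}(\theta_x\otimes_U U_y, M)\cong \Hom_{\hcii}(\theta_x, M), \qquad M\in \operatorname{mod}_{\hcii}(U_y),$$
applied to each simple $M=\beta_z$ with $z\in\hat{\mathcal{R}}$ (recalling that the proof of Proposition~\ref{Uymod} identifies the simple objects of $\operatorname{mod}_{\hcii}(U_y)$ with exactly these $\beta_z$). Since $\theta_x$ is the projective cover of $\beta_x$ in $\hcii$, the right hand side equals $\delta_{x,z}\mathbb{C}$.

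For $x\in\hat{\mathcal{R}}$ this immediately forces $\theta_x\otimes_U U_y$ to have a unique simple quotient $\beta_x$ of multiplicity one, hence to be indecomposable with simple top $\beta_x$ and to coincide with the projective cover of $\beta_x$ in $\operatorname{mod}_{\hcii}(U_y)$. For $x\notin\hat{\mathcal{R}}$ the same adjunction shows that $\theta_x\otimes_U U_y$ admits no nonzero morphism to any simple object of $\operatorname{mod}_{\hcii}(U_y)$, and so must be zero.

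The main point requiring care is exactly this last vanishing: an object with no simple quotient is forced to be zero only once we know it has finite length. Both ingredients needed -- that $\operatorname{mod}_{\hcii}(U_y)$ is Serre inside $\hcii$ (so that absence of simple quotients in the ambient category is inherited by the subcategory) and that $\theta_x\otimes_U U_y$ has finite length -- follow respectively from Proposition~\ref{Uytriv} and from the equivalence with $\cO_0^{\hat{\mathcal{R}}}$ provided by Proposition~\ref{Uymod}. With these in place, the proof is complete.
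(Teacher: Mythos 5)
Your argument is correct. It differs from the paper's proof mainly in where the key computation is carried out: the paper applies the exact functor $\theta_x$ to the surjection $P_e\tto D_y$ under the equivalence of Proposition~\ref{Uymod}, so that the nonzero $\theta_x\otimes_U U_y$ becomes a quotient of $P_x$ in $\cO_0$ and therefore has simple top $L_x$, which immediately gives indecomposability and pairwise non-isomorphism; you instead stay inside $\hcii$ and obtain the simple top $\beta_x$ from the free--forgetful adjunction $\Hom_{\operatorname{mod}_{\ccH}}(\theta_x\otimes_U U_y,M)\cong\Hom_{\hcii}(\theta_x,M)$ together with the identification of the simples of $\operatorname{mod}_{\hcii}(U_y)$ from the proof of Proposition~\ref{Uymod} (via Proposition~\ref{Uytriv}). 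The two mechanisms are equivalent in substance, but your route has the merit of making explicit a point the paper's one-line proof leaves implicit, namely that $\theta_x\otimes_U U_y=0$ precisely when $x\notin\hat{\mathcal{R}}$ (no nonzero maps to any simple of the category, plus finite length and the Serre property), while the paper's route is shorter because the statement ``quotient of $P_x$'' packages the top, the indecomposability and the non-isomorphism in one stroke. Your cautionary remark about needing finite length for the vanishing step is exactly the right thing to flag, and both of the inputs you invoke for it are indeed available.
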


\begin{proof}
This follows from the fact that the nonzero $\theta_x\otimes_U U_y$ corresponds
in $\cO_0$ to a quotient of  $P_x$. Therefore such modules are indecomposable 
and mutually non-isomorphic.
\end{proof}

\begin{proposition}\label{K to JKM}
For $y\in W$, we have that $\mathbf{K}(y)$ 
implies both $\mathbf{Kh}(y)$ and $\mathbf{KM}(*,y)$.
\end{proposition}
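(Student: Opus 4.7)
The plan is to combine the internal-hom equivalence of Proposition~\ref{ihom} with the explicit description of the indecomposable projectives of $\operatorname{mod}_{\hcii}(U_y)$ given in Corollary~\ref{projU}, and then trace through the resulting bijection of indecomposables.

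The starting point will be the observation that, by definition, $\mathbf{K}(y)$ is exactly the statement that the natural algebra map $\phi_y \colon U_y \to A_y$ in $\hcii$ is an isomorphism. Under this identification, $\operatorname{mod}_{\hcii}(U_y)$ and $\operatorname{mod}_{\hcii}(A_y)$ become the same category, and, a fortiori, so do the subcategories of projective objects. Then I would invoke Corollary~\ref{projU}: the indecomposable objects of $\operatorname{mod}_{\hcii}(U_y)$ are precisely the $\theta_x \otimes_U U_y$ for $x \in \hat{\mathcal{R}}$ (with $\mathcal{R}$ the right cell of $y^{-1}$), all indecomposable and pairwise non-isomorphic.

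On the other hand, Proposition~\ref{ihom} supplies an equivalence of $\mathscr{P}$-module categories
\[\mathcal{L}(L_y,-)\colon \operatorname{add}_{\mathcal{O}_0}\{\theta_x L_y \ |\ x\in W\} \xrightarrow{\sim} \operatorname{proj}_{\hcii}(A_y),\]
and I need to verify that, under this equivalence, $\theta_x L_y$ corresponds to $\theta_x \otimes_U A_y$. This is a naturality statement: it amounts to the canonical isomorphism $\mathcal{L}(L_y,\theta_x L_y) \cong \theta_x \otimes_U \mathcal{L}(L_y,L_y)$ expressing the compatibility of the internal hom with the left $\hcii$-action on $\mathcal{O}_0$. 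This follows from the adjunction in Proposition~\ref{L=[]} together with the module-category formalism already in use. Given this, under $\mathbf{K}(y)$ one obtains a chain of identifications $\theta_x L_y \leftrightarrow \theta_x \otimes_U A_y = \theta_x \otimes_U U_y$.

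Putting these ingredients together, the nonzero $\theta_x L_y$ (which, by \S\ref{s2.1}, occur exactly for $x \in \hat{\mathcal{R}}$) are matched with the indecomposable projectives $\theta_x \otimes_U U_y$, one for each $x \in \hat{\mathcal{R}}$. Pulling back indecomposability along the equivalence yields that each nonzero $\theta_x L_y$ is indecomposable, which is $\mathbf{KM}(*,y)$; pulling back pairwise non-isomorphism yields that $\theta_x L_y \not\cong \theta_z L_y$ for distinct nonzero $x, z$, which is $\mathbf{Kh}(y)$. The main obstacle in executing this plan is the identification $\mathcal{L}(L_y,\theta_x L_y) \cong \theta_x \otimes_U A_y$, since everything else reduces to bookkeeping inside the framework already developed.
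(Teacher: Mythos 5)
Your proposal is correct and is essentially the paper's own argument: the paper likewise combines $\operatorname{proj}_{\hcii}(A_y)\cong\operatorname{proj}_{\hcii}(U_y)$ (from $\mathbf{K}(y)$) with the equivalence of Proposition~\ref{ihom} and Corollary~\ref{projU} to identify the nonzero $\theta_xL_y$ with the pairwise non-isomorphic indecomposable projectives $\theta_x\otimes_U U_y$. The compatibility $\cL(L_y,\theta_xL_y)\cong\theta_x\otimes_U A_y$ that you flag as the remaining point is exactly the assertion that $\cL(L_y,-)$ is a $\mathscr{P}$-module functor, which is already built into Proposition~\ref{ihom}, so there is no gap.
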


\begin{proof}
Suppose $\mathbf{K}(y)$ is true. Then we have 
\begin{equation}\label{eqnn652}
\operatorname{proj}_{\hcii}(A_y)\cong \operatorname{proj}_{\hcii}(U_y),
\end{equation}
where $\theta_x \otimes_U A_y$ corresponds to $\theta_x \otimes_U U_y$.
By Proposition \ref{ihom}, we also have
\begin{equation}\label{eqnn653}
\Add_{\cO_0}\{\theta_x L_y\,:\,x\in W\}\cong\operatorname{proj}_{\hcii}(A_y),
\end{equation}
where $\theta_xL_y$ corresponds to $\theta_x A_y$.
Combining \eqref{eqnn652} and \eqref{eqnn653}, 
we get an equivalence that identifies $\theta_xL_y$ with $\theta_x\otimes_U U_y \in  \hcii$. 
Thus, by Corollary \ref{projU}, $\theta_xL_y$ are indecomposable and mutually non-isomorphic.
\end{proof}

\begin{proposition}\label{Morita}
For a Duflo element $d\in W$, if both $\mathbf{Kh}(d)$ and $\mathbf{KM}(*,d)$ are true, 
then  $\res^{A_d}_{U_d}$ is an equivalence.
\end{proposition}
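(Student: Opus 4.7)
The plan is to prove the proposition by a Morita-type argument: identify the indecomposable projective objects in both module categories, match them up under $\res$, and check that hom spaces between them coincide. By Proposition~\ref{ihom} applied to $M = L_d$, the assignment $\theta_x A_d \mapsto \theta_x L_d$ yields an equivalence $\operatorname{proj}_{\hcii}(A_d) \cong \Add_{\cO_0}\{\theta_x L_d\,:\, x \in W\}$ of $\p$-module categories. Under $\mathbf{KM}(*,d)$, each $\theta_x L_d$ is either zero or indecomposable, and under $\mathbf{Kh}(d)$ the non-zero ones are pairwise non-isomorphic; hence the $\theta_x A_d$ with $x \leq_\mathtt{R} d$ form a complete set of pairwise non-isomorphic indecomposable projectives in $\mod_{\hcii}(A_d)$. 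By Corollary~\ref{projU}, the indecomposable projectives in $\mod_{\hcii}(U_d)$ are the $\theta_x \otimes_U U_d$ for $x \leq_\mathtt{R} d$, indexed by the same set.

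Next, the inclusion $\phi_d : U_d \hookrightarrow A_d$ of algebras in $\hcii$ induces, by tensoring on the left with $\theta_x$, a canonical morphism
\[\theta_x \otimes_U U_d \longrightarrow \theta_x \otimes_U A_d \;=\; \res(\theta_x A_d)\]
of right $U_d$-modules in $\hcii$. Passing through the equivalences $\hcii \iso \cO_0$ via ${-}\otimes_U P_e$ and $\mod_{\hcii}(U_d) \iso \cO^{\hat{\mathcal R}}$ from Proposition~\ref{Uymod}, and invoking Propositions~\ref{prop5} and~\ref{prop6}, this morphism becomes the inclusion $\theta_x D_d \hookrightarrow \theta_x \overline{D}_d$ in $\cO^{\hat{\mathcal R}}$. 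The crucial claim is that this inclusion is an isomorphism for every $x \leq_\mathtt{R} d$. Once this is proven, $\res$ identifies indecomposable projectives on the two sides in a $\p$-equivariant manner; fully faithfulness on projectives then follows from the internal-hom computations of Proposition~\ref{L=[]}, and the equivalence extends uniquely from projectives to the whole abelian module categories because both have enough projectives.

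The main obstacle is establishing the vanishing $\theta_x(\overline{D}_d / D_d) = 0$ for all $x \leq_\mathtt{R} d$. Note that the special case $x = e$ reads $D_d = \overline{D}_d$, which is precisely $\mathbf{K}(d)$ by Corollary~\ref{cor8}; so together with Proposition~\ref{K to JKM} this proposition would yield the highly non-trivial implication $\mathbf{Kh}(d) \wedge \mathbf{KM}(*,d) \Rightarrow \mathbf{K}(d)$ of Theorem~\ref{thm1}. My strategy would be to exploit that the indexing sets of indecomposable projectives on the two sides coincide, both being $\{x \in W : x \leq_\mathtt{R} d\}$, and that the canonical family $\theta_x D_d \hookrightarrow \theta_x \overline{D}_d$ is a $\p$-equivariant family of non-zero morphisms between indecomposable projectives in $\cO^{\hat{\mathcal R}}$; by Krull--Schmidt, this should force each inclusion to be an isomorphism. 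The subtle point is to verify that $\theta_x \overline{D}_d$ remains indecomposable in $\cO^{\hat{\mathcal R}}$ (equivalently, that $\res(\theta_x A_d)$ is indecomposable as a $U_d$-module object in $\hcii$) under the hypotheses, since restriction can a priori split an $A_d$-indecomposable.
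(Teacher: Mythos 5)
Your setup is fine as far as it goes: the identification of the indecomposable projectives on both sides via Proposition~\ref{ihom} and Corollary~\ref{projU}, and the comparison maps $\theta_x\otimes_U U_d\to \res^{A_d}_{U_d}(\theta_x A_d)$, which under $-\otimes_U P_e$ become $\theta_x D_d\hookrightarrow \theta_x\overline{D}_d$. The genuine gap is precisely at what you call the crucial claim. Proving that $\theta_x D_d\hookrightarrow\theta_x\overline{D}_d$ is an isomorphism for all $x$ is not an auxiliary step: for $x=e$ it is literally $D_d=\overline{D}_d$, i.e.\ $\mathbf{K}(d)$ by Corollary~\ref{cor8}, which is the full strength of the hard implication this proposition exists to deliver (Theorem~\ref{theorem}). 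Your proposed justification, that a $\p$-equivariant family of non-zero morphisms between indecomposable projectives must consist of isomorphisms ``by Krull--Schmidt,'' does not work: Krull--Schmidt gives uniqueness of direct sum decompositions, not invertibility of a non-zero (even injective) morphism between indecomposable objects. Worse, at this point you know neither that $\theta_x\overline{D}_d$ is projective in $\cO^{\hat{\mathcal{R}}}$ nor that it is indecomposable: $\overline{D}_d$ is in general not projective in $\cO^{\hat{\mathcal{R}}}$ (in Example~\ref{ex9}, $\overline{D}_{13}$ is indecomposable with non-simple top, hence not projective), and establishing these properties under the hypotheses $\mathbf{Kh}(d)$ and $\mathbf{KM}(*,d)$ is essentially equivalent to what you are trying to prove. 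You flag the indecomposability issue but give no argument, so the proof does not close.

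For comparison, the paper never attacks $D_d=\overline{D}_d$ directly. Its key input is \cite[Corollary~3]{Ma2}: $L_d\hookrightarrow P_e^{\hat{\mathcal{R}}}$ with every composition factor of the cokernel of the form $L_x$, $x<_{\mathtt{R}}d$; applying $\theta_w$ gives $\theta_w L_d\hookrightarrow P_w^{\hat{\mathcal{R}}}$ with cokernel admitting no homomorphisms to projectives of $\cO^{\hat{\mathcal{R}}}$, so every non-zero map $P_w^{\hat{\mathcal{R}}}\to P_u^{\hat{\mathcal{R}}}$ restricts to a non-zero map $\theta_wL_d\to\theta_uL_d$. This yields a faithful functor $\Upsilon:\operatorname{proj}_{\hcii}(U_d)\to\operatorname{proj}_{\hcii}(A_d)$, while $\res^{A_d}_{U_d}$ induces a faithful functor $\Theta$ in the opposite direction; the hypotheses $\mathbf{Kh}(d)$ and $\mathbf{KM}(*,d)$ guarantee that both send pairwise non-isomorphic indecomposables to pairwise non-isomorphic indecomposables, and a faithfulness/counting argument on the finitely many finite-dimensional $\mathrm{Hom}$-spaces then forces both to be equivalences, so $\res^{A_d}_{U_d}$ is an equivalence on projectives and hence an equivalence. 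An argument of this kind, in which the hypotheses actually do the work, is what is missing from your proposal.
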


\begin{proof}
Let $\mathcal{R}$  be the right cell of $d$.
The equivalence in Proposition~\ref{Uymod} restricts to the equivalence
\begin{equation}\label{eqnn654}
\operatorname{proj}(\cO^{\mathcal{\hat R}})\cong\operatorname{proj}_{\hcii}(U_d),
\end{equation}
which sends $P_w^{\mathcal{\hat R}}$, for $w\in W$, to $\theta_w\otimes_U U_d$.
By \cite[Corollary~3]{Ma2}, we have $L_d \hookrightarrow P_e^{\mathcal{\hat R}}$
and any simple subquotient of the cokernel of this inclusion is of the form
$L_x$, for $x<_{\mathtt{R}}d$, up to shift of grading.
For $w\in W$, applying $\theta_w$ gives the inclusion 
$\theta_w  L_d\hookrightarrow P_w^{\mathcal{\hat R}}$ such that 
any simple subquotient of the cokernel of this inclusion is of the form
$L_x$, for $x<_{\mathtt{R}}d$, up to shift of grading.
In particular, there are no homomorphisms from any such cokernel to any
object in $\operatorname{proj}(\cO^{\mathcal{\hat R}})$.
This means that, for $w,u\in W$,  any non-zero homomorphism from $P_w^{\mathcal{\hat R}}$
to $P_u^{\mathcal{\hat R}}$ restricts to a non-zero homomorphism
from $\theta_w L_d$ to $\theta_u L_d$. Using \eqref{eqnn653}
and \eqref{eqnn654}, this gives a faithful functor
\begin{displaymath}
\Upsilon:\operatorname{proj}_{\hcii}(U_d) 
\to \operatorname{proj}_{\hcii}(A_d).
\end{displaymath}
Because of $\mathbf{Kh}(d)$ and $\mathbf{KM}(*,d)$, this functor sends 
(pairwise non-isomorphic) indecomposable objects
to (pairwise non-isomorphic) indecomposable objects.

Using \eqref{eqnn653}, Proposition \ref{Uymod} and Proposition \ref{ihom}, the functor 
$\res^{A_d}_{U_d}$ gives rise to the faithful functor 
\[\Theta:\Add_\cO\{\theta_xL_d\ |\ x\in\hat{\mathcal{R}}\} \to 
\Add_{\hcii}\{  \theta_x\otimes_U  U_d\ |\ x\in\hat{\mathcal{R}}\}\]
which sends $\theta_xL_d$ to $\theta_x\otimes_U U_d$. The combination of
$\mathbf{Kh}(d)$ and $\mathbf{KM}(*,d)$ implies that  
$\Theta$ sends (pairwise non-isomorphic) indecomposable objects to 
(pairwise non-isomorphic) indecomposable objects.
Putting the faithful functors $\Upsilon$ and $\Theta$ together,
we thus conclude that they are equivalences. This means that 
$\res^{A_d}_{U_d}$ is an equivalence when restricted to projective objects
and hence is an equivalence. This completes the proof.
\end{proof}
 
\begin{theorem}\label{theorem}
Let $d\in W$ be a Duflo element. Then $\mathbf{K}(d)$ is true
if and only if both $\mathbf{Kh}(d)$ and $\mathbf{KM}(*,d)$ are true.
\end{theorem}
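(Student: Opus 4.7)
The forward implication $\mathbf{K}(d) \Rightarrow \mathbf{Kh}(d) \wedge \mathbf{KM}(*,d)$ is already proved in Proposition~\ref{K to JKM}, so the plan concentrates on the converse. Assuming $\mathbf{Kh}(d)$ and $\mathbf{KM}(*,d)$, Proposition~\ref{Morita} already tells us that $\res^{A_d}_{U_d}: \operatorname{mod}_{\hcii}(A_d) \to \operatorname{mod}_{\hcii}(U_d)$ is an equivalence of categories, and the remaining task is to deduce that the injective algebra map $\phi_d: U_d \to A_d$ is an isomorphism (which is what $\mathbf{K}(d)$ asserts).

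My plan is to exploit the standard restriction-induction adjunction: the functor $\res^{A_d}_{U_d}$ admits the left adjoint $\operatorname{ind} := (-) \otimes_{U_d} A_d$. When a right adjoint is an equivalence of categories, its left adjoint is automatically a quasi-inverse by uniqueness of adjoints, and so the unit $\eta: \operatorname{id} \Rightarrow \res \circ \operatorname{ind}$ is a natural isomorphism of functors. Evaluating $\eta$ at the regular right $U_d$-module $U_d$ yields the canonical map $U_d \to \res(U_d \otimes_{U_d} A_d) \cong A_d$ sending $u \mapsto u \otimes 1$, which under the standard identification $U_d \otimes_{U_d} A_d \cong A_d$ coincides with $\phi_d$. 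Since $\eta_{U_d}$ must be an isomorphism, $\phi_d$ is an isomorphism, which is exactly $\mathbf{K}(d)$.

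The main obstacle I anticipate is handling this adjunction rigorously in the monoidal category $\hcii$: one needs to verify that $\operatorname{ind}$ is well-defined as a functor $\operatorname{mod}_{\hcii}(U_d) \to \operatorname{mod}_{\hcii}(A_d)$ left adjoint to $\res^{A_d}_{U_d}$, and that the unit evaluated at $U_d$ really does agree with $\phi_d$ under the canonical identifications. These checks are essentially formal, following from the fact that relative tensor products over an algebra object in a monoidal category are computed as coequalizers. As a sanity check, one can translate the argument via Propositions~\ref{prop5}, \ref{prop6} and \ref{Uymod} into the assertion $D_d = \overline{D}_d$ inside $\cO^{\hat{\mathcal{R}}}$, which by Corollary~\ref{cor8} is again equivalent to $\mathbf{K}(d)$; this provides a more concrete route which avoids adjunction formalism entirely, at the cost of extracting the equality of submodules from the equivalence directly via the preservation of projective objects.
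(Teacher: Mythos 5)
Your proposal is correct and follows essentially the same route as the paper: the forward direction is exactly Proposition~\ref{K to JKM}, and the converse rests entirely on the equivalence provided by Proposition~\ref{Morita}. The only divergence is the final inference — the paper notes that the equivalence matches $A_d$ with $U_d$, so $A_d\cong U_d$ as objects of $\hcii$, whereas you identify the unit of the induction--restriction adjunction at $U_d$ with $\phi_d$ and conclude it is invertible; this is a clean formal variant that uses only the statement (not the proof) of Proposition~\ref{Morita} and likewise yields $\mathbf{K}(d)$.
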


\begin{proof}
The ``only if" direction is Proposition~\ref{K to JKM}.
For the ``if" direction, assume that both $\mathbf{Kh}(d)$ and $\mathbf{KM}(*,d)$ are true.
Then, by Proposition~\ref{Morita}, $\res^{A_d}_{U_d}$ is an equivalence
sending $A_d$ to $U_d$. Therefore $A_d$ and $U_d$ are isomorphic as
objects in $\hcii$, which is exactly the claim $\mathbf{K}(d)$.
\end{proof}

\subsection{More on $\mathbf K$, $\mathbf{Kh}$, and $\mathbf{KM}$}

For $y\in W$, we set $E_y:= \cL(L_y,L_y)\otimes_U P_e \cong \overline{D_y}$
and $D_y := U_y\otimes_U P_e \in \cO$. 
Note that $D_y$ depends only on the left cell of $y$.
From $U_y\to \cL(L_y,L_y)$, we have  
$D_y\inj E_y$.

\begin{proposition}\label{propn-86}
Let $y\in W$ and $d$ be the Duflo element in the left cell of $y$.
Then $E_d$ is a summand of $E_y$. 
\end{proposition}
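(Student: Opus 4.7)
My plan is to translate the statement into the bimodule side and then produce the splitting from the asymptotic Duflo structure.

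First, I would pass through the equivalence $(-)\otimes_U P_e:\hcii\simeq \cO_0$ from \eqref{eq1}. Since $E_y = A_y\otimes_U P_e$ and $E_d = A_d\otimes_U P_e$ with $A_y = \cL(L_y,L_y)$ and $A_d = \cL(L_d,L_d)$, the claim is equivalent to showing that $A_d$ is a direct summand of $A_y$ as objects of $\hcii$. Using \eqref{eq:bebe}, this is further equivalent to exhibiting $\beta_d\beta_d$ as a summand of $\beta_y\beta_{y^{-1}}$ (here $d^{-1}=d$); this reformulation is the technical heart of the argument and is what lets the asymptotic machinery take over.

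Next, since $y\in\mathcal L$, Lemma~\ref{[d unit]} (applied with the left cell $\mathcal L^{-1}$ whose Duflo is $d$, and $x=y^{-1}$) gives the identity
\begin{equation*}
t_y\,t_{y^{-1}} \;=\; t_d \;+\;\sum_{d\neq z\in \mathcal{H}_d}\gamma_{y,y^{-1},z^{-1}}\,t_z
\end{equation*}
in the asymptotic ring, in which $t_d$ appears with multiplicity exactly one. Lifting via the monoidal identification $\mathrm{Gr}_{\oplus}(\mathscr{P})^{\mathrm{op}}\cong H(W,S)$ and isolating the top-degree part shows that $\theta_y\theta_{y^{-1}}$ contains a canonical copy of $\theta_d\langle -\mathbf{a}(d)\rangle$, and that no other $\theta_z\langle -\mathbf{a}(d)\rangle$ with $z\sim_\mathtt{J} d$ is ``bigger'' than it.

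The key step is then to realise this Duflo summand on the level of bimodule products. Using Proposition~\ref{L=[]} and the self-adjunction of $\theta_d$, together with Proposition~\ref{dLy} which identifies $\theta_dL_y\cong P_y^{\hat{\mathcal R}}\langle\mathbf{a}(d)\rangle$ and $\theta_dL_d\cong P_d^{\hat{\mathcal L^{-1}}}\langle\mathbf{a}(d)\rangle$, I would produce a natural transformation $\beta_y\beta_{y^{-1}}\to \beta_d\beta_d$ by composing the projection $\theta_y\theta_{y^{-1}}\twoheadrightarrow\theta_d\langle -\mathbf{a}(d)\rangle$ with the appropriate unit of adjunction on $\beta_d$, and conversely a section $\beta_d\beta_d\hookrightarrow \beta_y\beta_{y^{-1}}$ coming from the inclusion of the top-$\mathbf{a}$ socle piece $L_d\langle-\mathbf{a}(d)\rangle$ (whose multiplicity~one is guaranteed by the $t_d$ coefficient above and Lemma~\ref{socE} applied to $A_y$).

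The main obstacle is verifying that the composition of these two maps is an idempotent whose image is exactly $\beta_d\beta_d$, i.e.\ that the ``lower'' terms $t_z$ for $z\in\mathcal{H}_d\setminus\{d\}$ do not interfere. I expect to control this by the graded restriction of Theorem~\ref{thm-kmm-original}: all contributions coming from $z$ with $\mathbf{a}(z)<\mathbf{a}(d)$ are concentrated in strictly smaller internal degrees than the socle $L_d\langle -\mathbf{a}(d)\rangle$ that generates $E_d$, and the Duflo local-unit identity $t_dt_d=t_d$ forces the composed endomorphism to restrict to the identity on the $\beta_d\beta_d$-piece. Combined with the non-negativity of the grading on $\mathrm{End}(\theta_xL_y)$ (Lemma~\ref{lem11}), any such degree-zero idempotent automatically splits, yielding the desired direct summand decomposition $E_y\cong E_d\oplus E'$.
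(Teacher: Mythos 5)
Your first two steps are fine: reformulating the claim as ``$A_d\cong(\beta_d\beta_d)^\star$ is a direct summand of $A_y\cong(\beta_y\beta_{y\inv})^\star$'' via \eqref{eq:bebe} is legitimate, and the asymptotic identity $t_yt_{y\inv}=t_d+\sum_{z\neq d}\gamma_{y,y\inv,z\inv}t_z$ from Lemma~\ref{[d unit]} does give that $\theta_d\langle-\mathbf{a}(d)\rangle$ occurs exactly once in $\theta_y\theta_{y\inv}$ (this is the same multiplicity-one input that the paper extracts instead from $\dim\mathrm{Hom}(\theta_{y\inv}L_d,L_{y\inv})=1$, which shows that the injective envelope $\theta_dL_d$ of $L_d$ splits off $\theta_yL_{y\inv}$ in $\cO_0^{\hat{\mathcal{R}}}$ with multiplicity one). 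The gap is in your ``key step'', and it is a real one: a split surjection $\theta_y\theta_{y\inv}\tto\theta_d\langle-\mathbf{a}(d)\rangle$ between projective objects of $\hcii$ does not induce, in any evident way, a morphism $\beta_y\otimes_U\beta_{y\inv}\to\beta_d\otimes_U\beta_d$ (nor does the socle inclusion give a section): $\beta_y\otimes_U\beta_{y\inv}$ is a quotient of $\theta_y\otimes_U\theta_{y\inv}$, and the chosen splitting need not descend to that quotient, while the target you want is $\beta_d\otimes_U\beta_d$ rather than $\beta_d\langle-\mathbf{a}(d)\rangle$. You acknowledge that the idempotent verification is the main obstacle, but the tools you invoke to control it (Theorem~\ref{thm-kmm-original}, Lemma~\ref{lem11}) are statements about objects of $\cO_0$ of the form $\theta_xL_y$, not about tensor products of simple Harish-Chandra bimodules, so they do not apply where you need them. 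More fundamentally, even granting that $\theta_dL_d$ is a multiplicity-one summand of $\theta_yL_{y\inv}$, it is not automatic that the subobjects $E_d\subseteq\theta_dL_d$ and $E_y\subseteq\theta_yL_{y\inv}$ are compatible with that decomposition: a submodule of a direct sum need not decompose along the summands.

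What closes this gap in the paper is a functorial description of $E_y$ inside $\theta_yL_{y\inv}$ that is manifestly additive: by Proposition~\ref{present}, Corollary~\ref{E_yz} and the discussion of the partial approximation functor $\Psi$, one has $E_y\cong\Psi(\soc(\theta_yL_{y\inv}))$ and $E_d\cong\Psi(\soc(\theta_dL_d))$. Since socles are additive and $\Psi$ is an additive functor, applying $\Psi$ to the split inclusion $\soc(\theta_dL_d)\dsmd\soc(\theta_yL_{y\inv})$ coming from the multiplicity-one summand immediately yields $E_d\dsmd E_y$. So your proposal supplies (a functor-level version of) the multiplicity-one ingredient but is missing the transfer mechanism; to repair it you would either have to carry out the idempotent construction on $\beta_y\otimes_U\beta_{y\inv}$ in detail (which is not sketched in a well-defined way) or, as the paper does, return to $\cO_0$ and use the characterization $E_?\cong\Psi(\soc(\cdot))$ together with additivity of $\Psi$.
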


\begin{proof}
Let $\mathcal{R}$ be the right cell of $d$.
By adjunction, we have 
\begin{displaymath}
\mathrm{Hom}(L_d,\theta_y L_{y\inv})=
\mathrm{Hom}(\theta_{y\inv}L_d, L_{y\inv}).
\end{displaymath}
The right hand side has dimension one by \cite[Theorem~6]{Ma2}.
Therefore $\theta_d L_d$, which is the indecomposable injective
envelope of $L_d$ in $\mathcal{O}_0^{\mathcal{\hat{R}}}$, 
is a summand of the injective module $\theta_y L_{y\inv}$
in $\mathcal{O}_0^{\mathcal{\hat{R}}}$ with multiplicity one.

Now we can use Proposition~\ref{present} and Corollary~\ref{E_yz}.
The socle  of the module $E_d$ coincides with the socle of $\theta_d L_d$
and $E_d\cong \Psi(\mathrm{Soc}(\theta_d L_d))$.
The socle of $E_y$ coincides with the socle of $\theta_y L_{y\inv}$
and we have $E_y\cong \Psi(\mathrm{Soc}(\theta_y L_{y\inv}))$.
Now the claim of the proposition follows from the additivity of
$\Psi$ be applying the latter to the unique up to a scalar
split injection of $\mathrm{Soc}(\theta_d L_d)$
in $\mathrm{Soc}(\theta_y L_{y\inv})$ given by the previous paragraph.
\end{proof}

We summarize the above discussions in the following statement.

\begin{theorem}\label{DEthet}
Let $y\in W$ and $d$ be the Duflo element in the left cell of $y$, and denote by $\mathcal{H}$ the $\mathtt{H}$-cell of $y$. Then we have a commutative diagram
\begin{equation}\label{diag:DE}
    \begin{tikzcd}
        D_y\arrow[r,hook]& E_y \arrow[r,"",hook] &\theta_yL_{y\inv} \cong \bigoplus_{h \in \mathcal{H}} \theta_hL_{y\inv}\ ^{\oplus \gamma_{y\inv,y,h\inv}} \\
        D_d\arrow[r,hook]\arrow[u,equal]  &E_d \arrow[r,hook]\arrow[u,hook]& \theta_d L_d\arrow[u,hook],
    \end{tikzcd}
\end{equation}
whose vertical maps are split. 
Moreover, the diagram \eqref{diag:DE} restricts to
\begin{equation}\label{diag:socDE}
    \begin{tikzcd}
        L_d= \soc D_y\arrow[r,hook]& \soc E_y \arrow[r,equal] &\soc \theta_yL_{y\inv}  \\
        L_d= \soc D_d\arrow[r,equal]\arrow[u,equal]  &\soc E_d \arrow[r,equal]\arrow[u,hook]& \soc \theta_d L_d\arrow[u,hook]
    \end{tikzcd}
\end{equation}
and the object $E_y$ is determined by its socle.
\end{theorem}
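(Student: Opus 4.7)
The proof assembles the diagram from three essentially independent pieces: identification of the $D$'s together with the horizontal inclusions; the $\mathtt H$-cell decomposition of $\theta_yL_{y\inv}$ with the right vertical splitting; and the socle analysis.

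For the horizontal inclusions, the equality $D_y=D_d$ holds because $D_y=U_y\otimes_U P_e$ depends only on $\ann(L_y)$, which is constant on the left cell of $y$ by \cite{BV1,BV2}. The inclusions $D_\bullet\hookrightarrow E_\bullet\hookrightarrow \theta_\bullet L_{\bullet^{-1}}$ arise by transporting the canonical inclusion $U_\bullet\hookrightarrow A_\bullet$ through the equivalence $(-)\otimes_U P_e$ of~\eqref{eq1} and then applying Corollary~\ref{subz} at $(y,z)=(y,y)$ and at $(d,d)$. The middle vertical inclusion $E_d\hookrightarrow E_y$ is Proposition~\ref{propn-86}, and it is split by construction.

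The decomposition $\theta_y L_{y\inv}\cong\bigoplus_{h\in\mathcal H}\theta_h L_{y\inv}^{\oplus \gamma_{y\inv,y,h\inv}}$ follows from Proposition~\ref{b=c} with its $y$ and $x$ replaced by $y\inv$ and $y$ respectively: since Duflo elements are involutions, $d$ is also the Duflo in the right cell $\mathcal L_y\inv$ of $y\inv$, and the resulting index set $\mathcal L_{y\inv}\cap \mathcal R_{y\inv}$ matches $\mathcal H$ after inversion. The right vertical inclusion $\theta_d L_d\hookrightarrow \theta_y L_{y\inv}$ splits as a multiplicity-one summand, as already observed inside the proof of Proposition~\ref{propn-86}: inside $\cO_0^{\hat{\mathcal R}}$ (with $\mathcal R$ the right cell of $d$), $\theta_d L_d$ is the indecomposable injective envelope of $L_d$, $\theta_y L_{y\inv}$ is injective in the same block, and $\Hom(L_d,\theta_y L_{y\inv})\cong\Hom(\theta_{y\inv}L_d,L_{y\inv})$ is one-dimensional by \cite[Theorem~6]{Ma2}. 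Commutativity of~\eqref{diag:DE} is then a formal check on the $\hcii$ side, using that the unit map $U_y=U_d\to A_y$ factors through the direct summand $A_d$ of $A_y$.

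For the socle diagram~\eqref{diag:socDE}, $\soc D_d=L_d$ (up to grading shift) is part of the defining description of $D_d$ from Section~\ref{s3}, while $\soc E_d=\soc\theta_d L_d$ and $\soc E_y=\soc\theta_y L_{y\inv}$ follow from Lemma~\ref{socE} applied with $z=y=d$ and $z=y$. The arrows in~\eqref{diag:socDE} are restrictions of those in~\eqref{diag:DE}. The final assertion that $E_y$ is determined by its socle is the identification, already recorded after Corollary~\ref{E_yz}, $E_y\cong \Psi\bigl(\soc(\theta_y L_{y\inv})\bigr)=\Psi(\soc E_y)$, where $\Psi$ is the partial approximation functor on $\cO_0^{\hat{\mathcal R}}$ from \cite[\S 2.4]{KM}. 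The main technical obstacle I anticipate is the commutativity check for~\eqref{diag:DE}, specifically aligning the splitting of $E_d\hookrightarrow E_y$ from Proposition~\ref{propn-86} (which is built via $\Psi$ from a socle inclusion inside $\theta_y L_{y\inv}$) with the horizontal maps coming from $U_\bullet\hookrightarrow A_\bullet$; this is a bookkeeping exercise on the $\hcii$ side, verifying that the direct summand $A_d\subset A_y$ absorbs the image of the common unit $U_y=U_d$.
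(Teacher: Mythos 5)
Your argument is correct and follows essentially the same route as the paper: the paper's own proof simply cites Proposition~\ref{b=c}, Corollary~\ref{subz} and Proposition~\ref{propn-86} for the diagram \eqref{diag:DE}, and Lemma~\ref{socE} and Corollary~\ref{E_yz} (with the identification $E_y\cong\Psi(\soc\theta_yL_{y\inv})$) for \eqref{diag:socDE}, which are precisely the ingredients you assemble. Your additional bookkeeping -- $D_y=D_d$ via constancy of annihilators on left cells, the multiplicity-one splitting of $\theta_dL_d$ inside $\theta_yL_{y\inv}$ from \cite[Theorem~6]{Ma2}, and the commutativity check on the $\hcii$ side -- just makes explicit what the paper leaves implicit.
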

\begin{proof}
The first claim is given by Proposition \ref{b=c} , Corollary \ref{subz}, and Proposition \ref{propn-86}.
The second claim is given by Lemma \ref{socE} and Corollary \ref{E_yz}.
\end{proof}

\begin{corollary}\label{of con1}
Let $d\in W$ be a Duflo element and $\mathcal{L}$ its left cell.
\begin{enumerate}
\item If $\mathbf{K}(d)$ is not true, then $\mathbf{K}(y)$ is not true,
for all $y\in \mathcal{L}$.
\item If $\mathbf{K}(d)$ is true, then
$\mathbf{K}(y)$ is equivalent to $\mathbf{KM}(y,y^{-1})$, for $y\in \mathcal{L}$.
\end{enumerate}
\end{corollary}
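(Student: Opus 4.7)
The plan is to recast $\mathbf{K}(y)$ as the equality $D_y=E_y$ in $\cO_0$ and then exploit Theorem~\ref{DEthet}. The recasting is immediate: applying the equivalence $-\otimes_U P_e\colon\hcii\xrightarrow{\sim}\cO_0$ from \eqref{eq1} to the injective algebra map $\phi_y\colon U_y\hookrightarrow A_y$ yields the inclusion $D_y\hookrightarrow E_y$, and $\mathbf{K}(y)$ is, by definition, surjectivity of $\phi_y$, i.e., the equality $D_y=E_y$.

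For part (i), assume $\mathbf{K}(d)$ is false, so that $D_d\subsetneq E_d$. Fix any $y\in\mathcal{L}$. Since $U_y$ depends only on the left cell of $y$, we have $D_y=D_d$. By Theorem~\ref{DEthet}, the vertical arrow $E_d\hookrightarrow E_y$ in the diagram~\eqref{diag:DE} is a split inclusion, so concatenation gives $D_y=D_d\subsetneq E_d\hookrightarrow E_y$, forcing $D_y\subsetneq E_y$ and hence $\mathbf{K}(y)$ fails.

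For part (ii), assume $\mathbf{K}(d)$, so that $D_d=E_d$. For $y\in\mathcal{L}$, again $D_y=D_d$, and combined with the split inclusion $E_d\hookrightarrow E_y$ this realizes $D_y=E_d$ as an indecomposable summand of $E_y$ (the indecomposability of $E_d=D_d$ follows since it is a quotient of $P_e$, cf.~Proposition~\ref{prop5}). Therefore $\mathbf{K}(y)$ is equivalent to $E_y=E_d$, which in turn is equivalent to $E_y$ being indecomposable. It remains to identify indecomposability of $E_y$ with $\mathbf{KM}(y,y\inv)$. By Lemma~\ref{xLyProj}, $\theta_y L_{y\inv}$ is injective in $\cO_0^{\hat{\mathcal{R}}}$ (with $\mathcal{R}$ the right cell of $y\inv$), hence splits into indecomposable injectives, each with simple socle. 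The characterization of $E_y$ in Corollary~\ref{E_yz} (equivalently, $E_y\cong\Psi(\soc(\theta_y L_{y\inv}))$ via the additive partial approximation functor $\Psi$) then propagates that decomposition to $E_y$, so $E_y$ is indecomposable iff $\theta_y L_{y\inv}$ is, i.e., iff $\mathbf{KM}(y,y\inv)$.

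The main obstacle is this last step: one must establish a bijective correspondence between indecomposable summands of $E_y$ and those of $\theta_y L_{y\inv}$. This hinges on Theorem~\ref{DEthet}'s assertion that $E_y$ is determined by its socle, the additivity of $\Psi$, and the fact (from Lemma~\ref{xLyProj} and \cite[Theorem~6]{Ma2}) that each indecomposable summand of $\theta_y L_{y\inv}$ has a distinct simple socle.
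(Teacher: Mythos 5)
Your argument is correct, and it runs on the same underlying mechanism as the paper: the identification of $\mathbf{K}(y)$ with $D_y=E_y$, the equality $D_y=D_d$, and the split inclusion $E_d\hookrightarrow E_y$ together with the socle data packaged in Theorem~\ref{DEthet}. The difference is in how part (ii) is closed: the paper's (very terse) proof appeals to Proposition~\ref{propn-86} together with Theorem~\ref{theorem}, i.e.\ it leans on the internal-hom/Morita-type equivalence $\mathbf{K}(d)\Leftrightarrow\mathbf{Kh}(d)\wedge\mathbf{KM}(*,d)$ for the Duflo element, whereas you bypass that result entirely and argue directly from the socle identification $\soc E_y=\soc\theta_yL_{y\inv}$ (Lemma~\ref{socE}), the description $E_y\cong\Psi(\soc\theta_yL_{y\inv})$ of Corollary~\ref{E_yz}, and the fact that indecomposable summands of $\theta_yL_{y\inv}$ are projective-injective in $\cO^{\hat{\mathcal R}}$ with simple socle. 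This buys a self-contained argument at the level of the $D_y\subseteq E_y\subseteq\theta_yL_{y\inv}$ picture, and it is if anything slightly more economical than the paper's citation trail; a further shortcut for the direction ``$\mathbf{K}(y)\Rightarrow\mathbf{KM}(y,y\inv)$'' is that $E_y=E_d$ forces $\soc\theta_yL_{y\inv}=\soc E_d\cong L_d\langle-\mathbf{a}(d)\rangle$ to be simple, so no appeal to indecomposability of $E_y$ (hence to additivity of $\Psi$) is strictly needed there. One small caveat: your closing remark that the indecomposable summands of $\theta_yL_{y\inv}$ have \emph{distinct} simple socles is neither needed nor guaranteed in general (outside classical types the multiplicities $\gamma_{y\inv,y,z\inv}$ may exceed one); what the argument actually uses is only that each such socle is simple and that $\Psi$ is additive and nonzero on simples lying in socles of projective-injectives, which is exactly what the paper provides, so this does not create a gap.
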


\begin{proof}
This is a direct consequence of Proposition~\ref{propn-86} and 
Theorem~\ref{theorem}.
\end{proof}

\begin{proposition}\label{prop-n942}
Let $d$ be the Duflo element in a left cell $\mathcal{L}$ 
in $W$ and $y,z\in \mathcal{L}$. 
If $t_y t_{y\inv} = t_z t_{z\inv}$, 
then, for each $x\in W$, we have 
$\mathbf{KM}(x,y) \Leftrightarrow \mathbf{KM}(x,z)$ 
and $\mathbf{Kh}(y)\Leftrightarrow \mathbf{Kh}(z)$. 
In particular, both $\mathbf{KM}(\ast,{}_-)$ and $\mathbf{Kh}({}_-)$ 
are constant on strongly regular left cells.
\end{proposition}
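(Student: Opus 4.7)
The plan is to reduce both biconditionals to a statement about structure constants in the asymptotic ring $A(W)$. By Proposition~\ref{ihom}, the category $\Add_{\cO_0}\{\theta_x L_y : x \in W\}$ is equivalent, as a module category over $\mathscr{P}$, to $\operatorname{proj}_{\hcii}(A_y)$ where $A_y = \cL(L_y,L_y)$, and this equivalence sends $\theta_x L_y$ to $\theta_x \otimes_U A_y$. Hence both $\mathbf{KM}(x,y)$ (indecomposability of $\theta_xL_y$) and $\mathbf{Kh}(y)$ (pairwise non-isomorphism over $x$) can be read off from the object $A_y$ together with the action of $\mathscr{P}$ on it.

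The next step is to show that the $\mathscr{P}$-equivariant invariants of $A_y$ controlling the two conditions depend only on the asymptotic product $t_yt_{y\inv}$. Via the Bernstein--Gelfand equivalence \eqref{eq1}, $A_y$ corresponds to $E_y = A_y\otimes_U P_e$, and Theorem~\ref{DEthet} tells us that $E_y$ is determined by its socle, which coincides with $\soc\theta_yL_{y\inv}$. Proposition~\ref{b=c}, applied to $\theta_yL_{y\inv}$, decomposes this module as a direct sum of indecomposable projective-injectives $\theta_wL_d$ (with $d$ the Duflo in the right cell of $y\inv$) whose multiplicities are precisely the top-degree structure constants appearing in $t_yt_{y\inv}$ (equivalently, by a cyclic/inversion symmetry of the $\gamma$-coefficients, in $t_{y\inv}t_y$). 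The hypothesis $t_yt_{y\inv}=t_zt_{z\inv}$ then forces these multiplicities to match; so although $\theta_yL_{y\inv}$ and $\theta_zL_{z\inv}$ typically have non-isomorphic socles (being concentrated on different simples such as $L_{y\inv}$ versus $L_{z\inv}$), their $\mathscr{P}$-equivariant ``abstract decomposition type'' is the same. A transport of this equivariant datum through the equivalence of Proposition~\ref{ihom} gives, for every $x \in W$, a matching of the decompositions of $\theta_xL_y$ and $\theta_xL_z$, yielding both $\mathbf{KM}(x,y)\Leftrightarrow \mathbf{KM}(x,z)$ and $\mathbf{Kh}(y)\Leftrightarrow \mathbf{Kh}(z)$.

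The ``in particular'' statement about strongly regular left cells is a direct consequence of Lemma~\ref{[d unit]}: in a strongly regular two-sided cell the intersection $\mathcal{L}\cap\mathcal{L}\inv$ reduces to $\{d\}$, so the correction sum in \eqref{t't} is empty and $t_yt_{y\inv}=t_d$ for \emph{every} $y\in\mathcal{L}$. Hence the hypothesis is satisfied for all pairs $y,z\in\mathcal{L}$, and the preceding paragraph yields the desired constancy.

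The main obstacle will be the precise implementation of the ``$\mathscr{P}$-equivariant transport'' step: since $A_y$ and $A_z$ need not be isomorphic as algebras in $\hcii$, one cannot simply apply a Morita equivalence. Instead, one has to argue, using Proposition~\ref{b=c} together with the symmetry $\gamma_{x,y,z}=\gamma_{y,z,x}$ of the structure constants, that the $\mathscr{P}$-module structure on $E_y$ visible through the action $\theta_x\otimes_U-$ depends only on the product $t_yt_{y\inv}$ in $A(W)$, and to reconcile the indexing set $(\mathcal{H}_y)\inv$ of Proposition~\ref{b=c} with the common support $\mathcal{L}\cap\mathcal{L}\inv$ of $t_yt_{y\inv}$.
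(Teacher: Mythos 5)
Your reduction via Proposition~\ref{ihom} to $\operatorname{proj}_{\hcii}(A_y)$ and your treatment of the ``in particular'' clause (strongly regular $\Rightarrow$ $\mathcal{H}$-cells are singletons $\Rightarrow$ $t_yt_{y\inv}=t_d$ for all $y\in\mathcal{L}$ by \eqref{t't}) are fine, but the central step has a genuine gap, and it originates in a factual error. You claim that $\theta_yL_{y\inv}$ and $\theta_zL_{z\inv}$ ``typically have non-isomorphic socles, being concentrated on different simples such as $L_{y\inv}$ versus $L_{z\inv}$.'' This is not what Proposition~\ref{b=c} and Lemma~\ref{xLyProj} give: the indecomposable summands of $\theta_yL_{y\inv}$ are the projective-injectives $\theta_hL_d$ of $\cO_0^{\hat{\mathcal{R}}}$ (with $\mathcal{R}$ the right cell of $y\inv$, whose Duflo element is the \emph{same} $d$ for every $y\in\mathcal{L}$), each with simple socle $L_h\langle-\mathbf{a}(d)\rangle$, and $h$ runs over an index set inside $\mathcal{L}$ that does not depend on $y$; only the multiplicities, namely the coefficients of $t_yt_{y\inv}$, depend on $y$ (this is exactly the content of the diagram in Theorem~\ref{DEthet}). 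Hence the hypothesis $t_yt_{y\inv}=t_zt_{z\inv}$ immediately gives $\theta_yL_{y\inv}\cong\theta_zL_{z\inv}$, so their socles agree, and then $E_y\cong E_z$ (either because $E_y$ is determined by its socle, Theorem~\ref{DEthet}, or because $E_y\cong\Psi(\soc\theta_yL_{y\inv})$ as in Proposition~\ref{propn-86}). Because you denied yourself this conclusion, you replaced it by a ``$\mathscr{P}$-equivariant transport of the abstract decomposition type,'' which you yourself flag as the unresolved obstacle -- and rightly so: the equivalence of Proposition~\ref{ihom} identifies $\Add\{\theta_xL_y\}$ with projective modules over $A_y$, and knowing only that $A_y$ and $A_z$ have matching ``decomposition data'' does not let you compare $\theta_xL_y$ with $\theta_xL_z$; what is needed, and what you never establish, is an identification $A_y\cong A_z$ (equivalently $E_y\cong E_z$) in $\hcii$.

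This is precisely how the paper argues: from the hypothesis it deduces $E_y=E_z$ (via Proposition~\ref{propn-86}, i.e.\ via $E_y\cong\Psi(\soc\theta_yL_{y\inv})$ and the decomposition of $\theta_yL_{y\inv}$), and then chains the equivalences
$\Add\{\theta_xL_y\}\cong\operatorname{proj}_{\hcii}(A_y)=\operatorname{proj}_{\hcii}(A_z)\cong\Add\{\theta_xL_z\}$,
under which $\theta_xL_y\mapsto\theta_xL_z$ for every $x$, giving both equivalences $\mathbf{KM}(x,y)\Leftrightarrow\mathbf{KM}(x,z)$ and $\mathbf{Kh}(y)\Leftrightarrow\mathbf{Kh}(z)$ at once. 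So your proposal assembles the correct ingredients (Proposition~\ref{ihom}, Theorem~\ref{DEthet}, Proposition~\ref{b=c}) but stops short of the one deduction that makes the argument work; to repair it, delete the claim about differing socles, conclude $E_y\cong E_z$ directly from the matching multiplicities, and then invoke Proposition~\ref{ihom} exactly as you do in your first paragraph.
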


\begin{proof}
By Proposition~\ref{propn-86}, the assumption 
$t_y t_{y\inv} =t_z t_{z\inv}$ implies $E_y=E_z$. 
Thus, by Proposition~\ref{prop6} and   
Proposition~\ref{ihom}, we have equivalences
\[ \Add\{\theta_xL_y \colon x \in W\} \cong 
\operatorname{proj}_{\hcii}(A_y) = 
\operatorname{proj}_{\hcii}(A_z) 
\cong \Add\{\theta_xL_z \colon x \in W\}, \]
under which $\theta_xL_y\mapsto \theta_x L_z$, for each $x$.
The claim follows.
\end{proof}

At this point we can strengthen Theorem~\ref{theorem}.

\begin{theorem}\label{theorem1}
Let $y\in W$. Then $\mathbf{K}(y)$ is true
if and only if both $\mathbf{Kh}(y)$ and $\mathbf{KM}(*,y)$ are true.
\end{theorem}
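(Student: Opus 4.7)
The ``only if'' direction is Proposition~\ref{K to JKM}.

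For the ``if'' direction, assume $\mathbf{Kh}(y)$ and $\mathbf{KM}(*,y)$, and let $d$ be the Duflo element in the left cell $\mathcal L$ of $y$. The plan is to establish the asymptotic-ring identity $t_{y\inv}t_y=t_d$, deduce the isomorphism $\theta_yL_{y\inv}\cong \theta_dL_d$, and then reduce to the Duflo case of Theorem~\ref{theorem} via Corollary~\ref{of con1}\textup{(2)}.

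First I would apply Proposition~\ref{b=c} to the nonzero modules $\theta_xL_y$ with $x\sim_\mathtt{J} y$, writing each as a direct sum $\bigoplus_{z\in\mathcal H_y}\theta_zL_d^{\oplus \gamma_{y,x,z\inv}}$ of indecomposable projective-injective modules of $\cO_0^{\hat{\mathcal R}}$, where $\mathcal H_y=\mathcal L\cap\mathcal R_y$. The conditions $\mathbf{KM}(*,y)$ and $\mathbf{Kh}(y)$ force each such $\theta_xL_y$ to equal a single indecomposable summand $\theta_{z(x)}L_d$ with distinct $z(x)$ for distinct $x$, giving an injection $x\mapsto z(x)$ from the set $\{x\in\mathcal L\inv:\theta_xL_y\ne 0\}$ of cardinality $|\mathcal L|$ into $\mathcal H_y\subseteq\mathcal L$. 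This forces $\mathcal H_y=\mathcal L$, i.e., $\mathcal L\subseteq\mathcal R_y$; since $\mathcal R_y$ equals the right cell $\mathcal L\inv$ of the involution $d$, one obtains $\mathcal L=\mathcal L\inv=\mathcal R_y$. Next, the hypothesis $\mathbf{KM}(y\inv,y)$, together with Proposition~\ref{b=c} and Lemma~\ref{[d unit]}, pins down $t_yt_{y\inv}=t_d$. Because $\mathcal L=\mathcal L\inv$, the asymptotic subalgebra $A(\mathcal L):=\mathrm{span}\{t_z:z\in\mathcal L\}$ is a finite-dimensional unital associative algebra with two-sided unit $t_d$, so the relation $t_yt_{y\inv}=t_d$ means $t_{y\inv}$ is a right inverse of $t_y$, and in any finite-dimensional unital algebra a right inverse is automatically a two-sided inverse: $t_{y\inv}t_y=t_d$. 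Applying Proposition~\ref{b=c} once more yields $\theta_yL_{y\inv}\cong \theta_dL_d$, and in particular $\mathbf{KM}(y,y\inv)$ holds.

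The isomorphism $\theta_yL_{y\inv}\cong\theta_dL_d$, combined with Theorem~\ref{DEthet} (in particular, that $E_y$ is determined by its socle), yields $E_y\cong E_d$ in $\cO_0$, and hence via the equivalence \eqref{eq1} an isomorphism $A_y\cong A_d$ in $\hcii$ compatible with the canonical identification $U_y=U_d$ and the maps $\phi_y,\phi_d$. By Propositions~\ref{Uymod} and~\ref{ihom} this transports $\theta_xL_y\cong\theta_xL_d$ for every $x\in W$, so $\mathbf{Kh}(d)$ and $\mathbf{KM}(*,d)$ follow at once from the assumptions on $y$. Theorem~\ref{theorem} then gives $\mathbf{K}(d)$, and Corollary~\ref{of con1}\textup{(2)} together with the already-established $\mathbf{KM}(y,y\inv)$ concludes $\mathbf{K}(y)$.

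The principal obstacles are the cell-combinatorial identification $\mathcal L=\mathcal L\inv=\mathcal R_y$ and the finite-dimensional-algebra upgrade from $t_yt_{y\inv}=t_d$ to $t_{y\inv}t_y=t_d$; both rely on careful tracking of $\mathtt H$-cells and the positivity and integrality of the $\gamma$-coefficients. Once these two steps are in place, the remainder is categorical bookkeeping through Theorem~\ref{DEthet}, Proposition~\ref{ihom}, Theorem~\ref{theorem}, and Corollary~\ref{of con1}\textup{(2)}.
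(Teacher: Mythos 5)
Your ``only if'' direction and your endgame (getting $E_y\cong E_d$, transporting $\mathbf{Kh}$ and $\mathbf{KM}$ from $y$ to $d$, and invoking Theorem~\ref{theorem}) are essentially the paper's (which packages the transport as Propositions~\ref{propn-86} and~\ref{prop-n942}), but the middle of your argument has a genuine gap. The counting step that is supposed to give $\mathcal L=\mathcal L\inv=\mathcal R_y$ is wrong: the indecomposable summands of $\theta_xL_y$, as $x$ runs through $\mathcal L\inv$ inside the $\mathtt J$-cell, are projective-injectives $\theta_zL_{d'}$ of $\cO_0^{\hat{\mathcal R}_y}$ whose index $z$ is constrained by the $\gamma$-supports to lie in the intersection of $\mathcal R_y$ with the \emph{left cell of $x$}, not uniformly in the $\mathtt H$-cell $\mathcal L\cap\mathcal R_y$ of $y$; as $x$ varies over $\mathcal L\inv$ these index sets sweep out essentially all of $\mathcal R_y$, so $\mathbf{Kh}(y)$ and $\mathbf{KM}(*,y)$ give no injection of $\mathcal L\inv$ into $\mathcal L\cap\mathcal R_y$ and no cardinality conclusion. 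Indeed the conclusion is false: already in type $A_2$, for $y=s_2s_1$ one has $\mathbf K(y)$ (hence, by Proposition~\ref{K to JKM}, both $\mathbf{Kh}(y)$ and $\mathbf{KM}(*,y)$), yet the left and right cells of $y$ are distinct two-element sets meeting only in $\{y\}$, and the Duflo elements $d$ (left cell) and $d'$ (right cell) differ. Consequently the ``finite-dimensional unital algebra $A(\mathcal L)$'' you use to upgrade a one-sided identity to a two-sided one does not exist when $\mathcal L\neq\mathcal L\inv$ ($t_d$ is not a two-sided unit on $\mathrm{span}\{t_z:z\in\mathcal L\}$), and the target identity $t_{y\inv}t_y=t_d$ is itself false in general: by Lemma~\ref{[d unit]}, $t_yt_{y\inv}$ is supported in the diagonal $\mathtt H$-cell of $d$ while $t_{y\inv}t_y$ is supported in the diagonal $\mathtt H$-cell of $d'$, so the two products can only both equal $t_d$ when $d=d'$; in the $A_2$ example one of them is $t_{d'}\neq t_d$. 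Indecomposability of $\theta_{y\inv}L_y$ controls only one of the two products, and your route to the other collapses with the false cell identification.

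The paper closes exactly this gap by a different mechanism, with no assumption relating $\mathcal L$ and $\mathcal R_y$: from $\mathbf{KM}(*,y)$ and Proposition~\ref{b=c} one gets $t_{y\inv}t_y=t_{d'}$; since $t_{d'}$ is a local identity, $t_yt_{y\inv}$ is an idempotent of $A(W)$, and writing $t_yt_{y\inv}=t_d+x$ via \eqref{t't}, squaring gives $x+x^2=0$, whence $x=0$ by non-negativity of the structure constants, so $t_yt_{y\inv}=t_d$. With that identity in hand, your concluding reduction (through $E_y=E_d$, the transport of $\mathbf{Kh}$ and $\mathbf{KM}$ to $d$, Theorem~\ref{theorem}, and Corollary~\ref{of con1}) does go through; so the repair is localized to replacing your two ``principal obstacle'' steps by the idempotent-positivity argument in the asymptotic ring.
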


\begin{proof}
The ``only if" direction is Proposition~\ref{K to JKM}.
For the ``if" direction, assume that both $\mathbf{Kh}(y)$ and $\mathbf{KM}(*,y)$ are true. Let $d'$ be the Duflo element in the right cell
of $y$ and $d$ be the Duflo element in the left cell of $y$.
We will first prove that $\mathbf{K}(d) \Leftrightarrow \mathbf{K}(y)$.

Because of $\mathbf{KM}(*,y)$, the module $\theta_{y\inv}L_y$
is indecomposable and hence is isomorphic to $\theta_{d'} L_{d'}$. 
In particular, $t_{y\inv}t_y=t_{d'}$ in the asymptotic ring $A(W)$, see \eqref{cz}. We
claim that this implies $t_yt_{y\inv}=t_d=t_dt_d$. Indeed,
since $t_{d'}$ is a local identity,
$t_yt_{y\inv}$ is an idempotent of $A(W)$.
Using \eqref{t't}, we can write
$t_yt_{y\inv}$ as $t_d+x$, where $x$ is
a linear combination of various $t_u$ with non-negative
integer coefficients. 
Then 
\[t_d+x=(t_d+x)(t_d+x)=t_d+t_dx+xt_d+x^2=t_d+2x+x^2\]
by Lemma \ref{[d unit]}, and thus $0=x+x^2$. Since the structure coefficients with respect to the $t_u$'s are non-negative, we conclude $x=0$
and $t_yt_{y\inv}=t_d$. By Proposition~\ref{propn-86}, this implies $E_y=E_d$, and hence we have $\mathbf{K}(d) \Leftrightarrow \mathbf{K}(y)$.

Now, by Proposition~\ref{prop-n942}, $\mathbf{Kh}(y)$ 
implies $\mathbf{Kh}(d)$,  and $\mathbf{KM}(*,y)$
implies $\mathbf{KM}(*,d)$. Therefore $\mathbf{K}(d)$
holds by Theorem~\ref{theorem} and we are done.
\end{proof}

In most small rank examples, we have $\mathbf{K}(y)$ if and only if $\mathbf{Kh}(y)$.
However, the situation is slightly more complicated in type  $G_2$.

\begin{example}\label{G2}
Let $\mathfrak{g}$ be of type $G_2$, and let $(W,S)$ be its Weyl group. Write $S=\{1,2\}$. The two-sided cells in $W$ are given by
\[ \begin{tabular}{|l|}
\hline
$e$ \\ \hline
\end{tabular} \qquad\qquad \begin{tabular}{|l|l|}
\hline
\begin{tabular}{@{}l@{}}$1$ \\ $121$ \\ $12121$\end{tabular} & \begin{tabular}{@{}l@{}}$21$ \\ $2121$\end{tabular} \\ \hline
\begin{tabular}{@{}l@{}}$12$ \\ $1212$\end{tabular} & \begin{tabular}{@{}l@{}}$2$ \\ $212$ \\ $21212$ \end{tabular} \\ \hline
\end{tabular} \qquad\qquad \begin{tabular}{|l|}
\hline
$212121$ \\ \hline
\end{tabular} \]
where the rows are left, and the columns are right cells. For the right cell $\mathcal{R}$ containing $1$, the indecomposables in $\Add\{\theta_xL_y\,:\, x\inv,y\in R\}$ are exactly 
\[\theta_1L_1,\ \ \theta_1L_{12},\ \ \theta_1L_{121},\ \ \theta_1L_{1212},\ \  \theta_1L_{12121}.\]
We describe the situation for $y\in \mathcal{R}$ in Table \ref{table:G2}, the rest is either trivial or symmetric to what is given. Note that, for dihedral types, $[\theta_xL_y]$ determines the isomorphism class of $\theta_xL_y$.
\begin{table}[ht]
\centering 
\begin{tabular}{c c c c c c} 
\hline
$y$ & $\mathbf{K}(y)$ & $\mathbf{KM}(*,y)$ & $\mathbf{Kh}(y)$ & $\theta_{y\inv} L_y$ & $E_y$ \\ [0.5ex] 
\hline 
1 & True & True & True & $\theta_1L_1$ & $D_1$\\ 
12 & False & False & True & $\theta_1L_1\oplus\theta_1L_{121} $ & $D_2\oplus L_{212}\langle -1\rangle$ \\
121 & False & False & False & $\theta_1L_1\oplus \theta_1L_{121}\oplus \theta_1L_{12121}$ & $D_1\oplus L_{121}\langle -1\rangle\oplus L_{12121}\langle -1\rangle$ \\
1212 & False & False & True & $\theta_1L_1\oplus \theta_1L_{121}$ & $D_2\oplus L_{212}\langle -1\rangle$\\
12121 & True & True & True & $\theta_1L_1$ & $D_1$\\ [1ex] 
\hline \\ 
\end{tabular}
\caption{$\mathbf{K}$, $\mathbf{KM}$ and $\mathbf{Kh}$ for $G_2$}
\label{table:G2}
\end{table}
\end{example}

\subsection{A combinatorial statement}

Although we reduce $\mathbf{K}(y)$ to $\mathbf{Kh}(y)$ and $\mathbf{KM}(*,y)$, it is not easy to determine $\mathbf{Kh}(y)$ and $\mathbf{KM}(*,y)$ in general. 
But the  combinatorial version $[\mathbf{Kh}](y)$ of $\mathbf{Kh}(y)$ can be determined by computing $h_{x,y,z}$ (see \eqref{gdim=h}).
Recall that Conjecture \ref{kconj-intro} claims $[\mathbf{Kh}](d)=\mathbf{K}(d)$ for Duflo elements $d\in W$. When this is true, the problem $\mathbf{K}(y)$ has a purely combinatorial solution as follows.

\begin{corollary}\label{cor1.4}
Let $d$ be a Duflo element and let $y\sim_\mathtt{L} d$. Suppose $[\mathbf{Kh}(d)]=\mathbf{K}(d)$. Then the condition $\mathbf{K}(y)$ is equivalent to the conjunction of the following conditions:
\begin{enumerate}
    \item \label{E=E} $t_yt_{y\inv}=t_d$ in $A(W)$;
    \item \label{[kh]} For $x,x'\leq_\mathtt{R} d$, if $h_{z,x,d}=h_{z,x',d}$, for all $z\in W$, then $x=x'$.
\end{enumerate}
\end{corollary}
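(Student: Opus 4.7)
The plan is to combine Theorem~\ref{thm2} with the standing hypothesis to reduce $\mathbf{K}(y)$ to a purely combinatorial statement about Kazhdan--Lusztig structure constants, namely the conjunction of~\eqref{E=E} and~\eqref{[kh]}. First I would invoke Theorem~\ref{thm2}: its part~(1) shows that $\mathbf{K}(y)$ forces $\mathbf{K}(d)$, and its part~(2) gives, under the assumption of $\mathbf{K}(d)$, the equivalence of $\mathbf{K}(y)$ with the asymptotic identity $t_yt_{y\inv}=t_d$. Consequently,
\begin{equation*}
\mathbf{K}(y)\ \iff\ \mathbf{K}(d)\ \wedge\ \bigl(t_yt_{y\inv}=t_d\bigr),
\end{equation*}
and the second conjunct on the right is precisely condition~\eqref{E=E}.

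Next, by the reformulations recorded in \S~\ref{s2.4}, the standing hypothesis $[\mathbf{Kh}](d)\Leftrightarrow \mathbf{K}(d)$ allows us to replace the first conjunct by the graded combinatorial statement $[\mathbf{Kh}](d)$. The remaining task is to recognise $[\mathbf{Kh}](d)$ as condition~\eqref{[kh]}. By \S~\ref{s2.1}, the non-vanishing condition $\theta_xL_d\neq 0$ is equivalent to $x\inv\leq_\mathtt{L} d$, which, by Proposition~\ref{cella}\eqref{cella:i} and the involutivity $d=d\inv$, is the same as $x\leq_\mathtt{R} d$; this matches the range of indices in~\eqref{[kh]}. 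Finally, by \eqref{gdim=h}, the graded classes $[\theta_xL_d]$ and $[\theta_{x'}L_d]$ coincide in $\mathrm{Gr}(\cO_0^{\mathbb{Z}})$ exactly when $h_{z,x\inv,d}=h_{z,(x')\inv,d}$ for every $z\in W$. Applying the Hecke-algebra anti-involution $H_w\mapsto H_{w\inv}$ (equivalently, invoking \eqref{gdim=h.2}) then rewrites this combinatorial equality in the form appearing in~\eqref{[kh]}.

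The main obstacle is the final step of index manipulation: one has to track carefully which slot of $h_{\cdot,\cdot,d}$ plays which role, and to verify that the inv-substitution restricts to a bijection between the relevant sets of indices, which is where the self-inverseness $d=d\inv$ is essential. Apart from that, the argument is essentially a chain of biconditionals built on top of Theorem~\ref{thm2}, the standing hypothesis, and formula~\eqref{gdim=h}, and so delivers both implications simultaneously.
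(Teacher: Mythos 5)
Your proposal is correct and is essentially the paper's own argument: both proofs reduce the statement to the equivalence $\mathbf{K}(y)\Leftrightarrow\mathbf{K}(d)\wedge\bigl(t_yt_{y\inv}=t_d\bigr)$ (you obtain it from Theorem~\ref{thm2}, the paper from Theorem~\ref{DEthet} together with the argument in the proof of Theorem~\ref{theorem1}, which is precisely the machinery behind Theorem~\ref{thm2}), and then trade $\mathbf{K}(d)$ for $[\mathbf{Kh}](d)$ via the standing hypothesis and identify $[\mathbf{Kh}](d)$ with condition~\eqref{[kh]} through \eqref{gdim=h} and the non-vanishing criterion $\theta_xL_d\neq 0\Leftrightarrow x\leq_\mathtt{R} d$. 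The only caveat is in the last step: the anti-involution (or \eqref{gdim=h.2}) turns the equality of graded characters into $h_{x,z,d}=h_{x',z,d}$ for all $z$ (with $x,x'\leq_\mathtt{R} d$), i.e.\ with the first two slots of $h$ interchanged relative to the printed form of \eqref{[kh]}; the paper's proof performs the same identification with no more detail, so this is a shared imprecision in how \eqref{[kh]} is read rather than a gap in your argument.
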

\begin{proof}
Suppose $\mathbf{K}(y)$ is true. Then Theorem \ref{DEthet} says that $E_y=E_d$, Condition \eqref{E=E} is true, and $\mathbf{K}(d)$ is true. By assumption, we also have $[\mathbf{Kh}(d)]$ which is equivalent to Condition \eqref{[kh]} by \eqref{gdim=h}.

For the other direction, suppose Conditions \eqref{E=E}, \eqref{[kh]} are true. Theorem \ref{DEthet} and Condition \eqref{E=E} implies $\mathbf{K}(d)=\mathbf{K}(y)$. Since Condition \eqref{[kh]} implies $\mathbf{K}(d)$ by assumption, it follows that $\mathbf{K}(y)$ is true.
\end{proof}

\subsection{Extra results in classical types}
In this subsection, unless explicitly stated otherwise, 
we assume $\mathfrak{g}$ to be 
of (classical) type $A$, $B$, $C$, or $D$. 

The asymptotic algebra endows each diagonal $\mathtt{H}$-cell 
$\mathcal{H}$ with the structure of an abelian group.
This group depends only on the two-sided cell containing
$\mathcal{H}$ and is isomorphic to $(\Z/2\Z)^k$, for some 
$k\in\mathbb N$, see \cite{luneq}.
The off-diagonal $\mathtt{H}$-cells in the same two-sided cell 
have order $2^l$, for $l\leq k$ (see \cite[Section~7]{MMMTZ} 
for more information). If $\mathcal{H}$ is an $\mathtt{H}$-cell,
$\mathcal{H}_l$ is the diagonal $\mathtt{H}$-cell in the left cell
of $\mathcal{H}$, and $\mathcal{H}_r$ is the diagonal 
$\mathtt{H}$-cell in the right cell of $\mathcal{H}$,
then the asymptotic algebra endows $\mathcal{H}$ with the
structure of a transitive $\mathcal{H}_l$-set via right 
multiplication, and with the
structure of a transitive $\mathcal{H}_r$-set via left 
multiplication. In particular, we can speak about stabilizers 
of elements from $\mathcal{H}$ in both, $\mathcal{H}_l$
and $\mathcal{H}_r$. This combinatorics allows us to 
formulate a necessary condition for $\mathbf{Kh}(y)$,
and thus also for $\mathbf{K}(y)$.

\begin{proposition}\label{J maxH}
Let $\mathfrak{g}$ be of classical type and $y\in W$.
Assume that the $\mathtt{H}$-cell of $y$ does not have 
the maximal cardinality among the $\mathtt{H}$-cells
inside the two-sided cell of $y$.
Then both $\mathbf{Kh}(y)$ and $\mathbf{K}(y)$ are false.
\end{proposition}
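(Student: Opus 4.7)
The plan is to exhibit explicit $x\neq x'$ with $\theta_x L_y\cong\theta_{x'}L_y\neq 0$; by Proposition~\ref{K to JKM} this forces both $\mathbf{Kh}(y)$ and $\mathbf{K}(y)$ to fail. I will find such a pair inside the diagonal $\mathtt{H}$-cell $\mathcal{H}_l = \mathcal{L}\cap\mathcal{L}^{-1}$ of the left cell $\mathcal{L}$ of $y$, using that $\mathcal{H}_l$ is an abelian $2$-group of order $2^k$ and that, by Proposition~\ref{b=c}, the isomorphism class of $\theta_x L_y$ (for $x\sim_\mathtt{J} y$ with $x^{-1}\in\mathcal{L}$) is controlled by the image of $t_y t_x$ in the $\mathtt{H}$-cell $\mathcal{H}_y$.

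The key algebraic observation is the following. Since $\mathcal{H}_l$ acts transitively on $\mathcal{H}_y$ from the right and $|\mathcal{H}_y|<2^k$ by hypothesis, the stabilizer $\Gamma_y\leq\mathcal{H}_l$ of $y$ has order $2^k/|\mathcal{H}_y|>1$. Pick any $h\in\Gamma_y\setminus\{e\}$ and any $x\in\mathcal{H}_l$ (for concreteness, the Duflo element $d$). Because $\mathcal{H}_l$ acts on itself freely by left translation, the element $x':=h\cdot x\in\mathcal{H}_l$ is distinct from $x$, and single-term asymptotic multiplication gives $t_h t_x=t_{x'}$ and $t_y t_h=t_y$. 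Associativity then yields
\[t_y t_{x'}\;=\;(t_y t_h)\,t_x\;=\;t_y t_x\;=\;t_{y\cdot x}.\]
Now both $x$ and $x'$ lie in $\mathcal{H}_l\subseteq\mathcal{L}$ and in the two-sided cell of $y$, so Proposition~\ref{b=c} applies and writes $\theta_x L_y$ and $\theta_{x'} L_y$ as direct sums over $z\in\mathcal{H}_y$ of $\theta_z L_{d_R}$ with multiplicities $\gamma_{y,x,z^{-1}}$ and $\gamma_{y,x',z^{-1}}$, where $d_R$ is the Duflo of the right cell of $y$. These multiplicities agree by the displayed identity, and in fact each of $\theta_x L_y$ and $\theta_{x'} L_y$ is the single non-zero indecomposable $\theta_{y\cdot x} L_{d_R}$, giving the desired collision.

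The only non-formal input is the classical-type structure theorem that the diagonal $\mathtt{H}$-cell is an abelian group, that its actions on the $\mathtt{H}$-cells in its row and column are transitive via single-term asymptotic products, and that stabilizer sizes are powers of $2$ dividing $2^k$; this is the content of the paragraph preceding Proposition~\ref{J maxH} and comes from \cite{luneq} and \cite[Section~7]{MMMTZ}. Once invoked, the argument is a pigeonhole collision inside $\mathcal{H}_l$ together with Proposition~\ref{b=c}; the only subtlety requiring care in a full write-up is checking that $t_h t_x$ and $t_y t_h$ really produce single basis elements, so that the set-theoretic action $h\cdot x$ and the stabilizer $\Gamma_y$ have literal meaning in the asymptotic ring.
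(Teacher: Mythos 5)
Your argument is correct and is essentially the paper's own proof: both reduce $\mathbf{K}(y)$ to $\mathbf{Kh}(y)$ via Proposition~\ref{K to JKM}, use Proposition~\ref{b=c} (formula~\eqref{cz}) to translate a collision $\theta_xL_y\cong\theta_{x'}L_y$ into the identity $t_yt_x=t_yt_{x'}$ in the asymptotic ring, and obtain such a pair from a non-trivial element of the stabilizer of $y$ under the transitive right action of the diagonal $\mathtt{H}$-cell $\mathcal{H}_l$, which is non-trivial exactly because $|\mathcal{H}_y|<|\mathcal{H}_l|$. With your concrete choice $x=d$ (so $x'=h$) this is literally the paper's argument, and the structural inputs you flag (group structure on $\mathcal{H}_l$, single-term products defining the action) are the same ones the paper invokes from \cite{luneq} and \cite[Section~7]{MMMTZ}.
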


\begin{proof}
Let $y\in\mathcal{H}$ and $\mathcal{H}_l$
and $\mathcal{H}_r$ be as above, in particular,
$|\mathcal{H}|<|\mathcal{H}_l|=|\mathcal{H}_r|$.
Let $d\in \mathcal{H}_l$ be a Duflo element.
As $\mathbf{K}(y)$ implies $\mathbf{Kh}(y)$ by 
Proposition~\ref{K to JKM}, we only need to show that
$\mathbf{Kh}(y)$ is false. Taking \eqref{cz} into account, 
it is enough to find
$z\in \mathcal{H}$ such that $z\neq d$ and $t_yt_d=t_yt_z$.
As $|\mathcal{H}|<|\mathcal{H}_l|$, the stabilizer of 
$y$ in $\mathcal{H}_l$ is non-trivial and hence we can take
as $z$ any element from this stabilizer different from $d$.
The claim follows.
\end{proof}

\begin{lemma}\label{decyy}
Suppose $W$ is of classical type, $\mathcal{L}$ is a left cell 
in $W$ and $y\in \mathcal{L}$. Set $\mathcal{H} = \mathcal{L} 
\cap \mathcal{L}\inv$. Then 
\[\theta_{y\inv}\theta_{y} \cong \bigoplus_{z\in \operatorname{Stab}_\mathcal{H}(y)}\theta_z\langle \mathbf{a}(y)\rangle \oplus \theta,\]
where each summand of $\theta$ is of the form 
$\theta_w\langle a\rangle$, where $a < \mathbf{a}(w)$.
\end{lemma}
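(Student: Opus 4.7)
The plan is to extract the leading part of $\theta_{y\inv}\theta_{y}$ from its image in the asymptotic ring $A(W)$, and then pin down the remaining top-degree coefficients by invoking the classical-type matrix-algebra description of $A(W)$.

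Via the dictionary of Subsection~\ref{ss:KLT} and the definition of the $\mathbf{a}$-function, the top-degree indecomposable summands $\theta_z\langle\mathbf{a}(z)\rangle$ appearing in $\theta_{y\inv}\theta_{y}$ correspond precisely to the basis expansion
\[
t_y t_{y\inv}=\sum_{z\in W}\gamma_{y,y\inv,z\inv}\,t_z \qquad\text{in } A(W),
\]
with multiplicities equal to the $\gamma$-coefficients; every other indecomposable summand $\theta_w\langle a\rangle$ then automatically satisfies $a<\mathbf{a}(w)$ and accounts for the $\theta$ of the statement. Applying Lemma~\ref{[d unit]} with $x=y\inv\in\mathcal{L}\inv$ shows that the support of $t_y t_{y\inv}$ lies in $\mathcal{H}$, the coefficient at $t_d$ equals $1$, and $\mathbf{a}(z)=\mathbf{a}(y)$ for every $z$ in that support. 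Thus the shape of the decomposition is already established, up to the remaining non-negative integer coefficients $\gamma_{y,y\inv,z\inv}$ on $\mathcal{H}\setminus\{d\}$.

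The crux is to show that these remaining coefficients are $1$ precisely on $\mathrm{Stab}_{\mathcal{H}}(y)\setminus\{d\}$, and this is where the classical-type hypothesis enters. I invoke the matrix-over-group-algebra realization of the component of $A(W)$ supported on the two-sided cell of $y$ (see \cite[\S7]{MMMTZ} and the summary in Subsection~\ref{ss:asym}): since $\mathcal{H}$ is an elementary abelian $2$-group, each basis vector $t_w$ for $w$ in an $\mathtt{H}$-cell $\mathcal{L}_i\cap\mathcal{R}_j$ is a coset sum $E_{ij}\otimes\sum_{g\in K_{i,j}} g\,h_w$ over the relevant stabilizer subgroup $K_{i,j}\subseteq\mathcal{H}$. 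Computing $t_y t_{y\inv}$ in this model, using matrix multiplication together with the group law in $\mathcal{H}$ and the fact that every element of $\mathcal{H}$ has order $2$, the product of the two coset sums representing $t_y$ and $t_{y\inv}$ collapses to a $0/1$-coset sum supported on the subgroup $K=\mathrm{Stab}_{\mathcal{H}}(y)$, i.e.\ $t_y t_{y\inv}=\sum_{h\in K}t_h$. Combined with positivity of the $\gamma$'s and the already established normalization $\gamma_{y,y\inv,d\inv}=1$, this gives the claim.

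The main obstacle is exactly this coset-sum collapse: the naive product of two symmetrising sums over a subgroup $K\subseteq\mathcal{H}$ yields $|K|$ times a symmetrising sum rather than a $0/1$-sum, and a priori such multiplicities could arise in $\gamma_{y,y\inv,z\inv}$. The rescue rests on the fact that inversion in $W$ corresponds to transposition in the matrix model, which, in combination with the $(\mathbb{Z}/2)^k$-structure of $\mathcal{H}$, pairs the left-stabilizer datum of $y$ with the right-stabilizer datum of $y\inv$ in exactly the manner needed to produce a $0/1$-coset sum. This is precisely the reason the lemma is formulated only in classical type.
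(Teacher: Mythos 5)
Your opening reduction is fine and coincides with how the paper begins: via \eqref{tt} and Lemma~\ref{[d unit]} (applied with $x=y\inv$), the lemma amounts to showing that the non-negative integers $\gamma_{y,y\inv,z\inv}$, $z\in\mathcal{H}$, form the indicator function of $\operatorname{Stab}_{\mathcal{H}}(y)$, the value $1$ at the Duflo element $d$ being already known, and any summand not accounted for by these top-degree coefficients automatically has shift strictly below its $\mathbf{a}$-value.

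The crux step, however, is not proved, and this is a genuine gap. The model you invoke, in which each $t_w$ is sent to a matrix unit tensored with a \emph{full coset sum} $\sum_{g\in K_{i,j}}g\,h_w$ in $\Z[(\Z/2\Z)^k]$, cannot be a based-ring isomorphism: as you note yourself, the product of two such coset sums is $|K|$ times a coset sum, so in this model the coefficient of $t_d$ in $t_yt_{y\inv}$ would be $|\operatorname{Stab}_{\mathcal{H}}(y)|$, contradicting the coefficient $1$ you just extracted from Lemma~\ref{[d unit]}. The proposed rescue (``inversion corresponds to transposition, which pairs the stabilizer data in exactly the manner needed'') is an assertion with no computation behind it: transposing a coset sum does not change the fact that the product of two full coset sums over $K$ is $|K|$ times a coset sum, so nothing in the model as described can produce a $0/1$-sum. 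Thus the multiplicity-freeness, which is the entire content of the lemma beyond the support statement, remains unestablished. What is actually true in classical type (cf.\ \cite[Section~7]{MMMTZ}) is that basis elements are indexed by pairs consisting of an orbit and a character of its stabilizer in $(\Z/2\Z)^k$, and multiplicity one comes from Frobenius reciprocity for characters of an abelian $2$-group; alternatively, and this is the paper's route, no explicit model is needed: $t_yt_{y\inv}$ is supported on $\mathcal{H}$, is fixed under left multiplication by $t_h$ for each $h\in\operatorname{Stab}_{\mathcal{H}}(y)$ (it has the same left stabilizer as $t_y$), and since the diagonal $\mathtt{H}$-cell translates its own basis simply transitively, the coefficients are constant on $\operatorname{Stab}_{\mathcal{H}}(y)$; together with the fact that the support lies in $\operatorname{Stab}_{\mathcal{H}}(y)$ (use cyclic invariance of $\gamma$ and the fact that $t_ht_y$ is a single basis element) and the value $1$ at $t_d$, this forces all coefficients to be $1$. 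Your write-up contains the normalization at $t_d$, but neither this invariance argument nor any valid substitute for it.
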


\begin{proof}
The element $t_yt_{y\inv}$ of the asymptotic ring is
a linear combination of $t_u$, for $u\in\mathcal{H}$,
and has the same left stabilizer as $y$. 
Therefore $t_yt_{y\inv}$ must be a scalar multiple of 
$\displaystyle\sum_{z\in\operatorname{Stab}_\mathcal{H}(y)} t_z$.
But the multiplicity of $\theta_d$, where $d\in\mathcal{H}$
is the Duflo element, in $\theta_{y\inv}\theta_y$ 
is one. Therefore the scalar in question is one
which implies our claim.
\end{proof}

\begin{lemma}\label{decyLy}
Suppose $W$ is of classical type, $\mathcal{L}$ is a left cell 
in $W$ and $y\in \mathcal{L}$. Set $\mathcal{H} = \mathcal{L} 
\cap \mathcal{L}\inv$ and let $d\in\mathcal{H}$ be the Duflo element. 
Then
\[\theta_{y\inv}L_{y} \cong 
\bigoplus_{z\in \operatorname{Stab}_\mathcal{H}(y)}\theta_zL_d. \]
In particular, $\theta_{y\inv} L_{y}$ is indecomposable if and only if 
$\mathcal{H}$ is of maximal cardinality in 
its two-sided cell.
\end{lemma}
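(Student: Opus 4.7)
The plan is to deduce the decomposition by combining Proposition~\ref{b=c} with the just-established Lemma~\ref{decyy}. Applying Proposition~\ref{b=c} with $x=y^{-1}$ expresses $\theta_{y^{-1}}L_y$ as a direct sum of summands of the form $\theta_z L_d$, with multiplicities given by the asymptotic structure constants $\gamma_{y,y^{-1},z^{-1}}$. By definition of the asymptotic product \eqref{tt}, these multiplicities are precisely the coefficients of $t_z$ in $t_y t_{y^{-1}}$ in the asymptotic ring $A(W)$. I would therefore reduce the proof of the decomposition to identifying this product.

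The identification is essentially what Lemma~\ref{decyy} already does, in the form $t_y t_{y^{-1}} = \sum_{z\in\operatorname{Stab}_\mathcal{H}(y)} t_z$ (with each coefficient equal to $1$). This follows because $\mathcal{H}$ is a finite abelian group, $t_y t_{y^{-1}}$ lies in the group algebra $\mathbb{Z}[\mathcal{H}]$ with non-negative integer coefficients and has the same left stabilizer as $y$, and the coefficient of $t_d$ is $1$ by Lemma~\ref{[d unit]}. Substituting these $0/1$ multiplicities into Proposition~\ref{b=c} yields the claimed formula.

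For the indecomposability statement, the number of indecomposable summands of $\theta_{y^{-1}}L_y$ equals $|\operatorname{Stab}_\mathcal{H}(y)|$. Since $\mathcal{H}$ acts (in classical types) transitively on the $\mathtt{H}$-cell containing $y$, the orbit--stabilizer formula gives $|\operatorname{Stab}_\mathcal{H}(y)|=|\mathcal{H}|/|\mathtt{H}\text{-cell of }y|$. Therefore $\theta_{y^{-1}}L_y$ is indecomposable precisely when $|\mathtt{H}\text{-cell of }y|=|\mathcal{H}|$; since $\mathcal{H}$ is itself a diagonal $\mathtt{H}$-cell, this is equivalent to the $\mathtt{H}$-cell of $y$ being of maximal cardinality in its two-sided cell, as in Proposition~\ref{J maxH}.

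The main technical point, and the step I expect to require the most care, is reconciling the indexing sets: Proposition~\ref{b=c} sums over $z \in \mathcal{L}\cap\mathcal{R}$ (with $\mathcal{R}$ the right cell of $y$ and $d$ the Duflo element of $\mathcal{R}$), while the target formula sums over $z \in \operatorname{Stab}_\mathcal{H}(y) \subseteq \mathcal{H}=\mathcal{L}\cap\mathcal{L}^{-1}$ with $d$ the Duflo of $\mathcal{H}$. One must verify that the non-vanishing multiplicities $\gamma_{y,y^{-1},z^{-1}}$ from Proposition~\ref{b=c} are supported exactly on the set $\operatorname{Stab}_\mathcal{H}(y)$, and that each surviving $\theta_z L_d$ in the corresponding indexing is canonically isomorphic to the same indecomposable projective-injective object in $\mathcal{O}_0^{\hat{\mathcal{R}}}$ (via Proposition~\ref{dLy} and Lemma~\ref{xLyProj}); this is the content of the matching claim and where the classical-type hypothesis is genuinely used.
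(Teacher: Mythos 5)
Your argument is exactly the paper's: the paper deduces this lemma precisely from Proposition~\ref{b=c} combined with Lemma~\ref{decyy}, and your substitution of the multiplicities $\gamma_{y,y\inv,z\inv}$ into the decomposition of Proposition~\ref{b=c}, together with the orbit--stabilizer count and the fact that diagonal $\mathtt{H}$-cells have maximal cardinality for the indecomposability clause, is just a fleshed-out version of that one-line proof. The indexing reconciliation you flag at the end is indeed the only delicate point, and it is absorbed by Lemma~\ref{decyy} (which pins down the support and coefficients of $t_y t_{y\inv}$) together with the identification of the relevant indecomposable projective-injective summands via Lemma~\ref{xLyProj} and Proposition~\ref{dLy}.
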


\begin{proof}
This follows from Proposition~\ref{b=c} and Lemma~\ref{decyy}.
\end{proof}

\begin{lemma}\label{zeroend}
Let $W$ be of any type and $x,y\in W$. Then we have
\[\operatorname{hom}(\theta_xL_y,\theta_xL_y)=
\bigoplus_{w\in \mathcal{L}\cap \mathcal{L}\inv}
\mathrm{hom}(\theta_w L_y, 
L_y\langle\mathbf{a}(x)\rangle)^{\oplus \gamma_{x,x\inv,w\inv}},
\] 
where $\mathcal L$ is the left cell of $x$.
\end{lemma}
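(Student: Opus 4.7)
The plan is to apply Hom-tensor adjunction, decompose $\theta_{x^{-1}}\theta_x$ into graded shifts of indecomposable projective functors using the Hecke algebra, and then use Theorem~\ref{thm-kmm-original} to pick out exactly one grading shift per isotypic summand.

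Step one is the adjunction
\[
\mathrm{hom}(\theta_xL_y,\theta_xL_y)\cong \mathrm{hom}(\theta_{x^{-1}}\theta_xL_y,L_y).
\]
Step two is to decompose $\theta_{x^{-1}}\theta_x$ via the identification of Subsection~\ref{ss:KLT}. Taking the op-convention into account, $[\theta_{x^{-1}}\theta_x]$ corresponds to $\underline{H}_x\underline{H}_{x^{-1}}=\sum_z h_{x,x^{-1},z}\underline{H}_z$, so $\theta_{x^{-1}}\theta_x$ is isomorphic to a direct sum of terms $\theta_z\langle j\rangle$ with multiplicities $c_{z,j}$ equal to the coefficients of $v^j$ in $h_{x,x^{-1},z}\in\mathbb{Z}[v+v^{-1}]$. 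Since these polynomials are bar-invariant of degree at most $\mathbf{a}(z)$, only shifts with $|j|\le \mathbf{a}(z)$ occur, $c_{z,j}=c_{z,-j}$, and the extreme multiplicities satisfy $c_{z,\pm\mathbf{a}(z)}=\gamma_{x,x^{-1},z^{-1}}$ by the definition of $\gamma$ in \eqref{tt}.

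Step three combines the previous two steps. The right-hand side of the adjunction becomes a direct sum $\bigoplus_{z,j}\mathrm{hom}(\theta_zL_y,L_y\langle -j\rangle)^{\oplus c_{z,j}}$. By Theorem~\ref{thm-kmm-original}, the $(z,j)$-summand vanishes unless $-j\ge \mathbf{a}(z)$; combined with the Hecke-algebra bound $j\ge -\mathbf{a}(z)$ this forces $j=-\mathbf{a}(z)$. The surviving multiplicity $c_{z,-\mathbf{a}(z)}=\gamma_{x,x^{-1},z^{-1}}$ is nonzero only when $z$ lies in the diagonal $\mathtt{H}$-cell $\mathcal{L}\cap\mathcal{L}^{-1}$ (apply Lemma~\ref{[d unit]} to $x\in \mathcal{L}$, which computes the support of $t_xt_{x^{-1}}$), and for such $z$ we have $\mathbf{a}(z)=\mathbf{a}(x)$. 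Renaming $z=w$ yields the claimed formula.

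The main delicate point is bookkeeping: correctly applying the op-convention in the Hecke-algebra identification of Subsection~\ref{ss:KLT}, and matching the sign of the grading shift in Theorem~\ref{thm-kmm-original} against the Hecke-algebra degree bound. Once these are aligned, the two opposite degree bounds meet precisely at $j=-\mathbf{a}(z)$ and the collapse of the sum to the diagonal $\mathtt{H}$-cell is automatic.
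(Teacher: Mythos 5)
Your argument is correct and is exactly the paper's intended proof: the paper's one-line justification (``Theorem~\ref{thm-kmm-original} and the definition of the asymptotic ring by adjunction'') unpacks to precisely your three steps, namely adjunction, decomposition of $\theta_{x\inv}\theta_x$ via the coefficients of $h_{x,x\inv,z}$, and the degree bound of Theorem~\ref{thm-kmm-original} isolating the extreme shift with multiplicity $\gamma_{x,x\inv,z\inv}$ supported on $\mathcal{L}\cap\mathcal{L}\inv$. Your bookkeeping of the op-convention, the symmetry of $h_{x,x\inv,z}\in\Z[v+v\inv]$, and the $\mathtt{J}$-invariance giving $\mathbf{a}(z)=\mathbf{a}(x)$ is accurate, so nothing is missing.
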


\begin{proof}
This follows from Theorem~\ref{thm-kmm-original} and the definition
of the asymptotic ring by adjunction.
\end{proof}

\begin{lemma}\label{xtoy} 
Let $W$ be of any type, $x,y\in W$ and $d$ the Duflo element such that
$d\sim_\mathtt{R} x$. Assume that $\mathbf{KMM}(d,y)$ is true.
Then $\mathbf{KM}({x\inv},x)$ 
implies $\mathbf{KM}(x,y)$.
\end{lemma}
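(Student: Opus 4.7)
The plan is to extend the strategy of Proposition~\ref{prop14} (which handles type~$A$ without needing $\mathbf{KM}(x\inv,x)$) to arbitrary type, using the extra hypothesis $\mathbf{KM}(x\inv,x)$ as a substitute for the singleton nature of $\mathtt{H}$-cells in type~$A$.  If $\theta_x L_y=0$ there is nothing to prove, so assume it is nonzero; then by Corollary~\ref{cor12} together with Lemma~\ref{lem11} it suffices to establish $\dim\operatorname{End}(\theta_x L_y)_0 \le 1$.

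Adjunction gives $\operatorname{End}(\theta_x L_y)_0 \cong \mathrm{hom}(\theta_{x\inv}\theta_x L_y,L_y)$, and the decomposition of $\theta_{x\inv}\theta_x$ encoded by Lemma~\ref{[d unit]} (the general-type version of Lemma~\ref{lem10}) combined with Theorem~\ref{thm-kmm-original} shows that only the top-degree summands $\theta_z\langle\mathbf{a}(d)\rangle$, for $z$ in the $\mathtt{H}$-cell $\mathcal{H}$ of $d$, contribute:
\[
\dim\operatorname{End}(\theta_x L_y)_0 \;=\; \sum_{z\in\mathcal{H}} \gamma_{x\inv,x,z\inv}\,\dim\mathrm{hom}(\theta_z L_y,L_y\langle\mathbf{a}(d)\rangle).
\]
The $z=d$ term is at most $1$: the coefficient equals $1$ by Lemma~\ref{[d unit]}, and the hom-space is at most one-dimensional by the hypothesis $\mathbf{KMM}(d,y)$.

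It therefore remains to show that $\gamma_{x\inv,x,z\inv}=0$ for every $z\in\mathcal{H}$ with $z\ne d$, which is exactly where $\mathbf{KM}(x\inv,x)$ is invoked.  Applying Proposition~\ref{b=c} to $\theta_{x\inv}L_x$ writes it as a direct sum $\bigoplus_{z}(\theta_zL_d)^{\oplus c_z}$ whose multiplicities $c_z$ are structure constants in the asymptotic ring, and by Lemma~\ref{xLyProj} the summands $\theta_zL_d$ appearing are pairwise non-isomorphic indecomposable projective-injective objects of $\mathcal{O}_0^{\hat{\mathcal{R}}}$.  Hence indecomposability of $\theta_{x\inv}L_x$ pins down a unique summand; via the symmetry $\gamma_{x,x\inv,w\inv}=\gamma_{x\inv,x,w}$ induced by the Hecke-algebra antiinvolution $H_w\mapsto H_{w\inv}$, this translates into $t_{x\inv}t_x=t_d$ in the asymptotic ring.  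In particular $\gamma_{x\inv,x,z\inv}=0$ for $z\in\mathcal{H}\setminus\{d\}$, and combining with the previous paragraph yields $\dim\operatorname{End}(\theta_x L_y)_0\le 1$ as required.

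The main technical point, and in a sense the heart of the lemma, is the last step: converting the module-theoretic statement $\mathbf{KM}(x\inv,x)$ into the asymptotic identity $t_{x\inv}t_x=t_d$, while keeping careful track of which $\mathtt{H}$-cell on each side of $x$ actually carries the relevant structure constants and ensuring the correct symmetry is applied.  Everything else is a fairly routine generalization of the degree-counting argument of Proposition~\ref{prop14}.
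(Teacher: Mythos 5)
Your skeleton is the same as the paper's: reduce to $\dim\operatorname{End}(\theta_xL_y)_0\leq 1$ via Lemma~\ref{lem11} and Corollary~\ref{cor12}, use adjunction together with Theorem~\ref{thm-kmm-original} so that only the top-degree summands $\theta_z\langle\mathbf{a}(z)\rangle$ of $\theta_{x\inv}\theta_x$ can contribute, bound the Duflo term by $\mathbf{KMM}(d,y)$, and use $\mathbf{KM}(x\inv,x)$ via Proposition~\ref{b=c} and Lemma~\ref{xLyProj} to remove the remaining terms; this is exactly Lemma~\ref{zeroend} combined with the paper's short proof. However, the step you yourself single out as the heart of the lemma is not correct as written. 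The antiinvolution $H_w\mapsto H_{w\inv}$ gives $h_{a,b,c}=h_{b\inv,a\inv,c\inv}$, hence $\gamma_{x\inv,x,w}=\gamma_{x\inv,x,w\inv}$; it does \emph{not} give $\gamma_{x,x\inv,w\inv}=\gamma_{x\inv,x,w}$. The latter is equivalent to $t_xt_{x\inv}=t_{x\inv}t_x$, which fails in general: already in type $A_2$, for $x=s_1s_2$ the two products $t_xt_{x\inv}$ and $t_{x\inv}t_x$ are the two distinct basis elements $t_{s_1}$ and $t_{s_2}$ (the Duflo elements of the right and of the left cell of $x$, in one order or the other). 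Relatedly, $t_{x\inv}t_x$ is supported on the $\mathtt{H}$-cell of the Duflo element of the \emph{left} cell of $x$, so the conclusion ``$t_{x\inv}t_x=t_d$'' with the $d$ of the lemma (which satisfies $d\sim_\mathtt{R}x$) is not the correct statement, and the coefficients $\gamma_{x\inv,x,z\inv}$ in your displayed formula are not supported on the $\mathtt{H}$-cell of that $d$ either. (The left/right bookkeeping here is admittedly treacherous, but the symmetry you invoke is genuinely false, not merely a convention issue.)

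The gap is repairable, and in fact no transfer between the two products is needed. By the definition of $\gamma$ following \eqref{tt} (equivalently, by the normalization $\mathrm{Gr}_{\oplus}(\cP)^{\mathrm{op}}\cong H(W,S)$), the multiplicity of $\theta_z\langle\mathbf{a}(z)\rangle$ in $\theta_{x\inv}\theta_x$ is the top coefficient of $h_{x,x\inv,z}$, i.e.\ $\gamma_{x,x\inv,z\inv}$, and these are precisely the multiplicities appearing in Proposition~\ref{b=c} applied to $\theta_{x\inv}L_x$. Hence, once $\mathbf{KM}(x\inv,x)$ together with Proposition~\ref{b=c} forces a single summand, you get $t_xt_{x\inv}=t_d$ with the lemma's $d$, and exactly the unwanted terms in your adjunction formula vanish, with no symmetry required; this is the paper's argument (Lemma~\ref{zeroend} and Proposition~\ref{b=c} use the same family of structure constants). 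If you do want the opposite-sided identity $t_{x\inv}t_x=t_{d'}$ (with $d'$ the Duflo element in the left cell of $x$), the correct tool is the idempotent-plus-positivity argument used in the proof of Theorem~\ref{theorem1}, not a Hecke-algebra antiinvolution.
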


\begin{proof}
If $\theta_{x\inv}L_x$ is indecomposable, then Proposition~\ref{b=c} implies $t_{x}t_{x\inv}=t_d$.
Hence, by Lemma~\ref{zeroend}, $\mathrm{end}(\theta_xL_y)$ 
is isomorphic to $\hom(\theta_dL_y,L_y\langle\mathbf a(x)\rangle)$
and thus has dimension one by $\mathbf{KMM}(d,y)$.
This implies the claim.
\end{proof}

\begin{lemma}\label{ytox1}
Let $W$ be of any type, $y,z\in W$ and $d$ be the Duflo element such that
$d\sim_\mathtt{R} w_0y\inv$. Suppose $\mathbf{KMM}(d,z)$ and $\mathbf{KM}(y\inv,y)$ are true.
Then $\mathbf{KM}(w_0z\inv,y)$ is true.
\end{lemma}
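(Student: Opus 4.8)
The plan is to reduce the statement, by symmetry, to the already-established Lemma~\ref{xtoy}. I would first record two symmetries of the property $\mathbf{KM}$. By Koszul--Ringel self-duality of $\{\theta_aL_b\}$ (\cite[Theorem~16]{Ma2}, cf.\ Lemma~\ref{lem11} and its proof), $\theta_aL_b$ is indecomposable or zero if and only if $\theta_{b\inv w_0}L_{w_0a\inv}$ is, i.e.\ $\mathbf{KM}(a,b)\Leftrightarrow\mathbf{KM}(b\inv w_0,w_0a\inv)$; and the auto-equivalence of $\mathcal{O}_0^{\mathbb{Z}}$ coming from the automorphism $w\mapsto w_0ww_0$ of $(W,S)$ sends $\theta_aL_b$ to $\theta_{w_0aw_0}L_{w_0bw_0}$, i.e.\ $\mathbf{KM}(a,b)\Leftrightarrow\mathbf{KM}(w_0aw_0,w_0bw_0)$. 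Composing these gives $\mathbf{KM}(a,b)\Leftrightarrow\mathbf{KM}(w_0b\inv,a\inv w_0)$. Applying the last equivalence with $(a,b)=(w_0z\inv,y)$, and using $(w_0z\inv)\inv w_0=z$, we obtain $\mathbf{KM}(w_0z\inv,y)\Leftrightarrow\mathbf{KM}(w_0y\inv,z)$; thus it is enough to prove $\mathbf{KM}(w_0y\inv,z)$.

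Now I would invoke Lemma~\ref{xtoy} with $x:=w_0y\inv$ and with $L_z$ playing the role of the free simple module. Its hypotheses hold verbatim: $d$ is the Duflo element with $d\sim_\mathtt{R}w_0y\inv=x$, and $\mathbf{KMM}(d,z)$ is assumed. Lemma~\ref{xtoy} then reduces $\mathbf{KM}(w_0y\inv,z)$ to the statement $\mathbf{KM}(x\inv,x)$, that is, $\mathbf{KM}(yw_0,w_0y\inv)$; and applying the composed symmetry once more, $\mathbf{KM}(yw_0,w_0y\inv)\Leftrightarrow\mathbf{KM}(y,y\inv)$. So everything comes down to deriving $\mathbf{KM}(y,y\inv)$ from the hypothesis $\mathbf{KM}(y\inv,y)$, i.e.\ to the fact that $\theta_{y\inv}L_y$ is indecomposable (or zero) if and only if $\theta_yL_{y\inv}$ is.

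This last equivalence is the point where I expect the main work to lie. By Proposition~\ref{b=c}, applied with the two orderings of $y$ and $y\inv$, the module $\theta_{y\inv}L_y$ is indecomposable exactly when the product $t_yt_{y\inv}$ is a single basis element of the asymptotic ring $A(W)$, and likewise $\theta_yL_{y\inv}$ is indecomposable exactly when $t_{y\inv}t_y$ is; compatibly, by Lemma~\ref{zeroend} together with Corollary~\ref{cor12}, the graded endomorphism rings of these modules are governed by $t_yt_{y\inv}$ and $t_{y\inv}t_y$ respectively. In each product the coefficient of the pertinent Duflo element is $1$ (Lemma~\ref{[d unit]}), and the two products are swapped by the anti-automorphism $t_w\mapsto t_{w\inv}$ of $A(W)$ after passing to the other side of the $\mathtt{H}$-cell of $y$; using positivity of the structure constants one then shows that one product reduces to a single term if and only if the other does. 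In classical type this is the cardinality criterion behind Lemma~\ref{decyLy} (a single term occurs exactly when a suitable stabilizer of $y$ inside a diagonal $\mathtt{H}$-cell acting transitively on the $\mathtt{H}$-cell of $y$ is trivial, which is independent of the side since stabilizers of a transitive action are mutually conjugate); establishing the analogous side-independence in arbitrary type is the delicate step, and is where I would concentrate the effort.
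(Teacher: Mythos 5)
Your chain of reductions is, read backwards, essentially the one the paper uses: translate $\mathbf{KM}(w_0z\inv,y)$ through Koszul-Ringel duality and the $w\mapsto w_0ww_0$ diagram automorphism into $\mathbf{KM}(w_0y\inv,z)$, feed this into Lemma~\ref{xtoy} (whose hypotheses $d\sim_\mathtt{R}w_0y\inv$ and $\mathbf{KMM}(d,z)$ are indeed the ones assumed here), and thereby reduce everything to passing from the hypothesis $\mathbf{KM}(y\inv,y)$ to $\mathbf{KM}(y,y\inv)$. (A small slip on the way: applying your composed symmetry to $(yw_0,w_0y\inv)$ gives $(w_0yw_0,w_0y\inv w_0)$, not $(y,y\inv)$; the Koszul-Ringel symmetry alone gives $(y,y\inv)$, so the claimed equivalence is still correct.)

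The genuine gap is the step you explicitly defer, namely that $t_yt_{y\inv}$ reduces to a single Duflo term if and only if $t_{y\inv}t_y$ does. The mechanism you sketch does not work: $t_w\mapsto t_{w\inv}$ does extend to an anti-automorphism of $A(W)$, but precisely because it is an anti-automorphism it sends $t_yt_{y\inv}$ to $t_yt_{y\inv}$ and $t_{y\inv}t_y$ to $t_{y\inv}t_y$, i.e., it fixes each of the two products (which live in the two different diagonal $\mathtt{H}$-cells $\mathcal{L}\cap\mathcal{L}\inv$ and $\mathcal{R}\cap\mathcal{R}\inv$, for $\mathcal{L}$, $\mathcal{R}$ the left and right cells of $y$) rather than swapping them; and your stabilizer remark only covers classical types, whereas the lemma is stated for arbitrary type. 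The paper closes exactly this point with the idempotent-plus-positivity argument from the proof of Theorem~\ref{theorem1}: if $t_{y\inv}t_y=t_{d''}$ is a local identity, then $t_yt_{y\inv}$ is an idempotent by Lemma~\ref{[d unit]}; writing $t_yt_{y\inv}=t_{d'}+x$ as in \eqref{t't}, with $x$ a non-negative integral combination of non-Duflo basis elements of $\mathcal{L}\cap\mathcal{L}\inv$, idempotency gives $x+x^2=0$, and non-negativity of the structure constants forces $x=0$, hence $t_yt_{y\inv}=t_{d'}$; swapping the roles of $y$ and $y\inv$ gives the converse. With that argument inserted at your ``delicate step'', the remainder of your proposal goes through and coincides with the paper's proof.
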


\begin{proof}
The indecomposability of $\theta_{y\inv} L_y$ is, by 
Proposition~\ref{b=c}, equivalent to 
$t_{y}t_{y\inv}=t_{d'}$, where $d'$ is an appropriate Duflo element.
As we saw in the proof of Theorem~\ref{theorem1},
this, in turn, is equivalent to 
$t_{y\inv}t_{y}=t_{d''}$, where $d''$ is an appropriate Duflo element.
The latter is equivalent to the
indecomposability of $\theta_{y}L_{y\inv}$.
By Koszul-Ringel duality, we get that 
$\theta_{y w_0}L_{w_0y\inv}$ is indecomposable. 
Then $\mathbf{KMM}(d,z)$ and Lemma~\ref{xtoy} implies 
that $\theta_{w_0y\inv} L_z$ is indecomposable. Then, by Koszul-Ringel duality again, $\theta_{z\inv w_0}L_{w_0y w_0}$ is indecomposable. Since conjugation by $w_0$ corresponds to an automorphism of the Dynkin diagram, 
we obtain that $\theta_{w_0z\inv} L_{y}$ is indecomposable. 
\end{proof}

We can now establish a general sufficient condition for
equivalence of $\mathbf{K}(y)$ is equivalent to $\mathbf{Kh}(y)$.

\begin{proposition}\label{K=J ABCD}
Let $\mathfrak{g}$ be of classical type, $y\in W$
and $d$ be the Duflo element such that $d\sim_\mathtt{R} w_0y\inv$.
Assume that $\mathbf{KMM}(d,z)$ is true, for all $z\in W$. 
Then $\mathbf{K}(y)$ is equivalent to $\mathbf{Kh}(y)$.
\end{proposition}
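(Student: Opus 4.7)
The plan is to apply Theorem~\ref{theorem1}, which reduces $\mathbf{K}(y)$ to the conjunction $\mathbf{Kh}(y)\wedge\mathbf{KM}(*,y)$. The direction $\mathbf{K}(y)\Rightarrow \mathbf{Kh}(y)$ is already recorded in Proposition~\ref{K to JKM}, so the entire content is to deduce $\mathbf{KM}(*,y)$ from $\mathbf{Kh}(y)$ under the standing assumption $\mathbf{KMM}(d,z)$ for all $z\in W$.

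The first step is to extract $\mathbf{KM}(y\inv,y)$ from $\mathbf{Kh}(y)$. Proposition~\ref{J maxH} forces the $\mathtt{H}$-cell of $y$ to have the maximal cardinality inside its two-sided cell as soon as $\mathbf{Kh}(y)$ holds (otherwise some non-trivial element of $\operatorname{Stab}_{\mathcal{H}_l}(y)$ would produce an isomorphism $\theta_yL_y\cong\theta_yL_z$). Combined with Lemma~\ref{decyLy}, which in classical type characterizes indecomposability of $\theta_{y\inv}L_y$ via that same maximality, this immediately gives $\mathbf{KM}(y\inv,y)$.

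The second step is to bootstrap from a single $x=y\inv$ to all of $W$. This is exactly the content of Lemma~\ref{ytox1}: for any $z\in W$, the conjunction of $\mathbf{KMM}(d,z)$ and $\mathbf{KM}(y\inv,y)$ implies $\mathbf{KM}(w_0z\inv,y)$. Since by hypothesis $\mathbf{KMM}(d,z)$ is available for every $z$, and $\mathbf{KM}(y\inv,y)$ was just established, we conclude $\mathbf{KM}(w_0z\inv,y)$ for every $z\in W$; as $z\mapsto w_0z\inv$ is a bijection of $W$, this is precisely $\mathbf{KM}(*,y)$. Putting $\mathbf{Kh}(y)$ and $\mathbf{KM}(*,y)$ together and invoking Theorem~\ref{theorem1} yields $\mathbf{K}(y)$.

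The only subtle point — and where the classical-type hypothesis is genuinely used — is in the first step, since both Proposition~\ref{J maxH} and Lemma~\ref{decyLy} rely on the description of diagonal $\mathtt{H}$-cells as elementary abelian $2$-groups acting transitively on off-diagonal $\mathtt{H}$-cells (true in types $A$--$D$ by \cite{luneq}, \cite{MMMTZ}). Everything else is formal bookkeeping via the asymptotic ring, so the proof is short once these ingredients are assembled.
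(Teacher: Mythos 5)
Your proposal is correct and follows essentially the same route as the paper: reduce via Theorem~\ref{theorem1} to showing $\mathbf{Kh}(y)\Rightarrow\mathbf{KM}(*,y)$, use Proposition~\ref{J maxH} and Lemma~\ref{decyLy} (where the classical-type hypothesis enters) to get indecomposability of $\theta_{y\inv}L_y$, and then apply Lemma~\ref{ytox1} with the hypothesis $\mathbf{KMM}(d,z)$ for all $z$ to obtain $\mathbf{KM}(*,y)$. You merely make explicit the bijection $z\mapsto w_0z\inv$ that the paper leaves implicit; the slightly garbled parenthetical gloss on Proposition~\ref{J maxH} is harmless since you invoke that proposition as stated.
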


\begin{proof}
By Theorem~\ref{theorem1}, it is enough to show that $\mathbf{Kh}(y)$ implies $\mathbf{KM}(*,y)$. The latter assertion
would follow from Lemma~\ref{ytox1} provided that  
we can show that $\theta_{y\inv}L_y$ is indecomposable.
Given $\mathbf{Kh}(y)$, from Proposition~\ref{J maxH}
it follows that the $\mathtt{H}$-cell of $y$ is of maximal
cardinality inside the $\mathtt{J}$-cell of $y$.
Hence indecomposability of $\theta_{y\inv}L_y$
follows from Lemma~\ref{decyLy}.
\end{proof}

In this subsection, we do need the assumption on 
$\mathfrak{g}$ to be of classical type (compare with Example~\ref{G2}).

\section{Further discussion and speculation 
on $\mathbf{KMM}$}\label{s-kmm+}

\subsection{A counterexample}\label{s-kmm+.0}
In this subsection we give an  example in which 
$\mathbf{KMM}$ is false.

Let $(W,S)$ be of type $B_3$. We label the simple reflections in the following way:
\begin{tikzpicture}[scale=0.4,baseline=-2]
\draw (0 cm,0) -- (2 cm,0);
\draw (2 cm, 0.1 cm) -- +(2 cm,0);
\draw (2 cm, -0.1 cm) -- +(2 cm,0);
\draw[fill=white] (0 cm, 0 cm) circle (.15cm) node[above=1pt]{\scriptsize $1$};
\draw[fill=white] (2 cm, 0 cm) circle (.15cm) node[above=1pt]{\scriptsize $2$};
\draw[fill=white] (4 cm, 0 cm) circle (.15cm) node[above=1pt]{\scriptsize $3$};
\end{tikzpicture}.
Consider $y=231232$ and a Duflo element $x=2312312$.
We have $\mathbf{a}(x)=3$.
One can check (using a computer) that the graded 
composition factors of $\theta_xL_y$ is as given in Figure~\ref{figaa2}
and directly see that $\dim\hom(\theta_x L_y,L_y\langle \mathbf{a}(x)\rangle) = 2$. 
We note also that we have a decomposition $\theta_xL_y \cong \theta_{12312}L_{23123121} \oplus \theta_{12312}L_{123121}$,
supporting Conjecture~\ref{conj3prime}.

\begin{figure}
\begin{align*}
\text{degree}\colon& \text{composition factors in this degree:} \\
\mathbf{-3}\colon& 123121,\ 23123121,\ \mathbf{231232^{\oplus 2}} \\
-2\colon& 12321,\ 23123^{\oplus 2},\ 121,\ 2312312^{\oplus 2},\ 2312321^{\oplus 4},\ 23121,\ 12312^{\oplus 2} \\
-1\colon& 23123121^{\oplus 4},\ 1231^{\oplus 3},\ 2312^{\oplus 2},\ 2321,\ 12,\ 231231^{\oplus 3}, 123121^{\oplus 4},\ 231232^{\oplus 4},\ 21 \\
0\colon& 12321^{\oplus 3},\ 23123^{\oplus 4},\ 121^{\oplus 2},\ 1,\ 123,\ 2312312^{\oplus 4},\ 2312321^{\oplus 8},\ 231^{\oplus 2},\ 23121^{\oplus 2},\ 12312^{\oplus 4} \\
1\colon& 23123121^{\oplus 4},\ 1231^{\oplus 3},\ 2312^{\oplus 2},\ 2321,\ 12,\ 231231^{\oplus 3}, 123121^{\oplus 4},\ 231232^{\oplus 4},\ 21 \\
2\colon& 12321,\ 23123^{\oplus 2},\ 121,\ 2312312^{\oplus 2},\ 2312321^{\oplus 4},\ 23121,\ 12312^{\oplus 2} \\
3\colon& 123121,\ 23123121,\ 231232^{\oplus 2}
\end{align*}
\caption{The composition factors of $\theta_xL_y$ 
from Subsection~\ref{s-kmm+.0}}
\label{figaa2}
\end{figure}

\subsection{A homological approach to $\mathbf{KMM}$}\label{s-kmm+.1}
Here is a general criterion for $\mathbf{KMM}$
given in homological terms.

\begin{proposition}\label{prophom1}
Let $x,y\in W$ and $\mathcal{H}=\mathcal{L}\cap\mathcal{L}\inv$,
where $\mathcal{L}$ is the left cell of $w_0y$. 
Then $\mathbf{KMM}(x,y)$ is equivalent to 
\begin{equation}\label{prophom1-eq2}
\sum_{z\in \mathcal{H}}
\gamma_{y\inv w_0,w_0y,z\inv}\cdot \dim\mathrm{ext}^{\mathbf{a}(x)}(\theta_{z}L_{w_0x\inv},
L_{w_0}\langle\mathbf{a}(y\inv w_0)-\mathbf{a}(x)\rangle)\leq 1.
\end{equation}
\end{proposition}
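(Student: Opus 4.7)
My plan is to compute $\dim\mathrm{hom}(\theta_x L_y, L_y\langle \mathbf{a}(x)\rangle)$ explicitly by transporting it through Koszul-Ringel duality, an adjunction, and an asymptotic-ring decomposition, and then matching the answer against the sum on the right-hand side of \eqref{prophom1-eq2}. The general strategy parallels the proof of Theorem~\ref{thm-kmm-original} but in a dimension-tracking form.

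First, I would apply the Koszul-Ringel self-duality of the family $\{\theta_a L_b\,:\,a,b\in W\}$ from \cite[Theorem~16]{Ma2}, exactly as in the proof of Theorem~\ref{thm-kmm-original}, to obtain
\[
\mathrm{hom}(\theta_x L_y, L_y\langle \mathbf{a}(x)\rangle) \cong \mathrm{ext}^{\mathbf{a}(x)}\bigl(\theta_{y\inv w_0} L_{w_0 x\inv},\, T_{w_0 y\inv w_0}\bigr).
\]
This converts the $\mathrm{hom}$ controlling $\mathbf{KMM}(x,y)$ into an $\mathrm{ext}$ group that is much more amenable to projective-functor manipulations.

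Next, I would rewrite the tilting module. Using $T_w=\theta_{w_0 w}L_{w_0}$ (as in the proof of Lemma~\ref{lem7.2-1}), one has $T_{w_0 y\inv w_0}\cong \theta_{y\inv w_0}L_{w_0}$ as ungraded modules; tracking the graded normalization (the graded lift of $T_{w_0 y\inv w_0}$ has its top in degree $0$, whereas by Theorem~\ref{thm-kmm-original} the module $\theta_{y\inv w_0}L_{w_0}$ has its top in degree $\mathbf{a}(y\inv w_0)$) gives $T_{w_0 y\inv w_0}\cong \theta_{y\inv w_0}L_{w_0}\langle\mathbf{a}(y\inv w_0)\rangle$. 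Applying the shift-free adjunction between $\theta_{y\inv w_0}$ and $\theta_{w_0 y}$, and absorbing the usual convention shift between the two sides of Koszul-Ringel duality, I expect to land on
\[
\mathrm{ext}^{\mathbf{a}(x)}\!\bigl(\theta_{w_0 y}\theta_{y\inv w_0} L_{w_0 x\inv},\, L_{w_0}\langle\mathbf{a}(y\inv w_0)-\mathbf{a}(x)\rangle\bigr).
\]

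After that, I would decompose $\theta_{w_0 y}\theta_{y\inv w_0}$ using the asymptotic ring. By Lemma~\ref{[d unit]} applied to $u=y\inv w_0$, the extreme-degree summands are $\bigoplus_{z\in\mathcal{H}}\theta_z\langle-\mathbf{a}(z)\rangle^{\oplus\gamma_{y\inv w_0, w_0 y, z\inv}}$, while every remaining summand $\theta_w\langle j\rangle$ satisfies either $w\not\sim_{\mathtt{J}} y\inv w_0$ or $|j|<\mathbf{a}(y\inv w_0)$. Substituting the extreme part into the $\mathrm{ext}$ and moving $\langle-\mathbf{a}(z)\rangle$ across produces precisely the sum in \eqref{prophom1-eq2}.

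The hard part will be showing that the non-extreme summands contribute zero to this particular $\mathrm{ext}^{\mathbf{a}(x)}$ at the internal degree $\langle\mathbf{a}(y\inv w_0)-\mathbf{a}(x)\rangle$. This amounts to a Koszul-dual counterpart of Theorem~\ref{thm-kmm-original}: the $\mathbf{a}$-function bound on graded $\mathrm{hom}$ is to be transported through Koszul-Ringel duality into an $\mathbf{a}$-function bound on graded $\mathrm{ext}$. The delicate point is to verify that the specific shift $\langle\mathbf{a}(y\inv w_0)-\mathbf{a}(x)\rangle$ is placed exactly so that any $\theta_w\langle j\rangle$ with $w\not\sim_{\mathtt{J}} y\inv w_0$ or $|j|<\mathbf{a}(y\inv w_0)$ is cut out; once this is done, dimensions on both sides of \eqref{prophom1-eq2} agree and the equivalence of the $\leq 1$ inequalities is immediate.
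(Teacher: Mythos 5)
You follow the same route as the paper's proof: Koszul--Ringel duality plus adjunction to rewrite $\hom(\theta_xL_y,L_y\langle\mathbf{a}(x)\rangle)$ as an $\mathrm{ext}^{\mathbf{a}(x)}$-space against $L_{w_0}$, then the asymptotic-ring decomposition of $\theta_{w_0y}\theta_{y\inv w_0}$, whose extreme summands $\theta_z\langle-\mathbf{a}(y\inv w_0)\rangle^{\oplus\gamma_{y\inv w_0,w_0y,z\inv}}$ account for the sum in \eqref{prophom1-eq2}. But the proposal stops exactly where the real work starts: the vanishing of the contributions of the non-extreme summands is declared to be ``the hard part'' and described only as a ``Koszul-dual counterpart of Theorem~\ref{thm-kmm-original}'' whose ``delicate point is to verify'' the shift placement; no argument is given, and it is not a formal transport of Theorem~\ref{thm-kmm-original}. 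Moreover, your description of the remaining summands (``either $w\not\sim_\mathtt{J} y\inv w_0$ or $|j|<\mathbf{a}(y\inv w_0)$'') is weaker than what the vanishing argument needs: the relevant fact is that each such summand $\theta_w\langle-a\rangle$ satisfies $w\geq_\mathtt{J} d$ and the strict bound $a<\mathbf{a}(w)$ for that same $w$, which for $w>_\mathtt{J} d$ is not implied by your condition. The paper's argument for the vanishing is concrete: realize $\theta_wL_{w_0x\inv}$ as a linear complex of tilting modules, note that every indecomposable summand in homological position $-\mathbf{a}(x)$ is of the form $T_u\langle-\mathbf{a}(x)\rangle$ with $u\leq_\mathtt{J} w_0w$, convert $\hom(T_u,L_{w_0}\langle b\rangle)$ into $\hom(P_{w_0u},P_e\langle b\rangle)$ via Soergel's tilting character formula \cite[Theorem~6.7]{So2}, and use \eqref{eq9} to force $b\geq\mathbf{a}(w_0u)$; combined with $u\leq_\mathtt{J} w_0w$ and $a<\mathbf{a}(w)$ this yields a degree mismatch, so the contribution is zero. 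None of these ingredients appears in your plan, so the proof has a genuine gap at its decisive step.

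There is also a grading slip earlier that you should repair. The identification $T_{w_0y\inv w_0}\cong\theta_{y\inv w_0}L_{w_0}\langle\mathbf{a}(y\inv w_0)\rangle$ is not justified by Theorem~\ref{thm-kmm-original}, which only gives the lower bound $i\geq\mathbf{a}(y\inv w_0)$ for nonzero $\hom(\theta_{y\inv w_0}L_{w_0},L_z\langle i\rangle)$ and does not locate the top in that degree; the extreme degree of $\theta_{y\inv w_0}L_{w_0}$ is $\mathbf{b}(y\inv w_0,w_0)=\ell(y\inv w_0)$, cf.\ \eqref{2b=gl}, and under the paper's normalizations the self-dual graded module $\theta_{y\inv w_0}L_{w_0}$ is already the graded tilting object produced by the duality, with no shift. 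The correct bookkeeping (as in the paper) sends $L_y\langle\mathbf{a}(x)\rangle$ to $\theta_{y\inv w_0}L_{w_0}\langle-\mathbf{a}(x)\rangle$ in homological degree $\mathbf{a}(x)$; the shift $\mathbf{a}(y\inv w_0)$ in \eqref{prophom1-eq2} then arises solely from moving the summand shift $\langle-\mathbf{a}(y\inv w_0)\rangle$ of $\theta_z$ across. With your extra $\langle\mathbf{a}(y\inv w_0)\rangle$, moving $\langle-\mathbf{a}(z)\rangle$ across lands the extreme contributions at internal shift $\langle 2\mathbf{a}(y\inv w_0)-\mathbf{a}(x)\rangle$, so as written your chain double-counts $\mathbf{a}(y\inv w_0)$ and only reaches \eqref{prophom1-eq2} because the target formula is known in advance.
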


We note that, in type $A$, the formula \eqref{prophom1-eq2} simplifies to\begin{equation}\label{kmmtypea}
\dim\mathrm{ext}^{\mathbf{a}(x)}(\theta_{d}L_{w_0x\inv},
L_{w_0}\langle\mathbf{a}(y\inv w_0)-\mathbf{a}(x)\rangle)\leq 1,
\end{equation}
where $d\in \mathcal{H}$ is the Duflo element (in fact, 
$\mathcal{H}=\{d\}$ in this case). In classical types,
the formula \eqref{prophom1-eq2} reads
\begin{displaymath}
\sum_{z\in \mathrm{Stab}_{\mathcal{H}}(y\inv w_0)}
\dim\mathrm{ext}^{\mathbf{a}(x)}(\theta_{d}L_{w_0x\inv},
L_{w_0}\langle\mathbf{a}(y\inv w_0)-\mathbf{a}(x)\rangle)\leq 1.
\end{displaymath}

\begin{proof}[Proof of Proposition~\ref{prophom1}.]
We realize $\theta_xL_y$ as a linear complex
of tilting modules. Using Koszul-Ringel duality and adjunction, 
we have
\begin{equation}\label{prophom1-eq3}
    \begin{split}
        \hom(\theta_x L_y,L_y\langle \mathbf{a}(x)\rangle)&\cong \ext^{\mathbf{a}(x)}(\theta_{y\inv w_0} L_{w_0x\inv},\theta_{y\inv w_0}L_{w_0}\langle -\mathbf{a}(x)\rangle)\\
        &\cong \ext^{\mathbf{a}(x)}(\theta_{w_0y}\theta_{y\inv w_0}L_{w_0x\inv},L_{w_0}\langle -\mathbf{a}(x)\rangle).
    \end{split}
\end{equation}
For $z\in\mathcal{H}$, the composition
$\theta_{w_0y}\theta_{y\inv w_0}$ contains
$\theta_z\langle-\mathbf{a}(y\inv w_0)\rangle$
with multiplicity $\gamma_{y\inv w_0,w_0y,z\inv}$. All other summands 
of this composition are of the form
$\theta_w\langle-a\rangle$, where $w\geq_{\mathtt{J}} d$
and $a<\mathbf{a}(w)$. By \eqref{prophom1-eq2}, the contribution of all
$\theta_z\langle-\mathbf{a}(y\inv w_0)\rangle$ to the right hand side of
\eqref{prophom1-eq3} is at most one-dimensional. So, to complete
the proof we only need to show that the contribution of
all other summands of $\theta_{w_0y}\theta_{y\inv w_0}$ 
to the right hand side of \eqref{prophom1-eq3} is zero.

To this end, fix $\theta_w\langle-a\rangle$ as above
and consider the linear complex $\mathcal{T}_{\bullet}$
of tilting modules representing $\theta_wL_{w_0x\inv}$.
By Koszul-Ringel duality, each indecomposable summand of 
$\mathcal{T}_{-\mathbf{a}(x)}$ is of the form
$T_u\langle-\mathbf{a}(x)\rangle$, for some $u\in W$ such that
$u\leq_{\mathtt{J}}w_0w\leq_{\mathtt{J}}w_0d$.

For such $u$, we claim that the condition 
$\mathrm{hom}(T_u,L_{w_0}\langle b\rangle)\neq 0$
necessarily implies $b\geq \mathbf{a}(w_0u)=\mathbf{a}(d)$.
Indeed, we have $\mathrm{hom}(T_u,L_{w_0}\langle b\rangle)=
\mathrm{hom}(T_u,T_{w_0}\langle b\rangle)$ which, in turn, equals
$\mathrm{hom}(P_{w_0u},P_{e}\langle b\rangle)$ by 
Soergel's character formula for tilting modules, see
\cite[Theorem~6.7]{So2}. Now our $b\geq \mathbf{a}(w_0u)$ 
follows from \eqref{eq9}.

For such $b$, we have $b-a>\mathbf{a}(y\inv w_0)-\mathbf{a}(x)$
since $-a>-\mathbf{a}(w)$. This implies that the contribution of
$\theta_w\langle-a\rangle$
to the right hand side of \eqref{prophom1-eq3} is zero
and completes the proof.
\end{proof}

\subsection{$\mathbf{KMM}$ via twisting 
functors}\label{s-kmm+.3}

Consider the full twisting functor $\mathbf{T}_{w_0}$, see \cite{AS}.
The following statement provides a reformulation of 
$\mathbf{KMM}$ in terms of the cohomology of the derived twisting
functor $\mathscr{L}\mathbf{T}_{w_0}$ evaluated at a simple module.

\begin{proposition}\label{kmmtwist}
Let $x,y\in W$ and $\mathcal{H}=\mathcal{L}\cap\mathcal{L}\inv$,
where $\mathcal{L}$ is the left cell of $w_0 y$. 
Then $\mathbf{KMM}(x,y)$ is equivalent to 
\begin{equation}\label{kmmtwist-eq3}
\sum_{z\in \mathcal{H}}
\gamma_{y\inv w_0,w_0 y,z\inv}\cdot 
[\mathscr{L}_{\mathbf{a}(x)}\mathbf{T}_{w_0}
(L_{w_0x\inv}):L_{z\inv}
\langle\mathbf{a}(y\inv w_0)-\mathbf{a}(x)\rangle]\leq 1.
\end{equation}
\end{proposition}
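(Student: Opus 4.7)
The plan is to deduce Proposition~\ref{kmmtwist} directly from Proposition~\ref{prophom1} by translating each ext appearing in \eqref{prophom1-eq2} into a multiplicity inside $\mathscr L_{\mathbf a(x)}\mathbf T_{w_0}(L_{w_0x\inv})$. Concretely, it is enough to verify, for each $z\in\mathcal H$, the single-$z$ identity
\[
\dim\mathrm{ext}^{\mathbf a(x)}(\theta_z L_{w_0x\inv}, L_{w_0}\langle j\rangle)
= [\mathscr L_{\mathbf a(x)}\mathbf T_{w_0}(L_{w_0x\inv}) : L_{z\inv}\langle j\rangle],
\]
where $j:=\mathbf a(y\inv w_0)-\mathbf a(x)$; summing these weighted by $\gamma_{y\inv w_0,w_0y,z\inv}$ converts \eqref{prophom1-eq2} into \eqref{kmmtwist-eq3}.

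The first step would be to remove $\theta_z$ from the first argument via the projective-functor adjunction $(\theta_z,\theta_{z\inv})$, and then to identify $\theta_{z\inv}L_{w_0}\cong T_{w_0z\inv}$ using $L_{w_0}=T_{w_0}$ together with the normalization $\theta_u T_{w_0}\cong T_{w_0 u}$ already exploited in the proof of Lemma~\ref{lem7.2-1}. The left-hand side thus becomes an Ext from a simple into a tilting module, $\mathrm{ext}^{\mathbf a(x)}(L_{w_0x\inv}, T_{w_0z\inv}\langle j\rangle)$. The second step is to pass to $\mathcal D^b(\cO_0^{\mathbb Z})$ and apply the derived twisting autoequivalence $\mathscr L\mathbf T_{w_0}$ (Arkhipov, \cite{AS}). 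The key input here is that $\mathscr L\mathbf T_{w_0}$ realizes Ringel self-duality of $\cO_0^{\mathbb Z}$, sending each indecomposable tilting $T_{w_0z\inv}$ to an appropriate shift of the indecomposable projective $P_{z\inv}$. The Ext then becomes a derived Hom into a shift of $P_{z\inv}$, and since $P_{z\inv}$ is projective with simple top $L_{z\inv}$, this collapses to $\dim\mathrm{Hom}(\mathscr L_{\mathbf a(x)}\mathbf T_{w_0}(L_{w_0x\inv}), P_{z\inv}\langle j\rangle) = [\mathscr L_{\mathbf a(x)}\mathbf T_{w_0}(L_{w_0x\inv}) : L_{z\inv}\langle j\rangle]$, which is the right-hand side.

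The main obstacle is keeping track of the combined homological and internal grading shifts in the Ringel-duality identification $\mathscr L\mathbf T_{w_0}T_{w_0z\inv}\cong P_{z\inv}\langle ?\rangle[?]$: these must cancel exactly so that an Ext class of homological degree $\mathbf a(x)$ and internal shift $\langle j\rangle$ corresponds to the cohomology of $\mathscr L\mathbf T_{w_0}(L_{w_0x\inv})$ in the same bidegree. This bookkeeping should be a formal consequence of how $\mathscr L\mathbf T_{w_0}$ interacts with the Koszul-Ringel self-duality of \cite[Theorem~16]{Ma2} already exploited throughout the paper; once the shifts are reconciled, the rest of the argument is a routine chain of adjunctions.
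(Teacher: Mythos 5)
Your overall strategy (reduce to a termwise identity, move $\theta_z$ by adjunction so that the second argument becomes the tilting module $T_{w_0z\inv}\cong\theta_{z\inv}T_{w_0}$, then apply the derived autoequivalence $\mathscr{L}\mathbf{T}_{w_0}$ and read off a composition multiplicity) is exactly the paper's route, but the key identification at the end is wrong, and the step built on it would fail. With the paper's normalizations one has $\mathscr{L}\mathbf{T}_{w_0}(T_{w_0})\cong I_e$, and since $\mathscr{L}\mathbf{T}_{w_0}$ commutes with projective functors, $\mathscr{L}\mathbf{T}_{w_0}(T_{w_0z\inv})\cong\theta_{z\inv}I_e\cong I_{z\inv}$: the derived twisting functor sends projectives to tiltings and tiltings to \emph{injectives}, not tiltings to projectives as you claim. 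Moreover, even granting your identification, the final collapse is not valid: $\dim\mathrm{Hom}(M,P\langle j\rangle)$ for $P$ projective with simple top $L$ does \emph{not} compute $[M:L\langle j\rangle]$ (already $\mathrm{hom}(L_e,P_e)=0$ while $[L_e:L_e]=1$), and the reduction of $\mathrm{Hom}_{D^b}(X^\bullet, Q\langle j\rangle[\mathbf{a}(x)])$ to a Hom out of the single cohomology object $H^{-\mathbf{a}(x)}(X^\bullet)$ also requires the target to be injective. So as written, the last paragraph of your argument is not a bookkeeping issue but a genuine error.

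The fix is local and lands you precisely on the paper's proof: replace $P_{z\inv}$ by $I_{z\inv}$ throughout. Then $\mathrm{ext}^{\mathbf{a}(x)}(L_{w_0x\inv},T_{w_0z\inv}\langle j\rangle)\cong\mathrm{Hom}_{D^b}\bigl(\mathscr{L}\mathbf{T}_{w_0}(L_{w_0x\inv}),I_{z\inv}\langle j\rangle[\mathbf{a}(x)]\bigr)\cong\mathrm{hom}\bigl(\mathscr{L}_{\mathbf{a}(x)}\mathbf{T}_{w_0}(L_{w_0x\inv}),I_{z\inv}\langle j\rangle\bigr)$, where injectivity of $I_{z\inv}$ justifies the collapse, and homomorphisms into the indecomposable injective with socle $L_{z\inv}$ count exactly the multiplicity $[\mathscr{L}_{\mathbf{a}(x)}\mathbf{T}_{w_0}(L_{w_0x\inv}):L_{z\inv}\langle j\rangle]$; summing over $z\in\mathcal{H}$ with the weights $\gamma_{y\inv w_0,w_0y,z\inv}$ turns \eqref{prophom1-eq2} into \eqref{kmmtwist-eq3}, as in the paper (which applies the equivalence first, notes $\mathscr{L}\mathbf{T}_{w_0}(T_{w_0})=I_e$, and only then moves $\theta_z$ across the adjunction — the order is immaterial).
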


In type $A$, the formula \eqref{kmmtwist-eq3} simplifies to
\begin{displaymath}
[\mathscr{L}_{\mathbf{a}(x)}\mathbf{T}_{w_0}
(L_{w_0x\inv}):L_d
\langle\mathbf{a}(y\inv w_0)-\mathbf{a}(x)\rangle]\leq 1,
\end{displaymath}
where $d$ is the Duflo element in 
$\mathcal{H}$. 
In classical types, the formula \eqref{kmmtwist-eq3} reads
\begin{displaymath}
\sum_{z\in \mathrm{Stab}_{\mathcal{H}}(y\inv w_0)}
[\mathscr{L}_{\mathbf{a}(x)}\mathbf{T}_{w_0}
(L_{w_0x\inv}):L_{z\inv}
\langle\mathbf{a}(y\inv w_0)-\mathbf{a}(x)\rangle]\leq 1.
\end{displaymath}

\begin{proof}[Proof of Proposition~\ref{kmmtwist}.]
In this proof we use standard properties of
twisting functors, see \cite{AS} for details.
To start with, we note that $\mathscr{L}\mathbf{T}_{w_0}$ 
is an auto-equivalence of the bounded
derived category of $\mathcal{O}_0$. Let us apply 
this auto-equivalence to the left hand side of \eqref{prophom1-eq2}. 
In the second argument of the extension space,
we get $\mathscr{L}\mathbf{T}_{w_0}(T_{w_0})=I_e$.
In the first argument of the extension space, we note that
$\mathscr{L}\mathbf{T}_{w_0}$ commutes with projective functors,
so we can move $\theta_z$ out. By adjunction, we move
$\theta_z$ over to the second argument obtaining
$\theta_{z\inv} I_e\cong I_{z\inv}$. The leftover in
the first argument is $\mathscr{L}\mathbf{T}_{w_0}
(L_{w_0x\inv})$. Evaluating at the correct degree of the
extension and noting that homomorphisms to $I_{z\inv}$
give exactly the composition multiplicity of 
$L_{z\inv}$ for the homology, we obtain \eqref{kmmtwist-eq3}.
\end{proof}

\section{Small rank results}\label{section:smallrank}

The results in this paper, together with (computer-assisted computations for) Kazhdan-Lusztig combinatorics, enable us to determine $\mathbf{K}(y)$, $\mathbf{KM}(*,y)$, and $\mathbf{Kh}(y)$ in many cases. 
We present some of the results in this section.

In computer-assisted calculations, SageMath v.9.0 has been used.

\subsection{Type $A$}

It is verified in \cite{CMZ} that $\mathbf{KM}(x,y)$ is true for all $x,y$ in type $A_n$ for $n\leq 5$. 
Therefore, in this case we only need to determine either $\mathbf{Kh}(y)$ or $\mathbf{K}(y)$, which are equivalent by Theorem \ref{theorem1}.

In \cite{KhM,Kh},  Kostant's problem is solved for simple highest weight modules in $\mathcal{O}_0$, for $\mathfrak{sl}_n$, $n \leq 5$. For $\mathfrak{sl}_6$, i.e., type $A_5$, out of $76$ Duflo elements, $47$ have positive answer to Kostant's problem, $20$ have negative, and the following $9$ Duflo elements were left as an open problem (the notation for simple reflections is analogous to the one in Example \ref{ex9}):
\begin{equation}
\label{eq:KA5open}
\begin{tabular}{lll}
$23432$,   &&      $4523412$, \\
$234312$,  &&      $12342321$, \\
$452342$,  &&      $23454232$, \\
$234512342312$, && $3451234231$, \\
 &&                $2345412312$.                    
\end{tabular}
\end{equation}
We can now solve these remaining cases.

For all $y$ in the first column in \eqref{eq:KA5open}, one can check (by computer) that $[\mathbf{Kh}](y)$ holds, and therefore $\mathbf{Kh}(y)$ holds. From \cite{CMZ} and Theorem \ref{theorem1} it follows that $\mathbf{K}(y)$ is true.

For the second column in \eqref{eq:KA5open}, it is enough to consider only $x := 4523412$, $y := 12342321$ and $z := 3451234231$, because of the symmetry of the Dynkin diagram. By \cite[Proposition 46.b)]{CMZ}, we have
\[ \theta_{23} L_x \cong \theta_{3} L_{x3} \cong \theta_{43} L_x  \qquad \text{and} \qquad \theta_{32} L_z \cong \theta_{2} L_{z2} \cong \theta_{12} L_z.  \]
Therefore both $\mathbf{Kh}(x)$ and $\mathbf{Kh}(z)$ fail. So again, using \cite{CMZ} and
Theorem~\ref{theorem1}, we conclude that neither $\mathbf{K}(x)$ nor $\mathbf{K}(z)$ holds.

A calculation shows that $\theta_{45342} \cong \theta_{342}\theta_{45}$. One can check that $\theta_{45} L_y$ has height $1$. Moreover, its top and socle are simple, consisting of $L_{y'}$ with $y':=123452321$, and $L_y$ in the middle. Since $342 \not\leq_\mathtt{R} (y')^{-1}$, it follows that
\[ \theta_{45342} L_y \cong \theta_{342} L_y. \]
Therefore $\mathbf{Kh}(y)$ does not hold, and so $\mathbf{K}(y)$ also does not hold. Since the property $\mathbf{K}$ in type $A$ is invariant for the left cells, we have:

\begin{corollary}
\label{corollary:kostantA5}
Kostant's problem has a positive solution, for 
a simple highest weight module $L_w$ for $\mathfrak{sl}_6$, 
if and only if $w$ does not belong to the left cells 
containing one of the following 25 Duflo elements:
\[ \begin{tabular}{lllll}
$31$, & $531$, & $45341$, & $512321$, & $23454232$, \\
$42$, & $3431$, & $52312$, & $3453431$, & $34541231$, \\
$53$, & $4121$, & $234232$, & $4523412$, & $2345412312$, \\
$232$, & $4542$, & $345431$, & $5123121$, & $3451234231$, \\
$343$, & $5232$, & $454121$, & $12342321$, & $12345343121$.
\end{tabular}\]
\end{corollary}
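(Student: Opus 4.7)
The plan is to compile the conclusion from the case-by-case analysis already performed above the corollary, packaged with the reduction machinery of Section~\ref{s30}. Since $\operatorname{Ann}(L_w)$ depends only on the left cell of $w$, the property $\mathbf{K}(w)$ is a left-cell invariant, so it suffices to evaluate $\mathbf{K}(d)$ at each of the $76$ Duflo elements of $W(A_5)$. The results from \cite{KhM, Kh} already settle $67$ of these, yielding $47$ positive and $20$ negative answers; the $9$ Duflo elements listed in \eqref{eq:KA5open} form the remaining open cases that I need to handle.

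For these $9$ cases, I would first invoke the main result of \cite{CMZ}, which asserts that $\mathbf{KM}(\ast,y)$ holds throughout type $A_n$ for $n \le 5$, and combine it with Theorem~\ref{theorem1} to reduce the assertion $\mathbf{K}(y)$ to $\mathbf{Kh}(y)$. For each $y$ in the first column of \eqref{eq:KA5open}, I would then compute (by computer) the Kazhdan-Lusztig structure coefficients $h_{z,x^{-1},y}$ and use Formula~\eqref{gdim=h} to check that the graded characters of the nonzero $\theta_x L_y$ are pairwise distinct, giving $[\mathbf{Kh}](y)$ and hence $\mathbf{Kh}(y)$, i.e., $\mathbf{K}(y)$. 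For the second column, the preceding discussion supplies explicit isomorphisms $\theta_x L_y \cong \theta_{x'} L_y$ with $x \neq x'$, obtained from \cite[Proposition~46.b)]{CMZ} for the elements $4523412$ and $3451234231$, and from a direct calculation with $\theta_{45}$ for $12342321$; the remaining two elements $23454232$ and $2345412312$ are handled by the Dynkin diagram symmetry of $A_5$, so $\mathbf{K}$ fails for all five.

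Assembling everything, $\mathbf{K}(d)$ fails for exactly the $20 + 5 = 25$ Duflo elements enumerated in the corollary and holds for the complementary $47 + 4 = 51$; promoting from Duflo elements to arbitrary $w \in W$ via left-cell invariance of $\mathbf{K}$ finishes the proof. The only real obstacles at this point are computational rather than theoretical: verifying the Kazhdan-Lusztig data for the first column and locating (and checking) the explicit decompositions supporting the isomorphisms for the second. With the theoretical framework of Theorem~\ref{theorem1} and the indecomposability results of \cite{CMZ} already in place, no additional conceptual ingredients are needed, and the argument reduces to a targeted bookkeeping of the $9$ previously open cases.
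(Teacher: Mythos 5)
Your proposal follows essentially the same route as the paper: reduce to the $76$ Duflo elements, quote \cite{KhM,Kh} for the $67$ settled cases, use the indecomposability result of \cite{CMZ} together with Theorem~\ref{theorem1} to identify $\mathbf{K}(y)$ with $\mathbf{Kh}(y)$ for the nine open cases, verify $[\mathbf{Kh}](y)$ by computer for the first column of \eqref{eq:KA5open}, and disprove $\mathbf{Kh}$ for the second column via the explicit isomorphisms from \cite[Proposition~46.b)]{CMZ}, the $\theta_{45}$-calculation for $12342321$, and the diagram symmetry; the final count $20+5=25$ matches the paper.

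One step is justified incorrectly, though: you claim that $\mathbf{K}(w)$ is a left-cell invariant \emph{because} $\operatorname{Ann}(L_w)$ depends only on the left cell of $w$. That inference is not valid --- the annihilator controls only the image of $U(\mathfrak{g})$, while $\cL(L_w,L_w)$ itself can vary within a left cell, and indeed $\mathbf{K}$ is \emph{not} a left-cell invariant in general (see the $G_2$ data in Example~\ref{G2}/Table~\ref{table:G2}, where the left cell of $1$ contains both positive and negative answers). In type $A$ the invariance does hold, but it has to be deduced, e.g., from Theorem~\ref{theorem1} combined with Proposition~\ref{prop-n942} (all left cells in type $A$ are strongly regular), or from Corollary~\ref{of con1} together with the validity of $\mathbf{KM}(\ast,\ast)$ in type $A_5$ from \cite{CMZ}; this is the (implicit) justification the paper relies on when passing from Duflo elements to arbitrary $w$.
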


\subsection{Types $BCD$}

We completely determine $\mathbf{K}(w)$, $\mathbf{KM}(\ast,w)$ and $\mathbf{Kh}(w)$, for each $w\in W$ in types $B_3$ and $D_4$.
We also determine $\mathbf{K}(w)$ and $\mathbf{Kh}(w)$ completely in type $B_4$.
In all above examples, we have $\mathbf{Kh}(w)\Leftrightarrow\mathbf{KM}(\ast,w)$, supporting Conjecture \ref{kconj-intro}. 
We provide below the results without details for type $B_3$ and $D_4$.

A good way to present these results is to mark the $\mathtt{H}$-cells in $W$ in the following way
(note that, since $W$ is of classical type, 
by Proposition \ref{prop-n942} and Lemma \ref{decyLy}, the function $\mathbf{K}(w)$ (resp., $\mathbf{KM}(\ast,w)$ and $\mathbf{Kh}(w)$) has the same value for $w$ in the same $\mathtt{H}$-cell) :

\definecolor{color_K}{gray}{0.7}
\definecolor{color_KM_notKh}{gray}{0.85}
\definecolor{color_checkKM_notKh}{rgb}{1, 0, 0}

The elements $w$ in the $\mathtt{H}$-cells colored in
\begin{itemize}
    \item \colorbox{color_K}{this} color satisfy $\mathbf{K}(w)$, $\mathbf{KM}(\ast,w)$ and $\mathbf{Kh}(w)$,
    \item \colorbox{color_KM_notKh}{this} color satisfy $\mathbf{KM}(\ast,w)$ and do not satisfy  $\mathbf{Kh}(w)$, thus do not satisfy $\mathbf{K}(w)$, 
    \item white do not satisfy any of the above properties.
\end{itemize} 
Figure~\ref{figure:cellsB3} presents type $B_3$ results and Figure~\ref{figure:cellsD4} gives type $D_4$ results.

\begin{figure}[ht]
\begin{tabular}{|l|}
\hline
\cellcolor{color_K}$e$ \\ \hline
\end{tabular}
\hskip .75cm
\begin{tabular}{|l|l|l|}
\hline
$2312$ & $312$ & $32312$ \\ \hline
$231$ & $31$ & $3231$ \\ \hline
\cellcolor{color_K}$23123$ & \cellcolor{color_K}$3123$ & \cellcolor{color_K}$323123$ \\ \hline
\end{tabular}
\hskip .75cm
\begin{tabular}{|l|l|l|}
\hline
\cellcolor{color_K}$121$ & \cellcolor{color_K}$23121$ & \cellcolor{color_K}$3121$ \\ \hline
\cellcolor{color_K}$12312$ & \cellcolor{color_K}$2312312$ & \cellcolor{color_K}$312312$ \\ \hline
$1231$ & $231231$ & $31231$ \\ \hline
\end{tabular}
\vskip .5cm
\begin{tabular}{|l|l|l|}
\hline
\cellcolor{color_K}\begin{tabular}{@{}l@{}}$1$ \\ $12321$\end{tabular} & \cellcolor{color_K}\begin{tabular}{@{}l@{}}$21$ \\ $2321$\end{tabular} & $321$ \\ \hline
\cellcolor{color_K}\begin{tabular}{@{}l@{}}$12$ \\ $1232$\end{tabular} & \cellcolor{color_K}\begin{tabular}{@{}l@{}}$2$ \\ $232$\end{tabular} & $32$ \\ \hline
$123$ & $23$ & \cellcolor{color_K}\begin{tabular}{@{}l@{}}$3$ \\ $323$\end{tabular} \\ \hline
\end{tabular}
\hskip .75cm
\begin{tabular}{|l|l|l|}
\hline
\cellcolor{color_K}\begin{tabular}{@{}l@{}}$23123121$ \\ $123121$\end{tabular} & $3123121$ & $323121$ \\ \hline
$2312321$ & \cellcolor{color_K}\begin{tabular}{@{}l@{}}$312321$ \\ $32312321$\end{tabular} & \cellcolor{color_K}\begin{tabular}{@{}l@{}}$32321$ \\ $3231231$\end{tabular} \\ \hline
$231232$ & \cellcolor{color_K}\begin{tabular}{@{}l@{}}$31232$ \\ $3231232$\end{tabular} & \cellcolor{color_K}\begin{tabular}{@{}l@{}}$3232$ \\ $32312312$\end{tabular} \\ \hline
\end{tabular}
\hskip .75cm
\begin{tabular}{|l|}
\hline
\cellcolor{color_K}$323123121$ \\ \hline
\end{tabular}
\caption[Cells in type $B_3$]{Cells in type $B_3$ with the labeling
\begin{tikzpicture}[scale=0.4,baseline=-2]
\protect\draw (0 cm,0) -- (2 cm,0);
\protect\draw (2 cm, 0.1 cm) -- +(2 cm,0);
\protect\draw (2 cm, -0.1 cm) -- +(2 cm,0);
\protect\draw[fill=white] (0 cm, 0 cm) circle (.15cm) node[above=1pt]{\scriptsize $1$};
\protect\draw[fill=white] (2 cm, 0 cm) circle (.15cm) node[above=1pt]{\scriptsize $2$};
\protect\draw[fill=white] (4 cm, 0 cm) circle (.15cm) node[above=1pt]{\scriptsize $3$};
\end{tikzpicture}. 
Rows are left, and columns are right cells. Duflo elements are the top elements in the diagonal blocks.}
\label{figure:cellsB3}
\end{figure}

The following example is potentially related to Conjecture \ref{kconj-intro} \ref{kconj-2-intro}$\Leftrightarrow$\ref{kconj-3-intro}:
\begin{example}
Let $W$ be of type $B_3$ as in Figure \ref{figure:cellsB3}. 
In $\mathrm{Gr}(  \cO_0^\Z)$ we have the following equalities:
\begin{align*}
[\theta_{123}L_{121}] &= [\theta_{3}L_{231}] +[\theta_{3231}L_{1231}] \\
&= [\theta_{3}L_{23123}] +[\theta_{323123}L_{1231}].
\end{align*}
However, as shown in Figure \ref{figure:cellsB3}, the object $\theta_{123}L_{121}$ is indecomposable.
\end{example}

\begin{figure}
\begin{tabular}{|l|}
\hline
\cellcolor{color_K}$e$ \\ \hline
\end{tabular}
\hskip .75cm
\begin{tabular}{|l|l|l|l|}
\hline
\cellcolor{color_K}$1$ & \cellcolor{color_K}$21$ & \cellcolor{color_K}$321$ & \cellcolor{color_K}$421$ \\ \hline
\cellcolor{color_K}$12$ & \cellcolor{color_K}$2$ & \cellcolor{color_K}$32$ & \cellcolor{color_K}$42$ \\ \hline
\cellcolor{color_K}$123$ & \cellcolor{color_K}$23$ & \cellcolor{color_K}$3$ & \cellcolor{color_K}$423$ \\ \hline
\cellcolor{color_K}$124$ & \cellcolor{color_K}$24$ & \cellcolor{color_K}$324$ & \cellcolor{color_K}$4$ \\ \hline
\end{tabular}
\hskip .75cm
\begin{tabular}{|l|l|l|}
\hline
\cellcolor{color_K}$2312$ & \cellcolor{color_K}$312$ & \cellcolor{color_K}$42312$ \\ \hline
\cellcolor{color_KM_notKh}$231$ & \cellcolor{color_KM_notKh}$31$ & \cellcolor{color_KM_notKh}$4231$ \\ \hline
\cellcolor{color_K}$23124$ & \cellcolor{color_K}$3124$ & \cellcolor{color_K}$423124$ \\ \hline
\end{tabular}
\vskip .5cm
\begin{tabular}{|l|l|l|}
\hline
\cellcolor{color_K}$2412$ & \cellcolor{color_K}$32412$ & \cellcolor{color_K}$412$ \\ \hline
\cellcolor{color_K}$24123$ & \cellcolor{color_K}$324123$ & \cellcolor{color_K}$4123$ \\ \hline
\cellcolor{color_KM_notKh}$241$ & \cellcolor{color_KM_notKh}$3241$ & \cellcolor{color_KM_notKh}$41$ \\ \hline
\end{tabular}
\hskip .75cm
\begin{tabular}{|l|l|l|}
\hline
\cellcolor{color_K}$124321$ & \cellcolor{color_K}$24321$ & \cellcolor{color_K}$4321$ \\ \hline
\cellcolor{color_K}$12432$ & \cellcolor{color_K}$2432$ & \cellcolor{color_K}$432$ \\ \hline
\cellcolor{color_KM_notKh}$1243$ & \cellcolor{color_KM_notKh}$243$ & \cellcolor{color_KM_notKh}$43$ \\ \hline
\end{tabular}
\vskip .5cm
\begin{tabular}{|l|l|l|l|l|l|l|l|}
\hline
\cellcolor{color_K}\begin{tabular}{@{}l@{}}$121$ \\ $1243121$\end{tabular} & \cellcolor{color_K}\begin{tabular}{@{}l@{}}$3121$ \\ $31243121$\end{tabular} & \cellcolor{color_K}\begin{tabular}{@{}l@{}}$4121$ \\ $12423121$\end{tabular} & \cellcolor{color_K}\begin{tabular}{@{}l@{}}$23121$ \\ $3243121$\end{tabular} & \cellcolor{color_K}\begin{tabular}{@{}l@{}}$24121$ \\ $2423121$\end{tabular} & \cellcolor{color_K}\begin{tabular}{@{}l@{}}$423121$ \\ $324121$\end{tabular} & $243121$ & $43121$ \\ \hline
\cellcolor{color_K}\begin{tabular}{@{}l@{}}$1231$ \\ $12412321$\end{tabular} & \cellcolor{color_K}\begin{tabular}{@{}l@{}}$12321$ \\ $312412321$\end{tabular} & \cellcolor{color_K}\begin{tabular}{@{}l@{}}$41231$ \\ $1242321$\end{tabular} & \cellcolor{color_K}\begin{tabular}{@{}l@{}}$2321$ \\ $32412321$\end{tabular} & \cellcolor{color_K}\begin{tabular}{@{}l@{}}$241231$ \\ $242321$\end{tabular} & \cellcolor{color_K}\begin{tabular}{@{}l@{}}$42321$ \\ $3241231$\end{tabular} & $2412321$ & $412321$ \\ \hline
\cellcolor{color_K}\begin{tabular}{@{}l@{}}$1241$ \\ $23124321$\end{tabular} & \cellcolor{color_K}\begin{tabular}{@{}l@{}}$31241$ \\ $3124321$\end{tabular} & \cellcolor{color_K}\begin{tabular}{@{}l@{}}$12421$ \\ $423124321$\end{tabular} & \cellcolor{color_K}\begin{tabular}{@{}l@{}}$324321$ \\ $231241$\end{tabular} & \cellcolor{color_K}\begin{tabular}{@{}l@{}}$2421$ \\ $42312421$\end{tabular} & \cellcolor{color_K}\begin{tabular}{@{}l@{}}$32421$ \\ $4231241$\end{tabular} & $2312421$& $312421$ \\ \hline
\cellcolor{color_K}\begin{tabular}{@{}l@{}}$12312$ \\ $1241232$\end{tabular} & \cellcolor{color_K}\begin{tabular}{@{}l@{}}$1232$ \\ $31241232$\end{tabular} & \cellcolor{color_K}\begin{tabular}{@{}l@{}}$124232$ \\ $412312$\end{tabular} & \cellcolor{color_K}\begin{tabular}{@{}l@{}}$232$ \\ $3241232$\end{tabular} & \cellcolor{color_K}\begin{tabular}{@{}l@{}}$24232$ \\ $2412312$\end{tabular} & \cellcolor{color_K}\begin{tabular}{@{}l@{}}$4232$ \\ $32412312$\end{tabular} & $241232$ & $41232$ \\ \hline
\cellcolor{color_K}\begin{tabular}{@{}l@{}}$12412$ \\ $2312432$\end{tabular} & \cellcolor{color_K}\begin{tabular}{@{}l@{}}$312432$ \\ $312412$\end{tabular} & \cellcolor{color_K}\begin{tabular}{@{}l@{}}$1242$ \\ $42312432$\end{tabular} & \cellcolor{color_K}\begin{tabular}{@{}l@{}}$32432$ \\ $2312412$\end{tabular} & \cellcolor{color_K}\begin{tabular}{@{}l@{}}$242$ \\ $4231242$\end{tabular} & \cellcolor{color_K}\begin{tabular}{@{}l@{}}$3242$ \\ $42312412$\end{tabular} & $231242$ & $31242$ \\ \hline
\cellcolor{color_K}\begin{tabular}{@{}l@{}}$124123$ \\ $231243$\end{tabular} & \cellcolor{color_K}\begin{tabular}{@{}l@{}}$31243$ \\ $3124123$\end{tabular} & \cellcolor{color_K}\begin{tabular}{@{}l@{}}$12423$ \\ $4231243$\end{tabular} & \cellcolor{color_K}\begin{tabular}{@{}l@{}}$3243$ \\ $23124123$\end{tabular} & \cellcolor{color_K}\begin{tabular}{@{}l@{}}$2423$ \\ $42312423$\end{tabular} & \cellcolor{color_K}\begin{tabular}{@{}l@{}}$32423$ \\ $423124123$\end{tabular} & $2312423$ & $312423$ \\ \hline
$124312$ & $3124312$ & $1242312$ & $324312$ & $242312$ & $3242312$ & \cellcolor{color_K}\begin{tabular}{@{}l@{}}$24312$ \\ $231242312$\end{tabular} & \cellcolor{color_K}\begin{tabular}{@{}l@{}}$4312$ \\ $31242312$\end{tabular} \\ \hline
$12431$ & $312431$ & $124231$ & $32431$ & $24231$ & $324231$ & \cellcolor{color_KM_notKh}\begin{tabular}{@{}l@{}}$2431$ \\ $23124231$\end{tabular} &  \cellcolor{color_KM_notKh}\begin{tabular}{@{}l@{}}$431$ \\ $3124231$\end{tabular} \\ \hline
\end{tabular}
\vskip .5cm
\begin{tabular}{|l|l|l|}
\hline
$123121$ & $24123121$ & $4123121$ \\ \hline
\cellcolor{color_K}$23124312$ & \cellcolor{color_K}$4231242312$ & \cellcolor{color_K}$423124312$ \\ \hline
\cellcolor{color_KM_notKh}$2312431$ & \cellcolor{color_KM_notKh}$423124231$ & \cellcolor{color_KM_notKh}$42312431$ \\ \hline
\end{tabular}
\hskip .75cm
\begin{tabular}{|l|l|l|l|}
\hline
\cellcolor{color_K}$23124123121$ & \cellcolor{color_K}$324123121$ & \cellcolor{color_K}$3124123121$ & \cellcolor{color_K}$124123121$ \\ \hline
\cellcolor{color_K}$231241232$ & \cellcolor{color_K}$42312412312$ & \cellcolor{color_K}$4231241232$ & \cellcolor{color_K}$423124232$ \\ \hline
\cellcolor{color_K}$2312412321$ & \cellcolor{color_K}$4231241231$ & \cellcolor{color_K}$42312412321$ & \cellcolor{color_K}$4231242321$ \\ \hline
\cellcolor{color_K}$231243121$ & \cellcolor{color_K}$423124121$ & \cellcolor{color_K}$4231243121$ & \cellcolor{color_K}$42312423121$ \\ \hline
\end{tabular}
\vskip .5cm
\begin{tabular}{|l|l|l|}
\hline
\cellcolor{color_K}$2312423121$ & \cellcolor{color_K}$312423121$ &  \cellcolor{color_K}$32423121$ \\ \hline
\cellcolor{color_KM_notKh}$231242321$ & \cellcolor{color_KM_notKh}$31242321$ &\cellcolor{color_KM_notKh}$3242321$ \\ \hline
$23124232$ & $3124232$ & $324232$ \\ \hline
\end{tabular}
\hskip .75cm
\begin{tabular}{|l|l|l|}
\hline
$124121$ & $23124121$ & $3124121$ \\ \hline
\cellcolor{color_K}\cellcolor{color_K}$12412312$ & \cellcolor{color_K}$2312412312$ & \cellcolor{color_K}$312412312$ \\ \hline
\cellcolor{color_KM_notKh}$1241231$ & \cellcolor{color_KM_notKh}$231241231$ & \cellcolor{color_KM_notKh}$31241231$ \\ \hline
\end{tabular}
\hskip .75cm
\begin{tabular}{|l|}
\hline
\cellcolor{color_K}$423124123121$ \\ \hline
\end{tabular}
\caption[Cells in type $D_4$]{Cells in type $D_4$ with labeling %
\begin{tikzpicture}[scale=0.4,baseline=-3]
\protect\draw (0 cm,0) -- (2 cm,0);
\protect\draw (2 cm,0) -- (4 cm,0.7 cm);
\protect\draw (2 cm,0) -- (4 cm,-0.7 cm);
\protect\draw[fill=white] (0 cm, 0 cm) circle (.15cm) node[above=1pt]{\scriptsize $1$};
\protect\draw[fill=white] (2 cm, 0 cm) circle (.15cm) node[above=1pt]{\scriptsize $2$};
\protect\draw[fill=white] (4 cm, 0.7 cm) circle (.15cm) node[above=1pt]{\scriptsize $4$};
\protect\draw[fill=white] (4 cm, -0.7 cm) circle (.15cm) node[above=1pt]{\scriptsize $3$};
\end{tikzpicture}. Rows are left, and columns are right cells. Duflo elements are the top elements in the diagonal blocks.}
\label{figure:cellsD4}
\end{figure}

\vspace{2mm}

\noindent
H.~K.: Department of Mathematics, Uppsala University, Box. 480,
SE-75106, Uppsala,\\ SWEDEN, email: {\tt hankyung.ko\symbol{64}math.uu.se}

\noindent
V.~M.: Department of Mathematics, Uppsala University, Box. 480,
SE-75106, Uppsala,\\ SWEDEN, email: {\tt mazor\symbol{64}math.uu.se}

\noindent
R.~M.: Department of Mathematics, Uppsala University, Box. 480,
SE-75106, Uppsala,\\ SWEDEN, email: {\tt rafael.mrden\symbol{64}math.uu.se} \\
(On leave from: Faculty of Civil Engineering, University of Zagreb, \\
Fra Andrije Ka\v{c}i\'{c}a-Mio\v{s}i\'{c}a 26, 10000 Zagreb, CROATIA)

\end{document}